\def\a{\alpha}
\def\b{\beta}
\def\d{\delta}
\def\D{\Delta}
\def\g{\gamma}
\def\Om{\Omega}
\def\s{\sigma}
\def\t{\theta}
\def\ve{\varepsilon}
\def\vp{\varphi}
\def\Id{\mathop{\rm Id}\nolimits}
\def\ot{\otimes}
\def\nb{\nabla}
\def\rt{\triangleright}
\def\lt{\triangleleft}
\def\acl{\blacktriangleright\hspace{-4pt}\vartriangleleft }
\def\cl{\blacktriangleright\hspace{-4pt} < }
\def\D{\Delta}
\def\Om{\Omega}
\def\Id{\mathop{\rm Id}\nolimits}
\newcommand{\ps}[1]{~\hspace{-4pt}_{^{(#1)}}}
\newcommand{\ns}[1]{~\hspace{-4pt}_{_{{<#1>}}}}
\newcommand{\G}[1]{\mathfrak{#1}}
\newcommand{\C}[1]{\mathcal{#1}}
\newcommand{\B}[1]{\mathbb{#1}}
\newcommand{\Hom}{{\rm Hom}}
\newcommand{\ie}{{\it i.e.\/}\ }
\newcommand{\brt}{\blacktriangleright}
\renewcommand{\leq}{\leqslant}
\renewcommand{\geq}{\geqslant}
\numberwithin{equation}{section}
\newtheorem{theorem}{Theorem}[section]
\newtheorem{proposition}[theorem]{Proposition}
\newtheorem{lemma}[theorem]{Lemma}
\newtheorem{corollary}[theorem]{Corollary}
\theoremstyle{definition}
\newtheorem{remark}[theorem]{Remark}
\newtheorem{example}[theorem]{Example}
\newtheorem{definition}[theorem]{Definition}
\title{Hom-Lie-Hopf algebras}
\author{S. Halıcı}
\address{Pamukkale University, Denizli, Turkey}
\email{shalici@pau.edu.tr}
\author{A. Karataş}
\address{Pamukkale University, Denizli, Turkey}
\email{adnank@pau.edu.tr}
\author{S. Sütlü}
\address{Işık University, Istanbul, Turkey}
\email{serkan.sutlu@isikun.edu.tr}
\begin{document}

\begin{abstract}
We studied both the double cross product and the bicrossproduct constructions for the Hom-Hopf algebras of general $(\a,\b)$-type. This allows us to consider the universal enveloping Hom-Hopf algebras of Hom-Lie algebras, which are of $(\a,\Id)$-type. We show that the universal enveloping Hom-Hopf algebras of a matched pair of Hom-Lie algebras form a matched pair of Hom-Hopf algebras. We observe also that, the semi-dualization of a double cross product Hom-Hopf algebra is a bicrossproduct Hom-Hopf algebra. In particular, we apply this result to the universal enveloping Hom-Hopf algebras of a matched pair of Hom-Lie algebras to obtain Hom-Lie-Hopf algebras.
\end{abstract}

\maketitle
\tableofcontents

\section*{Introduction}

An important class of (infinite dimensional, non-commutative and non-cocommutative) Hopf algebras in the theory of quantum groups consists of the bicrossed-product Hopf algebras (in the sense of \cite[Chap. 6]{Majid-book}) of the form $R(\G{h}) \acl U(\G{g})$, where $(\G{g},\G{h})$ is a matched pair of Lie algebras, $U(\G{g})$ is the universal enveloping (Hopf) algebra of $\G{g}$, and $R(\G{h})$ is the (Hopf) algebra of representative functions of $\G{h}$, see for instance \cite{Hoch59}. In this case, there is a Lie algebra structure on $\G{g} \bowtie \G{h}:=\G{g} \oplus \G{h}$, and a double-crossed product Hopf algebra structure (in the sense of \cite[Chap. 7]{Majid-book}) on the universal enveloping Hom-algebra $U(\G{g}\bowtie\G{h}) \cong U(\G{g}) \bowtie U(\G{h})$. Then dualizing $U(\G{h})$ to $R(\G{h})$, inspired by \cite{Majid-book, Maji90, Kac68}, there emerges the bicrossed-product Hopf algebra $R(\G{h}) \acl U(\G{g})$. 

The bicrossed-product Hopf algebras of the form $R(\G{h}) \acl U(\G{g})$ were studied first in \cite{MoscRang09}, and then in \cite{RangSutl}, see also \cite{RangSutl-I-arxiv}. Later on, in \cite{RangSutl-III}, it was observed that many of the (co)representation theoretical properties of  $R(\G{h}) \acl U(\G{g})$ are controlled by the Lie algebra $\G{g}\bowtie \G{h}$, which in turn, are controlled by the individual Lie algebras $\G{g}$ and $\G{h}$. Therefore, one gets the advantage of analysing a highly nontrivial (nonlinear)  object $R(\G{h}) \acl U(\G{g})$ in terms of a much milder (and linear) object $\G{g}\bowtie \G{h}$. As such, a strategy arises to attack any problem (representation theory, homology, etc.) related to $R(\G{h}) \acl U(\G{g})$.

Two concrete, and successful, applications of this strategy were presented in \cite{RangSutl-III}. It was noted that the Connes-Moscovici Hopf algebras $\C{H}_n$, that appear in local index formula of the transversally elliptic operators on foliations \cite{ConnMosc98,ConnMosc}, are examples of Lie-Hopf algebras associated to the (infinite dimensional) Lie algebras $W_n$ of formal vector fields on $\B{R}^n$, \cite{Fuks-book}. Then, the classification of the stable anti-Yetter-Drinfeld modules (SAYD-modules in short, the coefficient spaces for the Hopf-cyclic cohomology) over $\C{H}_n$ was reduced to the classification of the modules over $W_n$, which was known to be trivial. It was thus proved that the only SAYD-module on $\C{H}_n$ is the one dimensional one introduced in \cite{ConnMosc98}.

What is also established in \cite{RangSutl-III} is a quasi-isomorphism between the Hopf-cyclic cohomology of the Hopf algebra $\C{H}_{1S}$ (the Schwarzian quotient of the Connes-Moscovici Hopf algebra $\C{H}_1$, which also happens to be the Lie-Hopf algebra associated to the Lie algebra $s\ell_2$), and the Lie-algebra cohomology of  $s\ell_2$.

Now, on the other hand, the $\sigma$-derivations of these infinite dimensional Lie algebras $W_n$, see for instance \cite{GelfFuks70,Jako75}, give rise to Lie algebra structures so that the Jacobi identity is deformed by a homomorphism, called Hom-Lie algebras in \cite{HartLarsSilv06}. Upon discovering their fundamental importance towards the development of a non-associative geometry (in the study of quantum groups, or the extension of the non-commutative formalism of the spectral action principle to a non-associative
framework, see for instance \cite{HassShapSutl15} and the references therein), the emergence of Hom-associative algebras (Hom-algebras in short) \cite{MakhSilv08}, Hom-bialgebras \cite{MakhSilv10-II}, and Hom-Hopf algebras \cite{MakhSilv09} followed shortly.

However, although the original definition of a Hom-bialgebra $\C{B}$ involve two homomorphisms; an $\a:\C{B} \to \C{B}$ twisting the associativity of the multiplication, and a $\b:\C{B} \to \C{B}$ twisting the coassociativity of the comultiplication, the study of these objects seems to take two certain directions: one is towards the equality $\b = \a$ of the structure maps, and the other is the course towards $\b = \a^{-1}$. As such, both directions exclude the most fundamental ingredient; the universal enveloping Hom-Hopf algebras of Hom-Lie algebras. Given a Hom-Lie algebra $(\G{g},\a)$, its universal enveloping Hom-Hopf algebra is a bialgebra of type $(\a,\Id)$, that is, its coalgebra structure is coassociative. 

The central role of the Lie-Hopf algebras in the associative setting (just as the Connes-Moscovici Hopf algebras $\C{H}_n$ appearing in the computation of the local index theory of transverse elliptic operators), and in particular $\C{H}_n$'s being a Hopf algebra associated to the Lie algebra $W_n$, whose deformations ignited the Hom-associative objects, prompted us to investigate the construction of Hom-associative Hopf algebras out of Hom-Lie algebras. Needless to say, we had to develop all the machinery suitable for general Hom-bialgebras of type $(\a,\b)$, not only those of the type $(\a,\a)$, or of the type $(\a,\a^{-1})$.

More precisely, given a matched pair $(\G{g},\G{h})$ of Hom-Lie algebras, we observe that their universal enveloping Hom-Hopf algebras $\C{U}(\G{g})$ and $\C{U}(\G{h})$ form a matched pair of Hom-Hopf algebras. As such, we get a ``double cross product'' Hom-Hopf algebra $\C{U}(\G{g})\bowtie\C{U}(\G{h})$. Next, in view of the duality of Hom-Hopf algebras, we show that $\C{U}(\G{h})^\circ$ and $\C{U}(\G{g})$ form a mutual pair of Hom-Hopf algebras, leading to a ``bicrossproduct'' Hom-Hopf algebra $\C{U}(\G{h})^\circ \acl\C{U}(\G{g})$. These are the Hom-Hopf algebras that we refer as the Hom-Lie-Hopf algebras. Further study on Hom-Lie-Hopf algebras, such as the (co)representations, (co)homology, etc., is spared for sequential papers.

The paper is organized as follows.

In order to fix the terminology, and for the convenience of the reader, we review in Section \ref{sect-Hom-Hopf-algebras} the Hom-associative structures we shall need in the sequel. More precisely, we begin with Hom-algebras, and Hom-coalgebras in Subsection \ref{subsect-Hom-(co)algebras-duality}. What is also discussed in Subsection \ref{subsect-Hom-(co)algebras-duality} is the duality between Hom-algebras and Hom-coalgebras along the lines of \cite{Abe-book}. This allows us to interpret the dual as the Hom-associative analogue of the (classical)  representative functions. In Subsection \ref{subsect-Hom-Lie-Hom-Hopf-algebras}, on the other hand, we proceeded to recall Hom-bialgebras and Hom-Hopf algebras, as well as the Hom-Lie algebras with their universal enveloping Hom-Hopf algebras.

Section \ref{sect-Hom-Hopf-symmetries} is devoted to the Hom-associative analogues of the Hopf algebra symmetries. In Subsection \ref{subsect-module-alg} we review the Hom-module algebra symmetry, which is needed to construct the underlying Hom-algebra structure of a bicrossproduct Hom-Hopf algebra. In Subsection \ref{subsect-module-coalg}, on the other hand, we consider Hom-module coalgebras, which are the building blocks of double cross product Hom-Hopf algebras. Subsection \ref{subsect-comodule-alg} deals with the comodule algebra symmetry, that is, the essense of the Hom-Hopf-Galois extensions, and finally in Subsection \ref{subsect-comodule-coalg} we consider the Hom-comodule coalgebra symmetry for the underlying Hom-coalgebra structure of a bicrossproduct Hom-Hopf algebra.

In Section \ref{sect-Double-cross-product-Hom-Hopf} we shall study the double cross product Hom-Hopf algebras. To this end, Subsection \ref{subsect-double-cross-product-construction} is reserved for the matched pairs of Hom-Hopf algebras, and their double cross product. More precisely, we succeed to obtain the correct compatibility conditions for a matched pair of Hom-Hopf algebras of type $(\a,\b)$ in Proposition \ref{prop-matched-pair-double-cross-prod}. Then, focusing on the universal enveloping Hom-Hopf algebras of matched pairs of Hom-Lie algebras in Subsection \ref{subsect-univ-envlp-Hom-Hopf-Hom-Lie}, we show in Proposition \ref{prop-univ-envlp-mutual-pair} that the universal enveloping Hom-Hopf algebras of a matched pair of Hom-Lie algebras form a matched pair of Hom-Hopf algebras.

Finally, in Section \ref{sect-bicrossproduct-Hom-Hopf} we turn our attention to mutual pair of Hom-Hopf algebras, and their bicrossproducts. To this end, in Subsection \ref{subsect-bicrossproduct-construction} we review the bicrossproduct construction - for Hom-Hopf algebras of $(\a,\b)$-type. The underlying algebra structure of a bicrossproduct Hom-Hopf algebra is given in Proposition \ref{prop-bicrossprod-alg}, and then in Proposition \ref{prop-bicrossprod-coalg} the coalgebra structure of a bicrossproduct is obtained. Finally in Proposition \ref{prop-bicrossed-prod-Hom-Hopf} we achieve to obtain the general compatibility conditions for a mutual pair of Hom-Hopf algebras. Next, in Subsection \ref{subsect-semidualization}, reviewing the Hom-action / Hom-coaction duality, we prove the main result of the paper in Proposition \ref{prop-semidual}, giving way to the Hom-Lie-Hopf algebras of Corollary \ref{coroll-Hom-Lie-Hopf}.

\subsection*{Notation and Conventions} 

We shall use the Sweedler's notation for the comultiplication. More precisely, suppressing the summation, $\D(c) = c\ps{1}\ot c\ps{2}$ if the coalgebra structure is associative, and $\D(c) = c\ns{1}\ot c\ns{2}$ if it the coalgebra structure is Hom-associative. Similarly, we use $\nb(w) = w\ps{-1} \ot w\ps{0}$ and $\nb(w) = w\ps{0} \ot w\ps{1}$ for the associative (left and right resp.) coactions, and $\nb(w) = w\ns{-1} \ot w\ns{0}$ and $\nb(w) = w\ns{0} \ot w\ns{1}$ for the Hom-associative (again, left and right resp.) coactions. As for the multiplications, we use the symbols $\bullet$, $\star$, or $\ast$ for Hom-associative multiplications, while unadorned multiplications refer to the associative ones. Finally, we occasionally employed the notation $(a,b)$ for the tensor product $a\ot b$ in order to save some space in calculations.

%\subsection*{Acknowledgments}

\section{Hom-Hopf algebras}\label{sect-Hom-Hopf-algebras}

\subsection{Hom-algebras, Hom-coalgebras, duality}\label{subsect-Hom-(co)algebras-duality}

In this subsection we shall recall the notions of Hom-algebra, Hom-coalgebra, Hom-bialgebra, Hom-Hopf algebra and Hom-Lie algebra from \cite{MakhSilv08,MakhSilv09}.

\subsubsection{Hom-algebras} 

We discuss here the Hom-algebras, the basics examples of Hom-algebras, and Hom-modules.

\begin{definition}
A ``Hom-associative algebra'' (Hom-algebra in short) is defined to be a triple $(\C{A},\a,\mu)$, where $V$ is a vector space, $\mu : \C{A} \ot \C{A} \to \C{A}$ and $\a:\C{A}\to \C{A}$ are linear maps satisfying 
\begin{equation} \label{Hom-Assoc}
\mu (\alpha (x) \otimes \mu (y \otimes z))=\mu(\mu(x \otimes y)\otimes \alpha(z)),
\end{equation}
for any $x,y,z \in \C{A}$.
\end{definition}
A Hom-associative algebra $(\C{A},\mu,\a)$ is called ``multiplicative'' if the linear map $\a:\C{A}\to \C{A}$ respects $\mu:\C{A}\ot \C{A}\to \C{A}$ (\ie multiplicative).

The equality \eqref{Hom-Assoc} is called the ``Hom-associativity condition'', and it is equivalent to the commutativity of the diagram
\[
\xymatrix{
\C{A} \ot \C{A} \ot \C{A} \ar[d]_{\a\ot \mu} \ar[r]^{\mu\ot\a} & \C{A}\ot \C{A} \ar[d]^{\mu} \\
\C{A}\ot \C{A} \ar[r]_{\mu} & \C{A}.
}
\]

The fundamental example is one that indicates that the Hom-algebras are deformations of the associative algebras.

\begin{example}\label{ex-assoc-alg-to-hom-assoc-alg}
Any associative algebra $A$, and an algebra homomorphism $\a:A\to A$, give rise to a Hom-algebra $(A_\a,\mu\circ\a,\a)$, where $A_\a:=A$, and $\mu:A\ot A\to A$ is the (associative) multiplication on $A$, \cite[Coroll. 2.6(1)]{Yau09}.
\end{example}

A Hom-associative algebra of the form given in Example \ref{ex-assoc-alg-to-hom-assoc-alg} above is called a ``twist'' in \cite{Gohr10}. On the other hand, a Hom-associative algebra $\C{A}$ is called ``strongly degenerate'' if there exists $a,b \in \C{A}$ such that $a\bullet x = b\bullet x$, and $x\bullet a = x\bullet b$ for all $x \in \C{A}$, \cite{Gohr10}. There is, then, the following result; \cite[Prop. 1]{Gohr10}.

\begin{proposition}\label{twist-strong-degenerate}
Let $(\C{A},\a)$ be a Hom-associative algebra so that $\a:\C{A} \to \C{A}$ is surjective. Then, $\C{A}$ is either a twist, or strongly degenerate.
\end{proposition}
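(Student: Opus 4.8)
The plan is to set up the dichotomy through the two-sided annihilator and then, in the non-degenerate case, to \emph{untwist} the multiplication by $\a^{-1}$. Write $N := \{n \in \C{A} : n\bullet x = x\bullet n = 0 \text{ for all } x \in \C{A}\}$ for the two-sided annihilator. Unwinding the definition, $\C{A}$ is strongly degenerate precisely when $N \neq 0$ (take $n = a-b$). So if $N \neq 0$ there is nothing to prove, and I may assume $N = 0$ and aim to exhibit $\C{A}$ as a twist in the sense of Example \ref{ex-assoc-alg-to-hom-assoc-alg}. Throughout I take the Hom-algebra to be multiplicative, $\a(x\bullet y) = \a(x)\bullet\a(y)$; this is the standing convention, and since every twist is visibly multiplicative it is in any case forced by the desired conclusion.

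First I would record two one-sided annihilation facts about $K := \Ker\a$, both obtained from Hom-associativity \eqref{Hom-Assoc} together with surjectivity of $\a$. Setting $x = c \in K$ in \eqref{Hom-Assoc} gives $(c\bullet y)\bullet\a(z) = \a(c)\bullet(y\bullet z) = 0$, and since $\a(z)$ ranges over all of $\C{A}$ this shows $c\bullet y$ is a left annihilator for every $y$. Symmetrically, setting $z = c \in K$ gives $\a(x)\bullet(y\bullet c) = 0$, so $y\bullet c$ is a right annihilator for every $y$.

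The heart of the argument, and the step I expect to be the main obstacle, is to promote these one-sided statements to the injectivity of $\a$. I would argue by contradiction: suppose $0 \neq c \in K$. Since $N = 0$, $c$ is not a two-sided annihilator, so (up to the evident left--right symmetry) there is $y_0$ with $e := c\bullet y_0 \neq 0$. By the first fact $e$ is a left annihilator; as $N = 0$ it cannot also be a right annihilator, so surjectivity yields $x_0$ with $g := \a(x_0)\bullet e \neq 0$. Now \eqref{Hom-Assoc} rewrites $g = \a(x_0)\bullet(c\bullet y_0) = (x_0\bullet c)\bullet\a(y_0)$, and I would check that this \emph{single} element is annihilated on both sides: on the left, $g\bullet\a(z) = (\a(x_0)\bullet e)\bullet\a(z) = \a^2(x_0)\bullet(e\bullet z) = 0$ because $e$ is a left annihilator; on the right, writing $f := x_0\bullet c$ (a right annihilator, so $x\bullet f = 0$ for all $x$) gives $\a(x)\bullet g = \a(x)\bullet(f\bullet\a(y_0)) = (x\bullet f)\bullet\a^2(y_0) = 0$. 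Hence $0 \neq g \in N$, contradicting $N = 0$; therefore $K = 0$ and $\a$ is bijective. The delicate point is exactly this upgrade: each specialization of \eqref{Hom-Assoc} produces only a one-sided annihilator, and one must combine the two specializations, feeding kernel elements into the first and the third slot, to manufacture a genuinely two-sided one.

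Finally, with $\a$ invertible I would untwist. Define the candidate product $x\star y := \a^{-1}(x\bullet y)$, so that $\bullet = \a\circ\star$ holds by construction. Using multiplicativity in the form $\a^{-1}(x\bullet y) = \a^{-1}(x)\bullet\a^{-1}(y)$, both the associativity of $\star$ and the statement that $\a$ is a $\star$-endomorphism reduce, after clearing the $\a^{-1}$'s, to the single instance \eqref{Hom-Assoc} of Hom-associativity and to multiplicativity; these are routine verifications. This realizes $\C{A}$ as the twist of the associative algebra $(\C{A},\star)$ along $\a$, completing the non-degenerate case and hence the dichotomy.
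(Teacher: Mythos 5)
The first thing to say is that the paper contains no proof of this proposition: it is imported verbatim from \cite[Prop.~1]{Gohr10}, and even later, when the paper needs details, it defers to ``the proof of \cite[Prop.~1]{Gohr10}''. So your argument can only be judged on its own merits. On those merits, the core of what you do is correct. Your reduction of strong degeneracy to $N\neq 0$ is the intended reading (note the paper's printed definition omits the requirement $a\neq b$, without which every Hom-algebra is vacuously strongly degenerate; you have silently, and correctly, restored it). The central step --- manufacturing a nonzero \emph{two-sided} annihilator $g=\a(x_0)\bullet(c\bullet y_0)=(x_0\bullet c)\bullet\a(y_0)$ out of a nonzero $c\in\Ker\a$ --- is valid: both annihilation computations are legitimate instances of \eqref{Hom-Assoc}, surjectivity upgrades them to full annihilator statements, and the left--right symmetric case goes through the same way. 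It is worth emphasizing that this injectivity step uses only \eqref{Hom-Assoc} and surjectivity, with no multiplicativity. The untwisting step is also correct as written, \emph{granted} that $\a$ is multiplicative.

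The genuine weak point is how you introduce that multiplicativity assumption. ``It is in any case forced by the desired conclusion'' is circular: to prove the proposition as literally stated you would have to show that a non-multiplicative Hom-algebra with surjective $\a$ is strongly degenerate, and that is false. Concretely, take $\C{A}=k$ with its field multiplication and $\a(x)=\lambda x$ for a scalar $\lambda\neq 0,1$: both sides of \eqref{Hom-Assoc} equal $\lambda xyz$, $\a$ is bijective, and there is no nonzero two-sided annihilator; yet $\C{A}$ is not a twist in the sense of Example \ref{ex-assoc-alg-to-hom-assoc-alg}, because injectivity of $\a$ forces the only candidate factorization to be $\mu'(x\ot y)=xy/\lambda$, for which $\a$ is not an algebra map ($\a(\mu'(x\ot y))=xy$ while $\mu'(\a(x)\ot\a(y))=\lambda xy$). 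So Proposition \ref{twist-strong-degenerate}, read with this paper's definition of ``twist'', is simply false without multiplicativity; your restriction is not a convenience but a necessary repair of the statement (whatever the exact conventions in \cite{Gohr10}, either multiplicativity or a weaker notion of twist, not requiring $\a$ to be an endomorphism of the associative product, must be in force there). What you should change is therefore the logic of the presentation, not the computations: state that you prove the proposition for multiplicative Hom-algebras, record the counterexample showing this hypothesis cannot be dropped under the paper's definition of twist, and observe that this is the only case the paper ever uses --- in its one application, the structure map is the transpose of an isomorphism of Hom-coalgebras and hence automatically multiplicative.
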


Further terminology on Hom-algebras is in order. Following \cite{Laur-GengMakhTele18}, a Hom-algebra $(\C{A},\mu,\a)$ is called ``unital'' is there is a homomorphism $\eta:k\to \C{A}$ such that the diagrams
\begin{equation}\label{Hom-unit}
\xymatrix{
k\ot \C{A} \ar[dr]_{\a}\ar[r]^{\eta\ot\Id} & \C{A}\ot \C{A} \ar[d]_{\mu} &\ar[l]_{\Id\ot\eta}\ar[dl]^\a \C{A} \ot k & & \C{A}\ar[r]^\a & \C{A} \\
 & \C{A} & & & k\ar[u]^\eta\ar[ur]_\eta & 
}
\end{equation}
are commutative. Given two (unital) Hom-algebras $(\C{A},\mu,\eta,\a)$ and $(\C{A}',\mu',\eta',\a')$, a linear map $f:\C{A}\rightarrow \C{A}'$ is a morphism of Hom-algebras if the diagrams
\[
\xymatrix{
\C{A}\ot \C{A} \ar[d]_\mu\ar[r]^{f\ot f} & \C{A}'\ot \C{A}' \ar[d]^{\mu'} & &\C{A}\ar[r]^f &\C{A}'  & &\C{A}\ar[r]^f \ar[d]_\a& \C{A}'\ar[d]^{\alpha '}  \\
\C{A} \ar[r]_f & \C{A}' & & k\ar[u]^\eta\ar[ur]_{\eta'} & & & \C{A}\ar[r]_f & \C{A}'
}
\]
are commutative. 

\begin{remark}
If $(\C{A},\mu,\eta,\a)$ is a unital Hom-algebra, then
\begin{align*}
& \mu(\a(a) \ot \a(a')) = \mu(\a(a) \ot \mu(a' \ot \eta(1))) = \mu(\mu(a \ot a') \ot \a(\eta(1))) = \\
& \mu(\mu(a \ot a') \ot \eta(1)) = \a(\mu(a \ot a')),
\end{align*}
for any $a,a' \in \C{A}$.
\end{remark}

\begin{definition}
Let $(\C{A},\mu,\eta,\a)$ be a unital Hom-algebra. An element $a \in \C{A}$ is said to be ``Hom-invertible'' if there is an $x^{-1}\in \C{A}$, and an $n \in \B{N}$ such that 
\begin{equation}\label{eqn-Hom-inv}
(\a^n\circ \mu) (x \ot x^{-1}) = (\a^n\circ \mu) (x^{-1} \ot x) = \eta(1).
\end{equation}
\end{definition}

The tensor product of Hom-algebras is considered as follows; if $(\C{A},\mu,\a)$ and $(\C{A}',\mu',\a')$ are two Hom-algebras, then their tensor product is also a Hom-algebra with the structure given by $(\C{A}\ot\C{A}',(\mu\ot\mu')\circ(\Id \ot\tau\ot\Id),\a\ot\a')$, where
\[
\tau: \C{A}' \ot \C{A} \to \C{A} \ot \C{A}', \qquad \tau(a'\ot a) := a \ot a'
\]
is the switch morphism. Finally, let us recall the notion of a (left) module over a Hom-algebra. 

\begin{definition}
Let $(\C{A},\mu,,\eta,\a)$ be a unital Hom-algebra, and $M$ a vector space. Let also $\rho:\C{A} \ot M \to M$ and $\gamma:M\to M$ be linear maps. Then the triple $(M,\rho,\gamma)$ is called a ``(left) Hom-module over the Hom-algebra $\C{A}$'' if the diagrams
\[
\xymatrix{
\C{A}\ot \C{A}  \ot M \ar[d]_{\a\ot \rho} \ar[r]^{\,\,\,\,\,\,\,\,\mu\ot \gamma} & \C{A} \ot M \ar[d]^{\rho}  && k\ot M \ar[r]^{\eta\ot \Id} \ar[d]_{\g} & \C{A}\ot M \ar[dl]^{\rho}\\
\C{A} \ot M \ar[r]_{\rho} & M &&M &
}
\]
are commutative.
\end{definition}
A ``right Hom-module over the Hom-algebra $\C{A}$'' is defined similarly. As expected, any Hom-algebra $(\C{A},\mu,\a)$ is both a left and a right Hom-module over itself.

\begin{definition}
Let $(\C{A},\mu, \eta,\a)$ be a (unital) Hom-algebra, and $(M,\rho,\g)$, $(M',\rho',\g')$ be two (left) Hom-modules over $\C{A}$. Then a map $f:M\to M'$ is called a ``morphism of Hom-modules'', if $f\circ\g = \g' \circ f$, and
\[
\xymatrix{
\C{A}  \ot M \ar[d]_{\Id\ot f} \ar[r]^{\,\,\,\,\,\,\,\,\rho} &  M \ar[d]^f \\
\C{A} \ot M' \ar[r]_{\rho'} & M'
}
\]
is commutative.
\end{definition}

\subsubsection{Hom-coalgebras}

We now recall Hom-coalgebras, basic examples of Hom-coalgebras, and Hom-comodules.

\begin{definition}
A ``Hom-coassociative coalgebra'' (Hom-coalgebra in short) is a triple $(\C{C},\D,\b)$ consisting of a linear space $\C{C}$, and linear maps $\Delta:\C{C}\to \C{C}\ot \C{C}$ and $\beta:\C{C}\to \C{C}$ such that the diagram
\begin{equation}\label{Hom-coassoc}
\xymatrix{
\C{C} \ot \C{C} \ot \C{C}   & \ar[l]_{\,\,\,\,\,\,\,\,\,\,\,\,\,\,\,\,\D\ot\b}\C{C}\ot \C{C}  \\
\C{C}\ot \C{C} \ar[u]^{\b\ot \D}  & \ar[l]^{\D} \C{C} \ar[u]_{\D} 
}
\end{equation}
is commutative.
\end{definition}
A Hom-coassociative coalgebra $\C{C}$ is called ``comultiplicative'' if $\b:\C{C} \to \C{C}$ is a coalgebra endomorphism (\ie comultiplicative).

Following \cite{Laur-GengMakhTele18}, a Hom-coalgebra $(\C{C},\D,\b)$ is called ``counital'' if there is $\ve:\C{C}\to k$ such that the diagrams 
\begin{equation}\label{Hom-counit}
\xymatrix{
 k\ot \C{C} & \ar[l]_{\ve\ot \Id}\C{C}\ot \C{C}\ar[r]^{\Id\ot\ve} & \C{C}\ot k & & \C{C}\ar[r]^\b\ar[rd]_\ve & \C{C}\ar[d]^\ve \\
  & \ar[ul]^\b \C{C}\ar[u]^{\D}\ar[ur]_{\b} & & &  & k
}
\end{equation}
are commutative. Given two (counital) Hom-coalgebras $(\C{C},\D,\ve,\b)$ and $(\C{C}',\D',\ve',\b')$, a linear map $f:\C{C} \rightarrow \C{C}'$ is a morphism of Hom-coalgebras if the diagrams
\[
\xymatrix{
\C{C}\ot \C{C} \ar[r]^{f\ot f} & \C{C}'\ot \C{C}'  & &\C{C}\ar[d]_\ve\ar[r]^f &\C{C}' \ar[dl]^{\ve'} & & \C{C}\ar[r]^f\ar[d]_\b & \C{C}' \ar[d]^{\b'} \\
\C{C}\ar[u]^\D \ar[r]_f & \C{C}'\ar[u]_{\D'} & & k & & & \C{C}\ar[r]_f & \C{C}'
}
\]
are commutative. As for the tensor product of Hom-coalgebras, let us note that given two (counital) Hom-coalgebras $(\C{C},\D,\ve,\b)$ and $(\C{C}',\D',\ve',\b')$, we have a (counital) Hom-coalgebra $(\C{C}\ot \C{C}', (\Id\ot \tau \ot \Id)\circ (\D\ot \D'),\ve\ot\ve',\b\ot\b')$, where
\[
\tau: \C{C} \ot \C{C}' \to \C{C}' \ot \C{C}, \qquad \tau(c\ot c') := c' \ot c
\]
is the switch morphism.

Just as it was the case for the Hom-algebras, the first example we recall implies that Hom-coalgebras are deformations of coassociative counital coalgebras.

\begin{example}
Let $(C,\D,\ve)$ be a coassociative counital coalgebra, and $\b:C\to C$ be a coalgebra morphism. Then for $C^\b:=C$, $(C^\b, \D\circ\b, \ve\circ \b, \b)$ is a coassociative Hom-coalgebra, \cite[Thm. 3.16]{MakhSilv10-II}.
\end{example}

Let us next recall the notion of a Hom-comodule over a Hom-coalgebra, see for instance \cite{MakhSilv10-II}.

\begin{definition}
Let $(\C{C},\D,\b)$ be a Hom-coalgebra, and $M$ a vector space. Let also $\nb:M\to M\ot \C{C}$ and $\t:M\to M$ be linear maps. Then the triple $(M,\nb,\t)$ is called a ``right Hom-comodule over the Hom-coalgebra $\C{C}$'' if the diagrams
\begin{equation}\label{Hom-comodule-compt}
\xymatrix{
M\ot \C{C} \ot \C{C} &\ar[l]_{\,\,\,\,\,\,\,\,\,\nb\ot \b} M\ot \C{C} && M\ot \C{C} \ar[r]^{\Id\ot \ve}&M\ot k\\
M\ot \C{C} \ar[u]^{\t\ot \D} & \ar[l]^\nb M\ar[u]_\nb &&M\ar[u]^\nb \ar[ur]_\t &
}
\end{equation}
are commutative.
\end{definition}
The left Hom-comodule over the Hom-coalgebra $\C{C}$ is defined similarly. Parallel to the case of Hom-modules, any Hom-coalgebra $(\C{C},\D,\b)$ is both a left and a right Hom-comodule over itself.

\begin{definition}
Let $(\C{C},\D,\ve,\b)$ be a (counital) Hom-coalgebra, and $(M,\nb,\t)$, $(M',\nb',\t')$ be two (right) Hom-comodules over $\C{C}$. Then a map $f:M\to M'$ is called a ``morphism of Hom-comodules'', if $f \circ \t = \t' \circ f$, and
\[
\xymatrix{
M'\ot \C{C} &\ar[l]_{\,\,\,\,\,\,\,\,\,\nb'} M'\\
M\ot \C{C} \ar[u]^{f\ot \Id} & \ar[l]^\nb M\ar[u]_f
}
\]
is commutative.
\end{definition}

\subsubsection{Duality}

Let us now discuss briefly the duality between Hom-algebras and Hom-coalgebras.

To this end, let us recall that given a (coassociative) coalgebra $(C,\D)$, its algebraic dual $C^\ast := \Hom(C,k)$ is an (associative) algebra via the convolution multiplication, namely;
\[
(f \ast g)(c) := f(c\ps{1})g(c\ps{2}),
\]
for any $f,g \in C^\ast$, and any $c \in C$. Moreover, if coalgebra $(C,\D,\ve)$ is counital, then the dual algebra $C^\ast$ is unital, the unit element being the counit $\ve:C\to k$, that is,
\[
(f \ast \ve)(c) := f(c\ps{1})\ve(c\ps{2}) = f(c) = \ve(c\ps{1})f(c\ps{2}) = (\ve\ast f)(c),
\]
for any $f \in C^\ast$, and any $c \in C$, see for instance \cite[Subsect. 2.1.2]{Abe-book}.

On the other hand, starting with an (associative) algebra $(A,\mu)$, the transpose
\[
\mu^\ast:A^\ast \to (A\ot A)^\ast
\]
of the multiplication map $\mu:A\ot A\to A$ does not in general factors through $A^\ast \ot A^\ast$. However, it does on the restricted dual
\[
A^\circ = \{f\in \Hom(A,k)\mid \exists \, I\subseteq \ker(f), \,\, \text{\rm such that}\,\, I\trianglelefteq A,\,\,\dim(A/I)<\infty\},
\]
see \cite[Thm. 2.2.12]{Abe-book}, that is,
\[
\mu^\ast:A^\circ \to A^\circ \ot A^\circ \subseteq (A\ot A)^\ast, \qquad \mu^\ast(f) =: f\ps{1}\ot f\ps{2},
\]
such that
\[
(f\ps{1}\ot f\ps{2})(a\ot a') := f(aa').
\]
Let us note also that if $\dim(A)<\infty$, then $A^\circ = A^\ast$. We shall now investigate these dualities for Hom-algebras, and Hom-coalgebras.

As for the former, the result goes verbatim on the level of Hom-algebras. We record below a slightly different version of \cite[Prop. 3.20]{MakhSilv10-II}.

\begin{proposition}\label{prop-Hom-C-A-duality}
Let $(\C{C},\D,\b)$ be a Hom-coalgebra, so that $\b:\C{C}\to \C{C}$ is an isomorphism of Hom-coalgebras, and $(\C{A},\mu,\a)$ a Hom-algebra. Then  $\Hom(\C{C},\C{A})$ is a Hom-algebra with the convolution product given by
\[
\mu^\star(f \ot g)(c) := \mu(f(\b^{-2}(c\ns{1}))\ot g(\b^{-2}(c\ns{2}))),
\]
and the homomophism
\[
\a^\star(f) := \a \circ f\circ\b^{-1},
\]
for any $f,g \in \Hom(\C{C},\C{A})$, and any $c \in \C{C}$. Moreover, if $(\C{C},\D,\ve,\b)$ is a counital Hom-coalgebra, and $(\C{A},\mu,\eta,\a)$ is a unital Hom-algebra, then $\Hom(\C{C},\C{A})$ is a unital Hom-algebra with the unit 
\[
\eta^\star:k \to \Hom(\C{C},\C{A}), \qquad \eta^\star(1) := \eta \circ \ve.
\]
\end{proposition}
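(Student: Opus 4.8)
The plan is to verify each of the three claimed structures — the convolution product $\mu^\star$, the twisting map $\a^\star$, and the unit $\eta^\star$ — against the defining axioms of a (unital) Hom-algebra, namely the Hom-associativity condition \eqref{Hom-Assoc} and the unit diagrams \eqref{Hom-unit}. Throughout I would exploit the standing hypothesis that $\b$ is an isomorphism of Hom-coalgebras, so that $\b^{-1}$ (and hence $\b^{-2}$) is available and is itself comultiplicative and counital; this is exactly what lets the $\b^{-2}$-corrections in the formula for $\mu^\star$ make sense and cancel correctly.

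First I would unwind the evaluation of a triple convolution product on an element $c \in \C{C}$. Using the Sweedler notation $\D(c) = c\ns{1}\ot c\ns{2}$ for the Hom-comultiplication, I would compute both $\mu^\star(\a^\star(f)\ot \mu^\star(g\ot h))(c)$ and $\mu^\star(\mu^\star(f\ot g)\ot \a^\star(h))(c)$, expanding each into a triple product in $\C{A}$ of the form $\mu(\mu(\cdots)\ot \cdots)$ with arguments $f,g,h$ evaluated at various $\b$-powers of the iterated coproduct of $c$. The key bookkeeping step is to match the two sides by Hom-coassociativity \eqref{Hom-coassoc} of $\C{C}$ together with the Hom-associativity \eqref{Hom-Assoc} of $\C{A}$: the $\b\ot\D$ versus $\D\ot\b$ asymmetry in \eqref{Hom-coassoc} is precisely compensated by the $\a$ appearing on opposite tensor legs in \eqref{Hom-Assoc}, once the $\b^{-2}$ normalizations and the definition $\a^\star(f)=\a\circ f\circ \b^{-1}$ are inserted. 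I expect this matching of twist-powers to be the main obstacle: one has to track how $\b^{-1}$ from $\a^\star$, the $\b^{-2}$ in $\mu^\star$, and the $\b$ occurring inside \eqref{Hom-coassoc} all combine, and verify that the surviving composite $\a$-twist in $\C{A}$ lands on the correct factor on each side.

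For the unit claim I would check the two diagrams in \eqref{Hom-unit} with $\eta^\star(1)=\eta\circ\ve$. Evaluating $\mu^\star(\a^\star(f)\ot \eta^\star(1))(c)$ gives $\mu\big(\a(f(\b^{-2}(c\ns{1})))\ot \eta(\ve(\b^{-2}(c\ns{2})))\big)$; using comultiplicativity of $\b^{-1}$ so that $\ve\circ\b^{-2}=\ve$, the counit axiom \eqref{Hom-counit} for $\C{C}$ collapses the second leg, and the unit axiom \eqref{Hom-unit} for $\C{A}$ (or the Remark's identity $\mu(\a(a)\ot\eta(1))=\a(\mu(a\ot\eta(1)))$) reduces the right-hand side to $\a^\star(f)(c)$, as required; the left unit condition is symmetric. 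I would also record that $\a^\star$ is multiplicative for $\mu^\star$, which follows from the same cancellation of $\b$-powers together with multiplicativity of $\a$ and $\b$.

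Finally I would remark that the case $\dim \C{C}<\infty$, or more generally the passage from $\Hom(\C{C},\C{A})$ to a genuine dual Hom-coalgebra, is not needed here: the proposition asserts only the Hom-algebra structure on the full $\Hom$-space, so no restricted-dual argument enters, and the verification is purely diagrammatic once the twist-power bookkeeping of the first obstacle is settled. The whole proof therefore reduces to the single computation in the previous paragraph, carried out once for associativity and once for each unit condition.
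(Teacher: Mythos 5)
Your strategy for the Hom-associativity --- expand both $\mu^\star(\a^\star(f)\ot\mu^\star(g\ot h))(c)$ and $\mu^\star(\mu^\star(f\ot g)\ot\a^\star(h))(c)$, use comultiplicativity of $\b^{-1}$ to push the $\b$-powers through the iterated coproduct, then match the two sides with Hom-coassociativity \eqref{Hom-coassoc} of $\C{C}$ followed by Hom-associativity \eqref{Hom-Assoc} of $\C{A}$ --- is exactly the paper's proof, and the twist-power bookkeeping you flag does close up: both sides reduce to $\mu\big(\mu(f(\b^{-4}(c\ns{1}\ns{1}))\ot g(\b^{-4}(c\ns{1}\ns{2})))\ot\a(h(\b^{-3}(c\ns{2})))\big)$.

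The unit verification, however, is wrong as written. The Hom-unit axiom \eqref{Hom-unit} for $\Hom(\C{C},\C{A})$ requires $\mu^\star(\eta^\star(1)\ot f)=\a^\star(f)$ and $\mu^\star(f\ot\eta^\star(1))=\a^\star(f)$, with the \emph{untwisted} $f$ in the other tensor slot; you instead evaluate $\mu^\star(\a^\star(f)\ot\eta^\star(1))(c)$. That expression does not reduce to $\a^\star(f)(c)$: first, $\a^\star(f)(\b^{-2}(c\ns{1}))=\a(f(\b^{-3}(c\ns{1})))$, not $\a(f(\b^{-2}(c\ns{1})))$; and second, after collapsing the counit leg via $\ve\circ\b^{-2}=\ve$ and \eqref{Hom-counit}, one is left with
\[
\mu\big(\a(f(\b^{-2}(c)))\ot\eta(1)\big)=\a^2(f(\b^{-2}(c)))=(\a^\star)^2(f)(c),
\]
using the unit axiom of $\C{A}$ (equivalently your own cited identity $\mu(\a(a)\ot\eta(1))=\a^2(a)$). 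The extra $\a$ does not cancel, and $(\a^\star)^2(f)\neq\a^\star(f)$ in general, so this step fails: feeding $\a^\star(f)$ into the unit axiom double-counts the twist. The correct check --- the paper's --- is
\[
\mu^\star(\eta^\star(1)\ot f)(c)=\mu\big(\eta(\ve(\b^{-2}(c\ns{1})))\ot f(\b^{-2}(c\ns{2}))\big)=\mu\big(\eta(1)\ot f(\b^{-1}(c))\big)=\a(f(\b^{-1}(c)))=\a^\star(f)(c),
\]
with the symmetric computation for the right unit. (Also, your closing aside that $\a^\star$ is multiplicative for $\mu^\star$ silently uses multiplicativity of $\a$, which is not among the hypotheses; fortunately that claim is not part of the proposition.)
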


\begin{proof}
For any $f,g,h \in \Hom(\C{C},\C{A})$, and any $c \in \C{C}$,
\begin{align*}
& \mu^\star (\alpha^\star (f) \otimes \mu^\star (g \otimes h)) (c) = \mu(\alpha^\star (f)(\b^{-2}(c\ns{1})) \ot \mu^\star (g \otimes h)(\b^{-2}(c\ns{2}))) = \\
& \mu(\a(f(\b^{-3}(c\ns{1}))) \ot \mu( g(\b^{-4}(c\ns{2}\ns{1})) \ot h(\b^{-4}(c\ns{2}\ns{2})))) = \\
& \mu(\a(f(\b^{-4}(c\ns{1}\ns{1}))) \ot \mu( g(\b^{-4}(c\ns{1}\ns{2})) \ot h(\b^{-3}(c\ns{2})))) = \\
& \mu(\mu(f(\b^{-4}(c\ns{1}\ns{1})) \ot  g(\b^{-4}(c\ns{1}\ns{2}))) \ot \a(h(\b^{-3}(c\ns{2})))) = \\
& \mu^\star(\mu^\star(f \otimes g)\otimes \alpha^\star(h))(c),
\end{align*}
where the second equality follows from the assumption, the third equality follows from the commutativity of \eqref{Hom-coassoc}, and the fourth equality is nothing but the Hom-associativity condition \eqref{Hom-Assoc}. As a result, $(\Hom(\C{C},\C{A}),\mu^\star,\a^\star)$ is a Hom-algebra. We now verify the condition for the unit, \ie the commutativity of the diagrams \eqref{Hom-unit}. We have,
\begin{align*}
& \mu^\star(\eta^\star(1) \ot f)(c) = \mu^\star(\eta \circ \ve \ot f)(c) = \mu(\eta(\ve(\b^{-2}(c\ns{1})))\ot f(\b^{-2}(c\ns{2}))) = \\
& \mu(\eta(1)\ot f(\b^{-1}(c))) = \a(f(\b^{-1}(c))) = \a^\star(f)(c),
\end{align*}
where the third equality is the counitality, \ie the commutativity of \eqref{Hom-counit}, and the last equality is the unitality, \ie the commutativity of \eqref{Hom-unit}. Hence, $(\Hom(\C{C},\C{A}),\mu^\star,\eta^\star,\a^\star)$ is a unital Hom-algebra.
\end{proof}

We thus conclude the following.

\begin{corollary}\label{coroll-dual-hom-alg}
For any Hom-coalgebra $(\C{C},\D,\b)$, where $\b:\C{C}\to \C{C}$ is an isomorphism of Hom-coalgebras, the algebraic dual $\C{C}^\ast:=\Hom(\C{C},k)$ is a Hom-algebra with the product given by
\begin{equation}\label{dual-multp-star}
\mu^\star(f \ot g)(c) := f(\b^{-2}(c\ns{1}))g(\b^{-2}(c\ns{2})),
\end{equation}
and the homomophism
\[
\a^\star(f) := f\circ\b^{-1},
\]
for any $f,g \in \C{C}^\ast$, and any $c \in \C{C}$. Moreover, if $(\C{C},\D,\ve,\b)$ is counital, then $\C{C}^\ast$ is a unital Hom-algebra with the unit 
\[
\eta^\star:k \to \C{C}^\ast, \qquad \eta^\star(1) := \ve.
\]
\end{corollary}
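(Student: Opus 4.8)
The plan is to derive the statement as the special case $\C{A} = k$ of Proposition \ref{prop-Hom-C-A-duality}. First I would observe that the ground field $k$, equipped with its field multiplication $\mu_k : k \ot k \to k$, the identity structure map $\a = \Id_k$, and the unit $\eta = \Id_k : k \to k$, is a unital Hom-algebra. Indeed, since $\a = \Id_k$, the Hom-associativity condition \eqref{Hom-Assoc} reduces to the ordinary associativity of $k$, and the unit diagrams \eqref{Hom-unit} are trivially satisfied. Moreover, $\Id_k$ visibly respects $\mu_k$, so $k$ is even multiplicative.

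Having established that $(k,\mu_k,\Id_k,\Id_k)$ is a unital Hom-algebra, I would apply Proposition \ref{prop-Hom-C-A-duality} to the pair consisting of the given Hom-coalgebra $(\C{C},\D,\b)$ and this Hom-algebra $k$. This immediately yields that $\Hom(\C{C},k) = \C{C}^\ast$ is a Hom-algebra, unital whenever $\C{C}$ is counital. It then remains only to check that the general formulas of the proposition specialize to the claimed ones. For the product, since $\mu_k$ is scalar multiplication, $\mu^\star(f \ot g)(c) = \mu_k(f(\b^{-2}(c\ns{1})) \ot g(\b^{-2}(c\ns{2}))) = f(\b^{-2}(c\ns{1}))\,g(\b^{-2}(c\ns{2}))$, recovering \eqref{dual-multp-star}. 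For the structure map, $\a^\star(f) = \a \circ f \circ \b^{-1} = f \circ \b^{-1}$ because $\a = \Id_k$. Finally, in the counital case the unit becomes $\eta^\star(1) = \eta \circ \ve = \ve$ because $\eta = \Id_k$.

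I do not anticipate any genuine obstacle, as the corollary is a direct specialization; the only point requiring a moment's care is confirming that $k$ endowed with the identity structure map satisfies the Hom-algebra axioms, which is immediate. Alternatively, one could give a self-contained argument by repeating the computation in the proof of Proposition \ref{prop-Hom-C-A-duality} verbatim with the target multiplication $\mu$ replaced by scalar multiplication on $k$; in that reading the essential ingredients would be the commutativity of \eqref{Hom-coassoc} and the associativity of $k$, but invoking the proposition directly is cleaner.
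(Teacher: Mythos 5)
Your proposal is correct and matches the paper's own route: the paper states Corollary \ref{coroll-dual-hom-alg} as an immediate consequence of Proposition \ref{prop-Hom-C-A-duality} (``We thus conclude the following''), i.e.\ precisely the specialization $\C{A}=k$ with $\mu_k$ the field multiplication, $\a=\Id_k$, and $\eta=\Id_k$ that you carry out. Your explicit verification that $(k,\mu_k,\Id_k,\Id_k)$ is a unital Hom-algebra and that the formulas for $\mu^\star$, $\a^\star$, and $\eta^\star$ specialize as claimed is exactly the (routine) content the paper leaves implicit.
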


As an immediate application, we shall consider the following analogue of Proposition \ref{twist-strong-degenerate} in order to shed further light to the assumption in the hypothesis of Proposition \ref{prop-Hom-C-A-duality}.

\begin{proposition}
Let $(\C{C},\D_\C{C},\b)$ be a Hom-coassociative coalgebra so that $\b:\C{C} \to \C{C}$ is an isomorphism of Hom-coalgebras. Then $\C{C}$ is either a ``cotwist''; \ie $\C{C} = C^{\b'}$ for some coalgebra isomorphism $\b':C\to C$ over a coassociative coalgebra $(C,\D)$, or it is ``strongly codegenerate''; \ie there is $f,g \in \C{C}^\ast$ with $f \neq g$ such that $f(\b^{-2}(c\ns{1}))\b^{-2}(c\ns{2}) = g(\b^{-2}(c\ns{1}))\b^{-2}(c\ns{2})$, and $\b^{-2}(c\ns{1})f(\b^{-2}(c\ns{2})) = \b^{-2}(c\ns{1})g(\b^{-2}(c\ns{2}))$ for any $c \in \C{C}$.
\end{proposition}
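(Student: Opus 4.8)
The plan is to dualize and reduce everything to Proposition \ref{twist-strong-degenerate}. Since $\b:\C{C}\to\C{C}$ is invertible, Corollary \ref{coroll-dual-hom-alg} endows the linear dual $\C{C}^\ast$ with the Hom-algebra structure $(\C{C}^\ast,\mu^\star,\a^\star)$ of \eqref{dual-multp-star}, whose twisting map is $\a^\star(f)=f\circ\b^{-1}$. Because $\b$ is an isomorphism, $\a^\star$ is itself bijective, hence in particular surjective; so Proposition \ref{twist-strong-degenerate} applies and says that $\C{C}^\ast$ is either a twist or strongly degenerate. The body of the argument is then the transport of each alternative back across the duality into a statement about $\C{C}$ itself.

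The degenerate branch I would dispatch first, as it is essentially immediate. Suppose $\C{C}^\ast$ is strongly degenerate, so there are $f\neq g$ in $\C{C}^\ast$ with $\mu^\star(f\ot x)=\mu^\star(g\ot x)$ and $\mu^\star(x\ot f)=\mu^\star(x\ot g)$ for all $x\in\C{C}^\ast$. Expanding the first identity via \eqref{dual-multp-star} gives $x\big(f(\b^{-2}(c\ns{1}))\,\b^{-2}(c\ns{2})\big)=x\big(g(\b^{-2}(c\ns{1}))\,\b^{-2}(c\ns{2})\big)$ for all $x\in\C{C}^\ast$ and $c\in\C{C}$. Since the functionals in $\C{C}^\ast$ separate the points of $\C{C}$, the two arguments of $x$ must agree, which is exactly the first strong-codegeneracy identity; the symmetric computation on $\mu^\star(x\ot\,\cdot\,)$ produces the second. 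Thus a strongly degenerate dual forces $\C{C}$ to be strongly codegenerate.

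The substantive branch is the twist one. If $\C{C}^\ast$ is a twist, then, $\a^\star$ being invertible with $(\a^\star)^{-1}(h)=h\circ\b$, the product $\widehat\mu:=(\a^\star)^{-1}\circ\mu^\star$ is associative and satisfies $\mu^\star=\a^\star\circ\widehat\mu$. Unwinding \eqref{dual-multp-star} and using the comultiplicativity of $\b$ to rewrite $(\b^{-2}\ot\b^{-2})\circ\D_\C{C}=\D_\C{C}\circ\b^{-2}$, I would check that $\widehat\mu(f\ot g)(c)=(f\ot g)\big(\D_0(c)\big)$ with $\D_0:=\D_\C{C}\circ\b^{-1}$; that is, $\widehat\mu$ is precisely the convolution product attached to the comultiplication $\D_0$ on $\C{C}$. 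Associativity of $\widehat\mu$ then reads $(f\ot g\ot h)\big[(\D_0\ot\Id)\D_0(c)-(\Id\ot\D_0)\D_0(c)\big]=0$ for all $f,g,h\in\C{C}^\ast$ and $c\in\C{C}$, and since $\C{C}^\ast\ot\C{C}^\ast\ot\C{C}^\ast$ separates the points of $\C{C}\ot\C{C}\ot\C{C}$, this forces $\D_0$ to be coassociative. Hence $(\C{C},\D_0)$ is a genuine coassociative coalgebra with $\D_\C{C}=\D_0\circ\b$, realizing $\C{C}$ as the cotwist $C^{\b}$ (the coalgebra analogue of Example \ref{ex-assoc-alg-to-hom-assoc-alg}), the counit descending as $\ve\circ\b^{-1}$.

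The step I expect to be the main obstacle is this last transfer: Proposition \ref{twist-strong-degenerate} only delivers an \emph{abstract} associative product on $\C{C}^\ast$, and to land a coalgebra structure on $\C{C}$ (rather than merely on $\C{C}^{\ast\ast}$) one must recognize that product as the convolution dual of a concrete comultiplication on $\C{C}$. This is where the explicit formula \eqref{dual-multp-star} and the bijectivity of $\b$ are essential, and it is the genuinely delicate ``return trip'' across a duality that need not respect finite dimensionality; the bookkeeping of the powers of $\b$ (handled above through comultiplicativity) is exactly what makes the cotwist presentation come out with $\b'=\b$ rather than some unwanted conjugate.
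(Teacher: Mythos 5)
Your proposal is correct and takes essentially the same route as the paper's own proof: pass to the dual Hom-algebra $(\C{C}^\ast,\mu^\star,(\b^{-1})^\ast)$ of Corollary \ref{coroll-dual-hom-alg}, apply Proposition \ref{twist-strong-degenerate}, convert strong degeneracy of $\C{C}^\ast$ into strong codegeneracy of $\C{C}$ by separation of points, and in the twist branch recognize the associative product as the convolution product of a concrete comultiplication on $\C{C}$, whose coassociativity again follows by separating points. Your explicit $\D_0=\D_\C{C}\circ\b^{-1}$ is precisely the comultiplication the paper calls $\D$ (introduced there only as ``for some $\D$''), so your ``return trip'' makes explicit the step the paper leaves terse.
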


\begin{proof}
If $(\C{C},\D_\C{C},\b)$ is a Hom-coassociative coalgebra so that $\b:\C{C} \to \C{C}$ is bijective, then (in view of Corollary \ref{coroll-dual-hom-alg}) the dual $\C{C}^\ast$ is a Hom-associative algebra with (the surjective) $(\b^{-1})^\ast:\C{C}^\ast\to \C{C}^\ast$.

Now, if $(\C{C}^\ast,(\b^{-1})^\ast)$ is not strongly codegenerate, then for any $f,g \in \C{C}^\ast$ with $f(\b^{-2}(c\ns{1}))\b^{-2}(c\ns{2}) = g(\b^{-2}(c\ns{1}))\b^{-2}(c\ns{2})$, or $\b^{-2}(c\ns{1})f(\b^{-2}(c\ns{2})) = \b^{-2}(c\ns{1})g(\b^{-2}(c\ns{2}))$ for any $c \in \C{C}$ implies $f = g$. But then, $\mu^\star(f \ot h) = \mu^\star(g \ot h)$ or $\mu^\star(h \ot f) = \mu^\star(h \ot g)$ for any $h \in \C{C}^\ast$ implies $f=g$; that is, $(\C{C}^\ast,(\b^{-1})^\ast)$ is not strongly degenerate. As such, it follows from Proposition \ref{twist-strong-degenerate} (more precisely, from the proof of \cite[Prop. 1]{Gohr10}) that $\C{C}^\ast$ is a twist; \ie $\C{C}^\ast = C^\ast_{(\b^{-1})^\ast}$. In other words,
\[
\mu^\star(f \ot g) = (\b^{-1})^\ast(f)(\b^{-1})^\ast(g),
\]
and hence
\[
\mu^\star(f \ot g)(c) = (\b^{-1})^\ast(f)(c\ps{1})(\b^{-1})^\ast(g)(c\ps{2}) = f(\b^{-1}(c\ps{1}))g(\b^{-1}(c\ps{2}))
\]
for some $\D:C\to C$ with $\D(c):= c\ps{1} \ot c\ps{2}$. It then follows from $(\b^{-1})^\ast:C^\ast \to C^\ast$ being an isomorphism of algebras that $\b^{-1}:C\to C$ satisfies $\D\circ \b = (\b \ot \b)\circ \D$, and similarly it follows from the associativity $f(gh) = (fg)h$ of $C^\ast$ for any $f,g,h \in C^\ast$ that (when evaluated over a $c\in C$)
\[
f(c\ps{1}\ps{1})g(c\ps{1}\ps{2})h(c\ps{2}) = f(c\ps{1})g(c\ps{2}\ps{1})h(c\ps{2}\ps{2}),
\]
that is, 
\[
c\ps{1}\ps{1}\ot c\ps{1}\ps{2}\ot c\ps{2} = c\ps{1}\ot c\ps{2}\ps{1}\ot c\ps{2}\ps{2},
\]
or equivalently, $(C,\D)$ is coassociative, and $\D_\C{C} = \D\circ\b$.
\end{proof}

As for the restricted dual, we shall mimic \cite[Sect. 2.2]{Abe-book}. To this end, we keep in mind that for a Hom-algebra $(\C{A},\mu,\a)$, in general, $\C{A}^\ast:=\Hom(\C{A},k)$ is not necessarily a left (or right) Hom-$\C{A}$-module via the coregular actions, \cite{HassShapSutl15}. Instead, we shall adopt the following point of view.

\begin{proposition}
Let $(\C{A},\mu,\a)$ be a multiplicative Hom-algebra. Then, the pair $(\C{A}^\ast,(\a^{-1})^\ast)$ is a left Hom-$\C{A}$-module via
\begin{equation}\label{left-action-on-A-star}
\rho_L:\C{A}\ot \C{A}^\ast \to \C{A}^\ast, \qquad \rho_L(a\ot f)(a') := f(\a^{-2}(\mu(a'\ot a))),
\end{equation}
and is a right Hom-$\C{A}$-module via
\begin{equation}\label{right-action-on-A-star}
\rho_R:\C{A}^\ast\ot \C{A} \to \C{A}^\ast, \qquad \rho_R(f\ot a)(a') := f(\a^{-2}(\mu(a\ot a'))).
\end{equation}
\end{proposition}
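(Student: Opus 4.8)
The plan is to verify directly that each of the two triples satisfies the defining diagrams of a (left, resp.\ right) Hom-module recorded above. As in Corollary \ref{coroll-dual-hom-alg}, the appearance of $\a^{-1}$ and $\a^{-2}$ forces us to read the hypothesis as $\a:\C{A}\to\C{A}$ being bijective, so that $\g:=(\a^{-1})^\ast$, given by $\g(f)=f\circ \a^{-1}$, is well defined. I would treat the left action \eqref{left-action-on-A-star} first. The only tool beyond the definitions is the multiplicativity of $\a$, which gives $\a^{-n}(\mu(x\ot y))=\mu(\a^{-n}(x)\ot \a^{-n}(y))$ for every $n$, and thereby lets the various powers of $\a^{-1}$ be redistributed across $\mu$ freely.

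For the compatibility condition $\rho_L(\a(a)\ot \rho_L(a'\ot f))=\rho_L(\mu(a\ot a')\ot \g(f))$ I would evaluate both sides on an arbitrary $b\in\C{A}$, unfolding each $\rho_L$ by \eqref{left-action-on-A-star}. The left-hand side becomes $f\big(\a^{-2}(\mu(\a^{-2}(\mu(b\ot \a(a)))\ot a'))\big)$; pushing the inner $\a^{-2}$ through $\mu$ by multiplicativity rewrites the innermost argument as $\mu(\mu(\a^{-2}(b)\ot \a^{-1}(a))\ot a')$, which is exactly the right-hand side $\mu(\mu(x\ot y)\ot \a(z))$ of \eqref{Hom-Assoc} with $x=\a^{-2}(b)$, $y=\a^{-1}(a)$, $z=\a^{-1}(a')$. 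One application of \eqref{Hom-Assoc}, followed once more by multiplicativity, collapses the whole expression to $f\big(\mu(\a^{-3}(b)\ot \a^{-3}(\mu(a\ot a')))\big)$. The right-hand side unfolds to $\g(f)\big(\a^{-2}(\mu(b\ot \mu(a\ot a')))\big)=f\big(\a^{-3}(\mu(b\ot \mu(a\ot a')))\big)$, which reduces to the very same expression by a single use of multiplicativity; hence the two agree.

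The unit axiom is immediate from unitality: since $\mu(a'\ot \eta(1))=\a(a')$, one gets $\rho_L(\eta(1)\ot f)(a')=f(\a^{-2}(\mu(a'\ot \eta(1))))=f(\a^{-1}(a'))=\g(f)(a')$, so $\rho_L(\eta(1)\ot f)=\g(f)$. The right action \eqref{right-action-on-A-star} is then handled by the mirror-image computation: evaluating $\rho_R(\rho_R(f\ot a)\ot \a(a'))$ and $\rho_R(\g(f)\ot \mu(a\ot a'))$ on $b$ and normalizing the $\a$-powers exposes the left-hand side $\mu(\a(x)\ot \mu(y\ot z))$ of \eqref{Hom-Assoc} with $x=\a^{-1}(a)$, $y=\a^{-1}(a')$, $z=\a^{-2}(b)$; one application of \eqref{Hom-Assoc} shows both sides equal $f\big(\mu(\a^{-3}(\mu(a\ot a'))\ot \a^{-3}(b))\big)$, while the unit axiom follows from $\mu(\eta(1)\ot a')=\a(a')$.

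The only genuinely delicate point is the bookkeeping: tracking the exact power of $\a^{-1}$ attached to each tensor factor and locating the unique slot at which \eqref{Hom-Assoc} applies. Once multiplicativity is used to normalize all $\a$-powers the identity is forced, so I anticipate no conceptual obstacle beyond this careful tracking.
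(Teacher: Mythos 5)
Your proof is correct and follows essentially the same route as the paper's: unfold both sides on a test element, redistribute powers of $\a^{\pm1}$ across $\mu$ by multiplicativity, and apply the Hom-associativity \eqref{Hom-Assoc} exactly once — your bookkeeping (e.g.\ $x=\a^{-2}(b)$, $y=\a^{-1}(a)$, $z=\a^{-1}(a')$ for the left action) checks out, and the two sides meet at $f\big(\mu(\a^{-3}(b)\ot \a^{-3}(\mu(a\ot a')))\big)$ just as the paper's computation meets at $f(\a^{-3}(a''\bullet(a\bullet a')))$. The only difference is cosmetic: the paper verifies only the associativity-type compatibility (dismissing the right action with ``follows similarly''), while you also check the unit axioms, which is harmless but invokes a unit that the proposition's stated hypothesis of a multiplicative Hom-algebra does not actually provide.
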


\begin{proof}
Let us use, for the sake of simplicity, the notation $a \rt f := \rho_L(a\ot f)$, for any $a\in \C{A}$, and any $f\in \C{A}^\ast$. We see at once that
\begin{align*}
& (\a(a) \rt (a' \rt f)) (a'') = (a' \rt f)(\a^{-2}(a''\bullet \a(a))) = (a' \rt f)(\a^{-2}(a'')\bullet\a^{-1}( a)) = \\
& f(\a^{-2}([(\a^{-2}(a'')\bullet\a^{-1}( a)] \bullet a')) = f([(\a^{-4}(a'')\bullet\a^{-3}( a)] \bullet \a^{-2}(a')) = \\
& f(\a^{-3}(a'')\bullet[(\a^{-3}( a)\bullet \a^{-3}(a')] ) = f(\a^{-3}(a''\bullet ( a\bullet a')) ) = \\
& (\a^{-1})^\ast(f)(\a^{-2}(a''\bullet ( a\bullet a')) ) = ( a\bullet a') \rt ((\a^{-1})^\ast(f))(a''),
\end{align*}
for any $a,a',a'' \in \C{A}$, and any $f\in \C{A}^\ast$. The first claim is thus obtained. The latter follows similarly.
\end{proof}

Following \cite[Sect. 2.2]{Abe-book}, given any $f \in \C{A}^\ast$, let us now define
\begin{equation}\label{A-vee-f}
\C{A}^\ast_f := \{\rho_L(a\ot f) \mid a \in \C{A}\},
\end{equation}
and
\begin{equation}\label{f-A-vee}
{}_f\C{A}^\ast := \{\rho_R(f\ot a) \mid a \in \C{A}\}.
\end{equation}
The following is an analogue of \cite[Thm. 2.2.6]{Abe-book}.

\begin{proposition}\label{prop-dim-fin}
Let $(\C{A},\mu,\a)$ be a multiplicative Hom-algebra, and $f\in \C{A}^\ast$ be given. Let also $\C{A}^\ast_f$ and ${}_f\C{A}^\ast$ be as in \eqref{A-vee-f} and \eqref{f-A-vee} respectively. Then $\dim(\C{A}^\ast_f) < \infty$ if and only if $\dim({}_f\C{A}^\ast) < \infty$.
\end{proposition}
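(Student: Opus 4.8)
The plan is to reduce the claim to the classical finiteness result \cite[Thm. 2.2.6]{Abe-book} by transporting the Hom-structure to the underlying associative algebra, exploiting the fact that $\a$ is assumed bijective (multiplicative Hom-algebras here carry an invertible twist). First I would observe that the left and right coregular-type actions \eqref{left-action-on-A-star} and \eqref{right-action-on-A-star} both involve the composition $\a^{-2}\circ\mu$, which is precisely the associative multiplication recovered from the twist (cf. Example \ref{ex-assoc-alg-to-hom-assoc-alg} and Proposition \ref{twist-strong-degenerate}, since $\a$ surjective forces $\C{A}$ to be a twist in the non-degenerate case). Concretely, writing $m := \a^{-2}\circ\mu$ for the associated associative product on the vector space $A := \C{A}$, the spaces $\C{A}^\ast_f$ and ${}_f\C{A}^\ast$ become the images of the ordinary left and right coregular actions of the associative algebra $(A,m)$ on $f$, namely $\C{A}^\ast_f = \{a\rightharpoonup f \mid a\in A\}$ and ${}_f\C{A}^\ast = \{f\leftharpoonup a \mid a\in A\}$ with $(a\rightharpoonup f)(a') = f(m(a'\ot a))$ and $(f\leftharpoonup a)(a') = f(m(a\ot a'))$.

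Next I would invoke the classical theorem verbatim: for an ordinary associative algebra, the left translate space $\{a\rightharpoonup f\}$ is finite dimensional if and only if the right translate space $\{f\leftharpoonup a\}$ is finite dimensional, the common criterion being that $\ker f$ contains a cofinite (two-sided) ideal, equivalently that $f$ lies in the restricted dual $A^\circ$. Since $\C{A}^\ast_f$ and ${}_f\C{A}^\ast$ literally coincide with these associative translate spaces once $m$ is substituted for the product, the equivalence $\dim(\C{A}^\ast_f)<\infty \iff \dim({}_f\C{A}^\ast)<\infty$ follows immediately. The only thing requiring verification is that $m = \a^{-2}\circ\mu$ is genuinely associative, which is exactly the Hom-associativity condition \eqref{Hom-Assoc} together with the multiplicativity of $\a$ (so that $\a$ commutes with $\mu$ and the powers of $\a^{-1}$ can be freely distributed), a computation already carried out in spirit in the preceding proposition.

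The main obstacle, and the point I would be most careful about, is confirming that the identification $\C{A}^\ast_f = \{a\rightharpoonup f\}$ is a genuine equality of \emph{sets} rather than merely a bijective correspondence indexed by $a$, and that the associativity of $m$ holds on the nose. The subtlety is that the twist $m=\a^{-2}\circ\mu$ uses $\a^{-2}$, so one must check that the factors of $\a^{-1}$ appearing when one unwinds $\rho_L(a\ot f)$ and $\rho_R(f\ot a)$ reorganize correctly; because $\a$ is an algebra automorphism of $(A,m)$ as well (being multiplicative and bijective), replacing $a$ by $\a^{k}(a)$ only permutes the indexing set $\{a\in A\}$, so the translate spaces are unchanged under these twists and the set-level identification is valid. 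Once this bookkeeping is settled, the dimension equivalence is inherited directly from the associative case, completing the argument.
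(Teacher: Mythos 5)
Your reduction strategy is salvageable, but the step you yourself single out as the crux is false: $m:=\a^{-2}\circ\mu$ is \emph{not} associative in general, and its associativity does not follow from Hom-associativity plus multiplicativity. Indeed, multiplicativity gives $\mu\circ(\a^{-2}\ot\a^{-2})=\a^{-2}\circ\mu$, and with it one computes $m(m(x\ot y)\ot z)=\a^{-4}\big(\mu(\mu(x\ot y)\ot \a^{2}(z))\big)$ and $m(x\ot m(y\ot z))=\a^{-4}\big(\mu(\a^{2}(x)\ot \mu(y\ot z))\big)$; applying \eqref{Hom-Assoc} once to each term, equality of the two for all $x,y,z$ is equivalent to $\mu\big(\mu(\a(x)\ot y)\ot \a(z)\big)=\mu\big(\a(x)\ot\mu(y\ot \a(z))\big)$, which, since $\a$ is bijective, is precisely the associativity of $\mu$ itself. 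So for a genuinely non-associative Hom-algebra your $(A,m)$ is not an associative algebra, \cite[Thm. 2.2.6]{Abe-book} cannot be invoked for it, and the appeal to cofinite two-sided ideals of $(A,m)$ has no meaning there. The twist that \emph{is} associative (bijectivity and multiplicativity suffice; no appeal to the twist/strongly-degenerate dichotomy is needed) is $m':=\a^{-1}\circ\mu$. But then $\rho_L(a\ot f)(a')=f\big(\a^{-1}(m'(a'\ot a))\big)=(f\circ\a^{-1})\big(m'(a'\ot a)\big)$, so $\C{A}^\ast_f$ and ${}_f\C{A}^\ast$ are the classical left and right translate spaces, with respect to $m'$, of $g:=f\circ\a^{-1}$ rather than of $f$; with that correction (harmless for the dimension count, as $(\a^{-1})^\ast$ is a linear isomorphism) your reduction to the associative theorem does become a valid proof.

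You should also note that the detour is unnecessary, and this is in effect how the paper argues: the statement is pure linear algebra about the single bilinear form $(a',a)\mapsto f(\a^{-2}(a'\bullet a))$, whose two families of slices are exactly the two translate spaces, since $\rho_L(a\ot f)(a')=\rho_R(f\ot a')(a)$. If $\{f_1,\ldots,f_n\}$ is a basis of $\C{A}^\ast_f$ and $\rho_L(a\ot f)=\sum_{i=1}^n g_i(a)f_i$, then $\rho_R(f\ot a')=\sum_{i=1}^n f_i(a')g_i$, whence ${}_f\C{A}^\ast\subseteq{\rm Span}(g_1,\ldots,g_n)$, and the converse is symmetric. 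No associativity of any kind, no multiplicativity, and no property of $\a$ beyond the invertibility already built into the definitions of $\rho_L$ and $\rho_R$ enter this argument, so it is both shorter and more general than the repaired version of yours.
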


\begin{proof}
Let $\dim(\C{A}^\ast_f) < \infty$, and let $\{f_1,f_2,\ldots,f_n\}$ be a basis. As such, for any $a\in \C{A}$,
\[
\rho_L(a\ot f) = \sum_{i=1}^ng_i(a)f_i,
\]
for some $g_i(a) \in k$, $1 \leq i \leq n$. Then, for any $a' \in \C{A}$,
\begin{align*}
& \rho_R(f\ot a') (a) = f(\a^{-2}(a'\bullet a)) = \rho_L(a\ot f) (a') =   \\
&  \sum_{i=1}^ng_i(a)f_i(a') = \left(\sum_{i=1}^nf_i(a')g_i\right)(a),
\end{align*}
from which we conclude that ${}_f\C{A}^\ast \subseteq {\rm Span}(\{g_1,\ldots, g_n\})$, and hence $\dim({}_f\C{A}^\ast) < \infty$. The converse implication is similar.
\end{proof}

Keeping in mind the discussion above on the tensor products of Hom-algebras (being Hom-algebras), we now consider a linear map
\[
\pi: \C{A}^\ast \ot \C{A}^\ast \to (\C{A}\ot \C{A})^\ast, \qquad \pi(f \ot g)(a\ot a') :=f(a)g(a'),
\]
which is injective. Indeed, for any $\sum_{i=1}^n g_i \ot f_i \in \C{A}^\ast \ot \C{A}^\ast$, without loss of generality $\{f_1,\ldots,f_n\}$ being linearly independent, such that $\pi(\sum_{i=1}^n g_i \ot f_i) = 0$, we have
\[
\pi\left(\sum_{i=1}^n g_i \ot f_i\right)(a\ot a') = \sum_{i=1}^n g_i(a) f_i(a') = \left(\sum_{i=1}^n g_i(a) f_i\right)(a') = 0,
\]
for any $a,a' \in \C{A}$. As a result, $\sum_{i=1}^n g_i(a) f_i = 0$, hence the linear independence of $\{f_1,\ldots,f_n\}$ implies $g_i(a) = 0$ for $1\leq i\leq n$. Hence $g_i = 0$ for $1\leq i\leq n$, which, in turn, results in $\sum_{i=1}^n g_i \ot f_i = 0$.

Let us next define
\begin{equation}\label{delta}
\d: \C{A}^\ast \to (\C{A}\ot \C{A})^\ast, \qquad \d(f)(a\ot a') := f(\a^{-2}(a\bullet a')) = f(\a^{-2}(a)\bullet \a^{-2}(a')).
\end{equation}
The following is an analogue of \cite[Thm. 2.2.7]{Abe-book}.

\begin{proposition}\label{prop-delta-pi}
Let $(\C{A},\mu,\a)$ be a multiplicative Hom-algebra, and $f\in \C{A}^\ast$ be given. Then, $\d(f) \in \pi(\C{A}^\ast \ot \C{A}^\ast)$ if and only if $\dim(\C{A}^\ast_f) <\infty$.
\end{proposition}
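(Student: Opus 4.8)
The plan is to hinge everything on the single identity
\[
\d(f)(a\ot a') \;=\; \rho_L(a'\ot f)(a),
\]
which is immediate from \eqref{left-action-on-A-star} and \eqref{delta}, since both sides equal $f(\a^{-2}(a\bullet a'))$. In words: freezing the second tensor slot of $\d(f)$ at $a'$ and letting the first slot vary recovers exactly the functional $\rho_L(a'\ot f)\in\C{A}^\ast_f$. This ``slicing'' lets me translate back and forth between the tensorial object $\d(f)$ and the family of functionals that spans $\C{A}^\ast_f$, and it is really the only idea in the argument; the rest is transpose bookkeeping along the lines of \cite[Thm. 2.2.7]{Abe-book}.

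For the implication $\dim(\C{A}^\ast_f)<\infty \Rightarrow \d(f)\in\pi(\C{A}^\ast\ot\C{A}^\ast)$, I would fix a basis $\{f_1,\dots,f_n\}$ of $\C{A}^\ast_f$. For each $a'\in\C{A}$ the functional $\rho_L(a'\ot f)$ lies in $\C{A}^\ast_f$ by definition \eqref{A-vee-f}, so $\rho_L(a'\ot f) = \sum_{i=1}^n g_i(a')f_i$ for uniquely determined scalars $g_i(a')$. Since $a'\mapsto\rho_L(a'\ot f)$ is linear and coordinates in a basis are unique, each $g_i:\C{A}\to k$ is linear, \ie $g_i\in\C{A}^\ast$. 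Then, using the slicing identity,
\[
\d(f)(a\ot a') = \rho_L(a'\ot f)(a) = \sum_{i=1}^n f_i(a)\,g_i(a') = \pi\Big(\sum_{i=1}^n f_i\ot g_i\Big)(a\ot a'),
\]
so that $\d(f) = \pi\big(\sum_{i=1}^n f_i\ot g_i\big)\in\pi(\C{A}^\ast\ot\C{A}^\ast)$.

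Conversely, assuming $\d(f)\in\pi(\C{A}^\ast\ot\C{A}^\ast)$, I would write $\d(f) = \pi\big(\sum_{i=1}^n f_i\ot g_i\big)$, so that $\d(f)(a\ot a') = \sum_{i=1}^n f_i(a)g_i(a')$. Reading off the first slot for each fixed $a'$ gives $\rho_L(a'\ot f) = \sum_{i=1}^n g_i(a')f_i$, whence $\C{A}^\ast_f \subseteq {\rm Span}(\{f_1,\dots,f_n\})$ and therefore $\dim(\C{A}^\ast_f)\le n<\infty$.

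I do not expect a genuine obstacle here: the argument is structurally identical to its classical counterpart. The only Hom-specific point to monitor is the twist $\a^{-2}$, but since $\d$ and $\rho_L$ carry exactly the same twist and $\a$ is invertible, it cancels in the slicing identity and never interferes. The sole step requiring a line of care is confirming that the coordinate maps $g_i$ are honest elements of $\C{A}^\ast$ (i.e.\ linear), which follows from uniqueness of basis coordinates; and the injectivity of $\pi$ established just above the statement guarantees that the factorization $\d(f)=\pi(\sum_i f_i\ot g_i)$ is unambiguous. By the left--right symmetry already recorded in Proposition \ref{prop-dim-fin}, the analogous statement for ${}_f\C{A}^\ast$ holds as well.
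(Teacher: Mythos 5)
Your proof is correct and follows essentially the same route as the paper's: both directions hinge on the slicing identity $\d(f)(a\ot a') = \rho_L(a'\ot f)(a)$, expanding $\rho_L(a'\ot f)$ in a finite basis of $\C{A}^\ast_f$ for one implication and reading off the span containment for the other. Your added remarks (linearity of the coordinate functionals $g_i$ via uniqueness of basis coordinates, and the role of the injectivity of $\pi$) only make explicit what the paper leaves implicit.
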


\begin{proof}
Let $\d(f) \in \pi(\C{A}^\ast \ot \C{A}^\ast)$, that is, let there be $f_i,g_i \in \C{A}^\ast$, $1 \leq i\leq n$, such that $\pi(\sum_{i=1}^n\,f_i\ot g_i) = \d(f)$. Since $\d(f)(a \ot a') = \rho_L(a'\ot f)(a)$, we arrive at 
\[
\rho_L(a'\ot f)(a) = \pi(\sum_{i=1}^n\,f_i\ot g_i)(a\ot a') = \sum_{i=1}^n\,f_i(a) g_i(a') = \left(\sum_{i=1}^n g_i(a')f_i\right)(a),
\]
in other words, $\C{A}^\ast_f \subseteq {\rm Span}(\{f_1,\ldots, f_n\})$, and hence $\dim(\C{A}^\ast_f) < \infty$.

Conversely, let $\dim(\C{A}^\ast_f) <\infty$ with a basis $\{f_1,\ldots, f_n\}$. Then for any $a'\in \C{A}$, $\rho_L(a'\ot f) = \sum_{i=1}^n\,g_i(a')f_i$, for some $f_i \in \C{A}^\ast$, $1\leq i \leq n$. Hence, for any $a\in \C{A}$,
\[
\d(f)(a\ot a') = \rho_L(a'\ot f)(a) = \sum_{i=1}^n\,g_i(a')f_i(a) = \pi(\sum_{i=1}^nf_i \ot g_i )(a\ot a').
\]
\end{proof}

Finally, the notion of the restricted dual appears in the following analogue of \cite[Thm. 2.2.8]{Abe-book}.

\begin{proposition}\label{prop-delta-pi-circ}
Let $(\C{A},\mu,\a)$ be a multiplicative Hom-algebra, and let
\[
\C{A}^\circ := \{f\in \C{A}^\ast \mid \dim(\C{A}^\ast_f) <\infty\}.
\]
Then, for any $f\in \C{A}^\circ$, $\d(f) \in \pi(\C{A}^\circ \ot \C{A}^\circ)$.
\end{proposition}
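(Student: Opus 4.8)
The plan is to begin from Proposition \ref{prop-delta-pi} and then upgrade the two tensor factors from $\C{A}^\ast$ to $\C{A}^\circ$. Since $f \in \C{A}^\circ$ means $\dim(\C{A}^\ast_f) < \infty$, Proposition \ref{prop-delta-pi} already yields $\d(f) \in \pi(\C{A}^\ast \ot \C{A}^\ast)$; more precisely, following the converse part of its proof one may write $\d(f) = \pi\big(\sum_{i=1}^n f_i \ot g_i\big)$, where $\{f_1,\dots,f_n\}$ is a basis of $\C{A}^\ast_f$ and the $g_i \in \C{A}^\ast$ are the coordinate functionals determined by $\rho_L(a \ot f) = \sum_{i=1}^n g_i(a) f_i$. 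Everything then reduces to showing that each $f_i$ and each $g_i$ in fact lies in $\C{A}^\circ$.

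For the $f_i$, first I would record the ``twisted submodule'' identity for the left action that already underlies the proof that \eqref{left-action-on-A-star} is a Hom-module structure: writing $a \rt f := \rho_L(a \ot f)$, one has $b \rt (a \rt f) = (\a^{-1}(b) \bullet a) \rt (\a^{-1})^\ast(f)$. Since $\a$ is bijective, $(\a^{-1})^\ast$ is a linear automorphism of $\C{A}^\ast$, and the intertwining relation $(\a^{-1})^\ast(a \rt f) = \a(a) \rt (\a^{-1})^\ast(f)$ shows that it carries $\C{A}^\ast_f$ isomorphically onto $\C{A}^\ast_{(\a^{-1})^\ast(f)}$; in particular $\dim(\C{A}^\ast_{(\a^{-1})^\ast(f)}) = \dim(\C{A}^\ast_f) < \infty$. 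Letting $b$ range over $\C{A}$ in the twisted identity gives $\C{A}^\ast_{(a \rt f)} \subseteq \C{A}^\ast_{(\a^{-1})^\ast(f)}$, so $\dim(\C{A}^\ast_{(a \rt f)}) < \infty$, that is $a \rt f \in \C{A}^\circ$ for every $a$. As $\C{A}^\ast_f = \{a \rt f \mid a \in \C{A}\}$ by \eqref{A-vee-f}, this gives $\C{A}^\ast_f \subseteq \C{A}^\circ$, and hence $f_i \in \C{A}^\circ$.

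For the $g_i$ I would pass to the right action. Since $\d(f)(a \ot a') = f(\a^{-2}(a \bullet a')) = \rho_R(f \ot a)(a')$, comparison with $\d(f) = \pi(\sum_i f_i \ot g_i)$ gives $\rho_R(f \ot a) = \sum_{i=1}^n f_i(a) g_i$ for all $a$. The linear independence of $\{f_i\}$ lets me pick $a^{(1)},\dots,a^{(n)} \in \C{A}$ with $f_i(a^{(j)}) = \delta_{ij}$, whence $g_j = \rho_R(f \ot a^{(j)}) \in {}_f\C{A}^\ast$. It then suffices to prove the right-handed analogue of the previous paragraph: the symmetric twisted identity $(f \lt a) \lt \a(a') = (\a^{-1})^\ast(f) \lt (a \bullet a')$, with $f \lt a := \rho_R(f \ot a)$, yields ${}_{(f \lt a)}\C{A}^\ast \subseteq {}_{(\a^{-1})^\ast(f)}\C{A}^\ast$, which is finite-dimensional because $\dim({}_f\C{A}^\ast) < \infty$ by Proposition \ref{prop-dim-fin}. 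Applying Proposition \ref{prop-dim-fin} once more converts this into $\dim(\C{A}^\ast_{(f \lt a)}) < \infty$, i.e. ${}_f\C{A}^\ast \subseteq \C{A}^\circ$, so that $g_j \in \C{A}^\circ$. Combining the two paragraphs, $\sum_i f_i \ot g_i \in \C{A}^\circ \ot \C{A}^\circ$, and therefore $\d(f) \in \pi(\C{A}^\circ \ot \C{A}^\circ)$.

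The main obstacle, and the point where the argument genuinely departs from the classical one in \cite[Thm. 2.2.8]{Abe-book}, is that the dual actions $\rho_L,\rho_R$ are not associative but only Hom-associative: iterating a translate reintroduces a twist $(\a^{-1})^\ast(f)$ together with a reassociated product. The key observation that makes the proof survive the deformation is that $\C{A}^\circ$ is stable under $(\a^{\pm 1})^\ast$ and that every such twisted iterated translate still lands in a single fixed finite-dimensional space, so that the finiteness hypotheses propagate exactly as in the associative case. Verifying the two twisted identities is then a routine application of the multiplicativity of $\a$ together with the Hom-associativity \eqref{Hom-Assoc}, entirely parallel to the computation already carried out for \eqref{left-action-on-A-star}.
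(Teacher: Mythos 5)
Your proof is correct, and its engine is the same as the paper's: both start from the decomposition $\d(f)=\pi\big(\sum_{i=1}^n f_i\ot g_i\big)$ supplied by Proposition \ref{prop-delta-pi}, with $\{f_i\}$ a basis of $\C{A}^\ast_f$; both identify $g_j=\rho_R(f\ot a_j)$ through a dual family $f_i(a_j)=\d_{ij}$; and both show $\rho_R(f\ot a)\in\C{A}^\circ$ by the same mechanism, namely that the twisted right-translation identity forces all iterated right translates of $f\lt a$ into a single fixed finite-dimensional space (the paper exhibits it concretely as ${\rm Span}(g_1\circ\a^{-1},\ldots,g_n\circ\a^{-1})$, you abstractly as ${}_{(\a^{-1})^\ast(f)}\C{A}^\ast$), after which Proposition \ref{prop-dim-fin} converts right-finiteness into membership in $\C{A}^\circ$. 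Where you genuinely depart from the paper is the endgame. The paper stops at $\d(f)\in\pi(\C{A}^\ast\ot\C{A}^\circ)$, that is \eqref{delta-f-I}, invokes a mirror argument to obtain \eqref{delta-f-II}, and then concludes by intersecting the two memberships --- a step that tacitly uses the injectivity of $\pi$ together with the linear-algebra identity $(\C{A}^\ast\ot\C{A}^\circ)\cap(\C{A}^\circ\ot\C{A}^\ast)=\C{A}^\circ\ot\C{A}^\circ$. You instead prove that the \emph{same} decomposition already has both legs in $\C{A}^\circ$, by establishing stability of $\C{A}^\circ$ under left translation: $\C{A}^\ast_{a\rt f}\subseteq\C{A}^\ast_{(\a^{-1})^\ast(f)}=(\a^{-1})^\ast(\C{A}^\ast_f)$, which is finite dimensional, so $\C{A}^\ast_f\subseteq\C{A}^\circ$ and hence each $f_i\in\C{A}^\circ$. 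This buys a one-pass conclusion that dispenses with the unstated intersection lemma, and it isolates a reusable fact of independent interest (stability of $\C{A}^\circ$ under both actions \eqref{left-action-on-A-star} and \eqref{right-action-on-A-star}, the Hom-analogue of the restricted dual being a sub-bimodule); the only price is one additional routine verification which you gesture at but do not write out, namely the right-handed intertwining $(\a^{-1})^\ast(f\lt a)=(\a^{-1})^\ast(f)\lt\a(a)$, which is what guarantees that ${}_{(\a^{-1})^\ast(f)}\C{A}^\ast$ is finite dimensional once $\dim({}_f\C{A}^\ast)<\infty$ is known.
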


\begin{proof}
Given $f\in \C{A}^\circ$, and hence having $\dim(\C{A}^\ast_f) < \infty$, let $\{f_1,\ldots,f_n\}$ be a basis for $\C{A}^\ast_f$. As a result, there are $g_1,\ldots,g_n \in \C{A}^\ast$ so that for any $a'\in \C{A}$
\[
\rho_L(a'\ot f) = \sum_{i=1}^n\,g_i(a')f_i.
\]

On the other hand, it follows from \eqref{delta} that
\[
\d(f) (a \ot a') = \rho_L(a'\ot f)(a) = \sum_{i=1}^n\,g_i(a')f_i(a) = \pi\left(\sum_{i=1}^n\,f_i \ot g_i\right) (a \ot a').
\]
Now, noticing from \eqref{left-action-on-A-star} and \eqref{right-action-on-A-star} that 
\[
\rho_L(a'\ot f)(a) = \sum_{i=1}^n\,g_i(a')f_i(a) = \rho_R(f\ot a)(a'),
\]
and considering $\{a_1,\ldots,a_n\} \subseteq \C{A}$ such that $f_i(a_j) = \d_{ij}$, see for instance \cite[Lemma 2.2.9]{Abe-book} and \cite[Lemma 1.1]{Hochschild-book}, we obtain $g_j = \rho_R(f \ot a_j) \in \C{A}^\ast$, $1\leq j \leq n$. Furthermore, for any $b\in \C{A}$, we see at once that
\[
\rho_R(\rho_R(f\ot a) \ot b) = \rho_R((\a^{-1})^\ast(f) \ot (a \bullet\a^{-1}(b))) = \sum_{i=1}^n\,f_i((\a^{-1}(b)\bullet a)g_i\circ \a^{-1},
\]
that is, $\rho_R(\rho_R(f\ot a) \ot b) \subseteq {\rm Span}(g_1\circ \a^{-1}, \ldots, g_n\circ \a^{-1})$. In other words, $\dim{}_{\rho_R(f\ot a)}\C{A}^\ast < \infty$, for any $a \in \C{A}$. It then follows from Proposition \ref{prop-dim-fin} that $\dim\C{A}^\ast_{\rho_R(f\ot a)} < \infty$, for any $a \in \C{A}$, that is, $\rho_R(f\ot a) \in \C{A}^\circ$ for any $a\in \C{A}$. In particular, we thus obtain $g_j = \rho_R(f \ot a_j) \in \C{A}^\circ$ for $1\leq j \leq n$. Summing up, we conclude
\begin{equation}\label{delta-f-I}
\d(f) = \pi\left(\sum_{i=1}^n\,f_i \ot g_i\right) \in \pi(\C{A}^\ast \ot \C{A}^\circ).
\end{equation}
Similarly we may obtain
\begin{equation}\label{delta-f-II}
\d(f)  \in \pi(\C{A}^\circ \ot \C{A}^\ast).
\end{equation}
Finally, \eqref{delta-f-I} and \eqref{delta-f-II} together yields the claim.
\end{proof}

As a result, we now have the restricted dual Hom-coalgebra.

\begin{corollary}\label{coroll-dual-hom-coalg}
Given a Hom-algebra $(\C{A},\mu,\a)$, its ``restricted dual'' $(\C{A}^\circ,\pi^{-1}\circ\d,(\a^{-1})^\ast )$ is a Hom-coalgebra. If $(\C{A},\mu,\eta,\a)$ is a unital Hom-algebra, then $(\C{A}^\circ,\pi^{-1}\circ\d,\eta^\ast,(\a^{-1})^\ast )$ is a counital Hom-coalgebra.
\end{corollary}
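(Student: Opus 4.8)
The plan is to equip $\C{A}^\circ$ with the comultiplication $\D := \pi^{-1}\circ \d$ and the map $\b := (\a^{-1})^\ast$, and to deduce the Hom-coassociativity condition \eqref{Hom-coassoc} by dualizing the Hom-associativity \eqref{Hom-Assoc} of $\mu$. The first task is to confirm that the structure maps are well-defined on $\C{A}^\circ$. That $\d(f)$ lands in the image $\pi(\C{A}^\circ\ot \C{A}^\circ)$ is precisely the content of Proposition \ref{prop-delta-pi-circ}, and since $\pi$ is injective (as observed just before \eqref{delta}) the inverse $\pi^{-1}$ is meaningful there, so $\D:\C{A}^\circ \to \C{A}^\circ \ot \C{A}^\circ$ makes sense. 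One also checks that $\b=(\a^{-1})^\ast$ preserves $\C{A}^\circ$; this follows from the identity $\rho_L(a\ot (\a^{-1})^\ast(f)) = (\a^{-1})^\ast(\rho_L(\a^{-1}(a)\ot f))$ (a short computation using the multiplicativity of $\a$), which gives $\C{A}^\ast_{(\a^{-1})^\ast(f)} = (\a^{-1})^\ast(\C{A}^\ast_f)$, so that $\dim \C{A}^\ast_{(\a^{-1})^\ast(f)} = \dim \C{A}^\ast_f < \infty$ since $(\a^{-1})^\ast$ is bijective.

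The main computation rests on the characterizing property of $\D$: writing $\D(f) = f\ns{1}\ot f\ns{2}$, the defining identity $\pi(\D(f)) = \d(f)$ together with \eqref{delta} reads
\[
f\ns{1}(a)\,f\ns{2}(a') = f(\a^{-2}(a\bullet a')), \qquad a,a'\in \C{A}.
\]
To prove Hom-coassociativity I would introduce the triple pairing $\pi_3:\C{A}^\ast\ot\C{A}^\ast\ot\C{A}^\ast\to (\C{A}\ot\C{A}\ot\C{A})^\ast$, $\pi_3(f\ot g\ot h)(a\ot a'\ot a'')=f(a)g(a')h(a'')$, which is injective by the same argument used for $\pi$. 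It then suffices to evaluate both $(\b\ot\D)\circ\D$ and $(\D\ot\b)\circ\D$ applied to $f$ against an arbitrary $a\ot a'\ot a''$. Applying the characterizing property twice on each side, and using the multiplicativity of $\a$ to factor $\a^{-n}(x\bullet y) = \a^{-n}(x)\bullet \a^{-n}(y)$, the left-hand side collapses to $f\big(\a^{-3}(a)\bullet(\a^{-4}(a')\bullet\a^{-4}(a''))\big)$ while the right-hand side collapses to $f\big((\a^{-4}(a)\bullet\a^{-4}(a'))\bullet\a^{-3}(a'')\big)$.

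At this point the two arguments of $f$ are related by exactly the Hom-associativity condition \eqref{Hom-Assoc}: setting $x=\a^{-4}(a)$, $y=\a^{-4}(a')$, $z=\a^{-4}(a'')$ turns \eqref{Hom-Assoc} into the required equality, so the two evaluations coincide; injectivity of $\pi_3$ then yields $(\b\ot\D)\circ\D = (\D\ot\b)\circ\D$ in $\C{A}^\circ\ot\C{A}^\circ\ot\C{A}^\circ$. I expect this bookkeeping of the powers of $\a^{-1}$ — making sure the two nested applications of the characterizing property leave the arguments in the precise form to which \eqref{Hom-Assoc} applies — to be the only delicate point; everything else is formal.

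For the counital statement I would take $\ve := \eta^\ast$, \ie $\ve(f) = f(\eta(1))$, and verify the three conditions in \eqref{Hom-counit}. Using the unitality \eqref{Hom-unit}, which gives $\eta(1)\bullet a = a\bullet \eta(1) = \a(a)$, the characterizing property yields $(\ve\ot\Id)(\D(f))(a) = f(\a^{-2}(\eta(1)\bullet a)) = f(\a^{-1}(a)) = \b(f)(a)$, and symmetrically $(\Id\ot\ve)(\D(f)) = \b(f)$; finally $\a(\eta(1)) = \eta(1)$ (again from \eqref{Hom-unit}) gives $\ve(\b(f)) = f(\a^{-1}(\eta(1))) = f(\eta(1)) = \ve(f)$, which is the remaining counit axiom. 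This completes the verification that $(\C{A}^\circ,\D,\ve,\b)$ is a counital Hom-coalgebra.
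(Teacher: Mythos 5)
Your proposal is correct and follows essentially the same route as the paper's proof: both establish Hom-coassociativity by evaluating $((\a^{-1})^\ast\ot\D)\circ\D$ and $(\D\ot(\a^{-1})^\ast)\circ\D$ against arbitrary tensors $a\ot a'\ot a''$, collapsing both sides via the characterizing identity $f\ns{1}(a)\,f\ns{2}(a') = f(\a^{-2}(a\bullet a'))$ to the two sides of the Hom-associativity condition \eqref{Hom-Assoc} (with the same bookkeeping of powers $\a^{-3},\a^{-4}$), and both derive counitality of $\eta^\ast$ from the unitality \eqref{Hom-unit}. Your explicit verification that $(\a^{-1})^\ast$ preserves $\C{A}^\circ$, via $\C{A}^\ast_{(\a^{-1})^\ast(f)} = (\a^{-1})^\ast(\C{A}^\ast_f)$, and your explicit appeal to the injectivity of the triple pairing $\pi_3$ are details the paper leaves implicit; they tighten the argument without changing its substance.
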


\begin{proof}
Let us first note that Proposition \ref{prop-delta-pi-circ} yields the coaction $\pi^{-1}\circ\d:\C{A}^\circ \to \C{A}^\circ \ot \C{A}^\circ$. Let us verify the commutativity of \eqref{Hom-coassoc}. Indeed, for any $f \in \C{A}^\circ$, setting $(\pi^{-1}\circ\d)(f) := f\ns{1}\ot f\ns{2}$, on one hand we have
\begin{equation}\label{(a-ot-D)D}
((\a^{-1})^\ast \ot \pi^{-1}\circ\d)(\pi^{-1}\circ\d)(f) = (\a^{-1})^\ast (f\ns{1}) \ot f\ns{2}\ns{1} \ot f\ns{2}\ns{2},
\end{equation}
and on the other hand,
\begin{equation}\label{(D-ot-a)D}
(\pi^{-1}\circ\d \ot (\a^{-1})^\ast )(\pi^{-1}\circ\d)(f) = f\ns{1}\ns{1} \ot f\ns{1}\ns{2} \ot (\a^{-1})^\ast (f\ns{2}).
\end{equation}
Evaluating both \eqref{(a-ot-D)D} and \eqref{(D-ot-a)D} on an arbitrary $a\ot a'\ot a'' \in \C{A}\ot \C{A} \ot \C{A}$, we see that
\begin{align*}
& \left((\a^{-1})^\ast (f\ns{1}) \ot f\ns{2}\ns{1} \ot f\ns{2}\ns{2}\right)(a\ot a'\ot a'') = f\ns{1}(\a^{-1}(a))f\ns{2}(\a^{-2}(a' \bullet a'')) = \\
&  f(\a^{-3}(a)\bullet [\a^{-4}(a')\bullet \a^{-4}(a'')]) = f([\a^{-4}(a)\bullet \a^{-4}(a')]\bullet \a^{-3}(a''))\\
& f\ns{1}(\a^{-2}(a\bullet a')f\ns{2}(\a^{-1}(a'')) = \left(f\ns{1}\ns{1} \ot f\ns{1}\ns{2} \ot (\a^{-1})^\ast(f\ns{2})\right)(a\ot a'\ot a''),
\end{align*}
where the third equality is a result of \eqref{Hom-Assoc}. As for the counitality, we observe that
\begin{align*}
& (\Id\ot\eta^\ast)((\pi^{-1}\circ\d)(f))(a) = f\ns{1}(a)\eta^\ast(f\ns{2}) = \\ 
& f(\a^{-2}(a\bullet \eta(1))) = f(\a^{-2}(\a(a))) = f(\a^{-1}(a)) = (\a^{-1})^\ast(f)(a),
\end{align*}
and that
\[
(\eta^\ast \circ(\a^{-1})^\ast)(f) = \eta^\ast((\a^{-1})^\ast(f)) = (\a^{-1})^\ast(f)(\eta(1)) = f(\a^{-1}(\eta(1))) = f(\eta(1)) = \eta^\ast(f),
\]
hence the claim.
\end{proof}

\subsection{Hom-Lie algebras and Hom-Hopf algebras}\label{subsect-Hom-Lie-Hom-Hopf-algebras}

\subsubsection{Hom-Hopf algebras}

We now put together the notions of Hom-algebras and Hom-coalgebras, with necessary compatibility conditions, to arrive at Hom-bialgebras and Hom-Hopf algebras.

\begin{definition}\label{def-Hom-bialg-Hom-Hopf}
A ``Hom-bialgebra'' is a 5-tuple $(\C{B},\mu, \alpha, \Delta, \beta)$, where
\begin{itemize}
\item[(i)] $(\C{B},\mu, \alpha)$ is a Hom-algebra,
\item[(ii)] $(\C{B},\Delta, \beta)$ is a Hom-coalgebra,
\item[(iii)] the Hom-coalgebra structure maps are Hom-algebra morphisms, \ie
\begin{enumerate}
\item $\D \circ \mu = (\mu \ot \mu)\circ(\Id \circ\tau\circ \Id) \circ(\D \ot \D)$, 
\item $\D\circ \a = (\a \ot \a) \circ\D$,
\item $\b \circ \mu = \mu \circ (\b \ot \b)$,
\item $\b \circ \a = \a \circ \b$.
\end{enumerate}
\end{itemize} 
\end{definition}

A (co)unital Hom-bialgebra is defined similarly. More explicitly, it is a 7-tuple $(\C{B},\mu, \eta, \alpha, \Delta,\ve, \beta)$, where
\begin{itemize}
\item[(i)] $(\C{B},\mu, \eta, \alpha)$ is a unital Hom-algebra,
\item[(ii)] $(\C{B},\Delta,\ve, \beta)$ is a counital Hom-coalgebra,
\item[(iii)] the Hom-coalgebra structure maps are unital Hom-algebra morphisms, \ie
\begin{enumerate}
\item $\D(\eta(1)) = \eta(1) \ot \eta(1)$, 
\item $\D \circ \mu = (\mu \ot \mu)\circ(\Id \circ\tau\circ \Id) \circ(\D \ot \D)$, 
\item $\D\circ \a = (\a \ot \a) \circ\D$,
\item $\ve\circ \eta = \Id_k$, 
\item $\ve\circ\mu = \ve\ot \ve$,
\item $\ve\circ\a = \ve$,
\item $\b \circ \eta = \eta$,
\item $\b \circ \mu = \mu \circ (\b \ot \b)$,
\item $\b \circ \a = \a \circ \b$.
\end{enumerate}
\end{itemize} 

\begin{example}
Let $(B,\mu,\eta,\D,\ve)$ be a bialgebra, and $\a:B\to B$, $\b:B\to B$ be two commuting ($\a\circ \b = \b \circ \a$) bialgebra homomorphisms. Then, for $B_\a^\b:=B$, the 7-tuple $(B,\a\circ\mu,\eta,,\a,\b\circ\D,\ve,\b)$ is a Hom-bialgebra.
\end{example}

Let us also consider the following examples from \cite{MakhSilv10-II}.

\begin{example}
Let $G$ be a group, $(kG,\mu)$ the group algebra of the group $G$, and $\g:G\to G$ a group homomorphism. Let also
\[
\g:kG \to kG, \qquad \g(\sum_{g\in G}c_g\,g) := \sum_{g\in G}c_g\,\g(g).
\]
be the associated algebra homomorphism. Then, following \cite[Coroll. 2.6(1)]{Yau09}, \ie Example \ref{ex-assoc-alg-to-hom-assoc-alg} above, we have the Hom-algebra $(kG,\g\circ\mu,\g)$. Furthermore, setting
%
%hocam aşağıdaki denklemde \D(g):= \a(g) \ot \a(g) yazıyordu a ları g ye çevirdim.
%
\[
\D:kG \to kG\ot kG, \qquad \D(g) := \g(g) \ot \g(g)
\]
we arrive at a Hom-bialgebra $(kG,\mu,\g,\D,\g)$, see \cite[Ex. 3.32]{MakhSilv10-II}.
\end{example}

\begin{example}
Let $\C{B} := k[\{X_{ij}\mid 1 \leq i,j\leq n\}]$ be the polynomial algebra with $n^2$ indeterminates. We note that $(\C{B},\mu,\D)$ has the structure of a (co)associative bialgebra with the polynomial algebra and the matrix coalgebra structures. Now let $\g:\C{B}\to \C{B}$ be a bialgebra map. Accordingly, $(\C{B}, \g\circ \mu, (\g\ot \g)\circ \D, \g)$ is a Hom-bialgebra, see \cite[Ex. 3.33]{MakhSilv10-II}.
\end{example}

Parallel to the associative case, a Hom-Hopf algebra is a Hom-bialgebra with an antipode. Given a Hom-bialgebra $(\C{B},\mu,\a,\D,\b)$, the antipode is defined to be the inverse of the identity homomorphism $\Id \in \Hom(\C{B},\C{B})$ with respect to the convolution multiplication
\begin{equation}\label{conv-multp-bialg}
\mu^\star:\Hom(\C{B},\C{B}) \ot \Hom(\C{B},\C{B}) \to \Hom(\C{B},\C{B}), \qquad \mu^\star(f\ot g) := \mu\circ (f \ot g) \circ (\b^{-2}\ot \b^{-2}) \circ \D.
\end{equation}
Since the (multpilicative) identity $\eta \circ \ve \in \Hom(\C{B},\C{B})$ - see Proposition \ref{prop-Hom-C-A-duality} - of the convolution multiplication \eqref{conv-multp-bialg} requires the existence of a unit $\eta:k\to \C{B}$ and a counit $\ve:\C{B}\to k$, one has to start with a (co)unital Hom-bialgebra in order to define a Hom-Hopf algebra.

\begin{definition}
A ``Hom-Hopf algebra'', or an ``$(\a,\b)$-Hom-Hopf algebra'', is a 8-tuple $(\C{H},\mu,\eta,\a,\D,\ve,\b,S)$ such that
\begin{itemize}
\item[(i)] $(\C{H},\mu,\eta,\a,\D,\ve,\b)$ is a (co)unital Hom-bialgebra,
\item[(ii)] there is a linear map $S:\C{H}\to \C{H}$, called ``antipode'', satisfying 
\begin{enumerate}
\item $\mu \circ (S \otimes \Id) \circ \Delta  = \mu \circ (\Id \otimes S) \circ \Delta = \eta \circ \ve$,  
\item $S\circ \a = \a \circ S$,
\item $S \circ \b = \b \circ S$.
\end{enumerate}
\end{itemize}
\end{definition}

\begin{example}
Let $(H,\mu,\nu,\D,\ve,S)$ be a Hopf algebra, and let $\a:H\to H$ and $\b:H\to H$ be two commuting ($\a\circ \b = \b \circ \a$) bialgebra homomorphisms. Then setting $H_\a^\b := H$, we may say that $(H_\a^\b,\a\circ \mu,\eta, \a,\D\circ \b,\ve,\b,S)$ is a Hom-Hopf algebra.
\end{example}

The antipode of a Hom-Hopf algebra has the similar properties to that of a Hopf algebra. Namely, we have the following.

\begin{proposition}\label{prop-antipode-properties}
The following properties hold for any Hom-Hopf algebra $(\C{H},\mu,\eta,\a,\D,\ve,\b,S)$:
\begin{itemize}
\item[(i)] The antipode $S:\C{H}\to \C{H}$ is unique,
\item[(ii)] $S\circ \eta = \eta$,
\item[(iii)] $\ve \circ S = \ve$,
\item[(iv)] $S\circ \mu = \mu \circ(S\ot S) \circ \tau$,
\item[(v)] $\D\circ S = \tau \circ (S\ot S) \circ \D$.
\end{itemize}
\end{proposition}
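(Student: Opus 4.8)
The plan is to treat all five statements inside the convolution Hom-algebra furnished by Proposition~\ref{prop-Hom-C-A-duality}. Taking $\C{C}=\C{H}$ as a Hom-coalgebra and $\C{A}=\C{H}$ as a Hom-algebra, the space $\Hom(\C{H},\C{H})$ becomes a unital Hom-algebra under the convolution product \eqref{conv-multp-bialg}, with unit $\eta\circ\ve$ and twisting map $\a^\star(f)=\a\circ f\circ\b^{-1}$; here $\b$ is invertible by the standing hypothesis of Proposition~\ref{prop-Hom-C-A-duality}, and taking $\a$ invertible as well, the map $\a^\star$ is injective. Axiom~(1) of the antipode expresses that $S$ is a two-sided convolution inverse of $\Id$. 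For (i), I would run the usual ``left inverse equals right inverse'' argument adapted to the Hom-associative setting: if $S,S'$ both invert $\Id$, then Hom-associativity \eqref{Hom-Assoc} of $\mu^\star$ together with the unit property gives $(\a^\star)^2(S)=(\a^\star)^2(S')$, and injectivity of $\a^\star$ forces $S=S'$.

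For (ii) and (iii) I would simply evaluate Axiom~(1). Testing it on $\eta(1)$ and using the group-likeness $\D(\eta(1))=\eta(1)\ot\eta(1)$ and $\ve\circ\eta=\Id_k$, Axiom~(1) collapses to $\mu(S(\eta(1))\ot\eta(1))=\eta(1)$; the unit diagram \eqref{Hom-unit} turns the left-hand side into $\a(S(\eta(1)))$, and comparing with $\a\circ\eta=\eta$ (again from \eqref{Hom-unit}) and cancelling the injective $\a$ yields $S\circ\eta=\eta$. For (iii), I would apply $\ve$ to Axiom~(1) and use $\ve\circ\mu=\ve\ot\ve$ and $\ve\circ\eta=\Id_k$ to get $\ve(S(h\ns{1}))\,\ve(h\ns{2})=\ve(h)$; the counit relation $h\ns{1}\ve(h\ns{2})=\b(h)$ of \eqref{Hom-counit} rewrites this as $\ve(S(\b(h)))=\ve(h)$, and finally $S\circ\b=\b\circ S$ together with $\ve\circ\b=\ve$ gives $\ve\circ S=\ve$.

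The substance of the proposition lies in (iv) and (v), which I would obtain by the ``two one-sided inverses'' device, now carried out in two larger convolution Hom-algebras. For (iv) I form $\Hom(\C{H}\ot\C{H},\C{H})$, where $\C{H}\ot\C{H}$ carries the tensor-product Hom-coalgebra structure recalled in Subsection~\ref{subsect-Hom-(co)algebras-duality} and $\C{H}$ is the target Hom-algebra; its convolution unit is $\eta\circ(\ve\ot\ve)$. The map $\mu$ is an element of this Hom-algebra, and the plan is to show that $\mu\circ(S\ot S)\circ\tau$ is a right convolution inverse of $\mu$ while $S\circ\mu$ is a left convolution inverse. The right-inverse identity reduces, after applying the comultiplication of $\C{H}\ot\C{H}$, to the Hom-analogue of the cancellation $b\ns{1}S(b\ns{2})=\ve(b)\eta(1)$ applied twice; the left-inverse identity uses instead that $\D$ is a Hom-algebra map (condition~(iii) of Definition~\ref{def-Hom-bialg-Hom-Hopf}), so that $\D\circ\mu$ factorizes and the computation reduces to $S\star\Id=\eta\circ\ve$. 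Uniqueness of the convolution inverse --- the analogue of (i) in this larger algebra --- then identifies the two expressions, giving (iv). Statement (v) is the order-reversed dual: I would work in $\Hom(\C{H},\C{H}\ot\C{H})$, with $\C{H}$ as source Hom-coalgebra and $\C{H}\ot\C{H}$ as target Hom-algebra, and show that both $\D\circ S$ and $\tau\circ(S\ot S)\circ\D$ are one-sided convolution inverses of $\D$.

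The main obstacle throughout (iv) and (v) is bookkeeping the twisting maps: the convolution product \eqref{conv-multp-bialg} inserts factors of $\b^{-2}$ (and $\a^\star$ inserts $\a$ and $\b^{-1}$), and these must be driven to positions where the Hom-(co)associativity relations \eqref{Hom-Assoc} and \eqref{Hom-coassoc} and the group-like cancellations can be invoked. One must also verify that the uniqueness argument of (i) survives in the two tensor convolution algebras, that is, that the corresponding twisting maps remain injective --- which again follows from the bijectivity of $\a$ and $\b$. Once the twists are placed correctly, every cancellation is formally identical to the classical Hopf-algebra computation.
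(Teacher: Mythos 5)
Your proposal is correct and follows essentially the same route as the paper: for (iv) and (v) the paper likewise exhibits $S\circ\mu$ and $\mu\circ(S\ot S)\circ\tau$ (resp. $\D\circ S$ and $\tau\circ(S\ot S)\circ\D$) as left/right convolution inverses of $\mu$ in the convolution Hom-algebra $\Hom(\C{H}\ot\C{H},\C{H})$ (resp. of $\D$ in $\Hom(\C{H},\C{H}\ot\C{H})$), and then identifies them by uniqueness of inverses. The only differences are minor: the paper outsources (i)--(iii) to \cite[Prop. 3.23]{MakhSilv10-II} where you argue directly, and your explicit observation that uniqueness of convolution inverses in the Hom-associative setting requires injectivity of the twisting maps (hence of $\a$) makes precise a step that the paper's phrase ``both inverse to $\psi$'' leaves tacit --- an assumption consistent with the paper, whose own computations use $\a^{-1}$ freely.
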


\begin{proof}
The proofs of (i), (ii) and (iii) are exactly as in \cite[Prop. 3.23]{MakhSilv10-II}. We shall proceed directly towards (iii) and (iv) following \cite[Thm. 2.14]{Abe-book}. Let us begin with (iv). Considering the (unital) convolution Hom-algebra $(\Hom(\C{H}\ot \C{H},\C{H}), \mu^\star, \eta^\star,\a^\star)$, where $\C{H}\ot \C{H}$ is regarded as a tensor product (counital) Hom-coalgebra, and $\C{H}$ as a unital Hom-algebra, we set $\psi, \vp, \Phi \in \Hom(\C{H}\ot \C{H},\C{H})$ so that
\[
\psi(h\ot h') := h\bullet h', \qquad \vp(h\ot h') := S(h')\bullet S(h), \qquad \Phi(h\ot h') := S(h\bullet h').
\]
Now, on one hand we have
\begin{align*}
& \mu^\star(\Phi \ot \psi)(h\ot h') = \Phi(\b^{-2}(h\ns{1}) \ot \b^{-2}(h'\ns{1}))\bullet \psi(\b^{-2}(h\ns{2})\ot \b^{-2}(h'\ns{2})) = \\
& \Big[S\big(\b^{-2}(h\ns{1}) \bullet \b^{-2}(h'\ns{1})\big)\Big]\bullet \Big[\b^{-2}(h\ns{2})\bullet \b^{-2}(h'\ns{2})\Big] = \\
& S\Big(\big(\b^{-2}(h) \bullet \b^{-2}(h')\big)\ns{1}\Big) \big(\b^{-2}(h) \bullet \b^{-2}(h')\big)\ns{2} = \\
& \ve\big(\b^{-2}(h) \bullet \b^{-2}(h')\big)\eta(1) = \ve(h')\ve(h)\eta(1) = \eta^\star(1)(h \ot h'),
\end{align*}
and on the other hand,
\begin{align*}
& \mu^\star(\psi \ot \vp)(h\ot h') = \psi(\b^{-2}(h\ns{1}) \ot \b^{-2}(h'\ns{1}))\bullet \vp(\b^{-2}(h\ns{2}) \ot \b^{-2}(h'\ns{2})) = \\
& \Big[\b^{-2}(h\ns{1})\bullet \b^{-2}(h'\ns{1})\Big]  \bullet \Big[S\big(\b^{-2}(h'\ns{2})\big)\bullet S\big(\b^{-2}(h\ns{2})\big)\Big] = \\
& \a(\b^{-2}(h\ns{1})) \bullet \bigg[\b^{-2}(h'\ns{1})\bullet\Big(\a^{-1}(S\big(\b^{-2}(h'\ns{2})\big))\bullet \a^{-1}(S\big(\b^{-2}(h\ns{2})\big))\Big)\bigg]  =\\
& \a(\b^{-2}(h\ns{1})) \bullet\bigg[\Big(\a^{-1}(\b^{-2}(h'\ns{1}))\bullet\a^{-1}(S\big(\b^{-2}(h'\ns{2})\big)) \Big)\bullet S\big(\b^{-2}(h\ns{2})\big)\bigg]  =\\
& \a(\b^{-2}(h\ns{1})) \bullet\bigg[\ve(h')\eta(1)\bullet S\big(\b^{-2}(h\ns{2})\big)\bigg]  = \a(\b^{-2}(h\ns{1})) \bullet\bigg[\ve(h') \a(S\big(\b^{-2}(h\ns{2})\big))\bigg] =\\
&  \ve(h')\ve(h)\eta(1) = \eta^\star(1)(h \ot h').
\end{align*}
In view of the definition of a (co)unital Hom-bialgebra, we obtain $\vp = \Phi$ as they are both inverse to $\psi \in \Hom(\C{H}\ot \C{H},\C{H})$. Thus (iii) follows. 

As for (v), we start with the (unital) convolution Hom-algebra $(\Hom(\C{H},\C{H}\ot \C{H}), \mu^\star, \eta^\star, \a^\star)$, where we regard $\C{H}$ as a counital Hom-coalgebra and $\C{H}\ot \C{H}$ a tensor product unital Hom-algebra. This time we consider $\tau \circ (S\ot S) \circ \D, \D\circ S \in \Hom(\C{H},\C{H}\ot \C{H})$, where $\tau:\C{H}\ot \C{H} \to \C{H}\ot \C{H}$ is the flip map. Then, in view of the definition of a (co)unital Hom-bialgebra, we have on one hand
\begin{align*}
& \mu^\star((\D \circ S) \ot \D)(h) = (\D\circ S)(\b^{-2}(h\ns{1}))\bullet \D(\b^{-2}(h\ns{2})) = \\
& \Big[S(\b^{-2}(h\ns{1}))\ns{1}  \ot S(\b^{-2}(h\ns{1}))\ns{2}\Big]  \bullet \Big[\b^{-2}(h\ns{2}\ns{1}) \ot \b^{-2}(h\ns{2}\ns{2})\Big] = \\
& S(\b^{-2}(h\ns{1}))\ns{1} \bullet \b^{-2}(h\ns{2}\ns{1})  \ot S(\b^{-2}(h\ns{1}))\ns{2}  \bullet \b^{-2}(h\ns{2}\ns{2}) = \\
& \D(S(\b^{-2}(h\ns{1})) \bullet \b^{-2}(h\ns{2})) =  \ve(h)\eta(1) \ot \eta(1) = \eta^\star(1)(h),
\end{align*}
and on the other hand
\begin{align*}
& \mu^\star(\D \ot (\tau \circ (S\ot S) \circ \D))(h) = \D(\b^{-2}(h\ns{1}))\bullet (\tau \circ (S\ot S) \circ \D)(\b^{-2}(h\ns{2})) = \\
& \bigg(\b^{-2}(h\ns{1}\ns{1}) \ot \b^{-2}(h\ns{1}\ns{2})\bigg) \bullet \bigg(S(\b^{-2}(h\ns{2}\ns{1})) \ot S(\b^{-2}(h\ns{2}\ns{2}))\bigg) = \\
& \b^{-2}(h\ns{1}\ns{1}) \bullet S(\b^{-2}(h\ns{2}\ns{1})) \ot \b^{-2}(h\ns{1}\ns{2}) \bullet S(\b^{-2}(h\ns{2}\ns{2})) = \\
& \b^{-2}(h\ns{1}\ns{1}) \bullet S(\b^{-1}(h\ns{2})) \ot \b^{-3}(h\ns{1}\ns{2}\ns{1}) \bullet S(\b^{-3}(h\ns{1}\ns{2}\ns{2})) = \\
& \b^{-2}(h\ns{1}\ns{1}) \bullet S(\b^{-1}(h\ns{2})) \ot \ve(h\ns{1}\ns{2}) \eta(1) = \b^{-1}(h\ns{1}) \bullet S(\b^{-1}(h\ns{2})) \ot \eta(1) = \\
& \ve(h)\eta(1) \ot \eta(1) = \eta^\star(1)(h),
\end{align*}
where, on the fourth equality we used the fact that
\begin{align*}
& \b(h\ns{1}) \ot h\ns{2}\ns{1} \ot h\ns{2}\ns{2} = h\ns{1}\ns{1} \ot h\ns{1}\ns{2} \ot \b(h\ns{2}) \overset{\D\ot\b\ot\Id}{\Rightarrow} \\
& \b(h\ns{1}\ns{1}) \ot \b(h\ns{1}\ns{2}) \ot \b(h\ns{2}\ns{1}) \ot h\ns{2}\ns{2} = \D(h\ns{1}\ns{1}) \ot \b(h\ns{1}\ns{2}) \ot \b(h\ns{2}) = \\
& \b(h\ns{1}\ns{1}) \ot \D(h\ns{1}\ns{2}) \ot \b(h\ns{2}) = \b(h\ns{1}\ns{1}) \ot h\ns{1}\ns{2}\ns{1} \ot h\ns{1}\ns{2}\ns{2} \ot \b(h\ns{2}) \overset{\b^{-1}\ot\b^{-1}\ot\b^{-1}\ot\Id}{\Rightarrow} \\
& h\ns{1}\ns{1} \ot h\ns{1}\ns{2} \ot h\ns{2}\ns{1} \ot h\ns{2}\ns{2} = h\ns{1}\ns{1} \ot \b^{-1}(h\ns{1}\ns{2}\ns{1}) \ot \b^{-1}(h\ns{1}\ns{2}\ns{2}) \ot \b(h\ns{2}).
\end{align*}
\end{proof}

We do note that the last two items of Proposition \ref{prop-antipode-properties} are subject also of \cite[Prop. 2.9]{CaenGoyv11}. 

\begin{example}
Given a Hom-Hopf algebra $(\C{H},\mu,\eta,\a,\D,\ve,\b,S)$, the 8-tuple $(\C{H},\mu^{op},\eta,\a,\D,\ve,\b,S^{-1})$ is also a Hom-Hopf algebra, which is denoted by $\C{H}^{op}$. Similarly, $(\C{H},\mu,\eta,\a,\D^{op},\ve,\b,S^{-1})$ becomes a Hopf algbra, denoted by $\C{H}^{cop}$.
\end{example}

\begin{proposition}\label{prop-dual-Hom-Hopf}
Let $(\C{H},\mu,\eta,\a,\D,\ve,\b,S)$ be a Hom-Hopf algebra. Then, its restricted dual $(\C{H}^\circ,\D^\ast,\ve,(\b^{-1})^\ast, \pi^{-1}\circ\d,\eta^\ast,(\a^{-1})^\ast, S^\ast)$ is a Hom-Hopf algebra.
\end{proposition}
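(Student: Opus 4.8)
The plan is to assemble the two dualities already in place. Applying Corollary \ref{coroll-dual-hom-coalg} to the unital Hom-algebra $(\C{H},\mu,\eta,\a)$ immediately yields the counital Hom-coalgebra $(\C{H}^\circ,\pi^{-1}\circ\d,\eta^\ast,(\a^{-1})^\ast)$, while applying Corollary \ref{coroll-dual-hom-alg} to the counital Hom-coalgebra $(\C{H},\D,\ve,\b)$ yields a unital Hom-algebra structure on the full dual $\C{H}^\ast$ with product $\D^\ast$ (the convolution $\mu^\star$ there), twist $(\b^{-1})^\ast$ and unit $\ve$; both corollaries apply because $\a$ and $\b$ are invertible, as is implicit throughout (cf. \eqref{conv-multp-bialg}). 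Since $\C{H}^\circ$ is cut out of $\C{H}^\ast$ by the \emph{algebra} side of $\C{H}$ (finiteness of $\dim\C{H}^\ast_f$), the first genuine task is to check that the \emph{coalgebra}-induced Hom-algebra structure $(\D^\ast,(\b^{-1})^\ast,\ve)$ restricts from $\C{H}^\ast$ to $\C{H}^\circ$.

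The restriction requires three closures. That $\ve\in\C{H}^\circ$ is clear, since $\rho_L(a\ot\ve)=\ve(a)\,\ve$ by $\ve\circ\mu=\ve\ot\ve$ and $\ve\circ\a=\ve$, so $\dim\C{H}^\ast_\ve\leq 1$. That $(\b^{-1})^\ast$ preserves $\C{H}^\circ$ follows from $\rho_L\big(a\ot(f\circ\b^{-1})\big)=(\b^{-1})^\ast\big(\rho_L(\b^{-1}(a)\ot f)\big)$, a consequence of $\b\circ\mu=\mu\circ(\b\ot\b)$ and $\a\circ\b=\b\circ\a$, whence $\C{H}^\ast_{(\b^{-1})^\ast(f)}=(\b^{-1})^\ast(\C{H}^\ast_f)$ has the same dimension as $\C{H}^\ast_f$. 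The substantive point, which I expect to be the \emph{main obstacle}, is closure under the convolution $\D^\ast$: for $f,g\in\C{H}^\circ$ I would expand
\[
\rho_L\big(a\ot\D^\ast(f\ot g)\big)(a')=f\big(\b^{-2}((\a^{-2}(a'a))\ns{1})\big)\,g\big(\b^{-2}((\a^{-2}(a'a))\ns{2})\big),
\]
and then use the Hom-bialgebra compatibilities $\D\circ\mu=(\mu\ot\mu)\circ(\Id\ot\tau\ot\Id)\circ(\D\ot\D)$ and $\D\circ\a=(\a\ot\a)\circ\D$ to split $\D(a'a)$ as $a'\ns{1}a\ns{1}\ot a'\ns{2}a\ns{2}$. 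Writing $\D(a)=a\ns{1}\ot a\ns{2}$ as a finite sum then exhibits $\C{H}^\ast_{\D^\ast(f\ot g)}$ inside a finite-dimensional span manufactured from $\C{H}^\ast_f$ and $\C{H}^\ast_g$, giving $\dim\C{H}^\ast_{\D^\ast(f\ot g)}<\infty$. This is the only step that genuinely interlocks the two structures rather than transposing a single one.

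With both structures resident on $\C{H}^\circ$, the nine conditions of Definition \ref{def-Hom-bialg-Hom-Hopf} follow by transposition, each being the $k$-linear dual of exactly one structural identity of $\C{H}$. For example, $\eta^\ast\circ\D^\ast=\eta^\ast\ot\eta^\ast$ transposes $\D(\eta(1))=\eta(1)\ot\eta(1)$ together with $\b\circ\eta=\eta$; the two crossed conditions $\pi^{-1}\circ\d\circ(\b^{-1})^\ast=((\b^{-1})^\ast\ot(\b^{-1})^\ast)\circ\pi^{-1}\circ\d$ and $(\a^{-1})^\ast\circ\D^\ast=\D^\ast\circ((\a^{-1})^\ast\ot(\a^{-1})^\ast)$ transpose $\b\circ\mu=\mu\circ(\b\ot\b)$ and $\D\circ\a=(\a\ot\a)\circ\D$ respectively; and the main bialgebra axiom $\pi^{-1}\circ\d\circ\D^\ast=(\D^\ast\ot\D^\ast)\circ(\Id\ot\tau\ot\Id)\circ(\pi^{-1}\circ\d\ot\pi^{-1}\circ\d)$ is the transpose of the same mixed axiom for $\C{H}$, which I would verify by evaluating both sides against an arbitrary $a\ot a'\in\C{H}\ot\C{H}$ and invoking the Hom-(co)associativity already used in Corollaries \ref{coroll-dual-hom-alg} and \ref{coroll-dual-hom-coalg}. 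The commutativity $(\a^{-1})^\ast\circ(\b^{-1})^\ast=(\b^{-1})^\ast\circ(\a^{-1})^\ast$ transposes $\a\circ\b=\b\circ\a$.

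Finally, the antipode is $S^\ast$. I would first check $S^\ast(\C{H}^\circ)\subseteq\C{H}^\circ$: using that $S$ is a Hom-algebra and Hom-coalgebra anti-morphism (Proposition \ref{prop-antipode-properties}(iv)--(v)) together with $S\circ\a=\a\circ S$, one rewrites $\rho_L(a\ot(f\circ S))$ as $\rho_R(f\ot S(a))\circ S$, so that the finiteness of $\dim{}_f\C{H}^\ast$ from Proposition \ref{prop-dim-fin} forces $\dim\C{H}^\ast_{S^\ast(f)}<\infty$. The convolution identities $\D^\ast\circ(S^\ast\ot\Id)\circ(\pi^{-1}\circ\d)=\D^\ast\circ(\Id\ot S^\ast)\circ(\pi^{-1}\circ\d)=\big(f\mapsto f(\eta(1))\,\ve\big)$ are the transpose of the antipode axiom $\mu\circ(S\ot\Id)\circ\D=\mu\circ(\Id\ot S)\circ\D=\eta\circ\ve$ of $\C{H}$, and reduce to it upon evaluation on an element of $\C{H}$. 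The two remaining identities $S^\ast\circ(\b^{-1})^\ast=(\b^{-1})^\ast\circ S^\ast$ and $S^\ast\circ(\a^{-1})^\ast=(\a^{-1})^\ast\circ S^\ast$ transpose precisely items (iii) and (ii) of the antipode definition, namely $S\circ\b=\b\circ S$ and $S\circ\a=\a\circ S$. Together these establish that $(\C{H}^\circ,\D^\ast,\ve,(\b^{-1})^\ast,\pi^{-1}\circ\d,\eta^\ast,(\a^{-1})^\ast,S^\ast)$ is a Hom-Hopf algebra.
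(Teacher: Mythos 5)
Your proposal is correct, and its overall architecture is the same as the paper's: dualize the two structures via Corollaries \ref{coroll-dual-hom-alg} and \ref{coroll-dual-hom-coalg}, then verify the nine compatibilities of Definition \ref{def-Hom-bialg-Hom-Hopf} and the antipode axioms by evaluating against elements of $\C{H}$. Where you genuinely add something is the restriction step. The paper simply cites Corollary \ref{coroll-dual-hom-alg} to place the unital Hom-algebra structure $(\D^\ast,\ve,(\b^{-1})^\ast)$ on $\C{H}^\circ$, but that corollary concerns the full dual $\Hom(\C{H},k)$; since $\C{H}^\circ$ is carved out of $\C{H}^\ast$ by the \emph{algebra} side of $\C{H}$ (finiteness of $\dim\C{H}^\ast_f$), closure under the coalgebra-derived operations does require an argument, and you supply it: your expansion amounts to the identity $\rho_L\big(a\ot(f\star g)\big)=\rho_L\big(\b^{-2}(a\ns{1})\ot f\big)\star\rho_L\big(\b^{-2}(a\ns{2})\ot g\big)$ (summation suppressed), which places $\C{H}^\ast_{f\star g}$ inside the finite-dimensional span of $\star$-products of elements of $\C{H}^\ast_f$ with elements of $\C{H}^\ast_g$. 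In the paper this closure is only implicit: its verification of compatibility (2) exhibits $\d(f\star g)\in\pi(\C{H}^\ast\ot\C{H}^\ast)$, which yields $f\star g\in\C{H}^\circ$ by Proposition \ref{prop-delta-pi}, but this is never remarked. More importantly, your check that $S^\ast(\C{H}^\circ)\subseteq\C{H}^\circ$ --- rewriting $\rho_L\big(a\ot S^\ast(f)\big)$ as $S^\ast\big(\rho_R(f\ot S(a))\big)$ via Proposition \ref{prop-antipode-properties}(iv) and then invoking Proposition \ref{prop-dim-fin} --- fills a step the paper's proof omits entirely; without it, $S^\ast$ is not even a well-defined map on $\C{H}^\circ$. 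So your route is the paper's route made watertight, at the modest cost of the extra closure computations.
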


\begin{proof}
It follows from Corollary \ref{coroll-dual-hom-alg} that $(\C{H}^\circ,\D^\ast,\ve,(\b^{-1})^\ast)$ is a unital Hom-algebra, and from Corollary \ref{coroll-dual-hom-coalg} that $(\C{H}^\circ, \pi^{-1}\circ\d,\eta^\ast,(\a^{-1})^\ast)$ is a counital Hom-coalgebra. We shall, thus, check the bialgebra compatibilities (1)-(9) now. The very first compatibility (1) follows along the lines of
\[
(\pi^{-1}\circ\d)(\ve) (h \ot h')= \ve (\a^{-2}(h \bullet h')) = \ve (h \bullet h') = (\ve \ot \ve)(h\ot h'),
\]
that is,
\[
(\pi^{-1}\circ\d)(\ve)  = \ve \ot \ve.
\]
Denoting the multiplication in $\C{H}^\circ$ by $\star:\C{H}\ot \C{H}\to \C{H}$, and $\pi^{-1}\circ \d(f) =: f\ns{1}\ot f\ns{2}\in \C{H}^\circ\ot \C{H}^\circ$, we next see that
\begin{align*}
& (\pi^{-1}\circ\d)(f \star g)(h\ot h') = (f \star g)(\a^{-2}(h\bullet h')) = (f \star g)(\a^{-2}(h)\bullet \a^{-2}(h')) = \\
& f(\b^{-2}(\a^{-2}(h\ns{1})\bullet \a^{-2}(h'\ns{1})))g(\b^{-2}(\a^{-2}(h\ns{2})\bullet \a^{-2}(h'\ns{2}))) = \\
& f(\b^{-2}(\a^{-2}(h\ns{1}))\bullet \b^{-2}(\a^{-2}(h'\ns{1})))g(\b^{-2}(\a^{-2}(h\ns{2}))\bullet \b^{-2}(\a^{-2}(h'\ns{2}))) =\\
& f\ns{1}(\b^{-2}(h\ns{1}))f\ns{2}(\b^{-2}(h'\ns{1}))g\ns{1}(\b^{-2}(h\ns{2}))g\ns{2}(\b^{-2}(h'\ns{2})) = \\
& (f\ns{1}\star g\ns{1})(h)(f\ns{2}\star g\ns{2})(h').
\end{align*}
Hence, 
\[
(\pi^{-1}\circ\d)(f\star g)  = (\pi^{-1}\circ\d)(f)\star (\pi^{-1}\circ\d)(g),
\]
for any $f,g\in \C{H}^\circ$, which is (2). We proceed into (3) through
\begin{align*}
&\Big((\pi^{-1}\circ \d)\circ (\b^{-1})^\ast \Big)(f) (h \ot h') = (\pi^{-1}\circ \d)(f\circ \b^{-1})(h\ot h' ) = (f\circ \b^{-1})(\a^{-2}(h\bullet h')) = \\
& f(\a^{-2}(\b^{-1}(h)\bullet \b^{-1}(h'))) = (\pi^{-1}\circ \d)(f)(\b^{-1}(h)\bullet \b^{-1}(h')) = \\
& \Big(((\b^{-1})^\ast\ot (\b^{-1})^\ast)\circ (\pi^{-1}\circ \d)\Big)(f) (h\ot h'),
\end{align*}
and we thus obtain
\[
(\pi^{-1}\circ\d)\circ (\b^{-1})^\ast = ((\b^{-1})^\ast \ot (\b^{-1})^\ast) \circ(\pi^{-1}\circ\d).
\]
The property (4) follows rather easily as
\[
\Big(\eta^\ast\circ \ve\Big)(\s) = \ve(\eta(\s)) = \s,
\]
for any $\s \in k$, that is,
\[
\eta^\ast\circ \ve = \Id_k.
\]
Next comes the observation of the compatibility (5). To this end, we see that
\begin{align*}
& \eta^\ast(f\star g) = f\star g(\eta(1)) = f(\b^{-2}(\eta(1)))g(\b^{-2}(\eta(1))) = f(\eta(1))g(\eta(1)) = \eta^\ast(f)\eta^\ast(g).
\end{align*}
The item (6) is equally straightforward. Indeed,
\[
\Big(\eta^\ast\circ(\b^{-1})^\ast\Big)(f) = (f\circ \b^{-1})(\eta(1)) = f(\b^{-1}(\eta(1))) = f(\eta(1)) =\eta^\ast (f),
\]
in other words,
\[
\eta^\ast\circ(\b^{-1})^\ast = \eta^\ast.
\]
Similarly, the compatibility (7), that is,
\[
(\a^{-1})^\ast \circ \ve = \ve,
\] 
follows at once from
\[
\Big((\a^{-1})^\ast \circ \ve\Big)(h) = \ve(\a^{-1}(h)) = \ve(h).
\]
As for item (8), we have
\begin{align*}
& (\a^{-1})^\ast(f\star g) (h) = (f\star g)(\a^{-1}(h)) = f(\b^{-2}(\a^{-1}(h\ns{1})))g(\b^{-2}(\a^{-1}(h\ns{2}))) = \\
& (\a^{-1})^\ast(f)(\b^{-2}(h\ns{1}))(\a^{-1})^\ast(g)(\b^{-2}(h\ns{2})) = \Big((\a^{-1})^\ast(f) \star (\a^{-1})^\ast(g)\Big)(h),
\end{align*}
that is,
\[
(\a^{-1})^\ast(f\star g) = (\a^{-1})^\ast(f) \star (\a^{-1})^\ast(g).
\]
Finally, 
\[
(\a^{-1})^\ast \circ (\b^{-1})^\ast = (\b^{-1})^\ast \circ (\a^{-1})^\ast
\]
that is, the compatibility (9), follows through
\[
(\a^{-1})^\ast \circ (\b^{-1})^\ast = (\b^{-1} \circ \a^{-1})^\ast = (\a^{-1} \circ \b^{-1})^\ast = (\b^{-1})^\ast \circ (\a^{-1})^\ast.
\]
We conclude by the compatibilities with the antipode. To this end, we first observe that
\begin{align*}
& \Big(S^\ast(f\ns{1})\star f\ns{2}\Big)(h) = S^\ast(f\ns{1})(\b^{-2}(h\ns{1}))f\ns{2}(\b^{-2}(h\ns{2})) = \\
& f\ns{1}(S(\b^{-2}(h\ns{1})))f\ns{2}(\b^{-2}(h\ns{2})) = f(\a^{-2}(S(\b^{-2}(h\ns{1}))\bullet \b^{-2}(h\ns{2}))) = \\
& \ve(h)f(\eta(1)) = \Big(\eta^\ast\circ \ve\Big)(f)(h),
\end{align*}
that is,
\[
S^\ast(f\ns{1})\star f\ns{2} = \eta^\ast(f) \ve.
\]
Similarly, we may observe that
\[
f\ns{1} \star S^\ast(f\ns{2})= \eta^\ast(f) \ve.
\]
The remaining two properties of the antipode follows immediately. Indeed,
\[
S^\ast \circ (\b^{-1})^\ast = (\b^{-1}\circ S)^\ast = (S \circ \b^{-1})^\ast = (\b^{-1})^\ast \circ S^\ast,
\]
and quite similarly,
\[
S^\ast \circ (\a^{-1})^\ast = (\a^{-1}\circ S)^\ast = (S \circ \a^{-1})^\ast = (\a^{-1})^\ast \circ S^\ast.
\]
\end{proof}

\subsubsection{Hom-Lie algebras and their enveloping Hom-Hopf algebras}

The notion of Hom-Lie algebra has been introduced in \cite{HartLarsSilv06}, and is recalled below.

\begin{definition}
A triple $(\G{g},[\,,\,],\phi)$ consisting of a vector space $\G{g}$, an anti-symmetric bilinear map $[\,,\,]:\G{g}\ot \G{g} \to \G{g}$, and a linear map $\phi:\G{g}\to \G{g}$ is called a ``Hom-Lie algebra'' if the Hom-Jacobi identity
\[
[\phi(\xi),[\xi',\xi'']] + [\phi(\xi''),[\xi,\xi']] + [\phi(\xi'),[\xi'',\xi']] = 0
\]
is satisfied, for all $\xi,\xi',\xi'' \in \G{g}$. In case the linear mapping $\phi:\G{g}\to \G{g}$ is an endomorphism, that is, it satisfies 
\[
\phi([\xi,\xi']) = [\phi(\xi),\phi(\xi')]
\]
for all $\xi,\xi'\in \G{g}$, then the Hom-Lie algebra $(\G{g},[\,,\,],\phi)$ is called ``multiplicative''.
\end{definition}
In the sequel, we shall consider the multiplicative Hom-Lie algebras only.

Given two Hom-Lie algebras $(\G{g},\phi)$ and $(\G{h},\a)$ - suppressing their brackets, a ``morphism'' of Hom-Lie algebras is defined to be a linear map $f:\G{g}\to \G{h}$ such that
\[
f([\xi,\xi']) = [f(\xi),f(\xi')]
\]
and that
\[
f(\phi(x)) = \a(f(x)),
\]
for all $\xi,\xi' \in \G{g}$.

As is noted in \cite[Ex. 1.4]{Laur-GengMakhTele18}, see also \cite{MakhSilv08,Yau09}, given any Lie algebra $(\G{g},[\,,\,])$, and a Lie algebra endomorphism $\phi:\G{g}\to \G{g}$, the triple $(\G{g},[\,,\,]_\phi,\phi)$ is a Hom-Lie algebra, where 
\[
[\xi,\xi']_\phi := \phi([\xi,\xi'])
\]
for any $\xi,\xi' \in \G{g}$. Similarly, given a Hom-algebra $(\C{A},\phi)$, the triple $(\C{A},[\,,\,],\phi)$ becomes a Hom-Lie algebra via
\[
[x,y] := xy - yx
\]
for all $x,y \in \C{A}$.

The efforts of associating a universal enveloping (Hom-Hopf) algebra to a given Hom-Lie algebra has been initiated in \cite{Yau08,Yau10}, constructing a Hom-bialgebra out of a Hom-Lie algebra. In case the Hom-Lie algebra is multiplicative, then a Hom-Hopf algebra construction (in terms of weighted trees) has been achieved in \cite{Laur-GengMakhTele18}.  Let us conclude this section with a brief review of this universal enveloping Hom-Hopf algebra construction.

To this end, we begin with the notion of a ``weighted $n$-tree'', consisting of a pair $(\vp,a_1,\ldots,a_n)$ of an $n$-tree $\vp\in T_n$, $T_n$ being the set of all planar binary trees of $n$-leaves, and an $n$-tuple $(a_1,\ldots,a_n)\in \B{N}^n$. As such, a weighted $n$-tree may be pictured as an $n$-tree with the numbers $a_1,\ldots,a_n \in \B{N}$ on top of the leaves, see for instance \cite[Subsect. 4.1]{Laur-GengMakhTele18}, or alternatively \cite[Subsect. 2.3]{Yau08}. Following the terminology of \cite{Laur-GengMakhTele18}, we shall denote the set of all weighted $n$-trees by $B_n$, and
\[
B:= \{{\bf 1}\}\,\cup\,\bigcup_{n\geq 1}\, B_n,
\]
where {\bf 1} is an external object. We also let $\B{T}$ denote the vector space generated by $B$. 

The ``grafting'' operation (adjoining the roots of two planar binary trees, say a $n$-tree and a $m$-tree, on a common root to get a $(n+m)$-tree) extend trivially to the level of weighted trees; $\vee: B_n\times B_m \to B_{n+m}$. Furthermore, the grafting extends to $\B{T}$  along with the operation
\begin{equation}\label{a-map}
\G{a}:B_n\to B_n, \qquad \vp:=(\vp,s_1,\ldots,s_n)\mapsto (\vp,s_1+1,\ldots,s_n+1)
\end{equation}
according to the convensions
\begin{itemize}
\item[(i)] $\G{a}({\bf 1}) = {\bf 1}$, \\
\item[(ii)] ${\bf 1} \vee {\bf 1} = {\bf 1}$, \\
\item[(iii)] $\vp\vee {\bf 1} = {\bf 1} \vee \vp = \G{a}(\vp)$.
\end{itemize}
Let us note that the grafting operation does not satisfy any Hom-associativity according to \eqref{a-map}, that is, given arbitrary $\vp,\vp',\vp'' \in T$, 
\begin{equation}\label{gen-ideal-I}
(\vp \vee \vp') \vee \G{a}(\vp'') - \G{a}(\vp) \vee (\vp' \vee \vp'')
\end{equation}
does not necessarily vanish. Nevertheless, $\C{I}$ being the space generated by the elements of the form \eqref{gen-ideal-I}, the quotient $\B{T} / \C{I}$ does. In other words, $(\B{T} / \C{I}, \vee, {\bf 1}, \G{a})$ is a unital Hom-algebra. 

Next, given any $\vp:=(\vp,s_1,\ldots,s_n)$, and any 
\[
 I =: \{t_1,\ldots, t_m\} \subseteq \{1,2,\ldots,n\},
\]
let $\vp_I \in T_m$ be the tree whose braches corresponding to $\{1,2,\ldots,n\} \backslash I$ is replaced with ${\bf 1}$. Then, a coproduct $\D:\B{T}\to \B{T}\ot \B{T}$ is defined as
\[
\D(\vp) := \sum_{ \underset{I \cap J = \emptyset}{I\cup J = \{1,2,\ldots,n\}}}\, \vp_I \ot \vp_J, \qquad \D({\bf 1}) := {\bf 1} \ot {\bf 1},
\]
while $\ve:\B{T}\to k$ given by
\[
\ve({\bf 1}) = 1, \qquad \ve(\vp) = 0
\]
for any $\vp \in B_n$, yields a coproduct. The coproduct and the counit pass to the quotient $\B{T} / \C{I}$, and we arrive at a Hom-bialgebra $(\B{T} / \C{I}, \vee,{\bf 1}, \G{a}, \D,\ve,\Id)$. Finally, together with the antipode $S:\B{T}\to \B{T}$ given by
\begin{itemize}
\item[(i)] $S({\bf 1}) = {\bf 1}$, \\
\item[(ii)] $S(\vp_1, s) = - (\vp_1, s)$, where $T_1 = \{\vp_1\}$, \\
\item[(iii)] $S(\vp \vee \vp') = S(\vp')\vee S(\vp)$ for any $\vp,\vp' \in B$,
\end{itemize}
the 8-tuple $(\B{T} / \C{I}, \vee,{\bf 1}, \G{a}, \D,\ve,\Id, S)$ is a Hom-Hopf algebra.

In the presence of a Hom-Lie algebra $(\G{g},\phi)$ now, one begins with the space 
\[
\B{T}^\G{g} := k{\bf 1} \,\oplus\, \bigoplus_{n\geq 1}\,\Big(B_n \, \oplus\,\G{g}^{\ot\,n}\Big),
\]
elements of which may be represented by
\[
(\vp,s_1,\ldots,s_n,\xi_1,\ldots,\xi_n), \qquad \vp\in T_n,\quad s_1,\ldots, s_n \in \B{N},\quad \xi_1,\ldots,\xi_n \in \G{g}.
\]
They may equally be pictured as weighted trees with the elements of $\G{g}$ on top of the weighted leaves. As is remarked in \cite[Subsect. 4.2]{Laur-GengMakhTele18}, there are extensions 
\begin{align*}
& \vee:\B{T}^\G{g} \ot \B{T}^\G{g} \to \B{T}^\G{g}, \\
&(\vp,s_1,\ldots,s_n,\xi_1,\ldots,\xi_n) \vee (\vp',s'_1,\ldots,s'_n,\xi'_1,\ldots,\xi'_n) := \\
& \hspace{5cm} (\vp\vee \vp',s_1,\ldots,s_n, s'_1,\ldots,s'_n,\xi_1,\ldots,\xi_n,\xi'_1,\ldots,\xi'_n), \\
& \G{a}:\B{T}^\G{g}\to \B{T}^\G{g}, \qquad \G{a}(\vp,s_1,\ldots,s_n,\xi_1,\ldots,\xi_n) := (\vp,s_1,\ldots,s_n,\phi(\xi_1),\ldots,\phi(\xi_n)), \\
& \D:\B{T}^\G{g}  \to \B{T}^\G{g} \ot \B{T}^\G{g}, \\
& \D(\vp,s_1,\ldots,s_n,\xi_1,\ldots,\xi_n) := \\
& \sum_{\underset{\{t_1,\ldots,t_m\} \cap \{p_1,\ldots,p_{n-m}\} =\emptyset}{\{t_1,\ldots,t_m\} \cup \{p_1,\ldots,p_{n-m}\} = \{1,\ldots,n\}}}\,(\vp_{\{t_1,\ldots,t_m\}},\xi_{t_1},\ldots,\xi_{t_m}) \ot (\vp_{\{p_1,\ldots,p_{n-m}\}},\xi_{p_1},\ldots,\xi_{p_{n-m}}), \\
& \ve:\B{T}^\G{g}  \to k, \qquad \ve({\bf 1}) = 1 \qquad \ve(\vp,s_1,\ldots,s_n,\xi_1,\ldots,\xi_n) = 0, \quad \forall\,(\vp,s_1,\ldots,s_n)\in B_n, \quad n\geq 1.
\end{align*}
In order to be able to obtain a Hom-associative algebra, there is a need to an analogue of $\C{I}$. This is the (co)ideal given by
\[
\C{I}^\G{g}:= \bigoplus_{n\geq 1}\, \Big(\C{I}\,\cap\,B_n\Big)\ot \G{g}^{\ot\,n}.
\]
Then, the 8-tuple $(\B{T}^\G{g} / \C{I}^\G{g}, \vee,{\bf 1}, \G{a}, \D,\ve,\Id, S)$ is a Hom-Hopf algebra. Finally, there comes the Hopf-ideal $\C{J}^\G{g}$ of $\B{T}^\G{g} / \C{I}^\G{g}$, see \cite[Prop. 4.13]{Laur-GengMakhTele18}, generated by the elements of the form
\begin{itemize}
\item[(i)]  $(\vp_1,s,\xi) - (\vp_1,0,\phi^s(\xi))$, and \\
\item[(ii)] $(\vp_2,0,0,\xi_1,\xi_2) - (\vp_2,0,0,\xi_2,\xi_1) - (\vp_1,0,[\xi_1,\xi_2])$, where we recall that $T_1=\{\vp_1\}$, and $T_2=\{\vp_2\}$, $\vp_2 := \vp_1\vee\vp_1$.
\end{itemize}
Accordingly, the Hom-Hopf algebra
\[
\C{U}(\G{g}) := \Big((\B{T}^\G{g} / \C{I}^\G{g}) / \C{J}^\G{g}, \vee, {\bf 1}, \phi, \D,\ve,\Id,S\Big)
\]
is called the ``universal enveloping Hom-Hopf algebra'' of the Hom-Lie algebra $(\G{g},\phi)$.

\section{Hom-Hopf algebra symmetries}\label{sect-Hom-Hopf-symmetries}

The present section is reserved for the Hom-associative analogues of the four fundamental Hopf symmetries in non-commutative geometry; namely the module algebra, module coalgebra, comodule algebra, and comodule coalgebra. These symmetries will be instrumental later in the constructions of ``double cross product Hom-Hopf algebras'' and ``bicrossproduct Hom-Hopf algebras''.

\subsection{Hom-module algebra}\label{subsect-module-alg}~

We shall pursue the motivation given at \cite[Thm. 1.1]{Yau08-II}, see also \cite[Prop. 1.10]{MakhPana15}, to seek appropriate compatibility conditions for the ``module algebra'' symmetry via the Hom-Hopf algebras. However, the examples coming from the (restricted) duals of universal enveloping Hom-algebras (of Hom-Lie algebras), which are associative as algebras (that is, those with $\a = \Id$), does not satify the hypothesis of \cite[Thm. 1.2]{Yau08-II}  (see also \cite[Prop. 1.10]{MakhPana15} and \cite[Prop. 2.13]{MakhPana16}). As a result, we take the following point of view.

Let $(H,\mu,\eta,\D,\ve)$ be a bialgebra, and $(A,\mu_A,\eta_A)$ be an algebra so that $A$ is a (left) $H$-module algebra via $\cdot:H\ot A \to A$. Let also $\phi,\psi:H\to H$ be two bialgebra homomorphsims, via which we construct the Hom-bialgebra $H_\phi^\psi$. Let, finally, $\a,\gamma:A\to A$ be two (commuting) algebra homomorphisms, so that,
\[
\gamma(h\cdot a) = \phi(h)\cdot \gamma(a), \qquad \a(h\cdot a) = \psi(h)\cdot \a(a)
\]
for any $a\in A$, and any $h\in H$. Thus, setting
\[
\rt:H_\phi^\psi \ot A_\a\to A_\a, \qquad h\rt a := \phi(h)\cdot \gamma(a),
\]
we endow $(A_\a,\gamma)$ with a (left) $H_\phi^\psi$-Hom-module structure. Indeed,
\begin{align}\label{computation-module}
\begin{split}
& (h\bullet h') \rt \g(a) = \phi(hh') \rt \g(a) = \phi^2(hh') \cdot \g^2(a) = \\
& \phi^2(h) \cdot \Big(\phi^2(h') \cdot \g^2(a)\Big) =  \phi(h) \rt \Big(\phi(h') \cdot \g(a)\Big) = \phi(h) \rt \Big(h' \rt a\Big),
\end{split}
\end{align}

On the next step, we define the diagonal action as
\begin{align*}
& \rt:H_\phi^\psi \ot A_\a\ot A_\a\to A_\a\ot A_\a, \\ 
& h\rt (a\ot a') := \phi(\psi(h\ps{1}))\cdot \gamma(a) \ot \phi(\psi(h\ps{2}))\cdot \gamma(a') = h\ns{1} \rt a \ot h\ns{2} \rt a'.
\end{align*}
Then, along the lines of 
\begin{align}\label{computation-hom-module-diagonal}
\begin{split}
& (h\bullet h') \rt \g\big(a \ot a'\big) = (h\bullet h') \rt \big(\a(a) \ot \a(a')\big) = \\
& \phi(hh') \rt \big(\g(a) \ot \g(a')\big) =  \psi(\phi^2(h\ps{1}h'\ps{1}))\cdot \g^2(a) \ot \psi(\phi^2(h\ps{2}h'\ps{2}))\cdot \g^2(a') = \\
& \psi(\phi^2(h\ps{1}))\cdot \Big(\psi(\phi^2(h'\ps{1})) \cdot \g^2(a)\Big) \ot \psi(\phi^2(h\ps{2}))\cdot \Big(\psi(\phi^2(h'\ps{2})) \cdot\g^2(a') \Big) = \\
& \phi^2(\psi(h\ps{1}))\cdot \Big(\phi^2(\psi(h'\ps{1})) \cdot \g^2(a)\Big) \ot \phi^2(\psi(h\ps{2}))\cdot \Big(\phi^2(\psi(h'\ps{2})) \cdot\g^2(a') \Big) = \\
& \phi(\psi(h\ps{1}))\rt \Big(\phi(\psi(h'\ps{1})) \cdot \g(a)\Big) \ot \phi(\psi(h\ps{2}))\rt \Big(\phi(\psi(h'\ps{2})) \cdot\g(a') \Big) = \\
& \phi(\psi(h\ps{1}))\rt \Big(\psi(h'\ps{1}) \rt a\Big) \ot \phi(\psi(h\ps{2}))\rt \Big(\psi(h'\ps{2}) \rt a' \Big) = \\
& \phi(h) \rt \Big(h' \rt (a \ot a')\Big),
\end{split}
\end{align}
we see that $(A_\a \ot A_\a, \gamma\ot \gamma)$ is a (left) $H_\phi^\psi$-Hom-module. We proceed to the observation that $\mu_\a:(A_\a \ot A_\a, \gamma\ot \gamma) \to (A_\a, \gamma)$ is a morphism of $H_\phi^\psi$-Hom-modules. Indeed,
\begin{align*}
& \mu_\a(h \rt (a \ot a')) = \big(h\ns{1} \rt a\big) \bullet \big(h\ns{2} \rt a'\big) = \big(\psi(h\ps{1}) \rt a\big) \bullet \big(\psi(h\ps{2}) \rt a'\big) = \\
& \a\big(\psi(h\ps{1}) \rt a\big) \a\big(\psi(h\ps{2}) \rt a'\big) =  \a\big(\phi(\psi(h\ps{1})) \cdot\gamma(a)\big) \a\big(\phi(\psi(h\ps{2})) \cdot\gamma(a')\big) = \\
& \Big(\phi(\psi^2(h\ps{1})) \cdot\a\gamma(a)\Big) \Big(\phi(\psi^2(h\ps{2})) \cdot\a(\gamma(a'))\Big) =  \phi(\psi^2(h)) \cdot \a(\gamma(a))\a(\gamma(a')) = \\
& \phi(\psi^2(h)) \cdot \a(\gamma(aa')) = \phi(\psi^2(h)) \cdot \gamma(a\bullet a') = \psi^2(h) \rt (a \bullet a') = \psi^2(h) \rt \mu_\a(a \ot a').
\end{align*}

\begin{remark}
Considering
\[
\brt:H_\phi^\psi \ot A_\a\to A_\a, \qquad h\brt a := \psi^2(h)\rt a,
\]
we may endow $(A_\a,\g)$ with another (left) $H_\phi^\psi$-Hom-module structure. Indeed,
\begin{align*}
& (h \bullet h') \brt \g(a) = \psi^2(h \bullet h') \rt \g(a) = \phi(\psi^2(h)) \rt \Big(\psi^2(h')\rt a\Big) = \\
& \phi(\psi^2(h)) \rt \Big(h'\brt a\Big) = \phi(h) \brt \Big(h'\brt a\Big).
\end{align*}
\end{remark}

Finally, we set
\[
\rt: H_\phi^\psi \ot k\to k, \qquad h\rt r := \ve(h) r,
\]
and observe that 
\begin{align*}
& \eta(h \rt r) = \eta(\ve(h)r) = \ve(h)\eta(r) =  \ve(\phi(h))\eta(r) =  \\
& \phi(h)\cdot \eta(r)  = \phi(h)\cdot \a(\eta(r)) = h \rt \eta(r).
\end{align*}

Accordingly, we introduce the following definition of ``Hom-module algebra''.

\begin{definition}\label{def-Hom-mod-alg}
Let $(\C{H},\mu,\eta,\phi,\D,\ve,\psi)$ be a Hom-bialgebra, and $(\C{A},\mu_\C{A},\eta_\C{A},\a)$ be a Hom-algebra together with a Hom-algebra endomorphism $\g:\C{A}\to \C{A}$ so that $(\C{A},\g)$ is a left $\C{H}$-Hom-module via $\rt:\C{H}\ot \C{A} \to \C{A}$. Then the pair $(\C{A},\g)$ is called a $\C{H}$-Hom-module algebra if the Hom-algebra structure maps are $\C{H}$-Hom-module morphisms, \ie
\begin{align}
& \a(h \rt a) = \psi(h) \rt \a(a), \label{Hom-mod-alg-00} \\
%& \g(h \rt a) = \phi(h) \rt \g(a), \label{Hom-mod-alg-0} \\
& \psi^2(h)\rt (a \bullet a') = (h\ns{1} \rt a) \bullet (h\ns{2} \rt a'), \label{Hom-mod-alg-I}\\
& h \rt \eta(1) = \ve(h)\eta(1), \label{Hom-mod-alg-II}
\end{align}
for any $h \in \C{H}$, and any $a,a'\in \C{A}$.
\end{definition}

\begin{remark}
In case $\phi = \psi$, the module Hom-algebra compatibility reduces to the one given in \cite{MakhPana15,Yau08-II}. 
\end{remark}

\subsection{Hom-module coalgebra}\label{subsect-module-coalg}~

This time we shall investigate appropriate compatibility conditions for the ``module coalgebra'' symmetry for Hom-bialgebras. In particular, we would like to cover the universal enveloping Hom-Lie-algebras; which are coassociative as coalgebras (in other words $\b = \Id$). Keeping in mind the similar considerations as in Subsection \ref{subsect-module-alg}, we shall take the following point of view.

Let $(H,\mu,\eta,\D,\ve)$ be a bialgebra, and $(C,\D_C,\ve_C)$ be a coalgebra so that $C$ is a (left) $H$-module coalgebra via $\cdot:H\ot C \to C$. Let also $\phi,\psi:H\to H$ be two bialgebra homomorphsims, via which we construct the Hom-bialgebra $H_\phi^\psi$. Let, finally, $\b,\gamma:C\to C$ be two (commuting) coalgebra homomorphisms, so that,
\[
\gamma(h\cdot c) = \phi(h)\cdot \gamma(c), \qquad \b(h\cdot c) = \psi(h)\cdot \b(c)
\]
for any $c\in C$, and any $h\in H$.

We may, then, define the following action of the Hom-bialgebra $H_\phi^\psi$ on the Hom-coalgebra $C^\b$ as
\[
\rt: H_\phi^\psi \ot C^\b \to C^\b, \qquad h \rt c := \phi(h) \cdot \gamma(c). 
\]
Then, $(C^\b,\g)$ becomes a $H_\phi^\psi$-Hom-module. Indeed,
\begin{align}\label{aux-mod-coalg-action}
\begin{split}
& (h \bullet h') \rt \g(c) = \phi(hh') \rt \g(c) = \phi^2(hh') \cdot \g^2(c) = \\
& \phi^2(h) \cdot \big(\phi^2(h') \cdot \g^2(c)\big) = \phi(h) \rt\big(\phi(h') \cdot \g(c)\big) = \phi(h) \rt\big(h' \rt c\big). 
\end{split}
\end{align}

Similarly, setting the diagonal action as
\begin{align*}
&\rt: H_\phi^\psi \ot C^\b \ot C^\b \to C^\b \ot C^\b, \\ 
& h \rt (c\ot c') := \phi(\psi(h\ps{1})) \cdot \gamma(c) \ot \phi(\psi(h\ps{2})) \cdot \gamma(c') = h\ns{1}\rt c \ot h\ns{2}\rt c',
\end{align*}
it follows from 
\begin{align}\label{aux-mod-coalg-diag-act}
\begin{split}
& (h \bullet h') \rt \g(c \ot c') = \phi(hh') \rt \big(\g(c) \ot \g(c')\big) = \\
& \phi^2(\psi(h\ps{1}))\phi^2(\psi(h'\ps{1})) \cdot \g^2(c) \ot \phi^2(\psi(h\ps{2}))\phi^2(\psi(h'\ps{2})) \cdot \g^2(c') = \\
& \phi^2(\psi(h\ps{1})) \cdot \Big(\phi^2(\psi(h'\ps{1})) \cdot \g^2(c)\Big) \ot \phi^2(\psi(h\ps{2}))\cdot \Big(\phi^2(\psi(h'\ps{2})) \cdot \g^2(c') \Big) = \\
& \phi(h) \rt \big(h' \rt (c \ot c')\big)
\end{split}
\end{align}
that $(C^\b\ot C^\b, \g\ot \g)$ is a $H_\phi^\psi$-Hom-module. 

Furthermore, the comultiplication $\D_\b:(C^\b,\g) \to (C^\b \ot C^\b,\g\ot \g)$ becomes a morphism of $H_\phi^\psi$-Hom-modules. Indeed,
\begin{align}\label{aux-mod-coalg-comp}
\begin{split}
& h\rt \D_\b(c) = h\ns{1}\rt c\ns{1} \ot h\ns{2}\rt c\ns{2} = h\rt \big(\b(c\ps{1})\ot \b(c\ps{2})\big) =  \\
& \phi(\psi(h\ps{1})) \cdot \gamma(\b(c\ps{1})) \ot \phi(\psi(h\ps{2})) \cdot \gamma(\b(c\ps{2})) = \\
& \b(\phi(h\ps{1})\cdot \g(c\ps{1})) \ot \b(\phi(h\ps{2})\cdot \g(c\ps{2}))= \D_\b(h\rt c).
\end{split}
\end{align}
Similarly, $\ve_C:(C^\b,\g) \to (k,\Id)$ becomes a morphism of $H_\phi^\psi$-Hom-modules.

Accordingly, the definition of the module coalgebra symmetry for Hom-bialgebras reads as follows.

\begin{definition}\label{def-Hom-mod-coalg}
Let $(\C{H},\mu,\eta,\phi,\D,\ve,\psi)$ be a Hom-bialgebra, and $(\C{C},\D_\C{C},\ve_\C{C},\b)$ be a Hom-coalgebra together with a Hom-coalgebra endomorphism $\g:\C{C}\to \C{C}$ so that $(\C{C},\g)$ is a left $\C{H}$-Hom-module via $\rt:\C{H}\ot \C{C} \to \C{C}$. The pair $(\C{C},\g)$ is called a $\C{H}$-Hom-module coalgebra if the Hom-coalgebra structure maps are $\C{H}$-Hom-module morphisms, \ie
\begin{align}
& \b(h\rt c) = \psi(h) \rt \b(c), \label{Hom-mod-coalg-00} \\
%& \g(h\rt c) = \phi(h)\rt \g(c), \label{Hom-mod-coalg-0} \\
& \D_\C{C}(h\rt c) = h\ns{1} \rt c\ns{1} \ot h\ns{2} \rt c\ns{2}, \label{Hom-mod-coalg-I} \\
& \ve_\C{C}(h\rt c) = \ve(h)\ve_\C{C}(c), \label{Hom-mod-coalg-II}
\end{align}
for any $h \in \C{H}$, and any $c\in \C{C}$.
\end{definition}

\subsection{Hom-comodule algebra}\label{subsect-comodule-alg}~

Let $(H,\mu,\eta,\D,\ve)$ be a bialgebra, and $(A,\mu_A,\eta_A)$ be an algebra so that $A$ is a (right) $H$-comodule algebra via $\nb:A \to A\ot H$. Let also $\phi,\psi:H\to H$ be two bialgebra homomorphsims, via which we construct the Hom-bialgebra $H_\phi^\psi$. Let, finally, $\a,\g:A\to A$ be two (commuting) algebra homomorphisms, so that,
\[
\nb(\gamma(a)) = \g(a\ps{0}) \ot \psi(a\ps{1}), \qquad \nb(\a(a)) = \a(a\ps{0}) \ot \phi(a\ps{1})
\]
for any $a\in A$. Then, setting
\[
\nb_{Hom}:A_\a \to A_\a\ot H_\phi^\psi, \qquad \nb_{Hom}(a) = a\ns{0} \ot a\ns{1} := \g(a\ps{0}) \ot \psi(a\ps{1}),
\]
we may endow $(A_\a,\g)$ with a (right) $H_\phi^\psi$-Hom-comodule structure. Indeed,
\begin{align}\label{comod-alg-compt}
\begin{split}
& ((\g \ot \D_{H_\phi^\psi}) \circ \nb_{Hom})(a) = (\g \ot \D_{H_\phi^\psi})(\g(a\ps{0}) \ot \psi(a\ps{1})) = \g^2(a\ps{0}) \ot \psi^2(a\ps{1}\ps{1}) \ot \psi^2(a\ps{1}\ps{2}) = \\
& \g^2(a\ps{0}\ps{0}) \ot \psi^2(a\ps{0}\ps{1}) \ot \psi^2(a\ps{1}) = \nb_{Hom}(\g(a\ps{0})) \ot \psi^2(a\ps{1}) = ((\nb_{Hom} \ot \psi)\circ \nb_{Hom}) (a). 
\end{split}
\end{align}

Similarly, defining the diagonal Hom-coaction by
\begin{align*}
& \nb_{Hom}^{diag}:A_\a \ot A_\a \to A_\a \ot A_\a \ot H_\phi^\psi, \\ &\nb_{Hom}^{diag}(a\ot a') := a\ns{0} \ot a'\ns{0} \ot a\ns{1} \bullet a'\ns{1} = \big(\g(a\ps{0}) \ot \g(a'\ps{0})\big) \ot \phi(\psi(a\ps{1})) \phi(\psi(a'\ps{1})),
\end{align*}
we may equip $(A_\a\ot A_\a, \g\ot \g)$ with a right $H_\phi^\psi$-Hom-comodule structure through 
\begin{align}\label{comod-alg-compt-II}
\begin{split}
& ((\g \ot \D_{H_\phi^\psi}) \circ \nb_{Hom}^{diag})(a\ot a') = (\g \ot \D_{H_\phi^\psi}) \Big(\g(a\ps{0}) \ot \g(a'\ps{0}) \ot \phi(\psi(a\ps{1})) \phi(\psi(a'\ps{1}))\Big) =\\
& \g^2(a\ps{0}) \ot \g^2(a'\ps{0}) \ot \phi(\psi^2(a\ps{1})) \phi(\psi^2(a'\ps{1})) \ot \phi(\psi^2(a\ps{2})) \phi(\psi^2(a'\ps{2})) = \\
& \g^2(a\ps{0}\ps{0}) \ot \g^2(a'\ps{0}\ps{0}) \ot \phi(\psi^2(a\ps{0}\ps{1})) \phi(\psi^2(a'\ps{0}\ps{1})) \ot \phi(\psi^2(a\ps{1})) \phi(\psi^2(a'\ps{1})) = \\
& \g^2\big(a\ps{0}\ps{0} \ot a'\ps{0}\ps{0}\big) \ot \phi(\psi^2(a\ps{0}\ps{1})) \phi(\psi^2(a'\ps{0}\ps{1})) \ot \phi(\psi^2(a\ps{1})) \phi(\psi^2(a'\ps{1})) = \\
& \nb_{Hom}^{diag}(\g(a\ps{0}) \ot \g(a'\ps{0})) \ot \phi(\psi^2(a\ps{1})) \phi(\psi^2(a'\ps{1})) = ((\nb_{Hom}^{diag} \ot \psi)\circ \nb_{Hom}^{diag}) (a \ot a').
\end{split}
\end{align}

Furthermore, the multiplication map $\mu_\a:(A_\a\ot A_\a, \g \ot \g) \to (A_\a,\g)$ is a map of $H_\phi^\psi$-Hom-comodules. Indeed,
\begin{align*}
& \left((\mu_\a\ot \Id)\circ\nb_{Hom}^{diag}\right)(a\ot a') = (\mu_\a\ot \Id)\Big(\g(a\ps{0}) \ot \g(a'\ps{0}) \ot \phi(\psi(a\ps{1}))\phi(\psi(a'\ps{1}))\Big) = \\
& \a(\g(a\ps{0}))\a(\g(a'\ps{0})) \ot \phi(\psi(a\ps{1}))\phi(\psi(a'\ps{1})) = \nb_{Hom}(\a(a)\a(a')) = (\nb_{Hom} \circ \mu_\a)(a \ot a').
\end{align*}
As for the unit map, we endow $(k,\Id)$ with the (right) $H_\phi^\psi$-Hom-comodule structure given by
\[
\nb_k: k \to k \ot H_\phi^\psi, \qquad \nb_k(r) := 1 \ot \eta(r),
\] 
to observe that
\begin{align*}
& \left((\eta_\a\ot \Id)\circ\nb_k\right)(r) = \a(\eta_A(1)) \ot \eta(r) = \g(\eta_A(1)) \ot \psi(\eta(r)) = (\nb_{Hom} \circ \eta_\a)(r).
\end{align*} 
That is, the unit map $\eta_\a:(k,\Id)\to (A_\a,\g)$ is also a map of $H_\phi^\psi$-Hom-comodules.

Accordingly, the module algebra symmetry for Hom-bialgebras is defined as follows; see also \cite[Def. 4.1]{Yau10}.

\begin{definition}
Let $(\C{H},\mu,\eta,\phi,\D,\ve,\psi)$ be a Hom-bialgebra, and $(\C{A},\mu_\C{A},\eta_\C{A},\a)$ be a Hom-algebra together with a Hom-algebra endomorphism $\g:\C{A}\to \C{A}$ so that $(\C{A},\g)$ is a right $\C{H}$-Hom-comodule via $\nb:\C{A} \to \C{A}\ot \C{H}$. The pair $(\C{A},\g)$ is called a $\C{H}$-Hom-comodule algebra if the Hom-algebra structure maps are $\C{H}$-Hom-comodule morphisms, \ie
\begin{align}
& \a(a)\ns{0} \ot \a(a)\ns{1} = \a(a\ns{0}) \ot \phi(a\ns{1}), \label{Hom-comod-alg-00}\\
%& \g(c)\ns{0} \ot \g(c)\ns{1} = \g(c\ns{0}) \ot \psi(c\ns{1}), \label{Hom-comod-coalg-0}\\
& \nb(a\bullet a') = a\ns{0}\bullet a'\ns{0} \ot a\ns{1}\bullet a'\ns{1}, \label{Hom-comod-alg-I} \\
& \nb(\eta_\C{A}(1))= \eta_\C{A}(1)\ot \eta(1), \label{Hom-comod-alg-II}
\end{align}
for any $a,a'\in \C{A}$.
\end{definition}

\subsection{Hom-comodule coalgebra}\label{subsect-comodule-coalg}~

We shall consider, in the subsequent sections, the coaction of a Hom-Hopf algebra onto another Hom-Hopf algebra; the former being associative as an algebra, while the latter is coassociative as a coalgebra. As a result, we shall consider the following approach, via which we shall define the  ``comodule coalgebra'' symmetry for Hom-bialgebra.

Let $(H,\mu,\eta,\D,\ve)$ be a bialgebra, and $(C,\D_C,\ve_C)$ be a coalgebra so that $C$ is a (right) $H$-comodule coalgebra via $\nb:C \to C\ot H$. Let also $\phi,\psi:H\to H$ be two bialgebra homomorphsims, via which we construct the Hom-bialgebra $H_\phi^\psi$. Let, finally, $\b,\gamma:C\to C$ be two (commuting) coalgebra homomorphisms, so that,
\[
\nb(\gamma(c)) = \g(c\ps{0}) \ot \psi(c\ps{1}), \qquad \nb(\b(c)) = \b(c\ps{0}) \ot \phi(c\ps{1})
\]
for any $c\in C$. Then, setting
\[
\nb_{Hom}:C^\b \to C^\b\ot H_\phi^\psi, \qquad \nb_{Hom}(c) = c\ns{0} \ot c\ns{1} := \g(c\ps{0}) \ot \psi(c\ps{1}),
\]
we may endow $(C^\b,\g)$ with a (right) $H_\phi^\psi$-Hom-comodule structure. Indeed,
\begin{align}\label{comod-coalg-compt}
\begin{split}
& ((\g \ot \D_{H_\phi^\psi}) \circ \nb_{Hom})(c) = (\g \ot \D_{H_\phi^\psi})(\g(c\ps{0}) \ot \psi(c\ps{1})) = \g^2(c\ps{0}) \ot \psi^2(c\ps{1}\ps{1}) \ot \psi^2(c\ps{1}\ps{2}) = \\
& \g^2(c\ps{0}\ps{0}) \ot \psi^2(c\ps{0}\ps{1}) \ot \psi^2(c\ps{1}) = \nb_{Hom}(\g(c\ps{0})) \ot \psi^2(c\ps{1}) = ((\nb_{Hom} \ot \psi)\circ \nb_{Hom}) (c). 
\end{split}
\end{align}

Similarly, for the diagonal Hom-coaction defined by
\begin{align*}
& \nb_{Hom}^{diag}:C^\b \ot C^\b \to C^\b \ot C^\b \ot H_\phi^\psi, \\ &\nb_{Hom}^{diag}(c\ot c') := c\ns{0} \ot c'\ns{0} \ot c\ns{1} \bullet c'\ns{1} = \big(\g(c\ps{0}) \ot \g(c'\ps{0})\big) \ot \phi(\psi(c\ps{1})) \phi(\psi(c'\ps{1})),
\end{align*}
the pair $(C^\b\ot C^\b, \g\ot \g)$ becomes a right $H_\phi^\psi$-Hom-comodule. To this end, we observe that  
\begin{align}\label{comod-coalg-compt-II}
\begin{split}
& ((\g \ot \D_{H_\phi^\psi}) \circ \nb_{Hom}^{diag})(c\ot c') = (\g \ot \D_{H_\phi^\psi}) \Big(\g(c\ps{0}) \ot \g(c'\ps{0}) \ot \phi(\psi(c\ps{1})) \phi(\psi(c'\ps{1}))\Big) =\\
& \g^2(c\ps{0}) \ot \g^2(c'\ps{0}) \ot \phi(\psi^2(c\ps{1})) \phi(\psi^2(c'\ps{1})) \ot \phi(\psi^2(c\ps{2})) \phi(\psi^2(c'\ps{2})) = \\
& \g^2(c\ps{0}\ps{0}) \ot \g^2(c'\ps{0}\ps{0}) \ot \phi(\psi^2(c\ps{0}\ps{1})) \phi(\psi^2(c'\ps{0}\ps{1})) \ot \phi(\psi^2(c\ps{1})) \phi(\psi^2(c'\ps{1})) = \\
& \g^2\big(c\ps{0}\ps{0} \ot c'\ps{0}\ps{0}\big) \ot \phi(\psi^2(c\ps{0}\ps{1})) \phi(\psi^2(c'\ps{0}\ps{1})) \ot \phi(\psi^2(c\ps{1})) \phi(\psi^2(c'\ps{1})) = \\
& \nb_{Hom}^{diag}(\g(c\ps{0}) \ot \g(c'\ps{0})) \ot \phi(\psi^2(c\ps{1})) \phi(\psi^2(c'\ps{1})) = ((\nb_{Hom}^{diag} \ot \psi)\circ \nb_{Hom}^{diag}) (c \ot c').
\end{split}
\end{align}

Moreover, $\D_\b: (C^\b,\g) \to (C^\b\ot C^\b, \g\ot\g)$ is a morphism of (right) $H_\phi^\psi$-Hom-comodules. Indeed,
\begin{align*}
& (\nb_{Hom}^{diag}\circ \D_\b)(c) = \nb_{Hom}^{diag}(\b(c\ps{1})\ot \b(c\ps{2})) = \b\g(c\ps{1}\ps{0}) \ot \b\g(c\ps{2}\ps{0}) \ot \psi(\phi^2(c\ps{1}\ps{1})) \psi(\phi^2(c\ps{2}\ps{1})) = \\
& \b\g(c\ps{0}\ps{1}) \ot \b\g(c\ps{0}\ps{2}) \ot \psi(\phi^2(c\ps{1}))  = (\D_\b\ot \Id) (\g(c\ps{0}) \ot \psi(\phi^2(c\ps{1}))) = \left((\D_\b\ot \phi^2)\circ\nb_{Hom}\right)(c).
\end{align*} 
Quite similarly, endowing $(k,\Id)$ with the (right) $H_\phi^\psi$-Hom-comodule structure given by
\[
\nb_k: k \to k \ot H_\phi^\psi, \qquad \nb_k(r) := 1 \ot \eta(r),
\] 
the counit $\ve_C:(C^\b,\g)\to (k,\Id)$ becomes a morphism of (right) $H_\phi^\psi$-Hom-comodules. Indeed,
\[
(\nb_k\circ\ve_C)(c) = 1 \ot \eta(\ve_C(c)) = \ve_C(c\ps{0}) \ot \psi(c\ps{1}) = \ve_C(\g(c\ps{0})) \ot \psi(c\ps{1}) = ((\ve_C\ot \Id)\circ\nb_{Hom})(c).
\]

\begin{definition}\label{def-Hom-comod-coalg}
Let $(\C{H},\mu,\eta,\phi,\D,\ve,\psi)$ be a Hom-bialgebra, and $(\C{C},\D_\C{C},\ve_\C{C},\b)$ be a Hom-coalgebra together with a Hom-coalgebra endomorphism $\g:\C{C}\to \C{C}$ so that $(\C{C},\g)$ is a right $\C{H}$-Hom-comodule via $\nb:\C{C} \to \C{C}\ot \C{H}$. The pair $(\C{C},\g)$ is called a $\C{H}$-Hom-comodule coalgebra if the Hom-coalgebra structure maps are $\C{H}$-Hom-comodule morphisms, \ie
\begin{align}
& \b(c)\ns{0} \ot \b(c)\ns{1} = \b(c\ns{0}) \ot \phi(c\ns{1}), \label{Hom-comod-coalg-00}\\
%& \g(c)\ns{0} \ot \g(c)\ns{1} = \g(c\ns{0}) \ot \psi(c\ns{1}), \label{Hom-comod-coalg-0}\\
& c\ns{0}\ns{1} \ot c\ns{0}\ns{2} \ot \phi^2(c\ns{1}) = c\ns{1}\ns{0} \ot c\ns{2}\ns{0} \ot c\ns{1}\ns{1} \bullet c\ns{2}\ns{1}, \label{Hom-comod-coalg-I} \\
& \ve_\C{C}(c\ns{0})c\ns{1} = \eta(\ve_\C{C}(c)), \label{Hom-comod-coalg-II}
\end{align}
for any $c\in \C{C}$.
\end{definition}

\section{Double cross product Hom-Hopf algebras}\label{sect-Double-cross-product-Hom-Hopf}

The notions of ``bicrossproduct Hom-Hopf algebra'' and ``double cross product Hom-Hopf algebra'' have already been developed, but only for the Hom-Hopf algebras of type $(\a,\a)$ and $(\a,\a^{-1})$. As such, the theory fails to cover the universal enveloping Hom-Hopf algebras of Hom-Lie algebras. To be able to develop a theory including this important class of Hom-Hopf algebras, we shall develop in the remaining sections the notions of the ``matched pairs of Hom-Hopf algebras'' (which leads to double cross product Hom-Hopf algebras) and the ``mutual pairs of Hom-Hopf algebras'' (that yields the bicrossproduct Hom-Hopf algebras) for the Hom-Hopf algebras of an arbitrary $(\a,\b)$-type.

In the present section we shall confine ourselves with the ``matched pairs of Hom-Hopf algebras'',  and hence the bicrossed product Hom-Hopf algebras, postponing the ``mutual pairs of Hom-Hopf algebras'' (and the bicrossproduct Hom-Hopf algebras) to the next section. It is this section that  we achieve to show that the universal enveloping Hom-Hopf algebras of a matched pair of Hom-Lie algebras form a matched pair of Hom-Hopf algebras.

\subsection{The double cross product construction}\label{subsect-double-cross-product-construction}~

Let us begin with the mutual pairs of Hom-Hopf algebras, and the double cross product Hom-Hopf algebra construction. To this end, we shall adopt \cite[Def. 2.11]{LuWang16}.

\begin{definition}
$(\C{U},\mu_\C{U}, \eta_\C{U},\phi,\D_\C{U},\ve_\C{U},\psi,S_\C{U})$ and $(\C{V},\mu_\C{V},\eta_\C{V},\a,\D_\C{V},\ve_\C{V},\b,S_\C{V})$ being two Hom-Hopf algebras, the pair $(\C{U},\C{V})$ is called a ``matched pair of Hom-Hopf algebras'' if
\begin{itemize}
\item[(i)] $(\C{U},\phi)$ is a left $\C{V}$-Hom-module coalgebra (via, say, $\rt:\C{V}\ot \C{U} \to \C{U}$) satisfying 
\begin{equation}\label{rt-phi-compatibility}
\phi(v\rt u) = \a(v)\rt \phi(u)
\end{equation} 
for any $u \in \C{U}$ and any $v\in \C{V}$, 
\item[(ii)] $(\C{V},\a)$ is a right $\C{U}$-Hom-module coalgebra (via, say, $\lt:\C{V}\ot \C{U} \to \C{V}$) satisfying 
\begin{equation}\label{lt-a-compatibility}
\a(v\lt u) = \a(v)\lt \phi(u)
\end{equation} 
for any $u \in \C{U}$ and any $v\in \C{V}$, 
\item[(iii)] for any $u,u'\in \C{U}$, and any $v,v'\in \C{V}$, 
\begin{align}
& v \rt (uu') = \big(\a^{-1}(\b^{-1}(v\ns{1})) \rt \psi^{-1}(u\ns{1})\big)\Big(\big(\a^{-2}(\b^{-1}(v\ns{2})) \lt \phi^{-1}(\psi^{-1}(u\ns{2}))\big)\rt u'\Big), \label{v-rt-uu'}\\
& (vv')\lt u = \Big(v \lt  \big(\a^{-1}(\b^{-1}(v'\ns{1})) \rt \phi^{-2}(\psi^{-1}(u\ns{1}))\big)\Big)\big(\b^{-1}(v'\ns{2}) \lt \phi^{-1}(\psi^{-1}(u\ns{2}))\big), \label{vv'-lt-u} \\
& v\ns{1}\lt u\ns{1} \ot v\ns{2}\rt u\ns{2} = v\ns{2}\lt u\ns{2} \ot v\ns{1}\rt u\ns{1}, \label{v-lt-u-ot-v-rt-u-switch} \\
& v \rt 1 = \ve_\C{V}(v)1, \qquad 1 \lt u = 1\ve_\C{U}(u). \label{actions-on-1}
\end{align}
\end{itemize}
\end{definition}

There is, then, a Hom-Hopf algebra structure on $\C{U}\ot \C{V}$, which is denoted by $\C{U}\bowtie \C{V}$ and is given by the following proposition. Compare with \cite[Prop. 2.12]{LuWang16}.

\begin{proposition}\label{prop-matched-pair-double-cross-prod}
Let $(\C{U},\mu_\C{U}, \eta_\C{U},\phi,\D_\C{U},\ve_\C{U},\psi,S_\C{U})$ and $(\C{V},\mu_\C{V},\eta_\C{V},\a,\D_\C{V},\ve_\C{V},\b,S_\C{V})$ be two Hom-Hopf algebras, so that $(\C{U},\C{V})$ is a matched pair of Hom-Hopf algebras. Then there is a Hom-Hopf algebra structure on $\big(\C{U}\bowtie \C{V} := \C{U}\ot \C{V}, \phi\ot \a, \psi\ot \b \big)$ given by
\begin{align}
& (u\ot v)(u'\ot v') = u\big(\a^{-1}(\b^{-1}(v\ns{1}))\rt \phi^{-1}(\psi^{-1}(u'\ns{1}))\big) \ot \big(\a^{-1}(\b^{-1}(v\ns{2}))\lt \phi^{-1}(\psi^{-1}(u'\ns{2}))\big)v', \label{dcp-multp}\\
& \D(u\ot v) = \big(u\ns{1}\ot v\ns{1}\big) \ot \big(u\ns{2}\ot v\ns{2}\big),\label{dcp-comultp} \\
& S(u\ot v) = \big(1\ot S(\a^{-1}(v))\big)\big(S(\phi^{-1}(u)) \ot 1\big),\label{dcp-antipode}
\end{align}
for any $u,u'\in \C{U}$, and any $v,v'\in \C{V}$.
\end{proposition}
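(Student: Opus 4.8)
The plan is to install the four layers of structure on $\C{U}\bowtie\C{V}$ one at a time---Hom-coalgebra, Hom-algebra, their mutual compatibility, and the antipode---drawing at each stage on the matched-pair axioms. The coalgebra side is essentially free: the comultiplication \eqref{dcp-comultp}, the counit $\ve_\C{U}\ot\ve_\C{V}$, and the map $\psi\ot\b$ are precisely the tensor-product Hom-coalgebra structure on $\C{U}\ot\C{V}$ recalled in Subsection \ref{subsect-Hom-(co)algebras-duality}, so Hom-coassociativity \eqref{Hom-coassoc}, counitality \eqref{Hom-counit}, and the comultiplicativity of $\psi\ot\b$ hold automatically.

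Next I turn to the Hom-algebra structure \eqref{dcp-multp}, which I expect to be the principal obstacle. The unit is $\eta_\C{U}(1)\ot\eta_\C{V}(1)$: using that $\phi,\psi,\a,\b$ preserve the (co)units, the actions-on-$1$ relations \eqref{actions-on-1}, the right-module unit law $v\lt \eta_\C{U}(1)=\a(v)$, and the counit axiom \eqref{Hom-counit}, one checks $(u\ot v)(\eta_\C{U}(1)\ot\eta_\C{V}(1))=\phi(u)\ot\a(v)$ and symmetrically on the left, matching \eqref{Hom-unit}. The heart of the matter is Hom-associativity \eqref{Hom-Assoc}: expanding both
\[
\mu^{\bowtie}\big((\phi\ot\a)(u\ot v)\ot \mu^{\bowtie}\big((u'\ot v')\ot(u''\ot v'')\big)\big)
\quad\text{and}\quad
\mu^{\bowtie}\big(\mu^{\bowtie}\big((u\ot v)\ot(u'\ot v')\big)\ot(\phi\ot\a)(u''\ot v'')\big)
\]
by \eqref{dcp-multp}, one reduces the desired equality to the individual Hom-associativities of $\C{U}$ and $\C{V}$ after repeatedly invoking that $\rt$ and $\lt$ are Hom-module coalgebra maps, together with the two twisted ``module-algebra'' compatibilities \eqref{v-rt-uu'} and \eqref{vv'-lt-u}. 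The genuine difficulty here is the bookkeeping: the exponents of $\phi^{\pm1},\psi^{\pm1},\a^{\pm1},\b^{\pm1}$ distributed across the four tensor legs must be tracked with care, and it is exactly the powers appearing in \eqref{v-rt-uu'}--\eqref{vv'-lt-u} that are engineered to make the two sides collapse to a common expression.

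For the Hom-bialgebra compatibilities, multiplicativity of $\ve_\C{U}\ot\ve_\C{V}$ follows from the module-coalgebra counit law \eqref{Hom-mod-coalg-II} and \eqref{actions-on-1}, while multiplicativity of $\D$ is where the quasi-cocommutativity relation \eqref{v-lt-u-ot-v-rt-u-switch} becomes indispensable: applying \eqref{dcp-comultp} to a product \eqref{dcp-multp} and using \eqref{Hom-mod-coalg-I}, the two middle legs can be interchanged only by \eqref{v-lt-u-ot-v-rt-u-switch}, after which the expression factors as the tensor-product-algebra product of $\D(u\ot v)$ and $\D(u'\ot v')$. The remaining identities $\D\circ(\phi\ot\a)=\big((\phi\ot\a)\ot(\phi\ot\a)\big)\circ\D$, the multiplicativity of $\psi\ot\b$, and the commuting of $\phi\ot\a$ with $\psi\ot\b$ reduce to the corresponding identities in $\C{U}$ and $\C{V}$ together with \eqref{rt-phi-compatibility}, \eqref{lt-a-compatibility}, and the coalgebra-twist compatibility \eqref{Hom-mod-coalg-00} of the two actions.

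Finally, I would verify that \eqref{dcp-antipode} is the convolution inverse of $\Id$ for the product \eqref{conv-multp-bialg}; by the uniqueness in Proposition \ref{prop-antipode-properties}(i) it is then the antipode. Concretely, evaluating $\mu^{\bowtie}\circ(S\ot\Id)\circ\D$ on $u\ot v$, inserting \eqref{dcp-antipode}, expanding by \eqref{dcp-multp}, and collapsing the result by means of the antipode axioms of $\C{U}$ and $\C{V}$, the module-coalgebra structure, and \eqref{actions-on-1}, one arrives at $\ve_\C{U}(u)\ve_\C{V}(v)\,\eta_\C{U}(1)\ot\eta_\C{V}(1)$; the computation for $\mu^{\bowtie}\circ(\Id\ot S)\circ\D$ is symmetric. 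The two remaining antipode compatibilities $S\circ(\phi\ot\a)=(\phi\ot\a)\circ S$ and $S\circ(\psi\ot\b)=(\psi\ot\b)\circ S$ then follow from the antipode compatibilities in each factor together with \eqref{rt-phi-compatibility} and \eqref{lt-a-compatibility}.
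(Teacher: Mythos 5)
Your proposal is correct and follows essentially the same route as the paper's own proof: the coalgebra structure is taken as the tensor-product Hom-coalgebra, Hom-associativity of \eqref{dcp-multp} is reduced to the Hom-(co)associativities of $\C{U}$ and $\C{V}$ via \eqref{rt-phi-compatibility}, \eqref{lt-a-compatibility}, \eqref{v-rt-uu'} and \eqref{vv'-lt-u}, multiplicativity of \eqref{dcp-comultp} hinges on \eqref{v-lt-u-ot-v-rt-u-switch} combined with a Hom-coassociativity rearrangement, and the antipode is checked as the convolution inverse of the identity using \eqref{actions-on-1}. The only difference is one of completeness, not of method: the paper carries out the exponent bookkeeping that you describe but do not execute.
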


\begin{proof}
In order to see that \eqref{dcp-multp} is Hom-associative, we note on one hand that 
\begin{align*}
& (\phi(u)\ot \a(v))\big((u'\ot v')(u''\ot v'')\big) = \\
& (\phi(u)\ot \a(v))\Big(u'\big(\a^{-1}(\b^{-1}(v'\ns{1}))\rt \phi^{-1}(\psi^{-1}(u''\ns{1}))\big) \ot \big(\a^{-1}(\b^{-1}(v'\ns{2}))\lt \phi^{-1}(\psi^{-1}(u''\ns{2}))\big)v''\Big) = \\
& \bigg(\phi(u)\Big(\b^{-1}(v\ns{1}) \rt \Big[\phi^{-1}(\psi^{-1}(u'\ns{1}))\big(\a^{-2}(\b^{-2}(v'\ns{1}\ns{1})) \rt \phi^{-2}(\psi^{-2}(u''\ns{1}\ns{1}))\big)\Big]\Big) \ot \\
& \Big(\b^{-1}(v\ns{2})\lt \Big[\phi^{-1}(\psi^{-1}(u'\ns{2}))\big(\a^{-2}(\b^{-2}(v'\ns{1}\ns{2})) \rt \phi^{-2}(\psi^{-2}(u''\ns{1}\ns{2}))\big)\Big]\Big)\times \\
&\Big[\big(\a^{-1}(\b^{-1}(v'\ns{2}))\lt \phi^{-1}(\psi^{-1}(u''\ns{2}))\big)v''\Big]\bigg),
\end{align*}
where, in the second equality we used \eqref{lt-a-compatibility}, and the latter is equivalent, in view of the Hom-associativity of the right Hom-action together with \eqref{v-rt-uu'}, to 
\begin{align*}
& \bigg(\phi(u)\Big(\big(\a^{-1}(\b^{-2}(v\ns{1}\ns{1})) \rt \phi^{-1}(\psi^{-2}(u'\ns{1}\ns{1}))\big) \times \\
&\Big[\big(\a^{-2}(\b^{-2}(v\ns{1}\ns{2})) \lt \phi^{-2}(\psi^{-2}(u'\ns{1}\ns{2}))\big) \rt\big(\a^{-2}(\b^{-2}(v'\ns{1}\ns{1})) \rt \phi^{-2}(\psi^{-2}(u''\ns{1}\ns{1}))\big)\Big]\Big) \ot \\
& \Big[\big(\a^{-1}(\b^{-1}(v\ns{2}))\lt \phi^{-1}(\psi^{-1}(u'\ns{2}))\big) \lt \big(\a^{-1}(\b^{-2}(v'\ns{1}\ns{2})) \rt \phi^{-1}(\psi^{-2}(u''\ns{1}\ns{2}))\big)\Big]\times \\
&\Big[\big(\a^{-1}(\b^{-1}(v'\ns{2}))\lt \phi^{-1}(\psi^{-1}(u''\ns{2}))\big)v''\Big]\bigg).
\end{align*}
Finally, imposing the Hom-associativity of $\C{U}$, we obtain
\begin{align*}
& (\phi(u)\ot \a(v))\big((u'\ot v')(u''\ot v'')\big) = \\
& \bigg(\Big[u\big(\a^{-1}(\b^{-2}(v\ns{1}\ns{1})) \rt \phi^{-1}(\psi^{-2}(u'\ns{1}\ns{1}))\big) \Big]\times \\
&\Big[\big(\a^{-1}(\b^{-2}(v\ns{1}\ns{2})) \lt \phi^{-1}(\psi^{-2}(u'\ns{1}\ns{2}))\big) \rt\big(\a^{-1}(\b^{-2}(v'\ns{1}\ns{1})) \rt \phi^{-1}(\psi^{-2}(u''\ns{1}\ns{1}))\big)\Big] \ot \\
& \Big[\big(\a^{-1}(\b^{-1}(v\ns{2}))\lt \phi^{-1}(\psi^{-1}(u'\ns{2}))\big) \lt \big(\a^{-1}(\b^{-2}(v'\ns{1}\ns{2})) \rt \phi^{-1}(\psi^{-2}(u''\ns{1}\ns{2}))\big)\Big]\times \\
&\Big[\big(\a^{-1}(\b^{-1}(v'\ns{2}))\lt \phi^{-1}(\psi^{-1}(u''\ns{2}))\big)v''\Big]\bigg).
\end{align*}
Now, on the other hand
\begin{align*}
& \big((u\ot v)(u'\ot v')\big)(\phi(u'')\ot \a(v'')) = \\
& \Big(u\big(\a^{-1}(\b^{-1}(v\ns{1}))\rt \phi^{-1}(\psi^{-1}(u'\ns{1}))\big) \ot \big(\a^{-1}(\b^{-1}(v\ns{2}))\lt \phi^{-1}(\psi^{-1}(u'\ns{2}))\big)v'\Big) (\phi(u'')\ot \a(v'')),
\end{align*}
where the latter implies, in view of \eqref{rt-phi-compatibility},
\begin{align*}
& \bigg(\Big[u\big(\a^{-1}(\b^{-1}(v\ns{1}))\rt \phi^{-1}(\psi^{-1}(u'\ns{1}))\big) \Big]\times\\
&\Big[\Big(\big(\a^{-2}(\b^{-2}(v\ns{2}\ns{1}))\lt \phi^{-2}(\psi^{-2}(u'\ns{2}\ns{1}))\big)\a^{-1}(\b^{-1}(v'\ns{1}))\Big) \rt \psi^{-1}(u''\ns{1})\Big] \ot \\
& \Big[\Big(\big(\a^{-2}(\b^{-2}(v\ns{2}\ns{2}))\lt \phi^{-2}(\psi^{-2}(u'\ns{2}\ns{2}))\big)\a^{-1}(\b^{-1}(v'\ns{2}))\Big) \lt \psi^{-1}(u''\ns{2})\Big]\a(v'')\bigg).
\end{align*}
Next, using the Hom-associativity of the Hom-action together with \eqref{vv'-lt-u}, we arrive at
\begin{align*}
& \bigg(\Big[u\big(\a^{-1}(\b^{-1}(v\ns{1}))\rt \phi^{-1}(\psi^{-1}(u'\ns{1}))\big) \Big]\times\\
&\Big[\big(\a^{-1}(\b^{-2}(v\ns{2}\ns{1}))\lt \phi^{-1}(\psi^{-2}(u'\ns{2}\ns{1}))\big) \rt \big(\a^{-1}(\b^{-1}(v'\ns{1}))\rt \phi^{-1}(\psi^{-1}(u''\ns{1}))\big)\Big] \ot \\
& \Big[\Big(\big(\a^{-2}(\b^{-2}(v\ns{2}\ns{2}))\lt \phi^{-2}(\psi^{-2}(u'\ns{2}\ns{2}))\big)\lt \big(\a^{-2}(\b^{-2}(v'\ns{2}\ns{1}))\rt \phi^{-2}(\psi^{-2}(u''\ns{2}\ns{1}))\big)\Big) \times \\
& \big(\a^{-1}(\b^{-2}(v'\ns{2}\ns{2}))\lt \phi^{-1}(\psi^{-2}(u''\ns{2}\ns{2}))\big)\Big]\a(v'')\bigg).
\end{align*}
Finally, in view of the Hom-associativity of $\C{V}$, we obtain
\begin{align*}
& \big((u\ot v)(u'\ot v')\big)(\phi(u'')\ot \a(v'')) = \\
& \bigg(\Big[u\big(\a^{-1}(\b^{-1}(v\ns{1}))\rt \phi^{-1}(\psi^{-1}(u'\ns{1}))\big) \Big]\times\\
&\Big[\big(\a^{-1}(\b^{-2}(v\ns{2}\ns{1}))\lt \phi^{-1}(\psi^{-2}(u'\ns{2}\ns{1}))\big) \rt \big(\a^{-1}(\b^{-1}(v'\ns{1}))\rt \phi^{-1}(\psi^{-1}(u''\ns{1}))\big)\Big] \ot \\
& \Big(\big(\a^{-1}(\b^{-2}(v\ns{2}\ns{2}))\lt \phi^{-1}(\psi^{-2}(u'\ns{2}\ns{2}))\big)\lt \big(\a^{-1}(\b^{-2}(v'\ns{2}\ns{1}))\rt \phi^{-1}(\psi^{-2}(u''\ns{2}\ns{1}))\big)\Big) \times \\
& \Big[\big(\a^{-1}(\b^{-2}(v'\ns{2}\ns{2}))\lt \phi^{-1}(\psi^{-2}(u''\ns{2}\ns{2}))\big)v''\Big]\bigg).
\end{align*}
As a result, the Hom-associativity of \eqref{dcp-multp} follows from the Hom-coassociativity of $\C{U}$ and the Hom-coassociativity of $\C{V}$.

As for \eqref{dcp-comultp} being multiplicative, we observe that
\begin{align*}
& \D\big((u\ot v)(u'\ot v')\big) = \\
& \D\Big(u\big(\a^{-1}(\b^{-1}(v\ns{1}))\rt \phi^{-1}(\psi^{-1}(u'\ns{1}))\big) \ot \big(\a^{-1}(\b^{-1}(v\ns{2}))\lt \phi^{-1}(\psi^{-1}(u'\ns{2}))\big)v'\Big) = \\
& \bigg(\Big(u\ns{1}\big(\a^{-1}(\b^{-1}(v\ns{1}\ns{1}))\rt \phi^{-1}(\psi^{-1}(u'\ns{1}\ns{1}))\big) \ot \big(\a^{-1}(\b^{-1}(v\ns{2}\ns{1}))\lt \phi^{-1}(\psi^{-1}(u'\ns{2}\ns{1}))\big)v'\ns{1}\Big) \ot \\
& \Big(u\ns{2}\big(\a^{-1}(\b^{-1}(v\ns{1}\ns{2}))\rt \phi^{-1}(\psi^{-1}(u'\ns{1}\ns{2}))\big) \ot \big(\a^{-1}(\b^{-1}(v\ns{2}\ns{2}))\lt \phi^{-1}(\psi^{-1}(u'\ns{2}\ns{2}))\big)v'\ns{2}\Big)\bigg),
\end{align*}
while on the other hand
\begin{align*}
& \D(u\ot v)\D(u'\ot v') = \\
& (u\ns{1}\ot v\ns{1})(u'\ns{1}\ot v'\ns{1}) \ot (u\ns{2}\ot v\ns{2})(u'\ns{2}\ot v'\ns{2}) = \\
& \Big(u\ns{1}\big(\a^{-1}(\b^{-1}(v\ns{1}\ns{1}))\rt \phi^{-1}(\psi^{-1}(u'\ns{1}\ns{1}))\big) \ot \big(\a^{-1}(\b^{-1}(v\ns{1}\ns{2}))\lt \phi^{-1}(\psi^{-1}(u'\ns{1}\ns{2}))\big)v'\ns{1}\Big) \ot \\
& \Big(u\ns{2}\big(\a^{-1}(\b^{-1}(v\ns{2}\ns{1}))\rt \phi^{-1}(\psi^{-1}(u'\ns{2}\ns{1}))\big) \ot \big(\a^{-1}(\b^{-1}(v\ns{2}\ns{2}))\lt \phi^{-1}(\psi^{-1}(u'\ns{2}\ns{2}))\big)v'\ns{2}\Big).
\end{align*}
On the other hand, it follows from the Hom-coassociativity of the comultiplications (both on $\C{U}$ and on $\C{V}$), we have
\begin{align}\label{flip-strategy}
\begin{split}
& v\ns{1}\ns{1} \ot v\ns{1}\ns{2} \ot v\ns{2}\ns{1} \ot v\ns{2}\ns{2} = \b(v\ns{1})\ot v\ns{2}\ns{1} \ot \b^{-1}(v\ns{2}\ns{2}\ns{1}) \ot \b^{-1}(v\ns{2}\ns{2}\ns{2} )= \\
& \b(v\ns{1})\ot \b^{-1}(v\ns{2}\ns{1}\ns{1}) \ot \b^{-1}(v\ns{2}\ns{1}\ns{2})\ot v\ns{2}\ns{2}.
\end{split}
\end{align}
As such,
\begin{align*}
& \D(u\ot v)\D(u'\ot v') = \\
& \Big(u\ns{1}\big(\a^{-1}(\b^{-1}(\b(v\ns{1})))\rt \phi^{-1}(\psi^{-1}(\psi(u'\ns{1})))\big) \ot \\
& \hspace{3cm} \big(\a^{-1}(\b^{-1}(\b^{-1}(v\ns{2}\ns{1}\ns{1})))\lt \phi^{-1}(\psi^{-1}(\psi^{-1}(u'\ns{2}\ns{1}\ns{1})))\big)v'\ns{1}\Big) \ot \\
& \Big(u\ns{2}\big(\a^{-1}(\b^{-1}(\b^{-1}(v\ns{2}\ns{1}\ns{2})))\rt \phi^{-1}(\psi^{-1}(\psi^{-1}(u'\ns{2}\ns{1}\ns{2})))\big) \ot  \\
& \hspace{5cm}\big(\a^{-1}(\b^{-1}(v\ns{2}\ns{2}))\lt \phi^{-1}(\psi^{-1}(u'\ns{2}\ns{2}))\big)v'\ns{2}\Big).
\end{align*}
Next, we make use of \eqref{v-lt-u-ot-v-rt-u-switch} to obtain
\begin{align*}
& \D(u\ot v)\D(u'\ot v') = \\
& \Big(u\ns{1}\big(\a^{-1}(\b^{-1}(\b(v\ns{1})))\rt \phi^{-1}(\psi^{-1}(\psi(u'\ns{1})))\big) \ot \\
& \hspace{3cm} \big(\a^{-1}(\b^{-1}(\b^{-1}(v\ns{2}\ns{1}\ns{2})))\lt \phi^{-1}(\psi^{-1}(\psi^{-1}(u'\ns{2}\ns{1}\ns{2})))\big)v'\ns{1}\Big) \ot \\
& \Big(u\ns{2}\big(\a^{-1}(\b^{-1}(\b^{-1}(v\ns{2}\ns{1}\ns{1})))\rt \phi^{-1}(\psi^{-1}(\psi^{-1}(u'\ns{2}\ns{1}\ns{1})))\big) \ot  \\
& \hspace{5cm}\big(\a^{-1}(\b^{-1}(v\ns{2}\ns{2}))\lt \phi^{-1}(\psi^{-1}(u'\ns{2}\ns{2}))\big)v'\ns{2}\Big),
\end{align*}
where the latter is equivalent, in view of \eqref{flip-strategy}, to
\begin{align*}
& \bigg(\Big(u\ns{1}\big(\a^{-1}(\b^{-1}(v\ns{1}\ns{1}))\rt \phi^{-1}(\psi^{-1}(u'\ns{1}\ns{1}))\big) \ot \big(\a^{-1}(\b^{-1}(v\ns{2}\ns{1}))\lt \phi^{-1}(\psi^{-1}(u'\ns{2}\ns{1}))\big)v'\ns{1}\Big) \ot \\
& \Big(u\ns{2}\big(\a^{-1}(\b^{-1}(v\ns{1}\ns{2}))\rt \phi^{-1}(\psi^{-1}(u'\ns{1}\ns{2}))\big) \ot \big(\a^{-1}(\b^{-1}(v\ns{2}\ns{2}))\lt \phi^{-1}(\psi^{-1}(u'\ns{2}\ns{2}))\big)v'\ns{2}\Big)\bigg) = \\
& \D\big((u\ot v)(u'\ot v')\big).
\end{align*}
Thus the multiplicativity of \eqref{dcp-comultp}. Finally we consider \eqref{dcp-antipode}, the antipode. To this end we observe that
\begin{align*}
& S\big(\phi(u\ns{1}) \ot \a(v\ns{1})\big) \big(\phi(u\ns{2}) \ot \a(v\ns{2})\big)  = \Big[\big(1\ot S(v\ns{1})\big)\big(S(u\ns{1})\ot 1\big)\Big]\big(\phi(u\ns{2}) \ot \a(v\ns{2})\big) = \\
& \big(1\ot S(\a(v\ns{1}))\big)\Big[\big(S(u\ns{1})\ot 1)\big)\big(u\ns{2} \ot v\ns{2}\big)\Big] = \\
& \big(1\ot S(\a(v\ns{1}))\big)\Big(S(u\ns{1})\big(\a^{-1}(\b^{-1}(1))\rt \phi^{-1}(\psi^{-1}(u\ns{2}\ns{1}))\big) \ot \big(\a^{-1}(\b^{-1}(1))\lt \phi^{-1}(\psi^{-1}(u\ns{2}\ns{2}))\big)v\ns{2}\Big) = \\
& \big(1\ot S(\a(v\ns{1}))\big)\Big(S(u\ns{1})\psi^{-1}(u\ns{2}\ns{1}) \ot \ve(u\ns{2}\ns{2})1\bullet v\ns{2}\Big) = \\
& \big(1\ot S(\a(v\ns{1}))\big)\Big(S(u\ns{1})u\ns{2}\ot \a(v\ns{2})\Big) = \ve(u)\big(1\ot S(\a(v\ns{1}))\big)\big(1\ot \a(v\ns{2})\big)  = \\
& \ve(u) \big(1 \ve(v\ns{1}\ns{1})\ot S(\a(\b^{-1}(v\ns{1}\ns{2}))) \a(v\ns{2})\big) = \ve(u) \big(1\ot S(\a(v\ns{1})) \a(v\ns{2})\big) = \ve(u)\ve(v) \big(1\ot 1\big),
\end{align*}
where on the fourth and the seventh equalities we used \eqref{actions-on-1}. The claim then follows.
\end{proof}

\begin{remark}
We note that in the case that $\a = \b: \C{V}\to \C{V}$ and $\phi = \psi:\C{U}\to \C{U}$, \eqref{dcp-multp} reduces to the one given in \cite[Prop. 2.12]{LuWang16}, while \eqref{v-rt-uu'} and \eqref{vv'-lt-u}  become \cite[(2.6)]{LuWang16} and \cite[(2.5)]{LuWang16} respectively.
\end{remark}

\subsection{Universal enveloping Hom-Hopf algebras of matched pairs of Hom-Lie algebras}\label{subsect-univ-envlp-Hom-Hopf-Hom-Lie}~

We shall first recall the matched pair construction for a pair $(\G{g},\G{h})$ of (multiplicative) Hom-Lie algebras from \cite{ShenBai14}. Then, we shall sketch briefly the universal enveloping Hom-algebra construction from \cite{Laur-GengMakhTele18}. Finally, we shall observe that the universal enveloping Hom-algebras of a matched pair of Hom-Lie algebras form a mutual pair of Hom-Hopf algebras.

Let us begin with the representation of a (multiplicative) Hom-Lie algebra from \cite{Shen12}.

\begin{definition}
Let $(\G{g},[\,,\,],\phi)$ be a (multiplicative) Hom-Lie algebra, and $V$ be a vector space equipped with a linear map $\g:V\to V$. Then, the pair $(V,\g)$ is called a $\G{g}$-module if there is a linear mapping $\G{g}\ot V \to V$ such that
\begin{itemize}
\item[(i)] $\g(\xi\cdot v) = \phi(\xi) \cdot \g(v)$, and
\item[(ii)] $[\xi,\xi'] \cdot \g(v) = \phi(\xi) \cdot (\xi' \cdot v) - \phi(\xi') \cdot (\xi \cdot v)$
\end{itemize}
for any $\xi,\xi' \in \G{g}$, and any $v \in V$.
\end{definition}

We can, then, recall from \cite{ShenBai14} the definition of a matched pair of Hom-Lie algebras.

\begin{definition}
Let $(\G{g},[\,,\,],\phi)$ and $(\G{h},[\,,\,],\a)$ be two (multiplicative) Hom-Lie algebras such that $(\G{g},\phi)$ is a $\G{h}$-module via
\[
\rt:\G{h} \ot \G{g} \to \G{g}, \qquad \eta\ot \xi \mapsto \eta \rt \xi,
\]
and that $(\G{h},\a)$ is a $\G{g}$-module via
\[
\lt:\G{h} \ot \G{g} \to \G{h}, \qquad \eta\ot \xi \mapsto \eta \lt \xi.
\]
Then the pair $(\G{g},\G{h})$ is called a ``matched pair of Hom-Lie algebras'' if 
\begin{align}
& \a(\eta) \rt [\xi,\xi'] = [\eta\rt \xi, \phi(\xi')] + [\phi(\xi),\eta\rt \xi'] + (\eta \lt \xi) \rt \phi(\xi') - (\eta \lt \xi') \rt \phi(\xi), \label{matched-pair-Hom-Lie-alg-I}\\ 
& [\eta,\eta'] \lt \phi(\xi) = [\a(\eta), \eta'\lt \xi] + [\eta \lt \xi,\a(\eta')] + \a(\eta) \lt (\eta' \rt \xi) - \a(\eta') \lt (\eta \rt \xi),\label{matched-pair-Hom-Lie-alg-II}
\end{align}
for any $\xi,\xi' \in \G{g}$, and any $\eta,\eta' \in \G{h}$.
\end{definition}
It is then showed in \cite[Thm. 3.1]{ShenBai14} that $(\G{g} \bowtie \G{h}, \phi\times \a)$, where $\G{g}\bowtie \G{h} := \G{g}\oplus \G{h}$, is a Hom-Lie algebra via the bracket 
\[
[(\xi,\eta),(\xi',\eta')] = \Big([\xi,\xi'] + \eta \rt \xi' - \eta' \rt \xi, [\eta,\eta'] + \eta \lt \xi' - \eta' \lt \xi\Big)
\]
for any $\xi,\xi'\in \G{g}$, and any $\eta,\eta' \in \G{h}$, if and only if $(\G{g},\G{h})$ is a matched pair of Hom-Lie algebras.

On the rest of this subsection we shall consider the pair $(\C{U}(\G{g}),\C{U}(\G{h}))$ of Hom-Hopf algebras given two Hom-Lie algebras $(\G{g},\phi)$ and $(\G{h},\a)$ so that $(\G{g},\G{h})$ is a matched pair of Hom-Lie algebras.

We shall be concerned first with the (right) Hom-action of the Hom-algebra $(\B{T}^\G{g} / \C{I}^\G{g}, \vee, {\bf 1}, \G{a})$ on $(\G{h},\a)$. To this end, we let
\begin{equation}\label{Tg-action-I}
\eta \lt {\bf 1} := \a(\eta), \qquad \eta \lt (\vp_1,s,\xi) := \eta \lt \phi^s(\xi),
\end{equation}
and extend it to $\B{T}^\G{g}$ via
\begin{equation}\label{Tg-action-II}
\a(\eta)\lt (\vp\vee \vp') := (\eta\lt \vp)\lt \G{a}(\vp'')
\end{equation}
for any $\vp:= (\vp,s_1,\ldots,s_n,\xi_1,\ldots,\xi_n) \in B_n\ot \G{g}^{\ot \,n}$, and any $\vp':= (\vp',r_1,\ldots,r_m,\xi'_1,\ldots,\xi'_m) \in B_m\ot \G{g}^{\ot \,m}$. 

\begin{proposition}\label{prop-right-Tg-Hom-act-on-h}
Given two Hom-Lie algebras $(\G{g},\phi)$ and $(\G{h},\a)$, let $(\G{h},\a)$ be a (right) Hom-module over $(\G{g},\phi)$. Then, $(\G{h},\a)$ is a (right) Hom-module over the Hom-algebra $(\B{T}^\G{g} / \C{I}^\G{g}, \vee, {\bf 1}, \G{a})$.
\end{proposition}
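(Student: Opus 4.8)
The plan is to read the two prescriptions \eqref{Tg-action-I} and \eqref{Tg-action-II} as the unit axiom $\eta\lt {\bf 1} = \a(\eta)$ and the Hom-associativity axiom $\a(\eta)\lt(\vp\vee\vp') = (\eta\lt\vp)\lt\G{a}(\vp')$ of a right Hom-module $(\G{h},\a)$ over the unital Hom-algebra $(\B{T}^\G{g}/\C{I}^\G{g},\vee,{\bf 1},\G{a})$. Once the action is shown to be well defined on the quotient, the module axioms hold by construction, so the whole content of the statement splits into: (a) the recursion \eqref{Tg-action-II} unambiguously defines a map $\lt\colon \G{h}\ot\B{T}^\G{g}\to\G{h}$, and (b) this map annihilates $\C{I}^\G{g}$, hence descends.

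For (a) I would induct on the number of leaves: every weighted decorated tree with at least two leaves has a unique root decomposition $\vp_L\vee\vp_R$, so \eqref{Tg-action-II}, read (using the standing invertibility of $\a$) as $\eta\lt(\vp_L\vee\vp_R):=(\a^{-1}(\eta)\lt\vp_L)\lt\G{a}(\vp_R)$, determines $\eta\lt\vp$ from the strictly smaller trees $\vp_L$ and $\G{a}(\vp_R)$, with base cases \eqref{Tg-action-I}. The only consistency to check occurs at the boundary $\vp\vee{\bf 1}={\bf 1}\vee\vp=\G{a}(\vp)$ forced by unitality of $\B{T}^\G{g}/\C{I}^\G{g}$, where one derives the auxiliary identity
\[
\a(\eta)\lt\G{a}(\vp) = \a(\eta)\lt(\vp\vee{\bf 1}) = (\eta\lt\vp)\lt\G{a}({\bf 1}) = (\eta\lt\vp)\lt{\bf 1} = \a(\eta\lt\vp).
\]
Specialising this to a single leaf $(\vp_1,s,\xi)$, where $\G{a}(\vp_1,s,\xi)=(\vp_1,s,\phi(\xi))$, the required agreement $\a(\eta)\lt\phi^{s+1}(\xi) = \a(\eta\lt\phi^s(\xi))$ is exactly one instance of the Hom-Lie module identity $\a(\eta\lt\zeta)=\a(\eta)\lt\phi(\zeta)$, so the base cases are compatible with $\G{a}$.

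For (b) the decisive computation is that $\lt$ kills the generators \eqref{gen-ideal-I} of $\C{I}^\G{g}$, i.e.
\[
\eta\lt\big[(\vp\vee\vp')\vee\G{a}(\vp'')\big] = \eta\lt\big[\G{a}(\vp)\vee(\vp'\vee\vp'')\big].
\]
Iterating \eqref{Tg-action-II} twice along the root decomposition of the left tree gives $\big[(\a^{-2}(\eta)\lt\vp)\lt\G{a}(\vp')\big]\lt\G{a}^2(\vp'')$. For the right tree I would use $\G{a}(\vp'\vee\vp'')=\G{a}(\vp')\vee\G{a}(\vp'')$ together with the auxiliary identity to rewrite $\a^{-1}(\eta)\lt\G{a}(\vp)=\a(\a^{-2}(\eta)\lt\vp)$, and then a single further application of \eqref{Tg-action-II} produces the same element $\big[(\a^{-2}(\eta)\lt\vp)\lt\G{a}(\vp')\big]\lt\G{a}^2(\vp'')$. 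Hence the two sides coincide and the generators act by zero; since this holds for every $\eta\in\G{h}$, and since $\G{a}$ maps $\C{I}^\G{g}$ into itself, the associativity axiom \eqref{Tg-action-II} propagates the vanishing from the generators to the whole ideal $\C{I}^\G{g}$, so that $\lt$ descends to $\B{T}^\G{g}/\C{I}^\G{g}$.

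The descent is essentially forced, because \eqref{Tg-action-II} is engineered to mirror Hom-associativity; the genuine obstacle is the bookkeeping in step (a), where the weight data and the $\G{g}$-decorations must be kept in step and all the powers of $\a^{-1}$, $\G{a}$ and $\G{a}^2$ must line up, the crucial consistency input being the Hom-Lie module identity. I note that the bracket and Hom-Jacobi relations (the ideal $\C{J}^\G{g}$) play no role here, since we pass only to $\B{T}^\G{g}/\C{I}^\G{g}$ and not all the way to $\C{U}(\G{g})$.
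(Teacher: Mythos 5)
Your proposal is correct and follows essentially the same route as the paper: your auxiliary identity $\a(\eta)\lt \G{a}(\vp) = \a(\eta\lt\vp)$ is exactly the paper's \eqref{Tg-action-III} (established there by induction, with the Hom-Lie module identity entering at the single-leaf case just as in your base case), and your computation showing that both association orders of a generator \eqref{gen-ideal-I} act as $\big[(\a^{-2}(\eta)\lt\vp)\lt\G{a}(\vp')\big]\lt\G{a}^{2}(\vp'')$ is, up to a shift of $\eta$ by one power of $\a$, verbatim the paper's chain of equalities using \eqref{Tg-action-II}, \eqref{Tg-action-III} and the multiplicativity of $\G{a}$. Your additional remarks -- the well-definedness of the recursion via the unique root decomposition, the propagation of the vanishing from the generators to all of $\C{I}^{\G{g}}$, and the irrelevance of $\C{J}^{\G{g}}$ at this stage -- are all correct and merely make explicit what the paper leaves implicit.
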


\begin{proof}
Let us first note inductively from \eqref{Tg-action-I} and \eqref{Tg-action-II} that
\begin{equation}\label{Tg-action-III}
\a(\eta \lt \vp) = \a(\eta)\lt \G{a}(\vp)
\end{equation}
for any $\eta\in \G{h}$, and any $\vp \in \B{T}^\G{g}$, and that
\begin{align*}
& \a(\eta) \lt \Big[\G{a}(\vp)\vee (\vp' \vee \vp'')\Big] = (\eta\lt \G{a}(\vp)) \lt \G{a}(\vp' \vee \vp'') = \\
& \a(\a^{-1}(\eta) \lt \vp) \lt \Big[\G{a}(\vp') \vee \G{a}(\vp'')\Big] = \Big[(\a^{-1}(\eta) \lt \vp) \lt \G{a}(\vp')\Big] \lt \G{a}^2(\vp'') = \\
& \Big[\eta\lt (\vp \vee \vp')\Big] \lt \G{a}^2(\vp'') = \a(\eta) \lt \Big[(\vp\vee \vp' )\vee \G{a}(\vp'')\Big],
\end{align*}
for any $\vp,\vp',\vp'' \in \B{T}^\G{g}$. Here, we used \eqref{Tg-action-II} on the first, third, fouth, and the fifth equalities. On the second equality we used \eqref{Tg-action-III} as well as \cite[Lemma 4.2]{Laur-GengMakhTele18}. 

As a result, the operation thus defined pass to the quotient $\B{T}^\G{g} / \C{I}^\G{g}$ as a Hom-action.
\end{proof}

Moreover, the action pass also to the quotient by $\C{J}^\G{g}$.

\begin{corollary}\label{coroll-right-Tg-Hom-act-on-h}
Given two Hom-Lie algebras $(\G{g},\phi)$ and $(\G{h},\a)$, let $(\G{h},\a)$ be a (right) Hom-module over $(\G{g},\phi)$. Then, $(\G{h},\a)$ is a (right) Hom-module over the universal enveloping Hom-Hopf algebra $(\C{U}(\G{g}), \vee, {\bf 1}, \phi)$.
\end{corollary}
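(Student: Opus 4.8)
The plan is to leverage Proposition \ref{prop-right-Tg-Hom-act-on-h}, which already equips $(\G{h},\a)$ with a right Hom-action $\lt$ of $(\B{T}^\G{g} / \C{I}^\G{g}, \vee, {\bf 1}, \G{a})$, and then to show that this action descends along the quotient map onto $\C{U}(\G{g}) = (\B{T}^\G{g} / \C{I}^\G{g})/\C{J}^\G{g}$. Descent of a right action is equivalent to the vanishing $\G{h} \lt \C{J}^\G{g} = 0$, and since $\C{J}^\G{g}$ is the two-sided ideal generated by the two families (i) and (ii) listed just before the statement, the whole task reduces to annihilating those generators and then propagating the vanishing across the ideal.

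First I would dispose of family (i), which is immediate. Directly from \eqref{Tg-action-I}, for any $\eta \in \G{h}$ one has $\eta \lt (\vp_1,s,\xi) = \eta \lt \phi^s(\xi)$ and $\eta \lt (\vp_1,0,\phi^s(\xi)) = \eta \lt \phi^0(\phi^s(\xi)) = \eta \lt \phi^s(\xi)$, so the generator is killed with no extra hypothesis.

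The core is family (ii), and it is precisely here that the Hom-Lie module axiom is used. Writing $(\vp_2,0,0,\xi_1,\xi_2) = (\vp_1,0,\xi_1)\vee(\vp_1,0,\xi_2)$ and applying \eqref{Tg-action-II} together with \eqref{Tg-action-I} and $\G{a}(\vp_1,0,\xi_2) = (\vp_1,0,\phi(\xi_2))$, I obtain
\begin{align*}
\a(\eta) \lt (\vp_2,0,0,\xi_1,\xi_2) &= \big(\eta\lt(\vp_1,0,\xi_1)\big)\lt \G{a}(\vp_1,0,\xi_2) = (\eta \lt \xi_1)\lt \phi(\xi_2),
\end{align*}
and symmetrically $\a(\eta)\lt(\vp_2,0,0,\xi_2,\xi_1) = (\eta\lt\xi_2)\lt\phi(\xi_1)$, while $\a(\eta)\lt(\vp_1,0,[\xi_1,\xi_2]) = \a(\eta)\lt[\xi_1,\xi_2]$. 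Hence the value of the generator (ii) on $\a(\eta)$ equals
\[
(\eta\lt\xi_1)\lt\phi(\xi_2) - (\eta\lt\xi_2)\lt\phi(\xi_1) - \a(\eta)\lt[\xi_1,\xi_2],
\]
which is exactly the right-handed analogue of the second axiom in the definition of a $\G{g}$-module, and therefore vanishes. Since $\a$ is invertible, this gives $\eta \lt j = 0$ for every generator $j$ of type (ii) and every $\eta$. This step is the heart of the argument; the relation (ii) defining $\C{J}^\G{g}$ is tailored to match the Hom-Lie representation axiom, so the check is essentially a single unwinding of the definitions.

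It remains to upgrade the vanishing on generators to $\G{h} \lt \C{J}^\G{g} = 0$, and this is the one point needing a small additional observation. Using \eqref{Tg-action-II} in both directions, one has $\a(\eta)\lt(j\vee b) = (\eta \lt j)\lt \G{a}(b) = 0$ for a generator $j$, and $\a(\eta)\lt(a\vee j) = (\eta\lt a)\lt \G{a}(j)$; the latter vanishes once one notes that $\G{a}$ sends each generator of $\C{J}^\G{g}$ to another generator of the same type — for family (ii) this uses the multiplicativity $\phi([\xi_1,\xi_2])=[\phi(\xi_1),\phi(\xi_2)]$ — so that $\G{a}(\C{J}^\G{g}) \subseteq \C{J}^\G{g}$ and $\G{h}\lt \G{a}(j) = 0$. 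Iterating these two reductions and using linearity forces $\G{h} \lt \C{J}^\G{g} = 0$, whence $\lt$ descends to a right Hom-action of $\big(\C{U}(\G{g}),\vee,{\bf 1},\phi\big)$ on $(\G{h},\a)$. Thus the only genuinely delicate ingredient is the generator (ii) identity; the $\G{a}$-stability of $\C{J}^\G{g}$ is the routine bookkeeping that carries the check from generators to the full ideal.
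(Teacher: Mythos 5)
Your proof is correct and takes essentially the same route as the paper's: both reduce the statement to showing that the action of Proposition \ref{prop-right-Tg-Hom-act-on-h} annihilates the two families of generators of $\C{J}^\G{g}$, disposing of family (i) directly from \eqref{Tg-action-I} and of family (ii) by unwinding \eqref{Tg-action-II} into the right-handed Hom-Lie representation axiom $\a(\eta)\lt[\xi_1,\xi_2]=(\eta\lt\xi_1)\lt\phi(\xi_2)-(\eta\lt\xi_2)\lt\phi(\xi_1)$. Your closing step --- checking $\G{a}(\C{J}^\G{g})\subseteq\C{J}^\G{g}$ (via multiplicativity of $\phi$) so that vanishing on generators propagates through $\a(\eta)\lt(a\vee j)=(\eta\lt a)\lt\G{a}(j)$ to the whole ideal --- is left implicit in the paper, so this is a sound completion rather than a deviation.
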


\begin{proof}
It suffaces to show that the (right) Hom-action over $\B{T}^\G{g} / \C{I}^\G{g}$ passes to the quotient by $\C{J}^\G{g}$. Indeed, we have
\[
\eta \lt (\vp_1,0,\phi^s(\xi)) = \eta \lt \phi^0(\phi^s(\xi)) = \eta \lt \phi^s(\xi) = \eta \lt (\vp_1,s,\xi),
\]
and
\begin{align*}
& \a(\eta)\lt (\vp_1,0,[\xi_1,\xi_2]) = \a(\eta) \lt [\xi_1,\xi_2] = \\
& (\eta \lt \xi_1) \lt \phi(\xi_2) - (\eta \lt \xi_2) \lt \phi(\xi_1) = \\
& \eta\lt \Big[(\vp_1,0,\xi_1) \vee (\vp_1,0,\xi_2)\Big] - \eta\lt \Big[(\vp_1,0,\xi_2) \vee (\vp_1,0,\xi_1)\Big] = \\
& \eta \lt \Big[(\vp_2,0,0,\xi_1,\xi_2) - (\vp_2,0,0,\xi_2,\xi_1)\Big].
\end{align*}
\end{proof}

We shall now proceed to a (left) Hom-action of $(\G{h},\a)$ on the universal enveloping Hom-Hopf algebra $(\C{U}(\G{g}), \vee, {\bf 1}, \phi, \D,\ve,\Id,S)$. To this end, we let
\begin{equation}\label{eqn-h-Hom-action-I}
\eta\rt{\bf 1} := 0, \qquad \eta\rt (\vp_1,s,\xi) := (\vp_1, s, \a^{-s}(\eta)\rt \xi),
\end{equation}
and extend it on $\B{T}^\G{g}$ according to \eqref{v-rt-uu'}, that is,
\begin{equation}\label{eqn-h-Hom-action-II}
\eta \rt (\vp \vee \vp') := (\a^{-1}(\eta)\rt \vp) \vee \G{a}(\vp') + \G{a}(\vp\ns{1}) \vee \Big[\Big(\a^{-2}(\eta)\lt \G{a}^{-1}(\vp\ns{2})\Big)\rt \vp'\Big]
\end{equation}
for any $\vp:= (\vp,s_1,\ldots,s_n,\xi_1,\ldots,\xi_n) \in B_n\ot \G{g}^{\ot \,n}$, and any $\vp':= (\vp',r_1,\ldots,r_m,\xi'_1,\ldots,\xi'_m) \in B_m\ot \G{g}^{\ot \,m}$. The right Hom-action here, is the one discussed in Proposition \ref{prop-right-Tg-Hom-act-on-h}, and we note also that $\G{a}:\B{T}^\G{g} \to \B{T}^\G{g}$ is invertible on $\D(\vp) = \vp\ns{1}\ot \vp\ns{2}$ for any $\vp\in B_n\ot\G{g}^{\ot\,n}$. 

 We shall need the following compatibility of the (right) Hom-action on $(\G{h},\a)$ discussed in Proposition \ref{prop-right-Tg-Hom-act-on-h} and Corollary \ref{coroll-right-Tg-Hom-act-on-h}.

\begin{lemma}\label{lemma-right-action-on-bracket}
Given two Hom-Lie algebras $(\G{g},\phi)$ and $(\G{h},\a)$, let $(\G{h},\a)$ be a (right) Hom-module over $(\G{g},\phi)$ satisfying \eqref{matched-pair-Hom-Lie-alg-II}, and $(\G{g},\phi)$ be a (left) Hom-module over the Hom-Lie algebra $(\G{h},\a)$. Then, the (right) Hom-action of the Hom algebra $(\B{T}^\G{g} / \C{I}^\G{g}, \vee, {\bf 1}, \G{a})$ on $(\G{h},\a)$ satisfies
\[
[\eta,\eta'] \lt \G{a}(\vp) = [\eta \lt \vp\ns{1}, \eta'\lt \vp\ns{2}] + \a(\eta) \lt (\eta' \rt \vp) - \a(\eta') \lt (\eta \rt \vp)
\]
for any $\eta,\eta' \in \G{h}$, and any $\vp \in \B{T}^\G{g} / \C{I}^\G{g}$.
\end{lemma}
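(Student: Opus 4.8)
The plan is to argue by induction on the number $n$ of leaves of the tree underlying $\vp$, using that every tree monomial in $\B{T}^\G{g}/\C{I}^\G{g}$ is either ${\bf 1}$, a single decorated leaf $(\vp_1,s,\xi)$, or a graft $\vp'\vee\vp''$ of two trees each having strictly fewer than $n$ leaves. Since both sides of the asserted identity are linear in $\vp$, and the actions descend to the quotient by Proposition \ref{prop-right-Tg-Hom-act-on-h} and Corollary \ref{coroll-right-Tg-Hom-act-on-h}, it suffices to verify it on such monomials. For the base cases, when $\vp={\bf 1}$ one has $\D({\bf 1})={\bf 1}\ot{\bf 1}$, $\G{a}({\bf 1})={\bf 1}$ and $\eta\rt{\bf 1}=0$, so both sides collapse to $[\a(\eta),\a(\eta')]$ by $\eta\lt{\bf 1}=\a(\eta)$ and multiplicativity of $\a$. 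When $\vp=(\vp_1,s,\xi)$ the coproduct is ${\bf 1}\ot(\vp_1,s,\xi)+(\vp_1,s,\xi)\ot{\bf 1}$, and after rewriting $\eta\rt(\vp_1,s,\xi)=(\vp_1,s,\a^{-s}(\eta)\rt\xi)$ via \eqref{eqn-h-Hom-action-I} and absorbing the weight through the module compatibility $\phi^s(\eta\rt\xi)=\a^s(\eta)\rt\phi^s(\xi)$, the claim reduces verbatim to the matched-pair axiom \eqref{matched-pair-Hom-Lie-alg-II} evaluated at $\phi^s(\xi)$.

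For the inductive step I write $\vp=\vp'\vee\vp''$. On the left I first use that $\G{a}$ is multiplicative, $\G{a}(\vp'\vee\vp'')=\G{a}(\vp')\vee\G{a}(\vp'')$, together with the recursion \eqref{Tg-action-II}, to obtain $[\eta,\eta']\lt\G{a}(\vp)=\big([\a^{-1}(\eta),\a^{-1}(\eta')]\lt\G{a}(\vp')\big)\lt\G{a}^2(\vp'')$; the inductive hypothesis for $\vp'$ then splits this into one bracket term and two ``action'' terms. On the right I expand $\D(\vp'\vee\vp'')=(\vp'\ns{1}\vee\vp''\ns{1})\ot(\vp'\ns{2}\vee\vp''\ns{2})$ by multiplicativity of $\D$, expand $\eta'\rt(\vp'\vee\vp'')$ and $\eta\rt(\vp'\vee\vp'')$ by \eqref{eqn-h-Hom-action-II}, and reduce every grafted action back through \eqref{Tg-action-II}.

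The two ``action'' terms produced by the inductive hypothesis on the left then match exactly the first pieces of $\a(\eta)\lt(\eta'\rt\vp)$ and of $-\a(\eta')\lt(\eta\rt\vp)$ on the right. The residual bracket term $[\a^{-1}(\eta)\lt\vp'\ns{1},\a^{-1}(\eta')\lt\vp'\ns{2}]\lt\G{a}^2(\vp'')$ is handled by applying the inductive hypothesis a second time, now to $\G{a}(\vp'')$ (which has the same, hence fewer than $n$, number of leaves), using $\G{a}(\vp'')\ns{i}=\G{a}(\vp''\ns{i})$ from the bialgebra compatibility $\D\circ\G{a}=(\G{a}\ot\G{a})\circ\D$. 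This produces precisely the remaining bracket term and the two remaining ``second pieces'' on the right. Matching these last two pieces uses \eqref{Tg-action-III} together with the $\phi$-equivariance $\G{a}(\zeta\rt\chi)=\a(\zeta)\rt\G{a}(\chi)$ of the left action — the analogue of \eqref{rt-phi-compatibility}, itself checked on generators from the module compatibility $\phi(\eta\rt\xi)=\a(\eta)\rt\phi(\xi)$ and propagated along \eqref{eqn-h-Hom-action-II}.

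The main obstacle is purely organisational: keeping the twists by $\phi,\a$ and their inverses aligned while reindexing the two Sweedler summations, and recognising that the grafted right-action rule $(\zeta\lt x)\lt\G{a}(y)=\a(\zeta)\lt(x\vee y)$ lets the left-hand and right-hand ``action'' terms be identified before any bracket is touched. Once the bookkeeping is in place, no structural input beyond \eqref{Tg-action-II}, \eqref{Tg-action-III}, \eqref{eqn-h-Hom-action-II} and the Hom-bialgebra compatibilities of Definition \ref{def-Hom-bialg-Hom-Hopf} is required in the inductive step; the Hom-Jacobi identity and the Leibniz-type module axiom enter only through the base case \eqref{matched-pair-Hom-Lie-alg-II}.
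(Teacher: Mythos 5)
Your proposal follows essentially the same route as the paper's own proof: induction over tree monomials, with the single-leaf case reducing to \eqref{matched-pair-Hom-Lie-alg-II} via the module compatibility $\phi^s(\eta\rt\xi)=\a^s(\eta)\rt\phi^s(\xi)$, and the grafting step handled by applying the inductive hypothesis twice (first to $\vp'$, then to $\G{a}(\vp'')$) and reassembling through \eqref{Tg-action-II}, \eqref{Tg-action-III}, the $\G{a}$-equivariance of the left action, and \eqref{eqn-h-Hom-action-II}. The only cosmetic differences are your explicit treatment of the base case ${\bf 1}$ and your flagging of the Sweedler reindexing (cocommutativity of the tree coproduct), both of which the paper leaves implicit.
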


\begin{proof}
Let us begin with
\begin{align*}
& [\eta,\eta'] \lt \G{a}(\vp_1,s,\xi) = [\eta,\eta'] \lt \phi^{s+1}(\xi) = \\
& [\a(\eta), \eta'\lt \phi^s(\xi)] + [\eta\lt \phi^s(\xi), \a(\eta')] + \a(\eta)\lt \Big(\eta'\rt \phi^s(\xi)\Big) - \a(\eta')\lt \Big(\eta\rt \phi^s(\xi)\Big) = \\
& [\eta \lt {\bf 1}, \eta'\lt (\vp_1,s,\xi)] + [\eta\lt (\vp_1,s,\xi), \eta' \lt {\bf 1}] +  \a(\eta)\lt \phi^s(\a^{-s}(\eta')\rt \xi) - \a(\eta')\lt \phi^s(\a^{-s}(\eta)\rt \xi) = \\
& [\eta \lt (\vp_1,s,\xi)\ns{1}, \eta'\lt (\vp_1,s,\xi)\ns{2}] + \a(\eta)\lt \Big(\eta'\rt (\vp_1,s,\xi)\Big) - \a(\eta')\lt \Big(\eta\rt (\vp_1,s,\xi)\Big), 
\end{align*}
for any $\eta,\eta'\in \G{h}$, and any $\xi\in \G{g}$. Then, inductively 
\begin{align*}
& [\eta,\eta'] \lt \G{a}(\vp\vee \vp') = \Big([\a^{-1}(\eta),\a^{-1}(\eta')] \lt \G{a}(\vp)\Big) \lt \G{a}^2(\vp') = \\
& \Big\{[\a^{-1}(\eta)\lt \vp\ns{1},\a^{-1}(\eta')\lt\vp\ns{2}]  + \eta \lt \Big(\a^{-1}(\eta')\rt\vp\Big) - \eta' \lt \Big(\a^{-1}(\eta)\rt\vp\Big)\Big\} \lt \G{a}^2(\vp') = \\
& [\Big(\a^{-1}(\eta)\lt \vp\ns{1}\Big)\lt \G{a}(\vp'\ns{1}), \Big(\a^{-1}(\eta')\lt\vp\ns{2}\Big) \lt \G{a}(\vp'\ns{2})] + \\
& \a(\a^{-1}(\eta)\lt \vp\ns{1}) \lt \Big[\Big(\a^{-1}(\eta')\lt \vp\ns{2}\Big) \rt \G{a}(\vp')\Big] - \a(\a^{-1}(\eta')\lt \vp\ns{1}) \lt \Big[\Big(\a^{-1}(\eta)\lt \vp\ns{2}\Big) \rt \G{a}(\vp')\Big]  + \\
& \Big[\eta \lt \Big(\a^{-1}(\eta')\rt\vp\Big)\Big] \lt \G{a}^2(\vp') - \Big[\eta' \lt \Big(\a^{-1}(\eta)\rt\vp\Big)\Big]\lt \G{a}^2(\vp') = \\
& [\eta\lt \Big(\vp\ns{1}\vee \vp'\ns{1}\Big), \eta'\lt \Big(\vp\ns{2}\vee \vp'\ns{2}\Big)] + \\
& \a(\eta) \lt  \Big\{\G{a}(\vp\ns{1}) \vee\Big[\Big(\a^{-2}(\eta')\lt \G{a}^{-1}(\vp\ns{2})\Big) \rt \vp'\Big]\Big\} - \a(\eta') \lt  \Big\{\G{a}(\vp\ns{1}) \vee\Big[\Big(\a^{-2}(\eta)\lt \G{a}^{-1}(\vp\ns{2})\Big) \rt \vp'\Big]\Big\} + \\
& \a(\eta) \lt \Big[\Big(\a^{-1}(\eta')\rt\vp\Big) \vee\G{a}(\vp')\Big] - \a(\eta') \lt \Big[\Big(\a^{-1}(\eta)\rt\vp\Big) \vee\G{a}(\vp')\Big] = \\
& [\eta \lt (\vp\vee \vp')\ns{1},\eta'\lt (\vp\vee \vp')\ns{2}]  + \a(\eta)\lt \Big(\eta' \rt (\vp\vee \vp')\Big) - \a(\eta')\lt \Big(\eta \rt (\vp\vee \vp')\Big)
\end{align*}
for any $\eta, \eta' \in \G{h}$, and any $\vp,\vp'\in \B{T}^\G{g}/ \C{I}^\G{g}$.
\end{proof}

\begin{lemma}\label{lemma-left-action-coprod}
Given two Hom-Lie algebras $(\G{g},\phi)$ and $(\G{h},\a)$, let $(\G{h},\a)$ be a (right) Hom-module over $(\G{g},\phi)$, and $(\G{g},\phi)$ be a (left) Hom-module over the Hom-Lie algebra $(\G{h},\a)$. Then, the (left) Hom-action on the Hom algebra $(\B{T}^\G{g} / \C{I}^\G{g},\G{a})$ satisfies
\begin{equation}\label{comultp-on-eta-vp}
\D(\eta\rt \vp) = \eta\rt \vp\ns{1}\ot \G{a}(\vp\ns{2}) + \G{a}(\vp\ns{1}) \ot \eta\rt \vp\ns{2},
\end{equation}
for any $\eta\in \G{h}$, and any $\vp \in \B{T}^\G{g} / \C{I}^\G{g}$.
\end{lemma}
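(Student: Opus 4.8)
The plan is to argue by induction on the number of leaves of the underlying tree, exploiting the recursive definition \eqref{eqn-h-Hom-action-II} of the left Hom-action together with the multiplicativity of the coproduct on the Hom-bialgebra $\B{T}^\G{g}/\C{I}^\G{g}$. Since \eqref{comultp-on-eta-vp} is linear in $\vp$ and every operation in sight descends to the quotient, it suffices to verify the identity on $\B{T}^\G{g}$. En route I would record one structural observation that turns out to be crucial: the tree coproduct $\D(\vp) = \sum_{I\cup J=\{1,\dots,n\},\, I\cap J=\emptyset}(\vp_I,\xi_I)\ot(\vp_J,\xi_J)$ is \emph{cocommutative}, being manifestly symmetric under the involution $(I,J)\mapsto(J,I)$ of the index set.

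For the base cases, for $\vp = {\bf 1}$ both sides vanish since $\eta\rt{\bf 1}=0$ and $\D({\bf 1})={\bf 1}\ot{\bf 1}$. For a single leaf $\vp=(\vp_1,s,\xi)$ one has $\D(\vp)=\vp\ot{\bf 1}+{\bf 1}\ot\vp$ and $\eta\rt\vp=(\vp_1,s,\a^{-s}(\eta)\rt\xi)$; substituting these and using $\eta\rt{\bf 1}=0$, $\G{a}({\bf 1})={\bf 1}$ collapses the right-hand side of \eqref{comultp-on-eta-vp} to $\eta\rt\vp\ot{\bf 1}+{\bf 1}\ot\eta\rt\vp=\D(\eta\rt\vp)$.

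For the inductive step I would write a tree with at least two leaves as a grafting $\vp=p\vee q$ of smaller trees and apply $\D$ to the two summands of $\eta\rt(p\vee q)$ given by \eqref{eqn-h-Hom-action-II}. The inputs are: (i) multiplicativity of $\D$, so that $\D(x\vee y)=(x\ns{1}\vee y\ns{1})\ot(x\ns{2}\vee y\ns{2})$; (ii) $\G{a}$ being both a Hom-algebra and a Hom-coalgebra morphism, whence $\D\circ\G{a}=(\G{a}\ot\G{a})\circ\D$; and (iii) the inductive hypothesis applied to $p$ with $\a^{-1}(\eta)$, and to $q$ with $\a^{-2}(\eta)\lt\G{a}^{-1}(p\ns{2})\in\G{h}$. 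Expanding $\D$ of the first summand $(\a^{-1}(\eta)\rt p)\vee\G{a}(q)$ produces precisely the two terms of the desired right-hand side in which the action lands on a factor of $p$. Expanding $\D$ of the second summand $\G{a}(p\ns{1})\vee\big[(\a^{-2}(\eta)\lt\G{a}^{-1}(p\ns{2}))\rt q\big]$ yields two further terms, whose leg-indices I would reorganize via coassociativity into an iterated coproduct $p\ns{1}\ot p\ns{2}\ot p\ns{3}$ of $p$.

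The main obstacle lies in matching one of these last two terms with its counterpart in the expansion of the right-hand side of \eqref{comultp-on-eta-vp} (obtained by substituting $(p\vee q)\ns{1}\ot(p\vee q)\ns{2}=(p\ns{1}\vee q\ns{1})\ot(p\ns{2}\vee q\ns{2})$ and then unfolding each $\eta\rt(p\ns{i}\vee q\ns{i})$ through \eqref{eqn-h-Hom-action-II}). Indeed, the two expressions coincide only after interchanging the second and third legs of the iterated coproduct of $p$, that is, after invoking $p\ns{1}\ot p\ns{2}\ot p\ns{3}=p\ns{1}\ot p\ns{3}\ot p\ns{2}$; this is exactly where cocommutativity is needed. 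The remaining cross term matches directly after a single use of coassociativity, while the two ``action-on-$p$'' terms match on the nose. Collecting the four resulting summands and comparing completes the inductive step, and since all manipulations respect $\C{I}^\G{g}$ the identity passes to $\B{T}^\G{g}/\C{I}^\G{g}$.
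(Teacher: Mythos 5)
Your proposal is correct and follows essentially the same route as the paper's proof: induction on the grafting decomposition $\vp=p\vee q$, with the single-leaf base case, multiplicativity of $\D$, the inductive hypothesis applied with the twisted elements $\a^{-1}(\eta)$ and $\a^{-2}(\eta)\lt\G{a}^{-1}(p\ns{2})$, and a final regrouping that the paper likewise justifies ``in view of the commutativity and the coassociativity of the coproduct.'' Your explicit identification of where cocommutativity (interchanging the second and third legs of the iterated coproduct of $p$) versus mere coassociativity is needed makes precise exactly the step the paper performs implicitly in its last equality.
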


\begin{proof}
Indeed, starting from
\begin{align*}
& \D(\eta \rt (\vp_1,s,\xi)) = \D(\vp_1,s,\a^{-s}(\eta)\rt\xi) = \\
& (\vp_1,s,\a^{-s}(\eta)\rt\xi) \ot {\bf 1} + {\bf 1}  \ot (\vp_1,s,\a^{-s}(\eta)\rt\xi) = \\
& \eta\rt (\vp_1,s,\xi)\ns{1} \ot \G{a}\Big((\vp_1,s,\xi)\ns{2}\Big) + \G{a}\Big((\vp_1,s,\xi)\ns{1}\Big) \ot \eta\rt (\vp_1,s,\xi)\ns{2},
\end{align*}
we find inductively, in view of the commutativity and the coassociativity of the coproduct,
\begin{align*}
& \D(\eta\rt (\vp\vee \vp')) = \\
&\D\Big(\Big[\a^{-1}(\eta) \rt \vp\Big]\vee \G{a}(\vp')\Big) + \D\Big(\G{a}(\vp\ns{1})\vee \Big[\Big(\a^{-2}(\eta)\lt \G{a}^{-1}(\vp\ns{2})\Big) \rt \vp'\Big]\Big) = \\
& \Big(\a^{-1}(\eta) \rt \vp\Big)\ns{1}\vee \G{a}(\vp'\ns{1}) \ot \Big(\a^{-1}(\eta) \rt \vp\Big)\ns{2}\vee \G{a}(\vp'\ns{2}) + \\
& \G{a}(\vp\ns{1}\ns{1})\vee \Big[\Big(\a^{-2}(\eta)\lt \G{a}^{-1}(\vp\ns{2})\Big) \rt \vp'\Big]\ns{1} \ot \G{a}(\vp\ns{1}\ns{2})\vee \Big[\Big(\a^{-2}(\eta)\lt \G{a}^{-1}(\vp\ns{2})\Big) \rt \vp'\Big]\ns{2} = \\
& \Big(\a^{-1}(\eta) \rt \vp\ns{1}\Big)\vee \G{a}(\vp'\ns{1}) \ot \G{a}(\vp\ns{2})\vee \G{a}(\vp'\ns{2}) + \\
& \hspace{2cm} \G{a}(\vp\ns{1})\vee \G{a}(\vp'\ns{1}) \ot \Big(\a^{-1}(\eta) \rt \vp\ns{2}\Big)\vee \G{a}(\vp'\ns{2}) + \\
& \G{a}(\vp\ns{1}\ns{1})\vee \Big[\Big(\a^{-2}(\eta)\lt \G{a}^{-1}(\vp\ns{2})\Big) \rt \vp'\ns{1}\Big] \ot \G{a}(\vp\ns{1}\ns{2})\vee \G{a}(\vp'\ns{2}) + \\
& \hspace{2cm}\G{a}(\vp\ns{1}\ns{1})\vee \G{a}(\vp'\ns{1}) \ot \G{a}(\vp\ns{1}\ns{2})\vee \Big[\Big(\a^{-2}(\eta)\lt \G{a}^{-1}(\vp\ns{2})\Big) \rt \vp'\ns{2}\Big] = \\
& \eta\rt (\vp\ns{1}\vee \vp'\ns{1}) \ot \Big( \G{a}(\vp\ns{2})\vee \G{a}(\vp'\ns{2})\Big) + \Big( \G{a}(\vp\ns{1})\vee \G{a}(\vp'\ns{1})\Big)  \ot \eta\rt (\vp\ns{2}\vee \vp'\ns{2}),
\end{align*}
for any $\eta\in \G{h}$, and any $\vp, \vp' \in \B{T}^\G{g} / \C{I}^\G{g}$.
\end{proof}

\begin{proposition}
Given two Hom-Lie algebras $(\G{g},\phi)$ and $(\G{h},\a)$, let $(\G{g},\phi)$ be a (left) Hom-module over $(\G{h},\a)$, and $(\G{h},\a)$ be a (right) Hom-module over $(\G{g},\phi)$ satisfying \eqref{matched-pair-Hom-Lie-alg-II}. Then, $(\B{T}^\G{g} / \C{I}^\G{g}, \G{a})$ is a (left) Hom-module over the Hom-Lie algebra $(\G{h},\a)$.
\end{proposition}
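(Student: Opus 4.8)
The plan is to verify the two defining axioms of a left $\G{h}$-Hom-module on $(\B{T}^\G{g}/\C{I}^\G{g},\G{a})$, namely the twist compatibility $\G{a}(\eta\rt\vp) = \a(\eta)\rt\G{a}(\vp)$ and the Hom-Jacobi compatibility $[\eta,\eta']\rt\G{a}(\vp) = \a(\eta)\rt(\eta'\rt\vp) - \a(\eta')\rt(\eta\rt\vp)$, both by induction on the number of leaves of $\vp$ (equivalently, on the grafting depth). Along the way I must also check that the left Hom-action, defined on generators by \eqref{eqn-h-Hom-action-I} and extended by \eqref{eqn-h-Hom-action-II}, descends to the quotient $\B{T}^\G{g}/\C{I}^\G{g}$; since \eqref{eqn-h-Hom-action-II} is modelled on the compatibility \eqref{v-rt-uu'}, I expect the associator generators of $\C{I}^\G{g}$ to be sent into $\C{I}^\G{g}$, exactly as the right action was shown to pass to the quotient in Proposition \ref{prop-right-Tg-Hom-act-on-h}.

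For the twist compatibility I would first treat the generators: on ${\bf 1}$ both sides vanish, and on a single leaf $(\vp_1,s,\xi)$ one computes $\G{a}(\eta\rt(\vp_1,s,\xi)) = (\vp_1,s,\phi(\a^{-s}(\eta)\rt\xi))$, which equals $\a(\eta)\rt\G{a}(\vp_1,s,\xi) = (\vp_1,s,\a^{1-s}(\eta)\rt\phi(\xi))$ precisely by the left $\G{g}$-module compatibility $\phi(\eta\rt\xi)=\a(\eta)\rt\phi(\xi)$ applied to $\a^{-s}(\eta)$. The inductive step applies $\G{a}$ to \eqref{eqn-h-Hom-action-II} and uses that $\G{a}$ is multiplicative for $\vee$ and commutes with $\D$, together with the right-action identity \eqref{Tg-action-III}, reducing the claim to the inductive hypotheses for $\vp$ and $\vp'$.

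The Hom-Jacobi axiom is the heart of the argument. The base case on a single leaf $(\vp_1,s,\xi)$ follows from the second module axiom for $(\G{g},\phi)$ over $(\G{h},\a)$, after writing $\a^{-s}([\eta,\eta'])=[\a^{-s}(\eta),\a^{-s}(\eta')]$ via multiplicativity of $\a$. For the inductive step on $\vp\vee\vp'$ I would expand $[\eta,\eta']\rt\G{a}(\vp\vee\vp')$ by \eqref{eqn-h-Hom-action-II}, and separately expand $\a(\eta)\rt(\eta'\rt(\vp\vee\vp'))$ and $\a(\eta')\rt(\eta\rt(\vp\vee\vp'))$ by applying \eqref{eqn-h-Hom-action-II} twice. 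Matching the resulting terms requires three inputs: the inductive hypotheses for $\vp$ and $\vp'$; Lemma \ref{lemma-left-action-coprod}, which splits $\D(\eta\rt\vp)$ so that the nested coproducts produced by the double application of \eqref{eqn-h-Hom-action-II} line up; and Lemma \ref{lemma-right-action-on-bracket}, which converts the right-action factors $\a^{-2}(\eta)\lt\G{a}^{-1}(\vp\ns{2})$ appearing in \eqref{eqn-h-Hom-action-II} into a bracket of right actions plus correction terms. The already-established twist compatibility is used to move $\G{a}$ and the powers of $\a,\phi$ past the actions.

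The main obstacle is this inductive step: the bookkeeping of the Sweedler components $\vp\ns{1},\vp\ns{2},\vp'\ns{1},\vp'\ns{2}$ together with the powers of $\a^{-1},\phi^{-1},\G{a}^{\pm 1}$ is delicate, and the two lemmas must be invoked at exactly the right places so that the cross terms (those mixing a right action of one of $\eta,\eta'$ with a left action of the other) cancel against the correction terms produced by Lemma \ref{lemma-right-action-on-bracket} and by \eqref{matched-pair-Hom-Lie-alg-II}. Once these cancellations are arranged, Hom-coassociativity of the coproduct on $\B{T}^\G{g}$ reassembles the surviving terms into $[\eta,\eta']\rt\G{a}(\vp\vee\vp')$, completing the induction.
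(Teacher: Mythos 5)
Your proposal is correct and follows essentially the same route as the paper's own proof: establish the twist identity $\G{a}(\eta\rt\vp)=\a(\eta)\rt\G{a}(\vp)$ inductively, show the associator generators of $\C{I}^\G{g}$ are mapped back into $\C{I}^\G{g}$ so the action descends, and then prove the Hom-Lie module axiom by induction on grafting, with the single-leaf base case reduced to the $\G{h}$-module axiom on $\G{g}$ and the inductive step assembled from Lemma \ref{lemma-left-action-coprod} and Lemma \ref{lemma-right-action-on-bracket} (the latter being exactly where \eqref{matched-pair-Hom-Lie-alg-II} enters). The only cosmetic difference is that the paper's descent argument exhibits the structural correspondence $\G{a}(A)\vee(B\vee C)\mapsto(A\vee B)\vee\G{a}(C)$ explicitly rather than citing the right-action case, but the idea is the one you describe.
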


\begin{proof}
We shall observe that this operation do pass on the quotient, and that it is indeed a Hom-Lie algebra Hom-action. As for the former claim, we first need to notice (inductively) from \eqref{eqn-h-Hom-action-I} and \eqref{eqn-h-Hom-action-II} that
\begin{equation}\label{Hom-mod-gamma-property}
\G{a}(\eta\rt \vp) = \a(\eta) \rt \G{a}(\vp)
\end{equation}
for any $\vp:= (\vp,s_1,\ldots,s_n,\xi_1,\ldots,\xi_n) \in B_n\ot \G{g}^{\ot \,n}$. Then,
\begin{align*}
& \eta \rt \Big[\G{a}(\vp) \vee (\vp'\vee \vp''\Big)] = \\
& \Big(\a^{-1}(\eta)\rt \G{a}(\vp)\Big)\vee \Big(\G{a}(\vp')\vee \G{a}(\vp'')\Big) + \G{a}^2(\vp\ns{1}) \vee \Big[\Big(\a^{-2}(\eta)\lt \vp\ns{2}\Big)\rt (\vp' \vee \vp'')\Big] = \\
& \Big(\a^{-1}(\eta)\rt \G{a}(\vp)\Big)\vee \Big(\G{a}(\vp')\vee \G{a}(\vp'')\Big) + \G{a}^2(\vp\ns{1}) \vee \bigg\{\Big[\a^{-1}\Big(\a^{-2}(\eta)\lt \vp\ns{2}\Big) \rt \vp'\Big]\vee \G{a}(\vp'') + \\
&  \G{a}(\vp'\ns{1}) \vee \bigg(\Big[\a^{-2}\Big(\a^{-2}(\eta)\lt \vp\ns{2}\Big) \lt \G{a}^{-1}(\vp'\ns{2})\Big] \rt \vp''\bigg)\bigg\} = \\
& \Big(\a^{-1}(\eta)\rt \G{a}(\vp)\Big)\vee \Big(\G{a}(\vp')\vee \G{a}(\vp'')\Big) + \G{a}^2(\vp\ns{1}) \vee \bigg\{\Big[\Big(\a^{-3}(\eta)\lt \G{a}^{-1}(\vp\ns{2})\Big) \rt \vp'\Big]\vee \G{a}(\vp'') + \\
&  \G{a}(\vp'\ns{1}) \vee \bigg(\Big[\a^{-2}\Big(\Big(\a^{-2}(\eta)\lt \vp\ns{2}\Big) \lt \G{a}(\vp'\ns{2})\Big)\Big] \rt \vp''\bigg)\bigg\} = \\
& \G{a}\Big(\a^{-2}(\eta)\rt \vp\Big)\vee \Big(\G{a}(\vp')\vee \G{a}(\vp'')\Big) + \G{a}^2(\vp\ns{1}) \vee \bigg\{\Big[\Big(\a^{-3}(\eta)\lt \G{a}^{-1}(\vp\ns{2})\Big) \rt \vp'\Big]\vee \G{a}(\vp'') + \\
&  \G{a}(\vp'\ns{1}) \vee \bigg(\Big[\a^{-2}\Big(\a^{-1}(\eta)\lt \Big(\vp\ns{2}\vee \vp'\ns{2}\Big) \Big)\Big] \rt \vp''\bigg)\bigg\} =
\end{align*}
where in the first equality we needed \cite[Eqn. (9)]{Laur-GengMakhTele18}, on the third equality we used \eqref{Tg-action-III}, and finally on the fourth equality we needed \eqref{Tg-action-II} and \eqref{Hom-mod-gamma-property}. On the other hand,
\begin{align*}
& \eta \rt \Big[(\vp\vee \vp') \vee \G{a}(\vp'')\Big] = \\
& \Big(\a^{-1}(\eta) \rt (\vp\vee \vp')\Big) \vee \G{a}^2(\vp'') + \\
& \hspace{2cm} \G{a}(\vp\ns{1}\vee \vp'\ns{1}) \vee\Big[\Big(\a^{-2}(\eta)\lt \Big(\G{a}^{-1}(\vp\ns{2})\vee \G{a}^{-1}(\vp'\ns{2})\Big)\Big)\rt \G{a}(\vp'')\Big] = \\
& \bigg[\Big(\a^{-2}(\eta) \rt \vp\Big)\vee \G{a}(\vp') + \G{a}(\vp\ns{1})\vee \Big[\Big(\a^{-3}(\eta)\lt \G{a}^{-1}(\vp\ns{2})\Big) \rt \vp'\Big]\bigg]\vee \G{a}^2(\vp'') + \\
&  \hspace{2cm} \G{a}(\vp\ns{1}\vee \vp'\ns{1}) \vee\Big[\Big(\a^{-2}\Big(\eta \lt \Big[\G{a}(\vp\ns{2})\vee \G{a}(\vp'\ns{2})\Big]\Big)\Big)\rt \G{a}(\vp'')\Big] = \\
& \bigg[\Big(\a^{-2}(\eta) \rt \vp\Big)\vee \G{a}(\vp')+ \G{a}(\vp\ns{1})\vee \Big[\Big(\a^{-3}(\eta)\lt \G{a}^{-1}(\vp\ns{2})\Big) \rt \vp'\Big]\bigg]\vee \G{a}^2(\vp'') + \\
&  \hspace{2cm} \G{a}(\vp\ns{1}\vee \vp'\ns{1}) \vee \G{a}\Big(\Big[\a^{-2}\Big(\a^{-1}(\eta) \lt (\vp\ns{2}\vee \vp'\ns{2})\Big)\Big]\rt \vp''\Big)
\end{align*}
where we needed \cite[Lemma 4.2 \& Lemma 4.6]{Laur-GengMakhTele18} in the first equality, while in the second equality we used \eqref{Tg-action-III}, and for the third equality we used \eqref{Hom-mod-gamma-property}. As a result, if
\[
\eta \rt \Big[\G{a}(\vp) \vee (\vp'\vee \vp'')\Big] = \G{a}(A)\vee (B\vee C)
\]
for some $A,B,C \in \B{T}^\G{g}$, then
\[
\eta \rt \Big[(\vp\vee \vp') \vee \G{a}(\vp'')\Big] = (A\vee B)\vee \G{a}(C),
\]
or in other words $\C{I}^\G{g}$ is closed under this operation, and hence it passes onto the quotient $\B{T}^\G{g} / \C{I}^\G{g}$.

Next, we shall observe that 
\[
[\eta,\eta'] \rt \G{a}(\vp) = \a(\eta) \rt (\eta' \rt \vp) - \a(\eta') \rt (\eta \rt \vp)
\]
for any $\eta,\eta'\in \G{h}$, and any $\vp\in \B{T}^\G{g} / \C{I}^\G{g}$.

Indeed, if $\vp = (\vp_1,s,\xi)$, then
\begin{align*}
& [\eta,\eta'] \rt \G{a}(\vp_1,s,\xi) =  [\eta,\eta'] \rt (\vp_1,s,\phi(\xi)) =  (\vp_1,s,\a^{-s}([\eta,\eta'])\rt \phi(\xi)) = \\
& (\vp_1,s,[\a^{-s}(\eta),\a^{-s}(\eta')]\rt \phi(\xi)) = \\
& \Big(\vp_1,s, \a^{-s+1}(\eta) \rt \Big[\a^{-s}(\eta')\rt \xi\Big]\Big) - \Big(\vp_1,s, \a^{-s+1}(\eta') \rt \Big[\a^{-s}(\eta)\rt \xi\Big]\Big) = \\
& \a(\eta) \rt \Big(\vp_1,s, \a^{-s}(\eta')\rt \xi\Big) - \a(\eta') \rt \Big(\vp_1,s, \a^{-s}(\eta)\rt \xi\Big) = \\
& \a(\eta) \rt \Big[\eta'\rt (\vp_1,s,\xi)\Big] - \a(\eta') \rt \Big[\eta\rt (\vp_1,s,\xi)\Big],
\end{align*}
for any $\eta,\eta' \in \G{h}$, and any $\xi\in \G{g}$. Inductively, on the one hand we have
\begin{align*}
& [\eta,\eta'] \rt \G{a}(\vp\vee \vp') = \\
& \Big(\a^{-1}([\eta,\eta']) \rt \G{a}(\vp)\Big) \vee \G{a}^2(\vp') + \G{a}^2(\vp\ns{1}) \vee \Big[\Big(\a^{-2}([\eta,\eta']) \lt \vp\ns{2}\Big) \rt \G{a}(\vp')\Big], 
\end{align*}
while on the other hand
\begin{align*}
& \a(\eta) \rt \Big(\eta' \rt (\vp\vee\vp')\Big) = \\
& \a(\eta) \rt \bigg[\Big(\a^{-1}(\eta')\rt \vp\Big)\vee \G{a}(\vp') + \G{a}(\vp\ns{1})\vee\Big[\Big(\a^{-2}(\eta')\lt \G{a}^{-1}(\vp\ns{2})\Big)\rt \vp'\Big]\bigg] = \\
& \a(\eta) \rt \Big[\Big(\a^{-1}(\eta')\rt \vp\Big)\vee \G{a}(\vp')\Big] + \a(\eta) \rt \Big\{\G{a}(\vp\ns{1})\vee\Big[\Big(\a^{-2}(\eta')\lt \G{a}^{-1}(\vp\ns{2})\Big)\rt \vp'\Big]\Big\} = \\
& \Big[\eta\rt\Big(\a^{-1}(\eta')\rt \vp\Big)\Big]\vee \G{a}^2(\vp') + \\
&\hspace{2cm}\G{a}\Big((\a^{-1}(\eta')\rt \vp)\ns{1}\Big) \vee \Big[\Big(\eta^{-1}(\eta)\lt \G{a}^{-1}\Big((\a^{-1}(\eta')\rt \vp)\ns{2}\Big)\Big) \rt \G{a}(\vp')\Big] + \\
& \Big(\eta\rt \G{a}(\vp\ns{1})\Big) \vee \Big[\Big(\a^{-1}(\eta')\lt \vp\ns{2}\Big)\rt \G{a}(\vp')\Big] + \\
& \hspace{2cm}\G{a}^2(\vp\ns{1}\ns{1}) \vee \Big\{\Big(\a^{-1}(\eta)\lt \vp\ns{1}\ns{2}\Big) \rt \Big[\Big(\a^{-2}(\eta')\lt \G{a}^{-1}(\vp\ns{2})\Big)\rt \vp'\Big]\Big\},
\end{align*}
for any $\eta,\eta' \in \G{h}$, and any $\vp,\vp'\in \B{T}^\G{g} / \C{I}^\G{g}$.

It then follows at once from Lemma \ref{lemma-left-action-coprod} that
\begin{align*}
& \a(\eta) \rt \Big(\eta' \rt (\vp\vee\vp')\Big) = \\
& \hspace{2cm}\Big[\eta\rt\Big(\a^{-1}(\eta')\rt \vp\Big)\Big]\vee \G{a}^2(\vp') + \\
&\hspace{4cm}\Big(\eta'\rt \G{a}(\vp\ns{1})\Big) \vee \Big[\Big(\a^{-1}(\eta)\lt \vp\ns{2}\Big) \rt \G{a}(\vp')\Big] + \\
&\G{a}^2(\vp\ns{1}) \vee \Big[\Big(\a^{-1}(\eta)\lt \Big(\a^{-2}(\eta')\rt \G{a}^{-1}(\vp\ns{2})\Big)\Big) \rt \G{a}(\vp')\Big] + \\
& \hspace{4cm}\Big(\eta\rt \G{a}(\vp\ns{1})\Big) \vee \Big[\Big(\a^{-1}(\eta')\lt \vp\ns{2}\Big)\rt \G{a}(\vp')\Big] + \\
& \G{a}^2(\vp\ns{1}) \vee \Big\{\Big(\a^{-1}(\eta)\lt \vp\ns{2}\Big) \rt \Big[\Big(\a^{-2}(\eta')\lt \G{a}^{-1}(\vp\ns{3})\Big)\rt \vp'\Big]\Big\}.
\end{align*}
As a result,
\begin{align*}
& \a(\eta) \rt \Big(\eta' \rt (\vp\vee\vp')\Big) - \a(\eta') \rt \Big(\eta \rt (\vp\vee\vp')\Big) = \\
& \hspace{1cm}\Big([\a^{-1}(\eta),\a^{-1}(\eta') ]\rt \G{a}(\vp)\Big) \vee \G{a}^2(\vp') + \\
&\hspace{2cm} \G{a}^2(\vp\ns{1})\vee \Big([\a^{-2}(\eta)\lt \G{a}^{-1}(\vp\ns{2}), \a^{-2}(\eta')\lt \G{a}^{-1}(\vp\ns{3})] \rt \G{a}(\vp')\Big) + \\
& \G{a}^2(\vp\ns{1}) \vee \Big[\Big(\a^{-1}(\eta)\lt \Big(\a^{-2}(\eta')\rt \G{a}^{-1}(\vp\ns{2})\Big) - \a^{-1}(\eta')\lt \Big(\a^{-2}(\eta)\rt \G{a}^{-1}(\vp\ns{2})\Big)\Big) \rt \G{a}(\vp')\Big] = \\
& \hspace{1cm}\Big([\a^{-1}(\eta),\a^{-1}(\eta') ]\rt \G{a}(\vp)\Big) \vee \G{a}^2(\vp') + \\
& \hspace{2cm} \G{a}^2(\vp\ns{1})\vee \Big[\Big([\a^{-2}(\eta), \a^{-2}(\eta')]\lt \G{a}^{-1}(\vp\ns{2})\Big) \rt \G{a}(\vp')\Big] = \\
& [\eta,\eta'] \rt \G{a}(\vp\vee \vp'),
\end{align*}
where on the second equality we used Lemma \ref{lemma-right-action-on-bracket}.
\end{proof}

\begin{corollary}
Given a matched pair of Hom-Lie algebras $(\G{g},\phi)$ and $(\G{h},\a)$, the universal enveloping Hom-Hopf algebra $(\C{U}(\G{g}), \phi)$ is a (left) Hom-module over the Hom-Lie algebra $(\G{h},\a)$.
\end{corollary}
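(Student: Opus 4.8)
The plan is to derive the corollary from the preceding proposition by checking that the left $(\G{h},\a)$-Hom-action on $\B{T}^\G{g}/\C{I}^\G{g}$ descends to $\C{U}(\G{g}) = (\B{T}^\G{g}/\C{I}^\G{g})/\C{J}^\G{g}$. Since the pair $(\G{g},\G{h})$ is matched, both \eqref{matched-pair-Hom-Lie-alg-I} and \eqref{matched-pair-Hom-Lie-alg-II} hold, so the hypotheses of the proposition are met and $(\B{T}^\G{g}/\C{I}^\G{g},\G{a})$ is a left Hom-module over $(\G{h},\a)$. Exactly as in Corollary \ref{coroll-right-Tg-Hom-act-on-h}, it then remains to show that this action carries the Hopf ideal $\C{J}^\G{g}$ into itself; since $\rt$ is an action, it suffices to test $\eta \rt j \in \C{J}^\G{g}$ on the two families of generators $j$ of $\C{J}^\G{g}$.

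For the type-(i) generators $(\vp_1,s,\xi)-(\vp_1,0,\phi^s(\xi))$, I would apply \eqref{eqn-h-Hom-action-I} together with the module compatibility $\phi(\eta\rt\xi)=\a(\eta)\rt\phi(\xi)$, iterated to $\eta\rt\phi^s(\xi)=\phi^s(\a^{-s}(\eta)\rt\xi)$. This yields $\eta \rt [(\vp_1,s,\xi)-(\vp_1,0,\phi^s(\xi))] = (\vp_1,s,\a^{-s}(\eta)\rt\xi)-(\vp_1,0,\phi^s(\a^{-s}(\eta)\rt\xi))$, which is again a generator of type (i), hence lies in $\C{J}^\G{g}$.

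The main work, and the expected obstacle, lies with the type-(ii) generator $(\vp_2,0,0,\xi_1,\xi_2)-(\vp_2,0,0,\xi_2,\xi_1)-(\vp_1,0,[\xi_1,\xi_2])$. Writing $(\vp_2,0,0,\xi_1,\xi_2) = (\vp_1,0,\xi_1)\vee(\vp_1,0,\xi_2)$ and using $\D(\vp_1,0,\xi) = (\vp_1,0,\xi)\ot{\bf 1}+{\bf 1}\ot(\vp_1,0,\xi)$, I would expand $\eta\rt(\vp_2,0,0,\xi_1,\xi_2)$ via \eqref{eqn-h-Hom-action-II}. The first summand of that formula produces a two-leaf term, while in the cross term the ${\bf 1}$-factor of the coproduct collapses through ${\bf 1}\vee\vp=\G{a}(\vp)$ to a single-leaf term; after simplifying with $\phi(\eta'\rt\xi)=\a(\eta')\rt\phi(\xi)$ and $\a(\eta'\lt\xi)=\a(\eta')\lt\phi(\xi)$ one obtains
\begin{align*}
\eta\rt(\vp_2,0,0,\xi_1,\xi_2) &= (\vp_2,0,0,\a^{-1}(\eta)\rt\xi_1,\phi(\xi_2)) + (\vp_2,0,0,\phi(\xi_1),\a^{-1}(\eta)\rt\xi_2) \\
&\quad + (\vp_1,0,(\a^{-1}(\eta)\lt\xi_1)\rt\phi(\xi_2)),
\end{align*}
and the analogue with $\xi_1,\xi_2$ interchanged. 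Simultaneously $\eta\rt(\vp_1,0,[\xi_1,\xi_2]) = (\vp_1,0,\eta\rt[\xi_1,\xi_2])$ is expanded by the matched-pair identity \eqref{matched-pair-Hom-Lie-alg-I}, with $\eta$ replaced by $\a^{-1}(\eta)$.

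Then I would combine the three pieces of $\eta\rt j$. The single-leaf terms $(\vp_1,0,(\a^{-1}(\eta)\lt\xi_1)\rt\phi(\xi_2))$ and $(\vp_1,0,(\a^{-1}(\eta)\lt\xi_2)\rt\phi(\xi_1))$ produced by the two grafting expansions cancel against the corresponding two terms of \eqref{matched-pair-Hom-Lie-alg-I}, leaving only $-(\vp_1,0,[\a^{-1}(\eta)\rt\xi_1,\phi(\xi_2)]) - (\vp_1,0,[\phi(\xi_1),\a^{-1}(\eta)\rt\xi_2])$. Using antisymmetry of the bracket and rewriting each single-leaf bracket modulo the type-(ii) relation as an antisymmetrized two-leaf tree, these should exactly cancel the four surviving two-leaf terms, so that $\eta\rt j\equiv 0 \pmod{\C{J}^\G{g}}$. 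The delicate points are the collapse of the ${\bf 1}$-grafts in \eqref{eqn-h-Hom-action-II} and the bookkeeping of the $\a$- and $\phi$-twists throughout; once $\rt$ is shown to preserve $\C{J}^\G{g}$, it descends to $\C{U}(\G{g})$ and the corollary follows.
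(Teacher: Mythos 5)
Your proposal follows essentially the same route as the paper's own proof: reduce to showing that the Hom-action of $(\G{h},\a)$ on $\B{T}^\G{g}/\C{I}^\G{g}$ preserves the Hopf ideal $\C{J}^\G{g}$, check the type-(i) generators via $\eta\rt\phi^s(\xi)=\phi^s(\a^{-s}(\eta)\rt\xi)$, and for the type-(ii) generators expand through \eqref{eqn-h-Hom-action-II} with the primitive coproduct, collapse the ${\bf 1}$-grafts, and invoke \eqref{matched-pair-Hom-Lie-alg-I} (with $\eta$ replaced by $\a^{-1}(\eta)$) to regroup the result into two type-(ii) generators, one in $\big(\a^{-1}(\eta)\rt\xi_1,\phi(\xi_2)\big)$ and one in $\big(\phi(\xi_1),\a^{-1}(\eta)\rt\xi_2\big)$. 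Your computation is correct, and in fact your final single-leaf bracket terms $(\vp_1,0,[\a^{-1}(\eta)\rt\xi_1,\phi(\xi_2)])$ and $(\vp_1,0,[\phi(\xi_1),\a^{-1}(\eta)\rt\xi_2])$ fix a small typographical slip in the paper's displayed formula, where a spurious ``$\eta\rt$'' appears inside those brackets.
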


\begin{proof}
It suffices to observe that the ideal $\C{J}^\G{g}$ is closed under the Hom-action of $(\G{h},\a)$. Indeed,
\begin{align*}
& \eta\rt \Big[(\vp_1,s,\xi) - (\vp_1,0,\phi^s(\xi))\Big] = \eta\rt(\vp_1,s,\xi) - \eta\rt(\vp_1,0,\phi^s(\xi)) = \\
& (\vp_1,s,\a^{-s}(\eta)\rt \xi) - (\vp_1,0,\eta\rt\phi^s(\xi)) = (\vp_1,s,\a^{-s}(\eta)\rt \xi) - (\vp_1,0,\phi^s(\a^{-s}(\eta)\rt\xi)) \in \C{J}^\G{g},
\end{align*}
as well as
\begin{align*}
& \eta \rt \Big\{(\vp_2,0,0,\xi_1,\xi_2) - (\vp_2,0,0,\xi_2,\xi_1) - (\vp_1,0,[\xi_1,\xi_2])\Big\} = \\
& \eta \rt \Big\{(\vp_1,0,\xi_1)\vee (\vp_1,0,\xi_2) - (\vp_1,0,\xi_2)\vee (\vp_1,0,\xi_2) - (\vp_1,0,[\xi_1,\xi_2])\Big\} = \\
& \Big(\a^{-1}(\eta)\rt (\vp_1,0,\xi_1)\Big)\vee \G{a}(\vp_1,0,\xi_2) + \G{a}\Big((\vp_1,0,\xi_1)\ns{1}\Big) \vee \Big[\Big(\a^{-2}(\eta)\lt \G{a}^{-1}\Big((\vp_1,0,\xi_1)\ns{2}\Big)\Big) \rt (\vp_1,0,\xi_2)\Big] + \\
& -\Big(\a^{-1}(\eta)\rt (\vp_1,0,\xi_2)\Big)\vee \G{a}(\vp_1,0,\xi_1) - \G{a}\Big((\vp_1,0,\xi_2)\ns{1}\Big) \vee \Big[\Big(\a^{-2}(\eta)\lt \G{a}^{-1}\Big((\vp_1,0,\xi_2)\ns{2}\Big)\Big) \rt (\vp_1,0,\xi_1)\Big] + \\
& - (\vp_1,0,\eta\rt [\xi_1,\xi_2]) = \\
& (\vp_1,0,\a^{-1}(\eta)\rt \xi_1)\vee (\vp_1,0,\phi(\xi_2)) + (\vp_1,0,\phi(\xi_1)) \vee \Big(\a^{-1}(\eta) \rt (\vp_1,0,\xi_2)\Big) + \\
&\hspace{2cm} \Big(\a^{-1}(\eta)\lt (\vp_1,0,\xi_1)\Big) \rt (\vp_1,0,\phi(\xi_2)) - (\vp_1,0,\a^{-1}(\eta)\rt \xi_2)\vee (\vp_1,0,\phi(\xi_1)) + \\
& - (\vp_1,0,\phi(\xi_2)) \vee \Big(\a^{-1}(\eta) \rt (\vp_1,0,\xi_1)\Big) - \Big(\a^{-1}(\eta)\lt (\vp_1,0,\xi_2)\Big) \rt (\vp_1,0,\phi(\xi_1)) + \\
& - (\vp_1,0,\eta\rt [\xi_1,\xi_2]) = \\
& (\vp_2,0,0,\a^{-1}(\eta)\rt \xi_1,\phi(\xi_2)) + (\vp_2,0,0,\phi(\xi_1),\a^{-1}(\eta) \rt \xi_2) + \\
&\hspace{2cm}  (\vp_1,0,\Big(\a^{-1}(\eta)\lt \xi_1\Big) \rt\phi(\xi_2)) - (\vp_2,0,0,\a^{-1}(\eta)\rt \xi_2,\phi(\xi_1)) + \\
& - (\vp_2,0,0,\phi(\xi_2),\a^{-1}(\eta) \rt \xi_1)  -  (\vp_1,0,\Big(\a^{-1}(\eta)\lt \xi_2\Big)\rt \phi(\xi_1)) + \\
& - (\vp_1,0,\eta\rt [\xi_1,\xi_2]) = \\
& (\vp_2,0,0,\a^{-1}(\eta)\rt \xi_1,\phi(\xi_2)) - (\vp_2,0,0,\phi(\xi_2),\a^{-1}(\eta) \rt \xi_1) - (\vp_1,0,\eta\rt [\a^{-1}(\eta) \rt\xi_1,\phi(\xi_2)]) + \\
&  (\vp_2,0,0,\phi(\xi_1),\a^{-1}(\eta)\rt \xi_2) - (\vp_2,0,0,\a^{-1}(\eta) \rt \xi_2,\phi(\xi_1)) - (\vp_1,0,\eta\rt [\phi(\xi_1), \a^{-1}(\eta) \rt\xi_2]) \in \C{J}^\G{g},
\end{align*}
where the last equality is a result of \eqref{matched-pair-Hom-Lie-alg-I}.
\end{proof}

So far, we have collected a (right) $(\C{U}(\G{g}), \vee, {\bf 1},\phi)$ Hom-action on $(\G{h},\a)$, and a (left) $(\G{h},\a)$ Hom-action on $(\C{U}(\G{g}), \vee, {\bf 1},\phi)$. We shall now take a step further towards the $(\C{U}(\G{h}), \vee,{\bf 1},\a)$ Hom-action on $(\C{U}(\G{g}),\phi)$. 

\begin{proposition}\label{prop-universal-Hom-Hopf-mod-coalg}
Given a matched pair of Hom-Lie algebras $(\G{g},\phi)$ and $(\G{h},\a)$, the universal enveloping Hom-Hopf algebra $(\C{U}(\G{g}), \vee,{\bf 1},\phi,\D,\ve,\Id,S)$ is a (left) Hom-module coalgebra over the universal enveloping Hom-Hopf algebra $(\C{U}(\G{h}), \vee,{\bf 1},\a,\D,\ve,\Id,S)$.
\end{proposition}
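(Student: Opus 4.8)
The plan is to first promote the Hom-Lie action of $(\G{h},\a)$ on $\C{U}(\G{g})$ — established in the preceding corollary — to a Hom-action of the whole enveloping Hom-Hopf algebra $\C{U}(\G{h})$ on $\C{U}(\G{g})$, and only then to verify the three conditions of Definition \ref{def-Hom-mod-coalg}. Since both $\C{U}(\G{g})$ and $\C{U}(\G{h})$ are of type $(\cdot,\Id)$, the coalgebra structure maps $\b$ on $\C{U}(\G{g})$ and $\psi$ on $\C{U}(\G{h})$ are the identity; hence the Sweedler components carry no $\b^{\pm 1}$-twist, \eqref{Hom-mod-coalg-00} is automatic, and the genuine content lies in the comultiplicativity \eqref{Hom-mod-coalg-I} and the counit relation \eqref{Hom-mod-coalg-II}. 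To build the action I would mimic \eqref{Tg-action-I}--\eqref{Tg-action-II}: on the weighted-tree generators of $\C{U}(\G{h})$ I set
\[
{\bf 1}\rt c := \phi(c), \qquad (\vp_1,s,\eta)\rt c := \a^s(\eta)\rt c,
\]
where on the right $\eta\rt(-)$ is the already-constructed $(\G{h},\a)$-action, and I extend through grafting by the Hom-module rule
\[
(\vp\vee\vp')\rt \phi(c) := \G{a}(\vp)\rt\big(\vp'\rt c\big),
\]
equivalently $(\vp\vee\vp')\rt c=\G{a}(\vp)\rt(\vp'\rt\phi^{-1}(c))$, $\phi$ being invertible on $\C{U}(\G{g})$.

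This prescription is compatible with relation (i) of $\C{J}^\G{h}$ by construction, since $(\vp_1,s,\eta)$ and $(\vp_1,0,\a^s(\eta))$ are assigned the very same operator, while relation (ii) is absorbed because $\eta\rt(-)$ is a genuine $\G{h}$-action satisfying \eqref{eqn-h-Hom-action-I}--\eqref{eqn-h-Hom-action-II}; closure under $\C{I}^\G{h}$ follows from the associativity bookkeeping already carried out when $(\B{T}^\G{g}/\C{I}^\G{g},\G{a})$ was shown to be an $\G{h}$-module, transported here via the grafting rule. The Hom-associativity of $\rt$, together with the compatibility $\phi(h\rt c)=\a(h)\rt\phi(c)$ — the analogue of \eqref{Hom-mod-gamma-property} — are then obtained by induction on the number of leaves, using that $\G{a}$ is comultiplicative and commutes with $\phi$.

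For the module-coalgebra axioms, the counit relation \eqref{Hom-mod-coalg-II} reduces on generators to $\ve(\eta\rt c)=0=\ve(\eta)\ve(c)$, valid because $\eta\rt(-)$ lands in the augmentation ideal, and to ${\bf 1}\rt c=\phi(c)$ with $\ve\circ\phi=\ve$; it then extends multiplicatively. The heart of the matter is \eqref{Hom-mod-coalg-I}, namely
\[
\D(h\rt c)=(h\ps{1}\rt c\ps{1})\ot(h\ps{2}\rt c\ps{2}), \qquad h\in\C{U}(\G{h}),\ c\in\C{U}(\G{g}).
\]
For primitive $h=\eta\in\G{h}$ the right-hand side is $\eta\rt c\ps{1}\ot\phi(c\ps{2})+\phi(c\ps{1})\ot\eta\rt c\ps{2}$, which is exactly the content of Lemma \ref{lemma-left-action-coprod} once one recalls that $\G{a}$ descends to $\phi$ on $\C{U}(\G{g})$. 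I would then induct on the tree structure of $h$: assuming the identity for $h$ and for $h'$, apply the grafting rule, invoke the inductive hypothesis first for $\G{a}(h)$ and then for $h'$, and reassemble the Sweedler legs using $\D(h\vee h')=\D(h)\vee\D(h')$ and $\G{a}(h)\ps{i}=\G{a}(h\ps{i})$ together with the Hom-module rule.

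The main obstacle is precisely this inductive comultiplicativity step: it requires threading the twists $\phi$, $\a$, $\G{a}$ and their inverses through the grafting recursion without mismatching powers, the same delicate bookkeeping that governed Lemma \ref{lemma-left-action-coprod} and the preceding module-structure proofs. Verifying well-definedness of the extended action on $\C{I}^\G{h}$ is the secondary difficulty, but it is formally parallel to the closure arguments already completed for the $\G{h}$-action.
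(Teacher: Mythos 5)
Your proposal is correct and follows essentially the same route as the paper: the $\C{U}(\G{h})$-action is built from the $\G{h}$-action through the grafting recursion $(\Om\vee\Om')\rt\vp = \G{a}(\Om)\rt\big(\Om'\rt\phi^{-1}(\vp)\big)$, the base case of \eqref{Hom-mod-coalg-I} is exactly Lemma \ref{lemma-left-action-coprod}, and the inductive step reassembles the Sweedler legs using $\D(\Om\vee\Om')=\D(\Om)\vee\D(\Om')$ precisely as you describe, with \eqref{Hom-mod-coalg-00} and \eqref{Hom-mod-coalg-II} dismissed as straightforward. The only difference is expository: you spell out the well-definedness of the extended action on the quotients by $\C{I}^\G{h}$ and $\C{J}^\G{h}$, which the paper dispatches by analogy with Proposition \ref{prop-right-Tg-Hom-act-on-h} and Corollary \ref{coroll-right-Tg-Hom-act-on-h}.
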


\begin{proof}
It follows at once from Proposition \ref{prop-right-Tg-Hom-act-on-h} and Corollary \ref{coroll-right-Tg-Hom-act-on-h} that the (left) $(\G{h},\a)$ Hom-action on $(\C{U}(\G{g}),\phi)$ extends to a (left) $(\C{U}(\G{h}),\vee,{\bf 1},\a)$ Hom-action. As a result, it suffices to show that the Hom-module coalgebra compatibility conditions \eqref{Hom-mod-coalg-00},\eqref{Hom-mod-coalg-I}, and \eqref{Hom-mod-coalg-II} are satisfied. Furthermore, \eqref{Hom-mod-coalg-00} and \eqref{Hom-mod-coalg-II}  being straightforward, we shall be content with \eqref{Hom-mod-coalg-I}, that is,
\[
\D(\Om\rt \vp) = \Om\ns{1}\rt \vp\ns{1} \ot \Om\ns{2}\rt \vp\ns{2}
\]
for any $\Om \in \C{U}(\G{h})$, and any $\vp \in \C{U}(\G{g})$. 

To begin with, if $\Om = (\vp_1,s,\eta)$, then the result follows from \eqref{comultp-on-eta-vp}. Then inductively
\begin{align*}
& \D((\Om\vee \Om')\rt \vp) = \D(\a(\Om) \rt \Big(\Om' \rt \phi^{-1}(\vp)\Big) )= \\
& \a(\Om\ns{1}) \rt \Big(\Om' \rt \phi^{-1}(\vp)\Big)\ns{1}  \ot \a(\Om\ns{2}) \rt \Big(\Om' \rt \phi^{-1}(\vp)\Big)\ns{2} = \\
& \a(\Om\ns{1}) \rt \Big(\Om'\ns{1} \rt \phi^{-1}(\vp\ns{1})\Big)  \ot \a(\Om\ns{2}) \rt \Big(\Om'\ns{2} \rt \phi^{-1}(\vp\ns{2})\Big)   = \\
& (\Om\vee \Om')\ns{1}\rt \vp\ns{1} \ot (\Om\vee \Om')\ns{2}\rt \vp\ns{2}.
\end{align*}
\end{proof}

Finally we achieve the main result of the present subsection.

\begin{proposition}\label{prop-univ-envlp-mutual-pair}
Given a matched pair of Hom-Lie algebras $(\G{g},\phi)$ and $(\G{h},\a)$, the universal enveloping Hom-Hopf algebras $(\C{U}(\G{g}), \vee,{\bf 1},\phi,\D,\ve,\Id,S)$ and $(\C{U}(\G{h}), \vee,{\bf 1},\a,\D,\ve,\Id,S)$ form a matched pair of Hom-Hopf algebras.
\end{proposition}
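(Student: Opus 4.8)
The plan is to verify, one by one, the three clauses in the definition of a matched pair of Hom-Hopf algebras for the pair $(\C{U}(\G{g}),\C{U}(\G{h}))$, exploiting that both enveloping algebras are coassociative: in the notation of that definition $\C{U}=\C{U}(\G{g})$ has multiplication twist $\phi$ and comultiplication twist $\psi=\Id$, while $\C{V}=\C{U}(\G{h})$ has multiplication twist $\a$ and comultiplication twist $\b=\Id$. Consequently every occurrence of $\psi^{-1}$ and $\b^{-1}$ in \eqref{v-rt-uu'} and \eqref{vv'-lt-u} disappears, which streamlines the bookkeeping. Clause (i) is already in hand: Proposition \ref{prop-universal-Hom-Hopf-mod-coalg} shows that $(\C{U}(\G{g}),\phi)$ is a left $\C{U}(\G{h})$-Hom-module coalgebra, and the extra compatibility \eqref{rt-phi-compatibility}, namely $\phi(\Om\rt\vp)=\a(\Om)\rt\phi(\vp)$, is exactly the identity \eqref{Hom-mod-gamma-property} (equivalently \eqref{Hom-mod-coalg-00} for this action) promoted from $\G{h}$ to $\C{U}(\G{h})$ by induction, using the multiplicativity of $\phi$ and $\a$ and the Hom-module axiom.

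For clause (ii) I would invoke the symmetry of the whole construction. A matched pair of Hom-Lie algebras is symmetric under interchanging $(\G{g},\phi,\rt)$ with $(\G{h},\a,\lt)$, because \eqref{matched-pair-Hom-Lie-alg-I} and \eqref{matched-pair-Hom-Lie-alg-II} are carried into one another by this interchange; hence the hypotheses transfer. Running Proposition \ref{prop-right-Tg-Hom-act-on-h}, Corollary \ref{coroll-right-Tg-Hom-act-on-h}, and Proposition \ref{prop-universal-Hom-Hopf-mod-coalg} with the two Hom-Lie algebras (and the two actions) swapped therefore produces a right $\C{U}(\G{g})$-Hom-action on $\C{U}(\G{h})$ extending $\lt:\G{h}\ot\G{g}\to\G{h}$ and exhibits $(\C{U}(\G{h}),\a)$ as a right $\C{U}(\G{g})$-Hom-module coalgebra, the compatibility \eqref{lt-a-compatibility} being the mirror image of \eqref{Hom-mod-gamma-property}.

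The remaining clause (iii) splits into an easy part and the real work. The switch relation \eqref{v-lt-u-ot-v-rt-u-switch} is immediate from cocommutativity: on both $\C{U}(\G{g})$ and $\C{U}(\G{h})$ the coproduct is the manifestly cocommutative map $\D(\vp)=\sum\vp_I\ot\vp_J$ summed over complementary decompositions of the leaves, so applying the fixed map $(a\ot b\ot c\ot d)\mapsto (a\lt c)\ot(b\rt d)$ to $\D_\C{V}(v)\ot\D_\C{U}(u)$, first as written and then after the harmless relabellings $v\ns{1}\ot v\ns{2}=v\ns{2}\ot v\ns{1}$ and $u\ns{1}\ot u\ns{2}=u\ns{2}\ot u\ns{1}$, reproduces the two sides of \eqref{v-lt-u-ot-v-rt-u-switch}. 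The normalisations \eqref{actions-on-1} follow from $\eta\rt{\bf 1}=0$ in \eqref{eqn-h-Hom-action-I} (and its $\lt$-analogue) together with $\ve(\eta)=0$ for $\eta\in\G{h}$, extended to arbitrary $v$ and $u$ by induction on word length using the Hom-module axiom, the fact that the twists fix ${\bf 1}$, and the multiplicativity of the counit.

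The genuine obstacle is to establish the measuring-type identities \eqref{v-rt-uu'} and \eqref{vv'-lt-u} for \emph{arbitrary} $v\in\C{U}(\G{h})$ and $u\in\C{U}(\G{g})$. For $v=\eta$ primitive these reduce to the defining recursion \eqref{eqn-h-Hom-action-II} (and its symmetric counterpart), once one absorbs the unit contributions ${\bf 1}\rt w=\phi(w)$ and ${\bf 1}\lt w=\ve(w){\bf 1}$ via \eqref{actions-on-1} and the counit; so the content lies in the inductive passage from generators to products. Assuming \eqref{v-rt-uu'} for $\Om$ and $\Om'$, one expands $(\Om\vee\Om')\rt(uu')$ through the module axiom $(\Om\vee\Om')\rt(-)=\a(\Om)\rt\big(\Om'\rt\phi^{-1}(-)\big)$ and reassembles the outcome into the required form. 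This is precisely where Lemma \ref{lemma-right-action-on-bracket} (the interaction of the right action with the bracket, i.e. with the coproduct in the first tensor slot) and Lemma \ref{lemma-left-action-coprod} (the coproduct of $\eta\rt\vp$) carry out the decisive bookkeeping, and where the Hom-twists $\phi$ and $\a$ must be shepherded carefully using their multiplicativity together with the Hom-(co)associativity of the two factors. Once \eqref{v-rt-uu'} and \eqref{vv'-lt-u} are secured in full generality, all four items of clause (iii) hold, and the three clauses together exhibit $(\C{U}(\G{g}),\C{U}(\G{h}))$ as a matched pair of Hom-Hopf algebras.
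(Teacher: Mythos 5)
Your proposal is correct and takes essentially the same route as the paper: clause (i) is quoted from Proposition \ref{prop-universal-Hom-Hopf-mod-coalg}, clause (ii) is handled by symmetry, \eqref{v-lt-u-ot-v-rt-u-switch} follows from cocommutativity, \eqref{actions-on-1} from the definitions, and \eqref{v-rt-uu'} (with \eqref{vv'-lt-u} treated symmetrically) is proved by induction on word length, with base case the defining recursion \eqref{eqn-h-Hom-action-II} and inductive step the expansion $(\Om\vee\Om')\rt(-)=\a(\Om)\rt\big(\Om'\rt\phi^{-1}(-)\big)$, exactly as in the paper's proof. The only minor inaccuracy is attributing the reassembly in the inductive step to Lemma \ref{lemma-right-action-on-bracket}: in the paper that step is carried by the inductive hypothesis (applied to both tensor factors) together with the Hom-module-coalgebra property (hence Lemma \ref{lemma-left-action-coprod}), while Lemma \ref{lemma-right-action-on-bracket} enters only in establishing that the actions are well defined on the quotients.
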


\begin{proof}
As a result of Proposition \ref{prop-universal-Hom-Hopf-mod-coalg} now, we have $(\C{U}(\G{g}), \vee,{\bf 1},\phi,\D,\ve,\Id,S)$ as a left Hom-module coalgebra over $(\C{U}(\G{h}), \vee,{\bf 1},\a,\D,\ve,\Id,S)$, and similarly $(\C{U}(\G{h}), \vee,{\bf 1},\a,\D,\ve,\Id,S)$ as a right Hom-module coalgebra over $(\C{U}(\G{g}), \vee,{\bf 1},\phi,\D,\ve,\Id,S)$. Hence, we need to show that the compatibility conditions \eqref{v-rt-uu'}-\eqref{actions-on-1} for the mutual pair of Hom-Hopf algebras hold.

Among these, \eqref{v-lt-u-ot-v-rt-u-switch} follows immediately from the cocommutativity of the coproduct, and \eqref{actions-on-1} from the definition. The remaining conditions \eqref{v-rt-uu'} and \eqref{vv'-lt-u}, that is,
\[
\Om \rt (\vp\vee \vp') = \Big(\a^{-1}(\Om\ns{1}) \rt \vp\ns{1}\Big) \vee\Big[\Big(\a^{-2}(\Om\ns{2}) \lt \phi^{-1}(\vp\ns{2})\Big)\rt \vp'\Big],
\]
and
\[
(\Om\vee\Om')\lt \vp = \Big[\Om\lt \Big(\a^{-1}(\Om'\ns{1})\rt \phi^{-2}(\vp\ns{1})\Big)\Big]\vee\Big(\Om'\ns{2} \lt \phi^{-1}(\vp\ns{2})\Big)
\]
for any $\vp,\vp' \in \C{U}(\G{g})$, and any $\Om,\Om' \in \C{U}(\G{h})$, are similar. As such, it suffices to observe the former.

Let us note that if $\Om = (\vp_1,s,\eta)$, then the result follows already from \eqref{eqn-h-Hom-action-II}. Then, inductively
\begin{align*}
& (\Om\vee \Om') \rt (\vp\vee \vp') = \a(\Om)\rt \Big(\Om'\rt \phi^{-1}(\vp\vee \vp')\Big) = \\
& \a(\Om)\rt \bigg\{\Big(\a^{-1}(\Om'\ns{1})\rt \phi^{-1}(\vp\ns{1})\Big)\vee \Big[\Big(\a^{-2}(\Om'\ns{2}) \lt \phi^{-2}(\vp\ns{2})\Big)\rt \phi^{-1}(\vp')\Big]\bigg\} = \\
& \Big[\Om\ns{1}\rt \Big(\a^{-1}(\Om'\ns{1})\rt \phi^{-1}(\vp\ns{1})\Big)\Big] \vee \\
&  \bigg\{\Big[\a^{-1}(\Om)\lt \Big(\a^{-2}(\Om'\ns{2})\rt \phi^{-2}(\vp\ns{2})\Big)\Big]\rt \Big[\Big(\a^{-2}(\Om'\ns{3}) \lt \phi^{-2}(\vp\ns{3})\Big)\rt \phi^{-1}(\vp')\Big]\bigg\} = \\
& \Big[\Big(\a^{-1}(\Om\ns{1}) \vee\a^{-1}(\Om'\ns{1})\Big)\rt \vp\ns{1}\Big] \vee  \bigg\{\Big[\Big(\a^{-2}(\Om\ns{2}) \vee\a^{-2}(\Om'\ns{2})\Big)\lt \phi^{-1}(\vp\ns{2})\Big]\rt \vp'\bigg\},
\end{align*}
for any $\vp,\vp' \in \C{U}(\G{g})$, and any $\Om,\Om' \in \C{U}(\G{h})$. The claim thus follows.
\end{proof}

\section{Bicrossproduct Hom-Hopf algebras}\label{sect-bicrossproduct-Hom-Hopf}

\subsection{The bicrossproduct construction}\label{subsect-bicrossproduct-construction}~

In this subsection we shall rediscuss \cite[Thm. 2.6]{LuWang16} in the presence of the Hom-Hopf algebras of type $(\a,\b)$. To this end, we shall first focus on the underlying Hom-algebra structure of a bicrossproduct Hom-Hopf algebra. Similarly, we next consider the underlying  Hom-coalgebra structure, and then finally we discuss the compatibility conditions necessary to be able to proceed towards a Hom-Hopf algebra.

\begin{proposition}\label{prop-bicrossprod-alg}
Let $(\C{F},\mu_\C{F}, \eta_\C{F},\a,\D_\C{F},\ve_\C{F},\b)$ and $(\C{U},\mu_\C{U}, \eta_\C{U},\phi,\D_\C{U},\ve_\C{U},\psi)$ be two Hom-bialgebras, so that $(\C{F},\b)$ is a (left) $\C{U}$-Hom-module algebra via, say $\rt:\C{U}\ot \C{F} \to \C{F}$, satisfying 
\begin{equation}\label{rt-f-comp-0}
\b(u \rt f) = \phi(u)\rt \b(f).
\end{equation}  
Then, there is a (unital) Hom-algebra structure on $\big(\C{F}\rtimes \C{U} := \C{F}\ot \C{U}, \b\ot \phi\big)$ given by
\begin{equation}\label{Hom-mod-alg-multp}
(f,u)\ast (f',u') := \Big(\a^{-1}(\b(f))\star \big(\phi^{-1}(\psi^{-1}(u\ns{1})) \rt \a^{-1}(f')\big), \psi^{-1}(u\ns{2}) \bullet u'\Big),
\end{equation}
and
\begin{equation}\label{Hom-mod-alg-rtimes-unit}
\eta_{\rtimes}: k \to \C{F}\rtimes \C{U}, \qquad \eta_{\rtimes}(1) := \Big(\eta_\C{F}(1), \eta_\C{U}(1)\Big).
\end{equation}
\end{proposition}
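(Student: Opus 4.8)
The plan is to verify the two axioms of a unital Hom-algebra for $(\C{F}\rtimes\C{U},\ast,\b\ot\phi)$: the Hom-associativity \eqref{Hom-Assoc} of $\ast$ relative to the twisting map $\b\ot\phi$, and the unitality \eqref{Hom-unit} of the map $\eta_{\rtimes}$ of \eqref{Hom-mod-alg-rtimes-unit}; multiplicativity of $\b\ot\phi$ is then automatic, since any unital Hom-algebra is multiplicative. The ingredients are precisely those packaged in ``$(\C{F},\b)$ is a $\C{U}$-Hom-module algebra'': the compatibilities \eqref{Hom-mod-alg-00}, \eqref{Hom-mod-alg-I}, \eqref{Hom-mod-alg-II}, \eqref{rt-f-comp-0}, and the left Hom-module axiom $\phi(u)\rt(u'\rt f)=(u\bullet u')\rt\b(f)$, together with the Hom-associativity of $\star$ and of $\bullet$, the multiplicativity of $\a,\b,\phi,\psi$ with the commutations $\a\b=\b\a$ and $\phi\psi=\psi\phi$, and the Hom-coassociativity of $\D_\C{U}$ (with both $\phi$ and $\psi$ commuting with $\D_\C{U}$).

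For Hom-associativity I would expand both $(\b(f)\ot\phi(u))\ast\big((f',u')\ast(f'',u'')\big)$ and $\big((f,u)\ast(f',u')\big)\ast(\b(f'')\ot\phi(u''))$ as elements of $\C{F}\ot\C{U}$ and compare. The $\C{U}$-leg involves only $\bullet$, the coproduct being an algebra map, and the comultiplicativity of $\psi$, so there it suffices to reassociate $\phi(\psi^{-1}(u\ns{2}))\bullet(\psi^{-1}(u'\ns{2})\bullet u'')$ by the Hom-associativity of $\bullet$. The $\C{F}$-leg is the substantial part: on the left the action is applied to the $\star$-product $F$ (the first $\C{F}$-entry of the inner product $(f',u')\ast(f'',u'')$), so I would move $\a^{-1}$ through the action by \eqref{Hom-mod-alg-00}, then distribute the action over $\star$ by \eqref{Hom-mod-alg-I} --- this is exactly the step that creates a second coproduct of the acting leg of $u$ --- and finally collapse the resulting nested actions with the module axiom $\phi(u)\rt(u'\rt f)=(u\bullet u')\rt\b(f)$ and the Hom-associativity of $\star$. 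The right-hand side is treated symmetrically, the nested actions arising there from comultiplying the $\C{U}$-entry of the first product via the algebra-map property of $\D_\C{U}$.

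The main obstacle, exactly as in Proposition \ref{prop-matched-pair-double-cross-prod}, is bookkeeping rather than conceptual: the two sides apply $\D_\C{U}$ to $u$ (and to $u'$) a different number of times and in a different order, so a leg-by-leg comparison does not match directly. I would reconcile the Sweedler legs with the Hom-coassociativity of $\D_\C{U}$ in the flipped form used in \eqref{flip-strategy}, namely $\psi(u\ns{1})\ot u\ns{2}\ns{1}\ot u\ns{2}\ns{2}=u\ns{1}\ns{1}\ot u\ns{1}\ns{2}\ot\psi(u\ns{2})$, aligning $(u\ns{1}\ns{1},u\ns{1}\ns{2},u\ns{2})$ with $(u\ns{1},u\ns{2}\ns{1},u\ns{2}\ns{2})$, while at the same time tracking the powers of $\phi$ and $\psi$ that \eqref{Hom-mod-alg-00} and \eqref{Hom-mod-alg-I} insert. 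Getting every twisting automorphism into its correct slot is where essentially all of the effort goes; the underlying combinatorial skeleton is that of the classical smash-product associativity.

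The unitality \eqref{Hom-unit} is then a short direct computation. For $(f,u)\ast\eta_{\rtimes}(1)$ I would use $\a\circ\eta_\C{F}=\eta_\C{F}$, evaluate $\phi^{-1}(\psi^{-1}(u\ns{1}))\rt\eta_\C{F}(1)=\ve_\C{U}(u\ns{1})\eta_\C{F}(1)$ by \eqref{Hom-mod-alg-II} (with $\ve_\C{U}\circ\phi=\ve_\C{U}\circ\psi=\ve_\C{U}$), apply the right Hom-units of $\star$ and $\bullet$, and contract the surviving $\ve_\C{U}(u\ns{1})$ against the $\C{U}$-leg by the counit axiom $\ve_\C{U}(u\ns{1})\,u\ns{2}=\psi(u)$, landing on $(\b(f),\phi(u))$. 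For $\eta_{\rtimes}(1)\ast(f,u)$ I would use $\D_\C{U}(\eta_\C{U}(1))=\eta_\C{U}(1)\ot\eta_\C{U}(1)$, the module unit axiom $\eta_\C{U}(1)\rt(-)=\b(-)$, and the left Hom-units of $\star$ and $\bullet$, again obtaining $(\b(f),\phi(u))=(\b\ot\phi)(f,u)$. This identifies $\eta_{\rtimes}$ as the two-sided Hom-unit and completes the verification.
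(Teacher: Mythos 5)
Your proposal is correct and follows essentially the same route as the paper's proof: expand both bracketings of the triple product, push $\a^{-1}$ and $\b$ through the action via \eqref{Hom-mod-alg-00} and \eqref{rt-f-comp-0}, distribute the action over $\star$ by \eqref{Hom-mod-alg-I}, collapse nested actions with the Hom-module axiom, reassociate with the Hom-associativity of $\star$ and $\bullet$, and match the Sweedler legs by the Hom-coassociativity of $\D_\C{U}$ exactly as in \eqref{flip-strategy}, finishing with the same direct unit computation (the paper only writes out the right unit, leaving the left one as ``similar''). No gaps; the remaining work is precisely the bookkeeping of powers of $\a,\b,\phi,\psi$ that you identify.
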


\begin{proof}
On one hand we have
\begin{align*}
&\Big[(f,u)\ast(f',u')\Big]\ast(\b(f''),\phi(u'')) =  \\
& \Big(\a^{-1}(\b(f))\star \big(\phi^{-1}(\psi^{-1}(u\ns{1})) \rt \a^{-1}(f')\big), \psi^{-1}(u\ns{2}) \bullet u'\Big) \ast(\b(f''),\phi(u'')) = \\
& \bigg(\Big[\a^{-2}(\b^2(f))\star \Big(\psi^{-2}(u\ns{1}) \rt \a^{-2}(\b(f'))\Big)\Big]\star \Big(\big(\phi^{-1}(\psi^{-2}(u\ns{2}\ns{1}))\bullet \phi^{-1}(\psi^{-1}(u'\ns{1}))\big) \rt \a^{-1}(\b(f''))\Big), \\
& \Big[\psi^{-2}(u\ns{2}\ns{2})\bullet \psi^{-1}(u'\ns{2})\Big]\bullet \phi(u'')\bigg),
\end{align*}
while on the other hand
\begin{align*}
& (\b(f),\phi(u))\ast \Big[(f',u')\ast(f'',u'')\Big] = \\
& (\b(f),\phi(u))\ast \Big(\a^{-1}(\b(f'))\star \big(\phi^{-1}(\psi^{-1}(u'\ns{1})) \rt \a^{-1}(f'')\big), \psi^{-1}(u'\ns{2}) \bullet u''\Big) = \\
& \bigg(\a^{-1}(\b^2(f)) \star \Big\{\psi^{-1}(u\ns{1}) \rt \Big[\a^{-2}(\b(f'))\star \Big(\phi^{-1}(\psi^{-2}(u'\ns{1})) \rt \a^{-2}(f'')\Big)\Big]\Big\}, \\
& \phi(\psi^{-1}(u\ns{2})) \bullet \Big[\psi^{-1}(u'\ns{2}) \bullet u''\Big]\bigg) = \\
& \bigg(\a^{-1}(\b^2(f)) \star \Big\{\Big(\psi^{-3}(u\ns{1}\ns{1}) \rt \a^{-2}(\b(f'))\Big) \star \Big[\psi^{-3}(u\ns{1}\ns{2}) \rt\Big(\phi^{-1}(\psi^{-2}(u'\ns{1})) \rt \a^{-2}(f'')\Big)\Big]\Big\}, \\
& \psi^{-1}(u\ns{2}) \bullet \Big[\psi^{-1}(u'\ns{2}) \bullet u''\Big]\bigg) = \\
& \bigg(\a^{-1}(\b^2(f)) \star \Big\{\Big(\psi^{-3}(u\ns{1}\ns{1}) \rt \a^{-2}(\b(f'))\Big) \star \Big[\Big(\phi^{-1}(\psi^{-3}(u\ns{1}\ns{2})) \bullet\phi^{-1}(\psi^{-2}(u'\ns{1}))\Big) \rt \a^{-2}(f'')\Big]\Big\}, \\
& \psi^{-1}(u\ns{2}) \bullet \Big[\psi^{-1}(u'\ns{2}) \bullet u''\Big]\bigg) = \\
& \bigg(\Big\{\a^{-2}(\b^2(f)) \star \Big(\psi^{-3}(u\ns{1}\ns{1}) \rt \a^{-2}(\b(f'))\Big)\Big\} \star \Big[\Big(\phi^{-1}(\psi^{-2}(u\ns{1}\ns{2})) \bullet \phi^{-1}(\psi^{-1}(u'\ns{1}))\Big) \rt \a^{-1}(\b(f''))\Big], \\
& \psi^{-1}(u\ns{2}) \bullet \Big[\psi^{-1}(u'\ns{2}) \bullet u''\Big]\bigg).
\end{align*}
The equality then, follows at once from \eqref{Hom-coassoc}. As for the unitality, we simply observe
\begin{align*}
& (f,u)\ast (\eta_\C{F}(1),\eta_\C{U}(1)) = \Big(\a^{-1}(\b(f))\star \big(\phi^{-1}(\psi^{-1}(u\ns{1})) \rt \a^{-1}(\eta_\C{F}(1))\big), \psi^{-1}(u\ns{2}) \bullet \eta_\C{U}(1)\Big) = \\
& \Big(\a^{-1}(\b(f))\star \big(\phi^{-1}(\psi^{-1}(u\ns{1})) \rt \eta_\C{F}(1)\big), \psi^{-1}(u\ns{2}) \bullet \eta_\C{U}(1)\Big) = \\
& \Big(\a^{-1}(\b(f))\star \ve(\phi^{-1}(\psi^{-1}(u\ns{1})))\eta_\C{F}(1), \phi(\psi^{-1}(u\ns{2})) \Big) = \\
& \Big(\a^{-1}(\b(f))\star \eta_\C{F}(1), \phi(u) \Big) = (\b(f), \phi(u) ).
\end{align*}
Similarly, we may obtain
\[
(\eta_\C{F}(1),\eta_\C{U}(1)) \ast (f,u) = (\b(f), \phi(u) ).
\]
\end{proof}

Next comes the Hom-coalgebra structure, as promised.

\begin{proposition}\label{prop-bicrossprod-coalg}
Let $(\C{F},\mu_\C{F}, \eta_\C{F},\a,\D_\C{F},\ve_\C{F},\b)$ and $(\C{U},\mu_\C{U}, \eta_\C{U},\phi,\D_\C{U},\ve_\C{U},\psi)$ be two Hom-bialgebras, so that $(\C{U},\phi)$ is a (right) $\C{F}$-Hom-comodule coalgebra via, say $\nb:\C{U}\to \C{U} \ot \C{F}$, satisfying 
\begin{equation}\label{rt-f-comp-0}
\nb(\phi(u)) = \phi(u\ns{0})\ot \b(u\ns{1}).
\end{equation}  
Then, there is a (counital) Hom-coalgebra structure on $\big(\C{F}\cl \C{U} := \C{F}\ot \C{U}, \a\ot \psi\big)$ given by
\begin{equation}\label{Hom-crossed-coalg-comultp}
\D_{\cl}(f,u) := \Big(\a(\b^{-1}(f\ns{1})), \phi^{-1}(u\ns{1}\ns{0})\Big) \ot \Big(\b^{-1}(f\ns{2})\star\a^{-2}(u\ns{1}\ns{1}), u\ns{2}\Big),
\end{equation}
and
\begin{equation}\label{Hom-mod-alg-rtimes-unit}
\ve_{\cl}: \C{F}\cl \C{U}\to k, \qquad \ve_{\cl}(f, u) := \ve_\C{F}(f) \ve_\C{U}(u).
\end{equation}
\end{proposition}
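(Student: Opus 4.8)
The plan is to verify the two defining axioms of a counital Hom-coalgebra from Subsection~\ref{subsect-Hom-(co)algebras-duality}: first, that $\D_{\cl}$ is Hom-coassociative with respect to the structure map $\a\ot\psi$, i.e. the commutativity of \eqref{Hom-coassoc},
\[
\big((\a\ot\psi)\ot\D_{\cl}\big)\circ\D_{\cl} = \big(\D_{\cl}\ot(\a\ot\psi)\big)\circ\D_{\cl},
\]
and second, that $\ve_{\cl}=\ve_\C{F}\ot\ve_\C{U}$ is a counit, i.e. the commutativity of \eqref{Hom-counit}. Throughout I would use that, $\C{F}$ and $\C{U}$ being (co)unital Hom-bialgebras, $\a,\b$ (resp. $\phi,\psi$) are mutually commuting (co)algebra morphisms with $\a(\eta_\C{F}(1))=\eta_\C{F}(1)$; that $\D_\C{F}$ is multiplicative and intertwines $\a,\b$ with $\a\ot\a,\b\ot\b$; the Hom-comodule coassociativity $(\phi\ot\D_\C{F})\circ\nb=(\nb\ot\b)\circ\nb$ of \eqref{Hom-comodule-compt}; the Hom-comodule coalgebra axioms \eqref{Hom-comod-coalg-00}--\eqref{Hom-comod-coalg-II}; and the coaction--twist compatibility $\nb\circ\phi=(\phi\ot\b)\circ\nb$ assumed in the statement (equivalently $\nb\circ\phi^{-1}=(\phi^{-1}\ot\b^{-1})\circ\nb$).

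I would dispatch counitality first, as it is the shorter computation. Evaluating $(\Id\ot\ve_{\cl})\circ\D_{\cl}(f,u)$ and using, in turn, the multiplicativity of $\ve_\C{F}$ together with $\ve_\C{F}\circ\a=\ve_\C{F}=\ve_\C{F}\circ\b$, the comodule counit $u\ns{0}\ve_\C{F}(u\ns{1})=\phi(u)$, and the Hom-coalgebra counits $f\ns{1}\ve_\C{F}(f\ns{2})=\b(f)$ and $u\ns{1}\ve_\C{U}(u\ns{2})=\psi(u)$, collapses the expression to $(\a(f),\psi(u))$. The mirror computation $(\ve_{\cl}\ot\Id)\circ\D_{\cl}(f,u)$ instead invokes the Hom-comodule coalgebra counit \eqref{Hom-comod-coalg-II}, $\ve_\C{U}(u\ns{0})u\ns{1}=\ve_\C{U}(u)\eta_\C{F}(1)$, together with $f\star\eta_\C{F}(1)=\a(f)$, and again reduces to $(\a\ot\psi)(f,u)$; finally $\ve_{\cl}\circ(\a\ot\psi)=\ve_{\cl}$ is immediate.

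For Hom-coassociativity I would expand both composites of $\D_{\cl}$ explicitly. On each side the second application of $\D_{\cl}$ forces the product $\b^{-1}(f\ns{2})\star\a^{-2}(u\ns{1}\ns{1})$ in the second leg to be comultiplied; here multiplicativity of $\D_\C{F}$ splits it and $\D_\C{F}\circ\a^{\pm1},\D_\C{F}\circ\b^{\pm1}$ push the twists onto the Sweedler legs. The coaction $\nb$ then lands on coproduct components of $\C{U}$, where the coaction--twist compatibility moves $\phi^{-1}$ through $\nb$, and \eqref{Hom-comodule-compt} regroups the iterated coactions. Because the left composite further processes $u\ns{2}$ while the right composite further processes $u\ns{1}\ns{0}$, the two sides emerge differing precisely in the order in which coaction and comultiplication have been taken on $\C{U}$; matching them is the crux.

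The hard part is the central use of the Hom-comodule coalgebra axiom \eqref{Hom-comod-coalg-I}, $u\ns{0}\ns{1}\ot u\ns{0}\ns{2}\ot\a^2(u\ns{1}) = u\ns{1}\ns{0}\ot u\ns{2}\ns{0}\ot u\ns{1}\ns{1}\star u\ns{2}\ns{1}$, which is exactly the identity interchanging ``coact then comultiply'' with ``comultiply then coact''; it must be inserted at the correct tensor slot and with the correct powers of $\a$ (arising from the $\a^{-2}$ in $\D_{\cl}$ and from the multiplicativity splittings). Once this exchange is performed, the residual discrepancy is absorbed by the Hom-coassociativity of $\C{F}$ and of $\C{U}$ via a flip-and-reindex of iterated coproducts as in \eqref{flip-strategy}, after which the net powers of $\a,\b,\phi,\psi$ cancel. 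I expect no conceptual obstruction beyond this; the delicacy is entirely in the bookkeeping of the twisting automorphisms, and organising the expansion so that \eqref{Hom-comod-coalg-I} applies verbatim is what keeps the calculation manageable.
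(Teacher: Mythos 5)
Your overall strategy is the paper's strategy (direct verification; counitality from the comodule and comodule-coalgebra counit axioms; coassociativity hinging on \eqref{Hom-comod-coalg-I}), and your counitality argument is correct as described. However, there is a genuine gap in the coassociativity part: the tool you name for closing the computation is the wrong one. Once \eqref{Hom-comod-coalg-I} is applied, the single coaction leg $\a^{-2}(u\ns{1}\ns{1})$ in the second slot of \eqref{Hom-crossed-coalg-comultp} gets replaced by a $\star$-product of two coaction legs, so the $\C{F}$-component of the last tensor factor becomes a \emph{triple} product. Tracing the two composites, one side ends (after coproduct reindexing) with
\[
\a(\b^{-2}(f\ns{2}\ns{2}))\star \Big(\a^{-2}(\b^{-1}(u\ns{1}\ns{1}\ns{2}))\star \a^{-3}(u\ns{2}\ns{1}\ns{1})\Big),
\]
and the other with
\[
\Big(\b^{-2}(f\ns{2}\ns{2}) \star \a^{-2}(\b^{-1}(u\ns{1}\ns{1}\ns{2}))\Big) \star \a^{-2}(u\ns{2}\ns{1}\ns{1}).
\]
These differ by a re-bracketing in which an $\a$ migrates from the first factor to the third; no amount of Hom-coassociativity of $\C{F}$ or $\C{U}$, flip-and-reindexing as in \eqref{flip-strategy}, or ``cancellation of powers'' can produce this. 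What is needed — and what the paper invokes as the explicit final step of its proof — is the Hom-associativity \eqref{Hom-Assoc} of the multiplication $\star$ on $\C{F}$, namely $\a(x)\star(y\star z) = (x\star y)\star\a(z)$. Your proposal never lists this axiom among its tools and attributes the final matching to coassociativity alone, so executed as written the computation would terminate with two unequal-looking expressions. The last step is conceptual, not bookkeeping: it is precisely the point where the twists built into $\D_{\cl}$ are calibrated against the deformed associativity of $\C{F}$.

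A smaller inaccuracy points the same way: you assert that ``on each side'' the second application of $\D_{\cl}$ comultiplies the product $\b^{-1}(f\ns{2})\star\a^{-2}(u\ns{1}\ns{1})$. In fact only one composite does so (the one in which $\D_{\cl}$ lands on the second leg; that is where multiplicativity of $\D_\C{F}$ enters). On the other composite, $\D_{\cl}$ lands on the first leg $\big(\a(\b^{-1}(f\ns{1})),\phi^{-1}(u\ns{1}\ns{0})\big)$, and it is there that the coaction-twist compatibility and the Hom-comodule coassociativity \eqref{Hom-comodule-compt} do their work. This asymmetry between the two composites is exactly what makes the triple-product re-bracketing, and hence the appeal to \eqref{Hom-Assoc}, unavoidable.
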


\begin{proof}
Let us begin with the Hom-coassociativity \eqref{Hom-coassoc}. On one hand we have
\begin{align*}
& \D_{\cl}\Big(\a(\b^{-1}(f\ns{1})), \phi^{-1}(u\ns{1}\ns{0})\Big) \ot (\a\ot \psi)\Big(\b^{-1}(f\ns{2}) \star\a^{-2}(u\ns{1}\ns{1}), u\ns{2}\Big) = \\
& \Big(\a^2(\b^{-2}(f\ns{1}\ns{1})), \phi^{-2}(u\ns{1}\ns{0}\ns{1}\ns{0})\Big) \ot \\
& \hspace{2cm} \Big(\a(\b^{-2}(f\ns{1}\ns{2}))\star \a^{-2}(\b^{-1}(u\ns{1}\ns{0}\ns{1}\ns{1})), \phi^{-1}(u\ns{1}\ns{0}\ns{2})\Big) \ot  \\
&\hspace{7cm} \Big(\a(\b^{-1}(f\ns{2}))\star \a^{-1}(u\ns{1}\ns{1}), \psi(u\ns{2})\Big) = \\
& \Big(\a^2(\b^{-2}(f\ns{1}\ns{1})), \phi^{-1}(u\ns{1}\ns{1}\ns{0})\Big) \ot \\
& \hspace{2cm} \Big(\a(\b^{-2}(f\ns{1}\ns{2}))\star \a^{-2}(\b^{-1}(u\ns{1}\ns{1}\ns{1}\ns{1})), \phi^{-1}(u\ns{1}\ns{2}\ns{0})\Big) \ot  \\
&\hspace{4cm} \Big(\a(\b^{-1}(f\ns{2}))\star \Big[\a^{-3}(\b^{-1}(u\ns{1}\ns{1}\ns{1}\ns{2}))\star \a^{-3}(u\ns{1}\ns{2}\ns{1})\Big], \psi(u\ns{2})\Big) = \\
& \Big(\a^2(\b^{-1}(f\ns{1})), \phi^{-1}(\psi(u\ns{1}\ns{0}))\Big) \ot \\
& \hspace{2cm} \Big(\a(\b^{-2}(f\ns{2}\ns{1}))\star \a^{-1}(\b^{-1}(u\ns{1}\ns{1}\ns{1})), \phi^{-1}(u\ns{2}\ns{1}\ns{0})\Big) \ot  \\
&\hspace{4cm} \Big(\a(\b^{-2}(f\ns{2}\ns{2}))\star \Big[\a^{-2}(\b^{-1}(u\ns{1}\ns{1}\ns{2}))\star \a^{-3}(u\ns{2}\ns{1}\ns{1})\Big], u\ns{2}\ns{2}\Big) 
\end{align*}
where on the second equality we used the Hom-comodule coalgebra compatibility \eqref{Hom-comod-coalg-I}, while the third equality followed from \eqref{Hom-coassoc} and \eqref{Hom-comodule-compt}. Similarly, on the other hand, 
\begin{align*}
& (\a\ot \psi)\Big(\a(\b^{-1}(f\ns{1})), \phi^{-1}(u\ns{1}\ns{0})\Big) \ot \D_{\cl}\Big(\b^{-1}(f\ns{2}) \star \a^{-2}(u\ns{1}\ns{1}), u\ns{2}\Big) = \\
& \Big(\a^2(\b^{-1}(f\ns{1})), \phi^{-1}(\psi(u\ns{1}\ns{0}))\Big) \ot \\
& \hspace{2cm} \Big(\a^{-1}(\b^{-2}(f\ns{2}\ns{1})) \star \a^{-1}(\b^{-1}(u\ns{1}\ns{1}\ns{1})), \phi^{-1}(u\ns{2}\ns{1}\ns{0})\Big) \ot \\
& \hspace{4cm} \Big(\Big[\b^{-2}(f\ns{2}\ns{2}) \star \a^{-2}(\b^{-1}(u\ns{1}\ns{1}\ns{2}))\Big] \star \a^{-2}(u\ns{2}\ns{1}\ns{1}), u\ns{2}\ns{2}\Big).
\end{align*}
The Hom-coassociativity of \eqref{Hom-crossed-coalg-comultp} now follows at once from the Hom-associativity on $\C{F}$. As for the counitality, it suffices to recall the individual counitalities on $\C{U}$, and on $\C{F}$, as well as the counitality of the $\C{F}$-Hom-coaction on $(\C{U},\phi)$.
\end{proof}

\begin{definition}
$(\C{U},\mu_\C{U}, \eta_\C{U},\phi,\D_\C{U},\ve_\C{U},\psi,S_\C{U})$ and $(\C{F},\mu_\C{F},\eta_\C{F},\a,\D_\C{F},\ve_\C{F},\b,S_\C{F})$ being two Hom-Hopf algebras, the pair $(\C{F},\C{U})$ is called a ``mutual pair of Hom-Hopf algebras'' if
\begin{itemize}
\item[(i)] $(\C{F},\b)$ is a left $\C{U}$-Hom-module algebra (via, say, $\rt:\C{U}\ot \C{F} \to \C{F}$) satisfying 
\begin{equation}\label{rt-f-comp}
\b(u\rt f) = \phi(u)\rt \b(f)
\end{equation} 
for any $u \in \C{U}$ and any $f\in \C{F}$, 
\item[(ii)] $(\C{U},\phi)$ is a right $\C{F}$-Hom-comodule coalgebra (via, say, $\nb:\C{U}\to \C{U} \ot \C{F}$) satisfying 
\begin{equation}\label{lt-f-comp}
\nb(\phi(u)) = \phi(u\ns{0})\ot \b(u\ns{1})
\end{equation} 
for any $u \in \C{U}$, and 
\item[(iii)] for any $u,u'\in \C{U}$, any $f\in \C{F}$, 
\begin{align}
& \D_\C{F}(u\rt f) = \psi^{-1}(u\ns{1}\ns{0})\rt f\ns{1} \ot \a^{-4}\b^3(u\ns{1}\ns{1})\star \big[\phi(\psi^{-2}(u\ns{2}))\rt \a^{-1}(f\ns{2})\big], \label{comp-I}\\
& \ve_\C{F}(u\rt f) = \ve_\C{U}(u)\ve_\C{F}(f), \label{comp-II}\\
& \nb(u\bullet u') = \psi^{-1}(u\ns{1}\ns{0}) \bullet u'\ns{0} \ot \a^{-2}(\b(u\ns{1}\ns{1}))\star \big[\phi^{-1}(u\ns{2})\rt \a^{-1}(u'\ns{1})\big], \label{comp-III}\\
& u\ns{1}\ns{0} \ot \a^{-2}\b^{2}(u\ns{1}\ns{1})\star (u\ns{2} \rt f) = u\ns{2}\ns{0} \ot (u\ns{1} \rt f) \star \a^{-2}\b^{2}(u\ns{2}\ns{1}), \label{comp-IV}
\end{align}
\end{itemize}
\end{definition}

There is, then, a Hom-Hopf algebra structure on $\C{F}\ot \C{U}:=\C{F}\acl \C{U}$, which is given by the following proposition, compare with \cite[Thm. 2.6]{LuWang16}. 

\begin{proposition}\label{prop-bicrossed-prod-Hom-Hopf}
Let $(\C{F},\mu_\C{F}, \eta_\C{F},\a,\D_\C{F},\ve_\C{F},\b,S_\C{F})$ and $(\C{U},\mu_\C{U}, \eta_\C{U},\phi,\D_\C{U},\ve_\C{U},\psi,S_\C{U})$ be two Hom-Hopf algebras, so that $(\C{F},\C{U})$ is a mutual pair of Hom-Hopf algebras. Then there is a Hom-Hopf algebra structure on $(\C{F}\acl \C{U} := \C{F}\ot \C{U}, \b\ot \phi,\a\ot \psi)$ given by
\begin{align}
& (f,u)\ast (f',u') = \Big(\a^{-1}(\b(f))\star \big(\phi^{-1}(\psi^{-1}(u\ns{1})) \rt \a^{-1}(f')\big), \psi^{-1}(u\ns{2}) \bullet u'\Big), \label{bcp-multp}\\
& \eta_{\acl}: k \to \C{F}\acl \C{U}, \qquad \eta_{\acl}(1) := \eta_\C{F}(1)\ot \eta_\C{U}(1), \\
& \D_{\acl}(f\ot u) = \Big(\a(\b^{-1}(f\ns{1})), \phi^{-1}(u\ns{1}\ns{0})\Big) \ot \Big(\b^{-1}(f\ns{2})\star \a^{-2}(u\ns{1}\ns{1}), u\ns{2}\Big),\label{bcp-comultp} \\
& \ve_{\acl}: \C{F}\acl \C{U}\to k, \qquad \ve_{\acl}(f, u) := \ve_\C{F}(f) \ve_\C{U}(u), \\
& S_{\acl}(f\ot u) = \Big(1,S_\C{U}(\phi^{-2}(u\ns{0}))\Big)\ast \Big(S_\C{F}(\a^{-1}(\b^{-1}(f))\star \a^{-2}(\b^{-1}(u\ns{1}))), 1\Big),\label{bcp-antipode}
\end{align}
for any $u,u'\in \C{U}$, and any $f,f'\in \C{F}$.
\end{proposition}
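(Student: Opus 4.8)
The plan is to assemble the Hom-Hopf algebra structure on $\C{F}\acl\C{U}$ from the two building blocks already established and then to verify the bialgebra compatibilities and the antipode axioms, in the spirit of the associative argument of \cite[Thm.~2.6]{LuWang16}. First I would invoke Proposition \ref{prop-bicrossprod-alg}: since $(\C{F},\b)$ is a left $\C{U}$-Hom-module algebra satisfying \eqref{rt-f-comp}, the triple $(\C{F}\acl\C{U},\ast,\b\ot\phi)$ with product \eqref{bcp-multp} and unit $\eta_\C{F}(1)\ot\eta_\C{U}(1)$ is a unital Hom-algebra. Dually, Proposition \ref{prop-bicrossprod-coalg} provides, since $(\C{U},\phi)$ is a right $\C{F}$-Hom-comodule coalgebra satisfying \eqref{lt-f-comp}, the counital Hom-coalgebra structure $(\C{F}\acl\C{U},\D_{\acl},\ve_{\acl},\a\ot\psi)$ of \eqref{bcp-comultp}. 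These settle items (i) and (ii) of the (co)unital Hom-bialgebra definition, so what remains is to check the nine compatibility conditions of Definition \ref{def-Hom-bialg-Hom-Hopf} together with the three antipode axioms.

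Most of the nine compatibilities are routine. The grouplikeness condition $\D_{\acl}(\eta_{\acl}(1))=\eta_{\acl}(1)\ot\eta_{\acl}(1)$ follows from the grouplikeness of the units of $\C{F}$ and $\C{U}$ together with the unit-counit normalisations of the (co)action; the conditions relating the twist maps $\b\ot\phi$ and $\a\ot\psi$ to $\ast$ and $\D_{\acl}$, as well as their mutual commutation $(\b\ot\phi)\circ(\a\ot\psi)=(\a\ot\psi)\circ(\b\ot\phi)$, reduce after expanding \eqref{bcp-multp} and \eqref{bcp-comultp} to the bialgebra axioms of $\C{F}$ and of $\C{U}$ separately and to the twist-compatibilities \eqref{rt-f-comp} and \eqref{lt-f-comp}; the counit conditions use \eqref{comp-II} and the individual counits. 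The substantive point is the multiplicativity of the coproduct, namely that $\D_{\acl}\colon\C{F}\acl\C{U}\to(\C{F}\acl\C{U})\ot(\C{F}\acl\C{U})$ is a Hom-algebra map into the tensor-product Hom-algebra. Here I would expand both sides on a general element $(f,u)\ast(f',u')$, using \eqref{comp-I} to push $\D_\C{F}$ through the action $\rt$, \eqref{comp-III} to push the coaction $\nb$ through the product $\bullet$ of $\C{U}$, and most crucially \eqref{comp-IV} to interchange the order in which the action on a comultiplied element and the coaction on a multiplied element appear; the Hom-(co)associativity of $\C{F}$ and $\C{U}$ is then used to realign the accumulated powers of $\a^{-1},\b^{-1},\phi^{-1},\psi^{-1}$.

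For the antipode I would work in the convolution Hom-algebra $\Hom(\C{F}\acl\C{U},\C{F}\acl\C{U})$ and show that the map $S_{\acl}$ of \eqref{bcp-antipode} is a two-sided convolution inverse of the identity, i.e. $\mu_{\acl}\circ(S_{\acl}\ot\Id)\circ\D_{\acl}=\mu_{\acl}\circ(\Id\ot S_{\acl})\circ\D_{\acl}=\eta_{\acl}\circ\ve_{\acl}$. Evaluating on $(f,u)$ and inserting \eqref{bcp-comultp}, the computation splits into an inner cancellation on the $\C{U}$-leg through $S_\C{U}$ and an outer cancellation on the $\C{F}$-leg through $S_\C{F}$; the module-algebra normalisations \eqref{Hom-mod-alg-I}--\eqref{Hom-mod-alg-II} and the comodule-coalgebra normalisations \eqref{Hom-comod-coalg-I}--\eqref{Hom-comod-coalg-II}, together with the antipode properties of $\C{F}$ and $\C{U}$ recalled in Proposition \ref{prop-antipode-properties}, collapse everything to $\ve_\C{F}(f)\ve_\C{U}(u)\,\big(\eta_\C{F}(1)\ot\eta_\C{U}(1)\big)$. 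The remaining two axioms $S_{\acl}\circ(\b\ot\phi)=(\b\ot\phi)\circ S_{\acl}$ and $S_{\acl}\circ(\a\ot\psi)=(\a\ot\psi)\circ S_{\acl}$ follow from the commutation of $S_\C{F}$ with $\a,\b$ and of $S_\C{U}$ with $\phi,\psi$, combined with \eqref{rt-f-comp} and \eqref{lt-f-comp}.

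I expect the multiplicativity of the coproduct to be the main obstacle: it is the one step that genuinely requires all four structural compatibilities \eqref{comp-I}--\eqref{comp-IV} simultaneously, and the presence of two independent twists on each tensor factor---four twisting endomorphisms $\a,\b,\phi,\psi$ in total---makes the bookkeeping of the inverse twists delicate. In particular the coherence condition \eqref{comp-IV}, whose weighting of $\a^{-2}\b^{2}$ has no analogue in the $(\a,\a)$- or $(\a,\a^{-1})$-typed settings, is precisely what reconciles the two ways of expanding $\D_{\acl}\big((f,u)\ast(f',u')\big)$, and matching the resulting exponents is where the heaviest computation lies.
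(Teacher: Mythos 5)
Your proposal is correct and follows essentially the same route as the paper: the underlying Hom-algebra and Hom-coalgebra structures are taken from Propositions \ref{prop-bicrossprod-alg} and \ref{prop-bicrossprod-coalg}, the only substantive bialgebra compatibility is the multiplicativity of $\D_{\acl}$ (which the paper, like you, establishes from \eqref{comp-I}, \eqref{comp-III}, \eqref{comp-IV} together with the Hom-(co)associativity rearrangements and the module-algebra/comodule-coalgebra compatibilities), and the antipode is verified exactly as you describe, by showing $S_{\acl}$ is a two-sided convolution inverse of the identity via direct computation collapsing to $\ve_\C{F}(f)\ve_\C{U}(u)\,(1,1)$. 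You also correctly located the heart of the argument in the coproduct multiplicativity and the role of \eqref{comp-IV} in reconciling the two expansions.
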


\begin{proof}
We shall first check the condition (iii)(1)-(iii)(9) of Definition \ref{def-Hom-bialg-Hom-Hopf}, among which, those except (iii)(2) are straightforward. As for (iii)(2), we begin with
\begin{align*}
& \D_{\acl}\Big((f,u)\ast (f',u')\Big) = \\
& \D_{\acl}\Big(\a^{-1}(\b(f))\star \big(\phi^{-1}(\psi^{-1}(u\ns{1})) \rt \a^{-1}(f')\big), \psi^{-1}(u\ns{2}) \bullet u'\Big) = \\
& \Big(f\ns{1}\star \a\big(\b^{-1}\big(\big[\phi^{-1}(\psi^{-1}(u\ns{1}))\rt \a^{-1}(f')\big]\ns{1}\big)\big), \phi^{-1}\big((\psi^{-1}(u\ns{2})\bullet u')\ns{1}\ns{0}\big)\Big) \ot \\
& \Big(\Big[\a^{-1}(f\ns{2})\star \b^{-1}\big(\big[\phi^{-1}(\psi^{-1}(u\ns{1}))\rt \a^{-1}(f')\big]\ns{2}\big)\Big]\star \a^{-2}\big((\psi^{-1}(u\ns{2})\bullet u')\ns{1}\ns{1}\big), (\psi^{-1}(u\ns{2})\bullet u')\ns{2}\Big),
\end{align*}
and using \eqref{comp-I} we continue with
\begin{align*}
& = \Big(f\ns{1}\star \a\big(\b^{-1}\big(\big[\phi^{-1}(\psi^{-2}(u\ns{1}\ns{1}\ns{0}))\rt \a^{-1}(f'\ns{1})\big]\big)\big), \phi^{-1}\big((\psi^{-1}(u\ns{2})\bullet u')\ns{1}\ns{0}\big)\Big) \ot \\
& \Big(\Big[\a^{-1}(f\ns{2})\star \b^{-1}\big(\a^{-5}(\b^2(u\ns{1}\ns{1}\ns{1})) \star\big[\psi^{-3}(u\ns{1}\ns{2})\rt \a^{-2}(f'\ns{2})\big]\big)\Big]\star \a^{-2}\big((\psi^{-1}(u\ns{2})\bullet u')\ns{1}\ns{1}\big), \\
&\hspace{2cm} (\psi^{-1}(u\ns{2})\bullet u')\ns{2}\Big) \\
& = \Big(f\ns{1}\star \big[\phi^{-2}(\psi^{-1}(u\ns{1}\ns{1}\ns{0}))\rt \b^{-1}(f'\ns{1})\big], \phi^{-1}\big((\psi^{-1}(u\ns{2}\ns{1})\bullet u'\ns{1})\ns{0}\big)\Big) \ot \\
& \Big(\Big[\a^{-1}(f\ns{2})\star \Big\{\a^{-5}(\b(u\ns{1}\ns{1}\ns{1})) \star\big[\phi^{-1}(\psi^{-3}(u\ns{1}\ns{2}))\rt \a^{-2}(\b^{-1}(f'\ns{2}))\big]\Big\}\Big]\star \\
& \hspace{10cm} \a^{-2}\big((\psi^{-1}(u\ns{2}\ns{1})\bullet u'\ns{1})\ns{1}\big), \\
&\hspace{2cm} (\psi^{-1}(u\ns{2}\ns{2})\bullet u'\ns{2})\Big).
\end{align*}
On the next move we employ \eqref{comp-III} and the Hom-associativity on $\C{F}$ to arrive at
\begin{align*}
& \D_{\acl}\Big((f,u)\ast (f',u')\Big) = \\
&\Big(f\ns{1}\star \big[\phi^{-2}(\psi^{-1}(u\ns{1}\ns{1}\ns{0}))\rt \b^{-1}(f'\ns{1})\big], \phi^{-1}(\psi^{-2}(u\ns{2}\ns{1}\ns{1}\ns{0}))\bullet \phi^{-1}(u'\ns{1}\ns{0})\Big) \ot \\
& \Big(\Big[\a^{-1}(f\ns{2})\star \Big\{\a^{-5}(\b(u\ns{1}\ns{1}\ns{1})) \star\big[\phi^{-1}(\psi^{-3}(u\ns{1}\ns{2}))\rt \a^{-2}(\b^{-1}(f'\ns{2}))\big]\Big\}\Big]\star \\
& \hspace{5cm} \Big\{\a^{-5}(\b(u\ns{2}\ns{1}\ns{1}\ns{1}))\star \big[\phi^{-1}(\psi^{-3}(u\ns{2}\ns{1}\ns{2}))\rt \a^{-3}(u'\ns{1}\ns{0})\big]\Big\}, \\
&\hspace{2cm} (\psi^{-1}(u\ns{2}\ns{2})\bullet u'\ns{2})\Big) = \\
&\Big(f\ns{1}\star \big[\phi^{-2}(\psi^{-1}(u\ns{1}\ns{1}\ns{0}))\rt \b^{-1}(f'\ns{1})\big], \phi^{-1}(\psi^{-2}(u\ns{2}\ns{1}\ns{1}\ns{0}))\bullet \phi^{-1}(u'\ns{1}\ns{0})\Big) \ot \\
& \Big(\Big[\a^{-1}(f\ns{2})\star \a^{-4}(\b(u\ns{1}\ns{1}\ns{1})) \Big] \star \\
&\Big\{ \Big(\big[\phi^{-1}(\psi^{-3}(u\ns{1}\ns{2}))\rt \a^{-2}(\b^{-1}(f'\ns{2}))\big]\star \a^{-6}(\b(u\ns{2}\ns{1}\ns{1}\ns{1}))\Big)\star  \big[\phi^{-1}(\psi^{-3}(u\ns{2}\ns{1}\ns{2}))\rt \a^{-3}(u'\ns{1}\ns{1})\big]\Big\}, \\
&\hspace{2cm} (\psi^{-1}(u\ns{2}\ns{2})\bullet u'\ns{2})\Big).
\end{align*}
Next, we recall \eqref{flip-strategy} and then the Hom-coassociativity on $\C{U}$ to proceed along
\begin{align*}
& \D_{\acl}\Big((f,u)\ast (f',u')\Big) = \\
& \Big(f\ns{1}\star \big[\phi^{-2}(u\ns{1}\ns{0})\rt \b^{-1}(f'\ns{1})\big], \phi^{-1}(\psi^{-3}(u\ns{2}\ns{1}\ns{2}\ns{1}\ns{0}))\bullet \phi^{-1}(u'\ns{1}\ns{0})\Big) \ot \\
& \Big(\Big[\a^{-1}(f\ns{2})\star \a^{-3}(\b(u\ns{1}\ns{1})) \Big] \star \\
&\hspace{1cm}\Big\{ \Big(\big[\phi^{-1}(\psi^{-4}(u\ns{2}\ns{1}\ns{1}))\rt \a^{-2}(\b^{-1}(f'\ns{2}))\big]\star \a^{-7}(\b(u\ns{2}\ns{1}\ns{2}\ns{1}\ns{1}))\Big)\star \\
&\hspace{7cm} \big[\phi^{-1}(\psi^{-4}(u\ns{2}\ns{1}\ns{2}\ns{2}))\rt \a^{-3}(u'\ns{1}\ns{1})\big]\Big\}, \\
&\hspace{2cm} (\psi^{-1}(u\ns{2}\ns{2})\bullet u'\ns{2})\Big) = \\
& \Big(f\ns{1}\star \big[\phi^{-2}(u\ns{1}\ns{0})\rt \b^{-1}(f'\ns{1})\big], \phi^{-1}(\psi^{-3}(u\ns{2}\ns{1}\ns{1}\ns{2}\ns{0}))\bullet \phi^{-1}(u'\ns{1}\ns{0})\Big) \ot \\
& \Big(\Big[\a^{-1}(f\ns{2})\star \a^{-3}(\b(u\ns{1}\ns{1})) \Big] \star \\
&\hspace{1cm}\Big\{ \Big(\big[\phi^{-1}(\psi^{-5}(u\ns{2}\ns{1}\ns{1}\ns{1}))\rt \a^{-2}(\b^{-1}(f'\ns{2}))\big]\star \a^{-7}(\b(u\ns{2}\ns{1}\ns{1}\ns{2}\ns{1}))\Big)\star \\
&\hspace{7cm} \big[\phi^{-1}(\psi^{-3}(u\ns{2}\ns{1}\ns{2}))\rt \a^{-3}(u'\ns{1}\ns{1})\big]\Big\}, \\
&\hspace{2cm} (\psi^{-1}(u\ns{2}\ns{2})\bullet u'\ns{2})\Big).
\end{align*}
At this point, we note that \eqref{comp-IV} implies
\begin{align*}
& \phi^{-1}\psi^{-3}(u''\ns{2}\ns{0}) \ot \Big[\phi^{-1}\psi^{-5}(u''\ns{1})\rt f''\Big]\star \a^{-7}\b(u''\ns{2}\ns{1}) = \\
& \psi^2\Big(\phi^{-1}\psi^{-5}(u''\ns{2})\ns{0}\Big) \ot \Big[\phi^{-1}\psi^{-5}(u''\ns{1})\rt f''\Big]\star \a^{-2}\b^2\Big(\phi^{-1}\psi^{-5}(u''\ns{2})\ns{1}\Big) = \\
& \psi^2\Big(\phi^{-1}\psi^{-5}(u''\ns{1})\ns{0}\Big) \ot \a^{-2}\b^2\Big(\phi^{-1}\psi^{-5}(u''\ns{1})\ns{1}\Big) \star \Big[\phi^{-1}\psi^{-5}(u''\ns{2})\rt f''\Big]  = \\
& \phi^{-1}\psi^{-3}(u''\ns{1}\ns{0}) \ot \a^{-7}\b(u''\ns{1}\ns{1}) \star \Big[\phi^{-1}\psi^{-5}(u''\ns{2})\rt f''\Big].
\end{align*}
Next, using \eqref{comp-IV} this way, as well as the Hom-coassociativity on $\C{U}$ once again, we obtain
\begin{align*}
& \D_{\acl}\Big((f,u)\ast (f',u')\Big) = \\
& \Big(f\ns{1}\star \big[\phi^{-2}(u\ns{1}\ns{0})\rt \b^{-1}(f'\ns{1})\big], \phi^{-1}(\psi^{-3}(u\ns{2}\ns{1}\ns{1}\ns{1}\ns{0}))\bullet \phi^{-1}(u'\ns{1}\ns{0})\Big) \ot \\
& \Big(\Big[\a^{-1}(f\ns{2})\star \a^{-3}(\b(u\ns{1}\ns{1})) \Big] \star \\
&\hspace{1cm}\Big\{ \Big(\a^{-7}(\b(u\ns{2}\ns{1}\ns{1}\ns{1}\ns{1})) \star \big[\phi^{-1}(\psi^{-5}(u\ns{2}\ns{1}\ns{1}\ns{2}))\rt \a^{-2}(\b^{-1}(f'\ns{2}))\big]\Big)\star \\
&\hspace{7cm} \big[\phi^{-1}(\psi^{-3}(u\ns{2}\ns{1}\ns{2}))\rt \a^{-3}(u'\ns{1}\ns{1})\big]\Big\}, \\
&\hspace{2cm} (\psi^{-1}(u\ns{2}\ns{2})\bullet u'\ns{2})\Big) = \\
& \Big(f\ns{1}\star \big[\phi^{-2}(u\ns{1}\ns{0})\rt \b^{-1}(f'\ns{1})\big], \phi^{-1}(\psi^{-2}(u\ns{2}\ns{1}\ns{1}\ns{0}))\bullet \phi^{-1}(u'\ns{1}\ns{0})\Big) \ot \\
& \Big(\Big[\a^{-1}(f\ns{2})\star \a^{-3}(\b(u\ns{1}\ns{1})) \Big] \star \\
&\hspace{1cm}\Big\{ \Big(\a^{-6}(\b(u\ns{2}\ns{1}\ns{1}\ns{1})) \star \big[\phi^{-1}(\psi^{-5}(u\ns{2}\ns{1}\ns{2}\ns{1}))\rt \a^{-2}(\b^{-1}(f'\ns{2}))\big]\Big)\star \\
&\hspace{7cm} \big[\phi^{-1}(\psi^{-4}(u\ns{2}\ns{1}\ns{2}\ns{2}))\rt \a^{-3}(u'\ns{1}\ns{1})\big]\Big\}, \\
&\hspace{2cm} (\psi^{-1}(u\ns{2}\ns{2})\bullet u'\ns{2})\Big),
\end{align*}
where the latter is the same as, in view of \eqref{flip-strategy},
\begin{align*}
& = \Big(f\ns{1}\star \big[\phi^{-2}(\psi^{-1}(u\ns{1}\ns{1}\ns{0}))\rt \b^{-1}(f'\ns{1})\big], \phi^{-1}(\psi^{-1}(u\ns{1}\ns{2}\ns{0}))\bullet \phi^{-1}(u'\ns{1}\ns{0})\Big) \ot \\
& \Big(\Big[\a^{-1}(f\ns{2})\star \a^{-4}(\b(u\ns{1}\ns{1}\ns{1})) \Big] \star \\
&\hspace{1cm}\Big\{ \Big(\a^{-5}(\b(u\ns{1}\ns{2}\ns{1})) \star \big[\phi^{-1}(\psi^{-4}(u\ns{2}\ns{1}\ns{1}))\rt \a^{-2}(\b^{-1}(f'\ns{2}))\big]\Big)\star \\
&\hspace{7cm} \big[\phi^{-1}(\psi^{-3}(u\ns{2}\ns{1}\ns{2}))\rt \a^{-3}(u'\ns{1}\ns{1})\big]\Big\}, \\
&\hspace{2cm} (\psi^{-1}(u\ns{2}\ns{2})\bullet u'\ns{2})\Big).
\end{align*}
We now use the Hom-associativity of $\C{F}$ once again to arrive at
\begin{align*}
& \D_{\acl}\Big((f,u)\ast (f',u')\Big) = \\
& \Big(f\ns{1}\star \big[\phi^{-2}(\psi^{-1}(u\ns{1}\ns{1}\ns{0}))\rt \b^{-1}(f'\ns{1})\big], \phi^{-1}(\psi^{-1}(u\ns{1}\ns{2}\ns{0}))\bullet \phi^{-1}(u'\ns{1}\ns{0})\Big) \ot \\
& \Big(\Big\{\a^{-1}(f\ns{2})\star\Big[\a^{-5}(\b(u\ns{1}\ns{1}\ns{1})) \star \a^{-5}(\b(u\ns{1}\ns{2}\ns{1})) \Big]\Big\} \star \\
&\hspace{1cm}\Big\{ \Big( \big[\phi^{-1}(\psi^{-3}(u\ns{2}\ns{1}\ns{1}))\rt \a^{-1}(\b^{-1}(f'\ns{2}))\big]\Big)\star\big[\phi^{-1}(\psi^{-3}(u\ns{2}\ns{1}\ns{2}))\rt \a^{-3}(u'\ns{1}\ns{1})\big]\Big\}, \\
&\hspace{2cm} (\psi^{-1}(u\ns{2}\ns{2})\bullet u'\ns{2})\Big),
\end{align*}
Finally, the Hom-module algebra compatibility \eqref{Hom-mod-alg-I} together with the  Hom-comodule coalgebra compatibility \eqref{Hom-comod-coalg-I} yield
\begin{align*}
& \D_{\acl}\Big((f,u)\ast (f',u')\Big) = \\
& \Big(f\ns{1}\star \big[\phi^{-2}(\psi^{-1}(u\ns{1}\ns{0}\ns{1}))\rt \b^{-1}(f'\ns{1})\big], \phi^{-1}(\psi^{-1}(u\ns{1}\ns{0}\ns{2}))\bullet \phi^{-1}(u'\ns{1}\ns{0})\Big) \ot \\
& \Big(\Big\{\a^{-1}(f\ns{2})\star\a^{-3}(\b(u\ns{1}\ns{1}))\Big\} \star \Big\{ \phi^{-1}(\psi^{-1}(u\ns{2}\ns{1}))\rt \big[\a^{-1}(\b^{-1}(f'\ns{2})) \star \a^{-3}(u'\ns{1}\ns{1})\big]\Big\}, \\
&\hspace{2cm} (\psi^{-1}(u\ns{2}\ns{2})\bullet u'\ns{2})\Big) = \\
& \Big(\a(\b^{-1}(f\ns{1})), \phi^{-1}(u\ns{1}\ns{0})\Big) \ast \Big(\a(\b^{-1}(f'\ns{1})), \phi^{-1}(u'\ns{1}\ns{0})\Big) \ot \\
& \Big(\b^{-1}(f\ns{2})\star \a^{-2}(u\ns{1}\ns{1}), u\ns{2}\Big) \ast \Big(\b^{-1}(f'\ns{2})\star \a^{-2}(u'\ns{1}\ns{1}), u'\ns{2}\Big) = \\
& \D_{\acl}(f,u) \ast \D_{\acl}(f',u').
\end{align*}
As for the antipode, we first observe that
\begin{align*}
& (\Id\ot S_{\acl})\circ \D_{\acl} (f,u) = \\
& \Big(\a(\b^{-1}(f\ns{1})), \phi^{-1}(u\ns{1}\ns{0})\Big) \ast S_{\acl}\Big(\b^{-1}(f\ns{2})\star \a^{-2}(u\ns{1}\ns{1}), u\ns{2}\Big) = \\
& \Big(\a(\b^{-1}(f\ns{1})), \phi^{-1}(u\ns{1}\ns{0})\Big) \ast  \\
&\bigg[\Big(1,S_\C{U}(\phi^{-2}(u\ns{2}\ns{0}))\Big) \ast \Big(S_\C{F}\big(\big[\a^{-1}(\b^{-2}(f\ns{2}))\star \a^{-3}(\b^{-1}(u\ns{1}\ns{1}))\big]\star \a^{-2}(\b^{-1}(u\ns{2}\ns{1}))\big),1\Big)\bigg].
\end{align*}
Then, in view of the Hom-associativity on $\C{F}$ and on $\C{F}\acl \C{U}$, we may proceed along
\begin{align*}
& =\bigg[\Big(\a(\b^{-2}(f\ns{1})), \phi^{-2}(u\ns{1}\ns{0})\Big) \ast  \Big(1,S_\C{U}(\phi^{-2}(u\ns{2}\ns{0}))\Big) \bigg]  \ast \\
&\Big(S_\C{F}\big(\big[\a^{-1}(\b^{-1}(f\ns{2}))\star \a^{-3}(u\ns{1}\ns{1})\big]\star \a^{-2}(u\ns{2}\ns{1})\big),1\Big) = \\
& =\bigg[\Big(\a(\b^{-2}(f\ns{1})), \phi^{-2}(u\ns{1}\ns{0})\Big) \ast  \Big(1,S_\C{U}(\phi^{-2}(u\ns{2}\ns{0}))\Big) \bigg]  \ast \\
&\Big(S_\C{F}\big(\b^{-1}(f\ns{2}) \star\big[\a^{-3}(u\ns{1}\ns{1}\star u\ns{2}\ns{1})\big]\big),1\Big).
\end{align*}
Next, recalling once again the Hom-comodule coalgebra compatibilities \eqref{Hom-comod-coalg-I} and \eqref{Hom-comod-coalg-II}, we may write
\begin{align*}
& (\Id\ot S_{\acl})\circ \D_{\acl} (f,u) = \\
& \bigg[\Big(\a(\b^{-2}(f\ns{1})), \phi^{-2}(u\ns{0}\ns{1})\Big) \ast  \Big(1,S_\C{U}(\phi^{-2}(u\ns{0}\ns{2}))\Big) \bigg]  \ast \Big(S_\C{F}\big(\b^{-1}(f\ns{2}) \star\a^{-1}(u\ns{1})\big),1\Big) = \\
& \Big(\a(\b^{-1}(f\ns{1})), \phi^{-2}(u\ns{0}\ns{1})\bullet S_\C{U}(\phi^{-2}(u\ns{0}\ns{2}))\Big) \ast \Big(S_\C{F}\big(\b^{-1}(f\ns{2}) \star\a^{-1}(u\ns{1})\big),1\Big) =\\
& \Big(\a(\b^{-1}(f\ns{1})), 1\Big) \ast \Big(S_\C{F}\big(\a(\b^{-1}(f\ns{2})) \big),1\Big) \,\ve_\C{U}(u) = \\
& \Big(f\ns{1} \star S_\C{F}\big(f\ns{2}\big)\Big) \,\ve_\C{U}(u) = \ve_\C{F}(f)\ve_\C{U}(u) \,\Big(1,1\Big).
\end{align*}
Similarly,
\begin{align*}
& (S_{\acl} \ot \Id)\circ \D_{\acl} (f,u) = \\
& S_{\acl}\Big(\a(\b^{-1}(f\ns{1})), \phi^{-1}(u\ns{1}\ns{0})\Big) \ast \Big(\b^{-1}(f\ns{2})\star \a^{-2}(u\ns{1}\ns{1}), u\ns{2}\Big) = \\
& \bigg[\Big(1,S_\C{U}\big(\phi^{-3}(u\ns{1}\ns{0}\ns{0})\big)\Big) \ast \Big(S_\C{F}\big(\b^{-2}(f\ns{1})\star \a^{-2}(\b^{-2}(u\ns{1}\ns{0}\ns{1}))\big), 1\Big)\bigg] \ast \\
& \Big(\b^{-1}(f\ns{2})\star \a^{-2}(u\ns{1}\ns{1}), u\ns{2}\Big) =  \\
& \Big(1,S_\C{U}\big(\phi^{-2}(u\ns{1}\ns{0}\ns{0})\big)\Big) \ast \\
& \bigg[\Big(S_\C{F}\big(\b^{-2}(f\ns{1})\star \a^{-2}(\b^{-2}(u\ns{1}\ns{0}\ns{1}))\big), 1\Big) \ast  \Big(\b^{-2}(f\ns{2})\star \a^{-2}(\b^{-1}(u\ns{1}\ns{1})), \phi^{-1}(u\ns{2})\Big)\bigg] = \\
& \Big(1,S_\C{U}\big(\phi^{-1}(u\ns{1}\ns{0})\big)\Big) \ast \\
& \bigg[\Big(S_\C{F}\big(\b^{-2}(f\ns{1})\star \a^{-2}(\b^{-2}(u\ns{1}\ns{1}\ns{1}))\big), 1\Big) \ast  \Big(\b^{-2}(f\ns{2})\star \a^{-2}(\b^{-2}(u\ns{1}\ns{1}\ns{2})), \phi^{-1}(u\ns{2})\Big)\bigg] =\\
 & \Big(1,S_\C{U}\big(\phi^{-1}(u\ns{1}\ns{0})\big)\Big) \ast \\
& \bigg[\Big(S_\C{F}\big(\a^{-1}(\b^{-1}(f\ns{1}))\star \a^{-3}(\b^{-1}(u\ns{1}\ns{1}\ns{1}))\big) \star [\a^{-1}(\b^{-1}(f\ns{2}))\star \a^{-3}(\b^{-1}(u\ns{1}\ns{1}\ns{2}))], u\ns{2}\Big)\bigg] = \\
& \Big(1,S_\C{U}\big(u\ns{1}\big)\Big) \ast  \Big(1, u\ns{2}\Big)\, \ve_\C{F}(f) = \Big(1,S_\C{U}\big(u\ns{1}\bullet u\ns{2}\big)\Big)\, \ve_\C{F}(f) = \ve_\C{F}(f)\ve_\C{F}(u) \, \Big(1,1\Big),
\end{align*}
where on the fourth equality we used the Hom-comodule coassociativity of the $\C{F}$-Hom-coaction on $\C{U}$.
\end{proof}

\subsection{Semidualization}\label{subsect-semidualization}~

Starting with a Hom-associative analogue of \cite[Prop. 1.6.11]{Majid-book}, for Hom-module coalgebra / Hom-module algebra duality, we show in the present subsection how a double cross product Hom-Hopf algebra gives rise to a bicrossproduct Hom-Hopf algebra. As a result, we construct  the Hom-Lie-Hopf algebras.

\begin{lemma}
Let $(\C{V},\mu,\eta,\phi,\D,\ve,\psi)$ be a Hom-bialgebra, and let $(\C{U},\g)$ be a (unital) left $\C{V}$-Hom-module, so that, $\g(v\rt u) = \phi(v)\rt \g(u)$. Then, $(\C{U},\g)$ is a (counital) right $\C{V}^\circ$-Hom-comodule via
\[
\nb_{Hom}^\circ:\C{U}\to \C{U}\ot \C{V}^\circ, \qquad \nb_{Hom}^\circ(u) := u\ns{0}\ot u\ns{1}
\]
given by
\begin{equation}\label{action-coaction-duality}
u\ns{0} \langle u\ns{1},v\rangle = \phi^{-2}(v)\rt u.
\end{equation}
Moreover, $\g:\C{U}\to \C{U}$ satisfies $\g(u)\ns{0} \ot \g(u)\ns{1} = \g(u\ns{0}) \ot (\phi^{-1})^\ast(u\ns{1})$ for any $u \in \C{U}$.
\end{lemma}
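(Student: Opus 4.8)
The plan is to verify every assertion by pairing the $\C{V}^\circ$-tensorands against arbitrary elements of $\C{V}$ and translating the resulting scalar identities into statements about the Hom-action $\rt$. Since the map $\pi$ established earlier is injective and $\C{V}^\circ\hookrightarrow\C{V}^\ast$ separates points, an equality in $\C{U}\ot\C{V}^\circ$ (or in $\C{U}\ot\C{V}^\circ\ot\C{V}^\circ$) holds as soon as it holds after such a pairing, so all three claims reduce to manipulations of $\rt$. The first task is well-definedness: for fixed $u$ the assignment $v\mapsto\phi^{-2}(v)\rt u$ must be realized by a genuine element $u\ns{0}\ot u\ns{1}\in\C{U}\ot\C{V}^\circ$ through \eqref{action-coaction-duality}. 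This is exactly where the restricted-dual machinery of Propositions \ref{prop-dim-fin}--\ref{prop-delta-pi-circ} enters, the coefficient functionals produced by the action lying in $\C{V}^\circ$ precisely because of the finiteness condition $\dim(\C{V}^\ast_g)<\infty$; I expect this to be the only genuinely delicate point (it tacitly requires the module to be locally finite, so that the image of $v\mapsto\phi^{-2}(v)\rt u$ is finite-dimensional).

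Granting well-definedness, I would check the comodule coassociativity \eqref{Hom-comodule-compt}, that is $(\g\ot\D_{\C{V}^\circ})\circ\nb_{Hom}^\circ=(\nb_{Hom}^\circ\ot(\phi^{-1})^\ast)\circ\nb_{Hom}^\circ$, by pairing the two dual slots against $v\ot v'$. On the left, the comultiplication of $\C{V}^\circ$ from Corollary \ref{coroll-dual-hom-coalg} gives $\langle u\ns{1}\ns{1},v\rangle\langle u\ns{1}\ns{2},v'\rangle=\langle u\ns{1},\phi^{-2}(v\bullet v')\rangle$, so the left side collapses to $\g(\phi^{-4}(v\bullet v')\rt u)$, which by the hypothesis $\g(w\rt u)=\phi(w)\rt\g(u)$ equals $\phi^{-3}(v\bullet v')\rt\g(u)$. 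On the right, applying \eqref{action-coaction-duality} twice yields $\phi^{-2}(v)\rt(\phi^{-3}(v')\rt u)$, and the Hom-module associativity $(w\bullet w')\rt\g(u)=\phi(w)\rt(w'\rt u)$ together with the multiplicativity of $\phi$ turns this into $(\phi^{-3}(v)\bullet\phi^{-3}(v'))\rt\g(u)=\phi^{-3}(v\bullet v')\rt\g(u)$, so the two sides agree. Counitality is shorter: pairing against the counit $\eta^\ast=\langle-,\eta(1)\rangle$ of $\C{V}^\circ$ gives $u\ns{0}\langle u\ns{1},\eta(1)\rangle=\phi^{-2}(\eta(1))\rt u=\eta(1)\rt u=\g(u)$, using $\phi\circ\eta=\eta$ and the unital left-module axiom.

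It remains to prove the ``moreover'' identity $\g(u)\ns{0}\ot\g(u)\ns{1}=\g(u\ns{0})\ot(\phi^{-1})^\ast(u\ns{1})$, which I would again test by pairing the second tensorand against an arbitrary $v\in\C{V}$. The left-hand side is, by \eqref{action-coaction-duality} applied to $\g(u)$, simply $\phi^{-2}(v)\rt\g(u)$. For the right-hand side, $\langle(\phi^{-1})^\ast(u\ns{1}),v\rangle=\langle u\ns{1},\phi^{-1}(v)\rangle$, so the term equals $\g\big(u\ns{0}\langle u\ns{1},\phi^{-1}(v)\rangle\big)=\g\big(\phi^{-3}(v)\rt u\big)$ after one more use of \eqref{action-coaction-duality}; now the compatibility $\g(w\rt u)=\phi(w)\rt\g(u)$ with $w=\phi^{-3}(v)$ gives $\phi^{-2}(v)\rt\g(u)$, matching the left-hand side, and the identity follows by injectivity of the pairing. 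The only real obstacle, as noted, is the well-definedness of $\nb_{Hom}^\circ$; once the coefficient functionals are known to sit in $\C{V}^\circ$, all three verifications are routine transcriptions of the action axioms through the duality \eqref{action-coaction-duality}.
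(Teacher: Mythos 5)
Your proof is correct and follows essentially the same route as the paper's: both arguments verify the Hom-comodule coassociativity, the counitality, and the compatibility of $\g$ by pairing the $\C{V}^\circ$-legs against arbitrary $v, v' \in \C{V}$ and translating through \eqref{action-coaction-duality}, using the Hom-module axiom $(w\bullet w')\rt \g(u) = \phi(w)\rt (w'\rt u)$, the multiplicativity of $\phi$, and the hypothesis $\g(w\rt u)=\phi(w)\rt \g(u)$, meeting at the common expression $\phi^{-3}(v\bullet v')\rt\g(u)$. Your extra remark that \eqref{action-coaction-duality} only defines a genuine map $\C{U}\to\C{U}\ot\C{V}^\circ$ under a local-finiteness condition on the action (so that the coefficient functionals land in $\C{V}^\circ$) flags a point the paper passes over in silence, which is to your credit rather than a defect.
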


\begin{proof}
We shall observe that \eqref{action-coaction-duality} indeed defines a right coaction. To this end, we note that 
\begin{align*}
& \g(u\ns{0})\langle \D(u\ns{1}), v \ot v' \rangle = \g(u\ns{0})\langle u\ns{1}, \phi^{-2}(v\bullet v') \rangle = \g(\phi^{-4}(v\bullet v')\rt u) = \\
& \phi^{-3}(v\bullet v') \rt \g(u) = \phi^{-2}(v) \rt (\phi^{-3}(v')\rt u) = \phi^{-2}(v) \rt u\ns{0} \langle u\ns{1}, \phi^{-1}(v')\rangle = \\
& u\ns{0}\ns{0}\langle u\ns{0}\ns{1},v\rangle \langle u\ns{1}, \phi^{-1}(v')\rangle,
\end{align*}
where the third equality follows from the assumption, and the fourth equality is a result of the Hom-associativity of the action. The equality thus obtained is equivalent to that of 
\[
\g(u\ns{0})\ot \D(u\ns{1}) = u\ns{0}\ns{0} \ot u\ns{0}\ns{1} \ot (\phi^{-1})^\ast(u\ns{1}),
\]
which is the claim. The Hom-counitality follows similarly (from the Hom-unitality of the action).

As for the second claim, it suffices to observe that
\[
\g(u)\ns{0} \langle \g(u)\ns{1}, v \rangle = \phi^{-2}(v)\rt \g(u) = \g(\phi^{-3}(v)\rt u) = u\ns{0}\langle u\ns{1}, \phi^{-1}(v)\rangle.
\]
\end{proof}

\begin{proposition}
Let $(\C{V},\mu,\eta,\phi,\D,\ve,\psi)$ be a Hom-bialgebra, and let $(\C{U},\D_\C{U},\ve_\C{U},\b)$ be a Hom-coalgebra together with a Hom-coalgebra endomorphism $\g:\C{U}\to \C{U}$ via which $(\C{U},\g)$ is a left $\C{V}$-Hom-module coalgebra with respect to the action $\rt:\C{V}\ot \C{U} \to \C{U}$ that satisfies $\g(v\rt u) = \phi(v)\rt \g(u)$. Then, $(\C{U},\g)$ is a right $\C{V}^\circ$-Hom-comodule coalgebra.
\end{proposition}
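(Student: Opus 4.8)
The plan is to invoke the preceding Lemma to obtain the comodule structure for free, and then to verify the three Hom-comodule coalgebra compatibilities \eqref{Hom-comod-coalg-00}, \eqref{Hom-comod-coalg-I}, \eqref{Hom-comod-coalg-II} of Definition \ref{def-Hom-comod-coalg} with $\C{H} := \C{V}^\circ$. Since $(\C{U},\g)$ is in particular a left $\C{V}$-Hom-module satisfying $\g(v\rt u) = \phi(v)\rt \g(u)$, the preceding Lemma already produces the right $\C{V}^\circ$-Hom-comodule $(\C{U},\nb^\circ_{Hom},\g)$, whose coaction $\nb^\circ_{Hom}(u) = u\ns{0}\ot u\ns{1}$ is determined by \eqref{action-coaction-duality}, namely $u\ns{0}\langle u\ns{1},v\rangle = \phi^{-2}(v)\rt u$. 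Reading off from Proposition \ref{prop-dual-Hom-Hopf} the Hom-bialgebra structure of $\C{V}^\circ$, its algebra endomorphism is $(\psi^{-1})^\ast$, its unit is $\ve$, and its multiplication $\bullet$ is the convolution product of Corollary \ref{coroll-dual-hom-alg}, so that $\langle f\bullet g, v\rangle = f(\psi^{-2}(v\ns{1}))\,g(\psi^{-2}(v\ns{2}))$. Throughout I would pair the $\C{V}^\circ$-legs of each required identity against an arbitrary $v\in\C{V}$; since the pairing between $\C{V}^\circ$ and $\C{V}$ is non-degenerate, it suffices to check equality after this pairing, and each resulting identity reduces to \eqref{action-coaction-duality} together with the module-coalgebra compatibilities \eqref{Hom-mod-coalg-00}, \eqref{Hom-mod-coalg-I}, \eqref{Hom-mod-coalg-II}.

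First I would dispatch the two twist/counit compatibilities. For \eqref{Hom-comod-coalg-00}, which here reads $\b(u)\ns{0}\ot\b(u)\ns{1} = \b(u\ns{0})\ot(\psi^{-1})^\ast(u\ns{1})$, pairing against $v$ turns the left-hand side into $\phi^{-2}(v)\rt\b(u)$ and the right-hand side into
\[
\b\big(u\ns{0}\langle u\ns{1},\psi^{-1}(v)\rangle\big) = \b\big(\phi^{-2}(\psi^{-1}(v))\rt u\big) = \psi\big(\phi^{-2}\psi^{-1}(v)\big)\rt\b(u) = \phi^{-2}(v)\rt\b(u),
\]
using \eqref{Hom-mod-coalg-00} and $\phi\psi=\psi\phi$. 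For \eqref{Hom-comod-coalg-II}, pairing $\ve_\C{U}(u\ns{0})u\ns{1}$ against $v$ gives $\ve_\C{U}(\phi^{-2}(v)\rt u) = \ve(\phi^{-2}(v))\,\ve_\C{U}(u) = \ve(v)\,\ve_\C{U}(u)$ by \eqref{Hom-mod-coalg-II} and $\ve\circ\phi = \ve$, which is exactly $\langle\,\ve_\C{U}(u)\,\ve,\, v\rangle = \langle\,\eta_{\C{V}^\circ}(\ve_\C{U}(u)),\,v\rangle$.

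The substance of the proof is \eqref{Hom-comod-coalg-I}, i.e. $u\ns{0}\ns{1}\ot u\ns{0}\ns{2}\ot(\psi^{-2})^\ast(u\ns{1}) = u\ns{1}\ns{0}\ot u\ns{2}\ns{0}\ot u\ns{1}\ns{1}\bullet u\ns{2}\ns{1}$. Pairing the last leg against $v$, the left-hand side becomes
\[
\D_\C{U}\big(u\ns{0}\langle u\ns{1},\psi^{-2}(v)\rangle\big) = \D_\C{U}\big(\phi^{-2}\psi^{-2}(v)\rt u\big) = \phi^{-2}\psi^{-2}(v\ns{1})\rt u\ns{1}\ot\phi^{-2}\psi^{-2}(v\ns{2})\rt u\ns{2},
\]
where the last step uses \eqref{Hom-mod-coalg-I} together with the comultiplicativity of $\phi$ and $\psi$, while the right-hand side, after expanding the convolution $\bullet$ via Corollary \ref{coroll-dual-hom-alg} and applying \eqref{action-coaction-duality} to each coproduct component, becomes
\[
u\ns{1}\ns{0}\langle u\ns{1}\ns{1},\psi^{-2}(v\ns{1})\rangle\ot u\ns{2}\ns{0}\langle u\ns{2}\ns{1},\psi^{-2}(v\ns{2})\rangle = \phi^{-2}\psi^{-2}(v\ns{1})\rt u\ns{1}\ot\phi^{-2}\psi^{-2}(v\ns{2})\rt u\ns{2}.
\]
The two sides coincide, giving \eqref{Hom-comod-coalg-I}.

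The main obstacle is precisely this last identity: it forces one to disentangle the two overlapping Sweedler conventions (the coproduct of $\C{U}$ versus the $\C{V}^\circ$-coaction) and to track the powers of $\phi$ and $\psi$ accurately, and it is here that one genuinely needs both the explicit convolution formula on $\C{V}^\circ$ and the comultiplicativity and commutativity of the twists (which are available since $\C{V}^\circ$ is being formed, requiring $\psi$ to be a comultiplicative isomorphism and $\phi\psi=\psi\phi$). The remaining compatibilities are, by contrast, immediate once tested against $v$.
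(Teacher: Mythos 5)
Your proposal is correct and follows essentially the same route as the paper's proof: test each comodule-coalgebra identity against an arbitrary $v\in\C{V}$, rewrite via the duality relation \eqref{action-coaction-duality}, and reduce to the Hom-module coalgebra axioms \eqref{Hom-mod-coalg-00}, \eqref{Hom-mod-coalg-I}, \eqref{Hom-mod-coalg-II} together with the convolution formula \eqref{dual-multp-star} on $\C{V}^\circ$ (your pairing against $\psi^{-2}(v)$ versus the paper's evaluation at $\psi^{2}(v)$ is only a bookkeeping difference). You in fact spell out the counit compatibility, which the paper dismisses with ``similarly,'' and you correctly flag the implicit standing hypotheses (comultiplicativity of $\psi$ and $\phi\psi=\psi\phi$) needed for $\C{V}^\circ$ to exist.
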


\begin{proof}
Let us first observe that 
\begin{align*}
& \b(u)\ns{0} \langle \b(u)\ns{1},  v\rangle = \phi^{-2}(v) \rt \b(u) = \b(\psi^{-1}(\phi^{-2}(v)) \rt u) = \\
& \b(u\ns{0})\langle u\ns{1}, \psi^{-1}(v)\rangle =  \b(u\ns{0})\langle {\left(\psi^{-1}\right)}^\ast(u\ns{1}), v\rangle,
\end{align*}
where the second equality follows from \eqref{Hom-mod-coalg-00}. Hence, \eqref{Hom-comod-coalg-00} follows. 

As for \eqref{Hom-comod-coalg-I} we have,
\begin{align*}
& u\ns{0}\ns{1} \ot u\ns{0}\ns{2} \langle u\ns{1}, v \rangle = \D_\C{U}(\phi^{-2}(v)\rt u) = \phi^{-2}(v\ns{1}) \rt u\ns{1} \ot \phi^{-2}(v\ns{2}) \rt u\ns{2} = \\
& u\ns{1}\ns{0} \langle u\ns{1}\ns{1} , v\ns{1}\rangle \ot u\ns{2}\ns{0} \langle u\ns{2}\ns{2} , v\ns{2}\rangle =  u\ns{1}\ns{0} \ot u\ns{2}\ns{0} \langle u\ns{1}\ns{1} \bullet u\ns{2}\ns{1}, \psi^2(v)\rangle
\end{align*}
where the second equality is a result of \eqref{Hom-mod-coalg-I}. Similarly, \eqref{Hom-comod-coalg-II} follows directly from \eqref{Hom-mod-coalg-II}.
\end{proof}

\begin{lemma}
Let $(\C{U},\mu,\eta,\phi,\D,\ve,\psi)$ be a Hom-bialgebra, and let $(\C{V},\g)$ be a (unital) right $\C{U}$-Hom-module, so that, $\g(v\lt u) = \g(v)\lt \phi(u)$. Then, $(\C{V}^\circ,(\g^{-1})^\ast)$ is a (unital) left $\C{U}$-Hom-module via
\[
\overset{\circ}{\rt}:\C{U}\ot \C{V}^\circ \to \C{V}^\circ
\]
given by
\begin{equation}\label{action-coaction-duality-II}
\langle u\overset{\circ}{\rt} f,v\rangle = \langle f,\g^{-2}(v)\lt \phi^{-2}(u)\rangle.
\end{equation}
Moreover, $(\g^{-1})^\ast:\C{V}^\circ\to \C{V}^\circ$ satisfies $(\g^{-1})^\ast(u \overset{\circ}{\rt} f) = \phi(u) \overset{\circ}{\rt}(\g^{-1})^\ast(f)$ for any $u \in \C{U}$, and any $f\in \C{V}^\circ$.
\end{lemma}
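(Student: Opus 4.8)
The plan is to verify, straight from the defining formula \eqref{action-coaction-duality-II}, the two axioms that make $(\C{V}^\circ,(\g^{-1})^\ast)$ a left $\C{U}$-Hom-module — the Hom-associativity $(u\bullet u')\overset{\circ}{\rt}(\g^{-1})^\ast(f)=\phi(u)\overset{\circ}{\rt}(u'\overset{\circ}{\rt}f)$ and the unitality $\eta(1)\overset{\circ}{\rt}f=(\g^{-1})^\ast(f)$ — together with the stated intertwining $(\g^{-1})^\ast(u\overset{\circ}{\rt}f)=\phi(u)\overset{\circ}{\rt}(\g^{-1})^\ast(f)$. Each of these is an equality of functionals in $\C{V}^\ast$, so I would evaluate both sides on an arbitrary $v\in\C{V}$ and unfold the pairing. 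The tools are exactly dual to the ones that structure $\C{V}$: the hypothesis $\g(v\lt u)=\g(v)\lt\phi(u)$, which I rewrite as $\g^{-1}(v\lt u)=\g^{-1}(v)\lt\phi^{-1}(u)$; the Hom-associativity of the right $\C{U}$-module $\C{V}$, namely $(v\lt u)\lt\phi(u')=\g(v)\lt(u\bullet u')$; the multiplicativity of $\phi$ (which holds because $\phi$ is the twist of the unital Hom-algebra $\C{U}$, as recorded in the Remark after the definition of a unital Hom-algebra) and the fixed-point property $\phi(\eta(1))=\eta(1)$; and the right-module unitality $v\lt\eta(1)=\g(v)$.

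For the Hom-associativity I would reduce the two sides separately to a common expression. Using \eqref{action-coaction-duality-II} and $(\g^{-1})^\ast=(\,\cdot\,)\circ\g^{-1}$, the left side pairs to $\langle f,\g^{-1}(\g^{-2}(v)\lt\phi^{-2}(u\bullet u'))\rangle=\langle f,\g^{-3}(v)\lt\phi^{-3}(u\bullet u')\rangle$ after pulling $\g^{-1}$ inside the action. The right side pairs first to $\langle f,(\g^{-4}(v)\lt\phi^{-3}(u))\lt\phi^{-2}(u')\rangle$; applying $(v''\lt a)\lt\phi(a')=\g(v'')\lt(a\bullet a')$ and then the multiplicativity $\phi^{-3}(u)\bullet\phi^{-3}(u')=\phi^{-3}(u\bullet u')$ turns this into $\langle f,\g^{-3}(v)\lt\phi^{-3}(u\bullet u')\rangle$ as well, so the two coincide. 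Unitality is immediate: $\langle\eta(1)\overset{\circ}{\rt}f,v\rangle=\langle f,\g^{-2}(v)\lt\phi^{-2}(\eta(1))\rangle=\langle f,\g^{-2}(v)\lt\eta(1)\rangle=\langle f,\g^{-1}(v)\rangle=\langle(\g^{-1})^\ast(f),v\rangle$, using $\phi(\eta(1))=\eta(1)$ and $v\lt\eta(1)=\g(v)$. The intertwining relation is the same kind of check: both $\langle(\g^{-1})^\ast(u\overset{\circ}{\rt}f),v\rangle$ and $\langle\phi(u)\overset{\circ}{\rt}(\g^{-1})^\ast(f),v\rangle$ collapse to $\langle f,\g^{-3}(v)\lt\phi^{-2}(u)\rangle$ after a single use of $\g^{-1}(v\lt u)=\g^{-1}(v)\lt\phi^{-1}(u)$.

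I expect the main obstacle to be twofold. The first difficulty is purely clerical: the powers of $\g^{-1}$ and $\phi^{-1}$ must be tracked precisely so that, after the single application of the Hom-associativity of $\lt$ combined with the multiplicativity of $\phi$, the two sides land on identical exponents; this is where the particular normalisation $\g^{-2},\phi^{-2}$ in \eqref{action-coaction-duality-II} is forced and cannot be chosen freely. The second, more substantive, point is to confirm that $u\overset{\circ}{\rt}f$ genuinely lands in the restricted dual $\C{V}^\circ$ rather than merely in $\C{V}^\ast$. Since \eqref{action-coaction-duality-II} realises $u\overset{\circ}{\rt}f$ as the precomposition of $f$ with the linear endomorphism $v\mapsto\g^{-2}(v)\lt\phi^{-2}(u)$ of $\C{V}$, I would establish this finiteness by the coregular-stability argument used for the restricted dual in Subsection \ref{subsect-Hom-(co)algebras-duality} (cf.\ Proposition \ref{prop-delta-pi-circ} and the discussion preceding Corollary \ref{coroll-dual-hom-coalg}), relying on the compatibility of the $\C{U}$-action $\lt$ with the underlying Hom-algebra structure of $\C{V}$; absent such compatibility the action need not preserve $\C{V}^\circ$, so this is the only place in the argument where more than the bare module axioms is needed.
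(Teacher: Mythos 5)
Your proposal is correct and follows essentially the same route as the paper's own proof: both verify Hom-associativity, unitality, and the intertwining identity by pairing against an arbitrary $v\in\C{V}$ and invoking the hypothesis $\g(v\lt u)=\g(v)\lt\phi(u)$, the right Hom-module associativity $(v\lt u)\lt\phi(u')=\g(v)\lt(u\bullet u')$, and the multiplicativity of $\phi$ (which the paper uses silently and you justify via the Remark on unital Hom-algebras). Your closing point --- that one must also check $u\overset{\circ}{\rt}f$ lands in $\C{V}^\circ$ rather than merely in $\C{V}^\ast$, and that this requires compatibility of $\lt$ with the Hom-algebra structure of $\C{V}$ (available in the intended application, where the pair forms a matched pair) --- is well taken: the paper's proof omits this verification entirely, so on this point your treatment is more careful than the original.
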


\begin{proof}
We see at once that
\begin{align*}
& \langle (u\bullet u') \overset{\circ}{\rt} (\g^{-1})^\ast(f), v \rangle = \langle  (\g^{-1})^\ast(f), (\g^{-2})(v) \lt \phi^{-2}(u\bullet u')\rangle = \langle  f, (\g^{-1})((\g^{-2})(v) \lt \phi^{-2}(u\bullet u'))\rangle = \\
& \langle  f, (\g^{-3})(v) \lt \phi^{-3}(u\bullet u')\rangle = \langle  f, ((\g^{-4})(v) \lt \phi^{-3}(u)) \lt \phi^{-2}(u')\rangle = \\
& \langle  u' \overset{\circ}{\rt} f, \g^2((\g^{-4})(v) \lt \phi^{-3}(u))\rangle = \langle  u' \overset{\circ}{\rt} f, (\g^{-2})(v) \lt \phi^{-1}(u)\rangle  = \langle \phi(u)\overset{\circ}{\rt} (u' \overset{\circ}{\rt} f), v\rangle,  
\end{align*}
where the third and the sixth equalities follow from the assumption, and on the fourth equality we used the Hom-module associativity. As for the unitality, we observe that
\[
\langle \eta(1) \overset{\circ}{\rt} f, v \rangle = \langle f, \g^{-2}(v)\lt \phi^{-2}(\eta(1))\rangle = \langle f, \g^{-2}(v)\lt \eta(1)\rangle = \langle f, \g^{-1}(v)\rangle  = \langle (\g^{-1})^\ast(f), v\rangle.
\]
The first claim then follows. 

The second claim, on the other hand, follows from
\begin{align*}
& \langle (\g^{-1})^\ast(u \overset{\circ}{\rt} f), v\rangle = \langle u \overset{\circ}{\rt} f, \g^{-1}(v)\rangle = \langle  f, \g^{-3}(v) \lt \phi^{-2}(u)\rangle = \langle  f, \g^{-1}(\g^{-2}(v)\lt \phi^{-1}(u)) \rangle  = \\
& \langle (\g^{-1})^\ast( f), \g^{-2}(v)\lt \phi^{-1}(u)\rangle = \langle \phi(u) \overset{\circ}{\rt} (\g^{-1})^\ast( f), v\rangle.
\end{align*}
\end{proof}

\begin{proposition}
Let $(\C{U},\mu,\eta,\phi,\D,\ve,\psi)$ be a Hom-bialgebra, and let $(\C{V},\D_\C{V},\ve_\C{V},\b)$ be a Hom-coalgebra together with a Hom-coalgebra endomorphism $\g:\C{V}\to \C{V}$ so that $(\C{V},\g)$ is a right $\C{U}$-Hom-module coalgebra via $\lt:\C{V}\ot \C{U} \to \C{V}$ so that $\g(v\lt u) = \g(v)\lt \phi(u)$. Then, $(\C{V}^\circ,(\g^{-1})^\ast)$ is a left $\C{U}$-Hom-module algebra.
\end{proposition}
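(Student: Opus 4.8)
The plan is to read off the two ingredients that are already available and then to verify, one at a time, the three defining identities of a left $\C{U}$-Hom-module algebra from Definition \ref{def-Hom-mod-alg} by transporting the right $\C{U}$-Hom-module coalgebra axioms of $\C{V}$ across the evaluation pairing $\langle\,,\,\rangle$. Indeed, the immediately preceding Lemma already exhibits $(\C{V}^\circ,(\g^{-1})^\ast)$ as a left $\C{U}$-Hom-module through the action $\overset{\circ}{\rt}$ of \eqref{action-coaction-duality-II}, together with $(\g^{-1})^\ast(u\overset{\circ}{\rt}f)=\phi(u)\overset{\circ}{\rt}(\g^{-1})^\ast(f)$; and Corollary \ref{coroll-dual-hom-alg} endows $\C{V}^\circ$ with the Hom-algebra structure $\mu^\star(f\ot g)(v)=f(\b^{-2}(v\ns{1}))g(\b^{-2}(v\ns{2}))$, unit $\ve_\C{V}$, and structure map $(\b^{-1})^\ast$. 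Thus the genuinely new content is the compatibility \eqref{Hom-mod-alg-00}, \eqref{Hom-mod-alg-I}, \eqref{Hom-mod-alg-II} between this action and the convolution product. Throughout I will use the right-handed module coalgebra axioms $\D_\C{V}(v\lt u)=v\ns{1}\lt u\ns{1}\ot v\ns{2}\lt u\ns{2}$, $\b(v\lt u)=\b(v)\lt \psi(u)$ and $\ve_\C{V}(v\lt u)=\ve_\C{V}(v)\ve_\C{U}(u)$, the comultiplicativity of $\phi$ and $\psi$ on $\C{U}$, and the fact that $\b$ and $\g$ commute.

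I would dispose of the two short identities first. For \eqref{Hom-mod-alg-II} I pair $u\overset{\circ}{\rt}\ve_\C{V}$ against an arbitrary $v$, unwind it to $\ve_\C{V}(\g^{-2}(v)\lt\phi^{-2}(u))$, and apply the counit axiom together with $\ve_\C{V}\circ\g=\ve_\C{V}$ and $\ve_\C{U}\circ\phi=\ve_\C{U}$ to obtain $\ve_\C{U}(u)\ve_\C{V}(v)$. For \eqref{Hom-mod-alg-00}, read with the algebra twist $\a=(\b^{-1})^\ast$, I again pair against $v$: the left side becomes $\langle f,\g^{-2}\b^{-1}(v)\lt\phi^{-2}(u)\rangle$, while the right side becomes $\langle f,\b^{-1}\g^{-2}(v)\lt\psi^{-1}\phi^{-2}\psi(u)\rangle$, where the $\b$-equivariance $\b^{-1}(v\lt u)=\b^{-1}(v)\lt\psi^{-1}(u)$ has produced the extra $\psi^{-1}$; since $\psi^{-1}\phi^{-2}\psi=\phi^{-2}$ and $\b\g=\g\b$, the two sides agree.

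The load-bearing step is \eqref{Hom-mod-alg-I}, i.e. $\psi^2(u)\overset{\circ}{\rt}(f\star g)=(u\ns{1}\overset{\circ}{\rt}f)\star(u\ns{2}\overset{\circ}{\rt}g)$. Pairing the left side against $v$ gives $\langle f\star g,\ \g^{-2}(v)\lt\phi^{-2}\psi^2(u)\rangle$, which forces the $\C{V}$-coproduct of $\g^{-2}(v)\lt\phi^{-2}\psi^2(u)$; the coproduct axiom for $\lt$ together with the comultiplicativity of $\g$ and of $\phi,\psi$ splits it as $\g^{-2}(v\ns{1})\lt\phi^{-2}\psi^2(u\ns{1})\ot\g^{-2}(v\ns{2})\lt\phi^{-2}\psi^2(u\ns{2})$. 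Feeding this into $\mu^\star$ introduces two factors $\b^{-2}$, and the $\b$-equivariance of $\lt$ slides them through so that $\psi^{-2}\phi^{-2}\psi^2=\phi^{-2}$, leaving $f(\b^{-2}\g^{-2}(v\ns{1})\lt\phi^{-2}(u\ns{1}))\,g(\b^{-2}\g^{-2}(v\ns{2})\lt\phi^{-2}(u\ns{2}))$. Pairing the right side against $v$ and unwinding $\overset{\circ}{\rt}$ and $\mu^\star$ yields the identical expression with $\g^{-2}\b^{-2}$ in place of $\b^{-2}\g^{-2}$, so the two sides match exactly because $\b\g=\g\b$. I expect the only real friction to be the bookkeeping of the four mutually commuting twists $\b,\g,\phi,\psi$ and their powers; once the coproduct of $v\lt u$ is expanded, the matching is mechanical. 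The auxiliary fact that $(\g^{-1})^\ast$ is itself a Hom-algebra endomorphism of $\C{V}^\circ$, needed for Definition \ref{def-Hom-mod-alg} to apply, follows by the same dualization from $\g$ being comultiplicative and commuting with $\b$.
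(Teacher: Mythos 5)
Your proposal is correct and follows essentially the same route as the paper: both verify \eqref{Hom-mod-alg-00}, \eqref{Hom-mod-alg-I}, and \eqref{Hom-mod-alg-II} by pairing against an arbitrary $v\in\C{V}$, unwinding $\overset{\circ}{\rt}$ via \eqref{action-coaction-duality-II} and $\star$ via \eqref{dual-multp-star}, and then invoking the right-handed module coalgebra axioms together with the equivariances $\b(v\lt u)=\b(v)\lt\psi(u)$ and $\g(v\lt u)=\g(v)\lt\phi(u)$ and the commutation relations $\phi\psi=\psi\phi$, $\b\g=\g\b$ to cancel the twists. The bookkeeping you describe (e.g.\ $\psi^{-2}\phi^{-2}\psi^{2}=\phi^{-2}$ in the key step for \eqref{Hom-mod-alg-I}) is exactly what appears in the paper's computation.
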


\begin{proof}
We begin with \eqref{Hom-mod-alg-00}. Indeed,
\begin{align*}
& \langle (\b^{-1})^\ast(u \overset{\circ}{\rt}  f), v\rangle  = \langle u \overset{\circ}{\rt}  f, \b^{-1}(v)\rangle = \langle  f, \g^{-2}(\b^{-1}(v))\lt \phi^{-2}(u)\rangle = \langle  f, \b^{-1}(\g^{-2}(v) \lt \psi(\phi^{-2}(u)))\rangle = \\
& \langle (\b^{-1})^\ast(f), \g^{-2}(v)\lt \phi^{-2}(\psi(u))\rangle = \langle \psi(u) \overset{\circ}{\rt} (\b^{-1})^\ast(f), v \rangle 
\end{align*}
for any $u\in \C{U}$, any $v\in \C{V}$, and any $f\in \C{V}^\circ$.

As for \eqref{Hom-mod-alg-I} we proceed along
\begin{align*}
& \langle \psi^2(u) \overset{\circ}{\rt} (f \star g), v\rangle = \langle f \star g, \g^{-2}(v)\lt \phi^{-2}(\psi^2(u))\rangle =  \\
& \langle f \ot g, \b^{-2}(\g^{-2}(v\ns{1})\lt \phi^{-2}(\psi^2(u\ns{1}))) \ot \b^{-2}(\g^{-2}(v\ns{2})\lt \phi^{-2}(\psi^2(u\ns{2})))\rangle  = \\
& \langle f , \b^{-2}(\g^{-2}(v\ns{1})\lt \phi^{-2}(\psi^2(u\ns{1}))) \rangle  \langle  g, \b^{-2}(\g^{-2}(v\ns{2})\lt \phi^{-2}(\psi^2(u\ns{2})))\rangle = \\
& \langle f , \b^{-2}(\g^{-2}(v\ns{1}))\lt \phi^{-2}(u\ns{1}) \rangle  \langle  g, \b^{-2}(\g^{-2}(v\ns{2}))\lt \phi^{-2}(u\ns{2})\rangle = \\
& \langle (u\ns{1} \overset{\circ}{\rt} f)\star (u\ns{2}\overset{\circ}{\rt} g) , v\rangle,
\end{align*}
where the third equality follows from \eqref{Hom-mod-coalg-I} for right actions. Similarly, \eqref{Hom-mod-coalg-II} implies \eqref{Hom-mod-alg-II}.
\end{proof}

We conclude with the following semidualization result.

\begin{proposition}\label{prop-semidual}
Let $(\C{U},\mu_\C{U}, \eta_\C{U},\phi,\D_\C{U},\ve_\C{U},\psi,S_\C{U})$ and $(\C{V},\mu_\C{V},\eta_\C{V},\a,\D_\C{V},\ve_\C{V},\b,S_\C{V})$ be two Hom-Hopf algebras, so that $\a^4\b^{-2}=\Id_\C{V} $, and that $\phi^4\psi^{-2} = \Id_\C{U}$. Then, $(\C{U},\C{V})$ is a matched pair of Hom-Hopf algebras, if and only if $(\C{V}^\circ,\C{U})$ is a mutual pair of Hom-Hopf algebras.
\end{proposition}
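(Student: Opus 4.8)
The plan is to read the mutual pair $(\C{V}^\circ,\C{U})$ off the matched pair $(\C{U},\C{V})$ through the action/coaction dictionary assembled in the four preparatory statements of this subsection, and then to check that the two lists of compatibilities \eqref{v-rt-uu'}--\eqref{actions-on-1} and \eqref{comp-I}--\eqref{comp-IV} are transposes of one another under the pairing $\langle\,,\,\rangle:\C{V}^\circ\times\C{V}\to k$. First I would fix the structure transported to $\C{F}:=\C{V}^\circ$. By Proposition \ref{prop-dual-Hom-Hopf} the restricted dual is a Hom-Hopf algebra whose associativity twist is $(\b^{-1})^\ast$ and whose coassociativity twist is $(\a^{-1})^\ast$, so in the generic bicrossproduct notation $\a_\C{F}=(\b^{-1})^\ast$ and $\b_\C{F}=(\a^{-1})^\ast$. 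Applying the lemma producing \eqref{action-coaction-duality-II} to the right $\C{U}$-Hom-module coalgebra $(\C{V},\a)$ (so $\g=\a$) makes $(\C{V}^\circ,(\a^{-1})^\ast)$ a left $\C{U}$-Hom-module via $\overset{\circ}{\rt}$, and its ``Moreover'' clause is precisely \eqref{rt-f-comp}; the proposition following that lemma upgrades this to a left $\C{U}$-Hom-module \emph{algebra}, yielding condition (i). Dually, applying the lemma producing \eqref{action-coaction-duality} to the left $\C{V}$-Hom-module coalgebra $(\C{U},\phi)$ (here the module structure map is $\g=\phi$ and the acting algebra is $\C{V}$ with associativity twist $\a$) makes $(\C{U},\phi)$ a right $\C{V}^\circ$-Hom-comodule, its ``Moreover'' clause is \eqref{lt-f-comp}, and the ensuing proposition promotes it to a right $\C{V}^\circ$-Hom-comodule \emph{coalgebra}, yielding condition (ii). This settles (i), (ii) and the twisting compatibilities \eqref{rt-f-comp}, \eqref{lt-f-comp} in both directions at once, since each invoked statement is an equivalence once $\overset{\circ}{\rt}$ and $\nb$ are pinned down by the pairings.

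Next I would establish the dictionary between the remaining compatibilities. The correspondences I expect are \eqref{v-rt-uu'}$\leftrightarrow$\eqref{comp-III} (the rule for $v\rt(uu')$ dualizes to the coaction on a product $\nb(u\bullet u')$), \eqref{vv'-lt-u}$\leftrightarrow$\eqref{comp-I} (multiplicativity of $\lt$ in the $\C{V}$-slot dualizes to the comultiplicativity $\D_\C{F}(u\rt f)$), \eqref{v-lt-u-ot-v-rt-u-switch}$\leftrightarrow$\eqref{comp-IV} (the cross relation between the two actions dualizes to the Yetter--Drinfeld-type relation), and \eqref{actions-on-1}$\leftrightarrow$\eqref{comp-II} together with the unit/counit normalizations already carried by the comodule structure. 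Each correspondence is obtained by pairing the $\C{V}^\circ$-legs of the mutual-pair identity against an arbitrary $v\in\C{V}$, rewriting every $\overset{\circ}{\rt}$ through $\langle u\overset{\circ}{\rt}f,v\rangle=\langle f,\a^{-2}(v)\lt\phi^{-2}(u)\rangle$ and every $\nb$ through $u\ns{0}\langle u\ns{1},v\rangle=\a^{-2}(v)\rt u$, and then invoking the matched-pair relation. Because elements of $\C{V}^\circ$ are separated by the $v\in\C{V}$, each such pairing identity is \emph{equivalent} to the coaction/action identity it encodes: running the computation with the matched-pair relation installed yields the mutual-pair relation (the ``only if''), and reading the same chain of equalities backwards, using instead the mutual-pair relation, returns the matched-pair relation (the ``if''). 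For concreteness I would carry out \eqref{v-rt-uu'}$\leftrightarrow$\eqref{comp-III} in full and then note that the other three are entirely analogous.

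The genuine obstacle is the exponent bookkeeping. The defining pairings carry the fixed shifts $\phi^{-2}$ and $\a^{-2}$, whereas the target relations \eqref{comp-I}--\eqref{comp-IV} carry the asymmetric shifts $\a_\C{F}^{-4}\b_\C{F}^{3}$, $\a_\C{F}^{-2}\b_\C{F}$, $\a_\C{F}^{-2}\b_\C{F}^{2}$ and $\psi^{-2}$; after substituting $\a_\C{F}=(\b^{-1})^\ast$, $\b_\C{F}=(\a^{-1})^\ast$ and transposing through the pairing, these must collapse onto the shifts produced by \eqref{v-rt-uu'}--\eqref{actions-on-1} and by the comultiplication relabellings of the type \eqref{flip-strategy}. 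This is exactly where the hypotheses $\a^4\b^{-2}=\Id_\C{V}$ and $\phi^4\psi^{-2}=\Id_\C{U}$ are indispensable: they let me trade four powers of $\a$ for two of $\b$ on $\C{V}$, and four powers of $\phi$ for two of $\psi$ on $\C{U}$, reconciling the two normalizations. I would therefore open the hard computation by recording the consequences $(\a^{-1})^{\ast\,4}=(\b^{-1})^{\ast\,2}$ on $\C{V}^\circ$ and $\phi^{4}=\psi^{2}$ on $\C{U}$, and apply them systematically to align every structure-map power before comparing the two sides. Once the powers are matched, the algebraic content of each equivalence is precisely one matched-pair axiom, so no further structural input is required; the conclusion is then packaged with Proposition \ref{prop-matched-pair-double-cross-prod} and Proposition \ref{prop-bicrossed-prod-Hom-Hopf} to identify the two resulting Hom-Hopf algebras.
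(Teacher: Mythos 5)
Your proposal follows essentially the same route as the paper's own proof: conditions (i)--(ii) and the twist compatibilities \eqref{rt-f-comp}, \eqref{lt-f-comp} are imported from the preceding duality lemmas, and the remaining axioms are compared through the pairings \eqref{action-coaction-duality} and \eqref{action-coaction-duality-II}, with exactly the correspondences \eqref{vv'-lt-u}$\leftrightarrow$\eqref{comp-I}, \eqref{v-rt-uu'}$\leftrightarrow$\eqref{comp-III}, \eqref{v-lt-u-ot-v-rt-u-switch}$\leftrightarrow$\eqref{comp-IV}, and the same use of $\a^4\b^{-2}=\Id_\C{V}$ and $\phi^4\psi^{-2}=\Id_\C{U}$ to reconcile the powers of the structure maps. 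The only difference is presentational: the paper writes out all three pairing computations in full, whereas you carry out one and defer the others as analogous.
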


\begin{proof}
Let $(\C{U},(\C{V})$ be matched pair of Hom-Hopf algebras, and let us consider the dual Hom-Hopf algebra $(\C{V}^\circ, \G{m},\G{e},\G{a}, {\bf \D},\epsilon,\G{b},{\bf S})$. We shall begin with \eqref{comp-I}. For any $u\in \C{U}$, and any $f\in \C{V}^\circ$, and any $v,v'\in \C{V}$, we have on one hand
\begin{align*}
& \langle {\bf \D}(u\overset{\circ}{\rt} f) , v\ot v'\rangle = \langle u\overset{\circ}{\rt} f,  \a^{-2}(v\bullet v')\rangle = \langle  f,  \a^{-4}(v\bullet v') \lt \phi^{-2}(u)\rangle = \\
& \langle  f,  \Big(\a^{-4}(v)\bullet \a^{-4}(v') \Big)\lt \phi^{-2}(u)\rangle,   
\end{align*}
where on the first equality we used \eqref{delta}, and on the second equality \eqref{action-coaction-duality-II}. On the other hand,
\begin{align*}
& \langle \psi^{-1}(u\ns{1}\ns{0})\overset{\circ}{\rt} f\ns{1} , v\rangle \langle \G{a}^{-4}\G{b}^3(u\ns{1}\ns{1})\star \big[\phi(\psi^{-2}(u\ns{2}))\overset{\circ}{\rt} \G{a}^{-1}(f\ns{2})\big], v'\rangle = \\
& \langle  f\ns{1} , \a^{-2}(v) \lt \phi^{-2}\psi^{-1}(u\ns{1}\ns{0})\rangle \langle \G{a}^{-4}\G{b}^3(u\ns{1}\ns{1}), \b^{-2}(v'\ns{1})\rangle \langle \phi(\psi^{-2}(u\ns{2}))\overset{\circ}{\rt} \G{a}^{-1}(f\ns{2}), \b^{-2}(v'\ns{2})\rangle 
= \\
& \langle  f\ns{1} , \a^{-2}(v) \lt \phi^{-2}\psi^{-1}(u\ns{1}\ns{0})\rangle \langle u\ns{1}\ns{1}, \a^{-3}\b^2(v'\ns{1})\rangle\langle f\ns{2}, \a^{-2}\b^{-1}(v'\ns{2}) \lt \phi^{-1}(\psi^{-1}(u\ns{2}))\rangle = \\
& \langle f\ns{1}, \a^{-2}(v)\lt \phi^{-2}\psi^{-1}(u\ns{1}\ns{0}) \rangle \langle u\ns{1}\ns{1}, \a(v'\ns{1})\rangle\langle f\ns{2}, \a^{-2}\b^{-1}(v'\ns{2}) \lt \phi^{-1}(\psi^{-1}(u\ns{2}))\rangle = \\
& \langle f\ns{1}, \a^{-2}(v)\lt \Big(\a^{-3}\b^{-1}(v'\ns{1}) \rt \phi^{-2}(\psi^{-1}(u\ns{1}))\Big) \rangle\langle f\ns{2}, \a^{-2}\b^{-1}(v'\ns{2}) \lt \phi^{-1}(\psi^{-1}(u\ns{2}))\rangle = \\
& \langle  f,  \Big[\a^{-4}(v)\lt \Big(\a^{-5}\b^{-1}(v'\ns{1}) \rt \phi^{-4}(\psi^{-1}(u\ns{1}))\Big)\Big]\bullet \Big(\a^{-4}\b^{-1}(v'\ns{2}) \lt \phi^{-3}(\psi^{-1}(u\ns{2}))\Big)\rangle,
\end{align*}
where we used \eqref{dual-multp-star} on the first equality, \eqref{action-coaction-duality-II} on the second and the fourth equalities, and \eqref{delta} on the fifth equality. Accordingly we see that \eqref{vv'-lt-u} holds if and only if \eqref{comp-I} holds.

Similarly, for any $u,u'\in \C{U}$, and any $v\in \C{V}$, on one hand we have
\begin{align*}
& (u\bullet u')\ns{0}\langle  (u\bullet u')\ns{1}, v\rangle = \a^{-2}(v) \rt (u\bullet u'),
\end{align*}
where we used \eqref{action-coaction-duality-II}, and on the other hand,
\begin{align*}
& \psi^{-1}(u\ns{1}\ns{0}) \bullet u'\ns{0} \langle \G{a}^{-2}(\G{b}(u\ns{1}\ns{1}))\star \big[\phi^{-1}(u\ns{2})\overset{\circ}{\rt} \G{a}^{-1}(u'\ns{1})\big] , v\rangle = \\
& \psi^{-1}(u\ns{1}\ns{0}) \bullet u'\ns{0} \langle \G{a}^{-2}(\G{b}(u\ns{1}\ns{1})) , \b^{-2}(v\ns{1})\rangle \langle \phi^{-1}(u\ns{2})\overset{\circ}{\rt} \G{a}^{-1}(u'\ns{1}) , \b^{-2}(v\ns{2})\rangle = \\
& \psi^{-1}(u\ns{1}\ns{0}) \bullet u'\ns{0} \langle u\ns{1}\ns{1} , \a^{-1}(v\ns{1})\rangle \langle  u'\ns{1} , \a^{-2}\b^{-1}(v\ns{2}) \lt \phi^{-3}\psi(u\ns{2})\rangle = \\
& \Big(\a^{-3}\b^{-1}(v\ns{1}) \rt \psi^{-1}(u\ns{1})\Big) \bullet u'\ns{0} \langle  u'\ns{1} , \a^{-2}\b^{-1}(v\ns{2}) \lt \phi^{-3}\psi(u\ns{2})\rangle = \\
& \Big(\a^{-3}\b^{-1}(v\ns{1}) \rt \psi^{-1}(u\ns{1})\Big) \bullet \Big[\Big(\a^{-4}\b^{-1}(v\ns{2}) \lt \phi^{-5}\psi(u\ns{2})\Big) \rt u'\Big] = \\
&\Big(\a^{-3}\b^{-1}(v\ns{1}) \rt \psi^{-1}(u\ns{1})\Big) \bullet \Big[\Big(\a^{-4}\b^{-1}(v\ns{2}) \lt \phi^{-1}\psi^{-1}(u\ns{2})\Big) \rt u'\Big],
\end{align*}
where on the first equality we used \eqref{dual-multp-star}, and \eqref{action-coaction-duality-II} on the second, the third, and the fifth equalities. Hence, \eqref{v-rt-uu'} holds if and only if \eqref{comp-III} holds.

Finally, for any $u\in \C{U}$, any $f\in \C{V}^\circ$, any $v\in \C{V}$, and any $m\geq 0$, we have on one hand
\begin{align*}
&  u\ns{1}\ns{0} \langle \G{a}^{-2}\G{b}^2(u\ns{1}\ns{1})\star (u\ns{2} \overset{\circ}{\rt} f), v \rangle = u\ns{1}\ns{0} \langle \G{a}^{-2}\G{b}^2(u\ns{1}\ns{1}), \b^{-2}(v\ns{1}) \rangle\langle u\ns{2} \overset{\circ}{\rt} f, \b^{-2}(v\ns{2}) \rangle = \\
& \a^{-4}(v\ns{1}) \rt u\ns{1} \langle f , \a^{-2}\b^{-2}(v\ns{2}) \lt \phi^{-2}(u\ns{2})\rangle = \\
& \phi^2\Big(\a^{-6}(v\ns{1}) \rt \phi^{-2}(u\ns{1})\Big) \langle f , \a^{-2}\b^{-2}(v\ns{2}) \lt \phi^{-2}(u\ns{2})\rangle = \\
& \phi^2\Big(\a^{-6}(v\ns{1}) \rt \phi^{-2}(u\ns{1})\Big) \langle f , \a^{-6}(v\ns{2}) \lt \phi^{-2}(u\ns{2})\rangle,
\end{align*}
where on the first equality we used \eqref{dual-multp-star}, and on the second and the third equalities \eqref{action-coaction-duality-II}. On the other hand,
\begin{align*}
& u\ns{2}\ns{0} \langle (u\ns{1} \overset{\circ}{\rt} f) \star \G{a}^{-2}\G{b}^{2}(u\ns{2}\ns{1}), v \rangle = u\ns{2}\ns{0} \langle u\ns{1} \overset{\circ}{\rt} f , \b^{-2}(v\ns{1}) \rangle\langle  \G{a}^{-2}\G{b}^{2}(u\ns{2}\ns{1}), \b^{-2}(v\ns{2}) \rangle = \\
& u\ns{2}\ns{0} \langle f , \a^{-2}\b^{-2}(v\ns{1})\lt \phi^{-2}(u\ns{1}) \rangle\langle  u\ns{2}\ns{1}, \a^{-2}(v\ns{2}) \rangle = \\
& \a^{-4}(v\ns{2}) \rt u\ns{2} \langle f , \a^{-2}\b^{-2}(v\ns{1})\lt \phi^{-2}(u\ns{1}) \rangle  = \\
& \phi^2\Big(\a^{-6}(v\ns{2}) \rt \phi^{-2}(u\ns{2})\Big) \langle f , \a^{-2}\b^{-2}(v\ns{1})\lt \phi^{-2}(u\ns{1}) \rangle =\\
& \phi^2\Big(\a^{-6}(v\ns{2}) \rt \phi^{-2}(u\ns{2})\Big) \langle f , \a^{-6}(v\ns{1})\lt \phi^{-2}(u\ns{1}) \rangle. 
\end{align*}
Accordingly, \eqref{v-lt-u-ot-v-rt-u-switch} holds if and only if \eqref{comp-IV} holds.
\end{proof}

We can now state our main result as a direct corollary of Proposition \ref{prop-univ-envlp-mutual-pair}, Proposition \ref{prop-bicrossed-prod-Hom-Hopf}, and Proposition \ref{prop-semidual}.

\begin{corollary}\label{coroll-Hom-Lie-Hopf}
To any matched pair of Hom-Lie algebras $(\G{g},\phi)$ and $(\G{h},\a)$, so that $\phi^4 = \Id_\G{g}$ and $\a^4 = \Id_\G{h}$, there corresponds a bicrossed product Hom-Hopf algebra 
\[
\Big(\C{U}(\G{h})^\circ\acl \C{U}(\G{g}), \ast, \ve\ot {\bf 1},(\a^{-1})^\ast\ot\phi,\D_{\acl},\ve_{\acl},\Id_{\C{U}(\G{h})}^\ast\ot \Id_{\C{U}(\G{g})},S_{\acl}\Big).
\]
\end{corollary}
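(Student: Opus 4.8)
The plan is to assemble the asserted object by composing the three cited propositions, so that once the twisting data are tracked the corollary follows almost formally. First I would apply Proposition \ref{prop-univ-envlp-mutual-pair} to the given matched pair of Hom-Lie algebras, producing the matched pair of Hom-Hopf algebras $(\C{U}(\G{g}),\C{U}(\G{h}))$. The point to record here is the \emph{type} of each factor: the enveloping construction endows $\C{U}(\G{g})$ with the algebra twist $\phi$ while leaving it coassociative, so $\C{U}(\G{g})$ is of type $(\phi,\Id)$, and likewise $\C{U}(\G{h})$ is of type $(\a,\Id)$. Identifying $\C{U}(\G{g})$ with the object $\C{U}$ of Proposition \ref{prop-semidual} (hence $\psi=\Id_{\C{U}(\G{g})}$) and $\C{U}(\G{h})$ with $\C{V}$ (hence $\b=\Id_{\C{U}(\G{h})}$), the two hypotheses $\phi^4\psi^{-2}=\Id$ and $\a^4\b^{-2}=\Id$ of that proposition collapse to $\phi^4=\Id_{\C{U}(\G{g})}$ and $\a^4=\Id_{\C{U}(\G{h})}$.

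The one step that is not purely formal is promoting the generator-level identities $\phi^4=\Id_\G{g}$ and $\a^4=\Id_\G{h}$ to these global identities on the enveloping algebras, and I expect this to be the main obstacle. I would argue that the twist of $\C{U}(\G{g})$ is the multiplicative extension of $\phi|_\G{g}$ that fixes ${\bf 1}$ and commutes with the grafting product $\vee$; since $\C{U}(\G{g})$ is generated as a Hom-algebra by the image of $\G{g}$ together with the unit, and $\phi^4$ is again an algebra map agreeing with $\Id$ on this generating set, one gets $\phi^4=\Id_{\C{U}(\G{g})}$ throughout, and symmetrically $\a^4=\Id_{\C{U}(\G{h})}$. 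I would also note that these relations are exactly what makes every twist map invertible ($\phi^{-1}=\phi^3$, $\a^{-1}=\a^3$), which is tacitly required both for the matched-pair compatibilities \eqref{v-rt-uu'}--\eqref{v-lt-u-ot-v-rt-u-switch} and for the very formation of the restricted dual. With the hypotheses verified, Proposition \ref{prop-semidual} converts the matched pair $(\C{U}(\G{g}),\C{U}(\G{h}))$ into the mutual pair of Hom-Hopf algebras $(\C{U}(\G{h})^\circ,\C{U}(\G{g}))$.

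Finally I would feed this mutual pair into Proposition \ref{prop-bicrossed-prod-Hom-Hopf} with $\C{F}:=\C{U}(\G{h})^\circ$ and $\C{U}:=\C{U}(\G{g})$, obtaining the bicrossproduct Hom-Hopf algebra $\C{U}(\G{h})^\circ\acl\C{U}(\G{g})$ with the multiplication $\ast$ of \eqref{bcp-multp}, comultiplication $\D_{\acl}$ of \eqref{bcp-comultp}, counit $\ve_{\acl}$, and antipode $S_{\acl}$ of \eqref{bcp-antipode}. The remaining task is to read off the unit and the two twist maps through the duality of Proposition \ref{prop-dual-Hom-Hopf}. Since $\C{U}(\G{h})$ has algebra twist $\a$ and coalgebra twist $\Id$, its restricted dual $\C{U}(\G{h})^\circ$ has unit $\ve$, algebra twist $(\Id^{-1})^\ast=\Id_{\C{U}(\G{h})}^\ast$, and coalgebra twist $(\a^{-1})^\ast$. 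Substituting these, together with $\phi$ and $\psi=\Id_{\C{U}(\G{g})}$ for $\C{U}(\G{g})$, into the twist pair $\big(\b_\C{F}\ot\phi,\ \a_\C{F}\ot\psi\big)$ of the bicrossproduct recorded in Proposition \ref{prop-bicrossed-prod-Hom-Hopf}, I obtain the algebra twist $(\a^{-1})^\ast\ot\phi$, the coalgebra twist $\Id_{\C{U}(\G{h})}^\ast\ot\Id_{\C{U}(\G{g})}$, and the unit $\ve\ot{\bf 1}$, which is precisely the displayed $8$-tuple. Beyond this careful transcription --- in particular keeping straight that the \emph{coalgebra} twist of $\C{F}$ becomes the \emph{algebra} twist of $\C{F}\acl\C{U}$ and vice versa --- I anticipate no further difficulty.
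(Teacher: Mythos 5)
Your proposal is correct and is exactly the paper's argument: the corollary is obtained by composing Proposition \ref{prop-univ-envlp-mutual-pair}, Proposition \ref{prop-semidual} (with $\C{U}=\C{U}(\G{g})$, $\C{V}=\C{U}(\G{h})$, $\psi=\Id$, $\b=\Id$), and Proposition \ref{prop-bicrossed-prod-Hom-Hopf}, which is all the paper itself does. Your extra bookkeeping --- promoting $\phi^4=\Id_\G{g}$ and $\a^4=\Id_\G{h}$ to the enveloping algebras via multiplicativity of the twists on generators, and reading off the unit $\ve\ot{\bf 1}$ and the twist pair $\big((\a^{-1})^\ast\ot\phi,\ \Id_{\C{U}(\G{h})}^\ast\ot\Id_{\C{U}(\G{g})}\big)$ from Proposition \ref{prop-dual-Hom-Hopf} --- is sound and merely fills in details the paper leaves implicit.
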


\bibliographystyle{plain}
\bibliography{references}{}

\end{document}